\documentclass[11pt]{amsart}
\usepackage[T1]{fontenc}
%\pdfoutput=1
\usepackage{ifpdf}
\usepackage[utf8]{inputenc}
\usepackage[english]{babel}
\usepackage{url}
\usepackage{graphicx}
\usepackage{silence,lmodern}
\usepackage[style=alphabetic, backend=biber, giveninits=true, url=false]{biblatex}
\WarningFilter{biblatex}{Patching footnotes failed}
\DeclareLabelalphaTemplate{
  \labelelement{
    \field[final]{shorthand}
    \field{label}
    \field[strwidth=2, strside=left, ifnames=1]{labelname}
    \field[strwidth=1, strside=left]{labelname}
    }
        }
\DeclareFieldFormat{extraalpha}{#1}
\DeclareNameAlias{author}{given-family}
\renewbibmacro{in:}{}

\DeclareFieldFormat*{title}{\mkbibitalic{#1}}
\DeclareFieldFormat*{citetitle}{\mkbibitalic{#1}}
\DeclareFieldFormat[article]{journaltitle}{#1}
\DeclareFieldFormat{citejournal}{#1}
\DeclareFieldFormat*[article]{journaltitle}{#1}
\DeclareFieldFormat[article]{citejournal}{#1}
\DefineBibliographyExtras{english}{%
}
\emergencystretch=1em
\usepackage{xpatch}
\xpatchbibdriver{article}{%
\newunit\newblock
\printfield{edition}%
}
{%
\setunit{\addcomma\space}\newblock
\printfield{edition}%
}{}{}

\makeatletter
\newcommand*\rel@kern[1]{\kern#1\dimexpr\macc@kerna}
\newcommand*\widebar[1]{%
  \begingroup
  \def\mathaccent##1##2{%
    \rel@kern{0.8}%
    \overline{\rel@kern{-0.8}\macc@nucleus\rel@kern{0.2}}%
    \rel@kern{-0.2}%
  }%
  \macc@depth\@ne
  \let\math@bgroup\@empty \let\math@egroup\macc@set@skewchar
  \mathsurround\z@ \frozen@everymath{\mathgroup\macc@group\relax}%
  \macc@set@skewchar\relax
  \let\mathaccentV\macc@nested@a
  \macc@nested@a\relax111{#1}%
  \endgroup
}
\makeatother

\usepackage{capt-of}
\usepackage{eucal}
\usepackage[letterpaper, left=2.5cm, right=2.5cm, top=2.5cm,
bottom=2.5cm,dvips]{geometry} 

\bibliography{ms}
\usepackage{geometry}
\usepackage[toc,page]{appendix}
\usepackage{extarrows}

\usepackage{amsmath,amsthm}
\usepackage{ stmaryrd }
\usepackage{mathtools}
\usepackage{hyperref}
\usepackage{enumerate}
\usepackage{indentfirst}
\usepackage{color,soul}
\usepackage{centernot}
\usepackage{ stmaryrd }
\usepackage[export]{adjustbox}
\usepackage{tikzit}
% TiKZ style file generated by TikZiT. You may edit this file manually,
% but some things (e.g. comments) may be overwritten. To be readable in
% TikZiT, the only non-comment lines must be of the form:
% \tikzstyle{NAME}=[PROPERTY LIST]

% Node styles

% Edge styles
\tikzstyle{new edge style 0}=[<-]
\tikzstyle{new edge style 1}=[->]

\usepackage{turnstile}
\usepackage{amssymb,amscd,amsthm, verbatim,amsmath,color,fancyhdr, mathrsfs}
\usepackage{array}
\usepackage{enumerate}
\usepackage{empheq}
\usepackage{tikz-cd}
\usepackage[most]{tcolorbox}
\usepackage{xcolor}
\usepackage[all]{xy}
\usepackage[notransparent]{svg}
\usepackage{relsize}
\usepackage{pdfpages}

\usepackage{csquotes}
%\MakeOuterQuote{"}
\usepackage{pxfonts}
\usepackage{bbm}

\usepackage{scalerel,stackengine}
\stackMath
\newcommand\reallywidehat[1]{%
\savestack{\tmpbox}{\stretchto{%
  \scaleto{%
    \scalerel*[\widthof{\ensuremath{#1}}]{\kern-.6pt\bigwedge\kern-.6pt}%
    {\rule[-\textheight/2]{1ex}{\textheight}}%WIDTH-LIMITED BIG WEDGE
 }{\textheight}% 
}{0.5ex}}%
\stackon[1pt]{#1}{\tmpbox}%
}
\parskip 1ex

\usepackage{todonotes}
\newcommand{\uss}{\text{uss}}
\newcommand{\id}{\text{id}}

\newcommand{\Hom}{\text{Hom}}
\def\R{\mathbb{R}}

\def\Z{\mathbb{Z}}

\def\XB{\text{XB}}
\def\PD{\text{PD}}

\def\pd{\text{PD}}
\def\B{\mathbb{B}}
\def\R{\mathbb{R}}
\def\XP{\mathbb{XP}}
\def\P{\mathbb{P}}
\def\N{\mathbb{N}}
\def\Alg{\text{Alg}_{\Bbbk}^2}
\def\Alge{\text{Alg}_{\Bbbk}}

\def\F{\mathsf{F}}

\def\Tr{{\text{Tr}}}
\def\one{\imath}
\def\<{{\langle}}
\def\>{{\rangle}}
\def\d{{\partial}}

\newcommand{\leftelbow}{\includegraphics{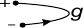}}
\newcommand{\rightelbow}{\includegraphics{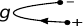}}
\newcommand{\arti}{\includegraphics[trim=0 -0.3mm 0 0]{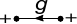}}

\newcommand{\linartg}{\includegraphics[trim=0 +0.7mm 0 0]{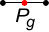}}
\newcommand{\linarte}{\includegraphics[trim=0 +0.7mm 0 0]{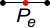}}
\newcommand{\eksi}{\includegraphics[trim=0 -0.3mm 0 0]{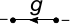}}

\newcommand{\lineksg}{\includegraphics[trim=0 +0.7mm 0 0]{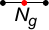}}
\newcommand{\linekse}{\includegraphics[trim=0 +0.7mm 0 0]{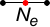}}
\newcommand{\foldcupg}{\includegraphics[trim=0 +0.7mm 0 0]{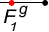}}
\newcommand{\foldcapg}{\includegraphics[trim=0 +0.7mm 0 0]{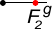}}
\newcommand{\nokta}{\includegraphics[trim=0 -0.1mm 0 0]{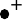}}

\newcommand{\noktab}{\includegraphics[trim=0 -0.1mm 0 0]{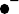}}

\newcommand{\noktac}{\includegraphics[trim=0 -0.3mm 0 0]{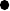}}
\newcommand{\leftelbov}{\includegraphics{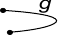}}
\newcommand{\leftelbowe}{\includegraphics{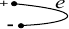}}
\newcommand{\leftelbote}{\includegraphics{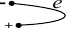}}

\newcommand{\leftelbovg}{\includegraphics[trim=0 0.3mm 0 0]{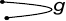}}
\newcommand{\leftelbovt}{\includegraphics[trim=0 0.1mm 0 0]{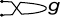}}
\newcommand{\leftelbor}{\includegraphics[trim=0 0.4mm 0 0]{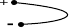}}
\newcommand{\rightelbov}{\includegraphics{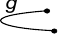}}
\newcommand{\rightelbovt}{\includegraphics[trim=0 -0.1mm 0 0]{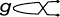}}

\newcommand{\rightelbos}{\includegraphics[trim=0 0.4mm 0 0]{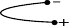}}
\newcommand{\rightelbowe}{\includegraphics{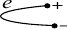}}
\newcommand{\rightelbote}{\includegraphics{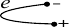}}
\newcommand{\bos}{\includegraphics[trim=0 -0.4mm 0 0]{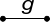}}

\newcommand{\cuspa}{\includegraphics[trim=0 0.6mm 0 0]{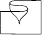}}
\newcommand{\cuspb}{\includegraphics[trim=0 0.6mm 0 0]{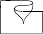}}
\newcommand{\commua}{\includegraphics[trim=0 0.4mm 0 0]{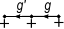}}
\newcommand{\commub}{\includegraphics[trim=0 0.2mm 0 0]{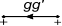}}
\newcommand{\commuc}{\includegraphics[trim=0 1.5mm 0 0]{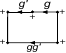}}
\newcommand{\syma}{\includegraphics[trim=0 2.1mm 0 0]{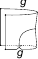}}
\newcommand{\gcirc}{\includegraphics[trim=0 0mm 0 0]{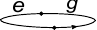}}
\newcommand{\teknokta}{\includegraphics[trim=0 0mm 0 0]{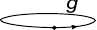}}

 %cobhyp 1
 %cobhyp 2

 %cobhyp 3

\makeatletter
\newcommand\footnoteref[1]{\protected@xdef\@thefnmark{\ref{#1}}\@footnotemark}
\makeatother

\newcommand{\G}{\mathcal{G}}
\newcommand{\bp}{\mathtt{bp}}

\usepackage{tikz}

\theoremstyle{definition}
\newtheorem{definition}{Definition}[section]
\newtheorem*{remark}{Remark}
\newtheorem{example}{Example}[section]

\theoremstyle{plain}
\newtheorem{theorem}{Theorem}[section]
\newtheorem{cor}{Corollary}[theorem]
\newtheorem{lemma}{Lemma}[section]
\newtheorem{prop}{Proposition}[section]

\newcommand{\op}{\text{op}}
\newcommand{\ori}{\text{or}}
\newcommand{\un}{\text{un}}

\newcommand{\Bord}{\text{Bord}}

\newcommand{\X}{\text{X}}
\usetikzlibrary{decorations.markings, decorations.fractals,calc,trees,positioning,arrows,fit,shapes,decorations.text,decorations.pathmorphing}
\graphicspath{{images/}}
\usepackage{wrapfig}
\title{Two-Dimensional Extended Homotopy Field Theories}
\author{K\"urşat S\"ozer}
\newcommand{\Address}{{\bigskip \footnotesize
K\"urşat S\"ozer, \textsc{Laboratoire Paul Painlev\'e, Universit\'e de Lille,  Villeneuve d'Ascq, Lille, France 59655} \par\nopagebreak \textit{E-mail address:} \textit{kursat.sozer@univ-lille.fr}}}
\begin{document}
\begin{abstract}
    We give another definition of two-dimensional extended homotopy field theories (E-HFTs) with aspherical targets and classify them. When the target of E-HFT is chosen to be a $K(G,1)$-space, we classify E-HFTs taking values in the symmetric monoidal bicategory of algebras, bimodules, and bimodule maps by certain Frobenius $G$-algebras called quasi-biangular $G$-algebras. As an application, for any discrete group $G$, we verify a special case of the $(G \times SO(2))$-structured cobordism hypothesis due to Lurie. 
\end{abstract}
\maketitle
\tableofcontents
\section{Introduction}
Extended topological field theories (E-TFTs) are generalizations of topological field theories (usually called TQFTs or TFTs) to manifolds with corners and higher categories (\cite{freedext1}, \cite{lawrence}, \cite{baezdolan}). A different generalization of TFTs is obtained by considering manifolds equipped with principal $G$-bundles. When $G$ is a discrete group, such a generalization was introduced by Turaev~\cite{ilkhqft} who called them homotopy (quantum) field theories (HFTs). These theories are defined by applying axioms of TFTs to manifolds and cobordisms endowed with maps to a fixed target space. In this paper, we combine E-TFTs and HFTs in dimension $2$. More precisely, we define $2$-dimensional extended homotopy field theories (E-HFTs) with aspherical targets and classify them. 

\subsection{Main results} To define a $2$-dimensional E-HFT with target $X\simeq K(G,1)$ we introduce an X-cobordism bicategory $X\Bord_2$. The objects of $X\Bord_2$ are compact oriented $0$-dimensional manifolds and the $1$-morphisms are oriented cobordisms between such manifolds equipped with homotopy classes of maps to $X$. The $2$-morphisms of $\X\Bord_2$ are equivalence classes of pairs $(S,P)$, where $S$ is a certain type of oriented surface with corners and $P$ is a homotopy class of a map from $S$ to $X$. The equivalence relation is given by diffeomorphisms respecting $P$ and restricting to the identity on the boundary. The disjoint union operation turns $X\Bord_2$ into a symmetric monoidal bicategory and \textit{a $2$-dimensional E-HFT with target $X$ (extended X-HFT)} is defined as a symmetric monoidal $2$-functor from $X\Bord_2$ to any other symmetric monoidal bicategory.

For a given symmetric monoidal bicategory $\mathcal{C}$, our classification of $\mathcal{C}$-valued $2$-dimensional extended X-HFTs is comprised of two steps. Firstly, we define certain combinatorial diagrams in $I=[0,1]$, $I^2$, and $I^3$, called $G$-linear, $G$-planar, and $G$-spatial diagrams, respectively. These diagrams generalize the ones in \cite{schommer} and they possess the same information as the morphisms of $\X\Bord_2$. As the second step, we define a symmetric monoidal bicategory $\XB^{\PD}$ whose $1$- and $2$-morphisms are defined using these diagrams. This bicategory is equivalent to $\X\Bord_2$ and has a convenient description in terms of generators and relations. Figures \ref{fig:generators} and \ref{fig:relations} show the corresponding list of generators and relations for $\X\Bord_2$. Then, the classification of $2$-dimensional extended X-HFTs reduces to an application of the cofibrancy theorem (\cite{schommer}), which is a coherence theorem for symmetric monoidal $2$-functors explained below. 

For a computadic symmetric monoidal bicategory $\mathsf{F}(\P)$ constructed from a list of generators and relations $\P$, let $\text{SymMon}(\mathsf{F}(\P),\mathcal{C})$ denote the bicategory of symmetric monoidal $2$-functors, transformations, and modifications. The cofibrancy theorem gives an equivalence $\text{SymMon}(\mathsf{F}(\P),\mathcal{C}) \simeq \P(\mathcal{C})$ of bicategories, where $\P(\mathcal{C})$ is the bicategory called \textit{$\P$-data in $\mathcal{C}$} whose objects are assignments of generators in $\P$ to the objects, $1$-morphisms, and $2$-morphisms of $\mathcal{C}$ subject to relations (see Section \ref{cofibrancy_section}). Applying this theorem to the list of generators and relations $\XP$ of $\XB^{\PD}$ along with the equivalence $\X\Bord_2 \simeq \XB^{\PD}$, gives the following classification theorem.

\newtheorem*{thm:genelsinif}{Theorem \ref{thm:genelsinif}}
\begin{thm:genelsinif}
For any symmetric monoidal bicategory $\mathcal{C}$, there is an equivalence of bicategories $$\text{SymMon}(\X\Bord_2,\mathcal{C})\simeq \XP(\mathcal{C}).$$ 
\end{thm:genelsinif}
Next, we consider a specific target bicategory $\Alg$ of $\Bbbk$-algebras, bimodules, and bimodule maps for a commutative ring $\Bbbk$ with unit. The following notions are the main ingredients of our result on $\Alg$-valued $2$-dimensional extended X-HFTs. For a discrete group $G$ with identity element $e$, a strongly graded $G$-algebra is a $G$-graded associative $\Bbbk$-algebra $A= \oplus_{g \in G} A_g$ with unity such that $A_g A_{g'} = A_{gg'}$ for all $g,g' \in G$. The opposite $G$-algebra of $A$ is $A^{op}= \oplus_{g \in G} A_{g^{-1}}$ where the order of multiplication is reversed. 

A Frobenius $G$-algebra is a pair $(A,\eta)$, where $A=\oplus_{g \in G}A_g$ is a $G$-algebra such that each $A_g$ is a finitely generated projective $\Bbbk$-module, and $\eta: A \otimes A \to \Bbbk$ is a nondegenerate bilinear form satisfying $\eta(ab,c)=\eta(a,bc)$ for any $a,b,c \in A$. A quasi-biangular $G$-algebra is a strongly graded Frobenius $G$-algebra $(A,\eta)$ whose identity component $A_e$ is separable and $\eta$ satisfies certain conditions (see Section \ref{alg_or_sinif}). We also need $G$-graded Morita contexts between $G$-algebras, which were introduced by Boisen~\cite{gmorita}. We recall their definition and introduce a notion of compatibility with Frobenius structures in Section \ref{alg_or_sinif}.
\newtheorem*{thm:orclass}{Theorem \ref{thm:orclass}}
\begin{thm:orclass}
Let $\Bbbk$ be a commutative ring and $X$ be a CW-complex which is a $K(G,1)$-space for a discrete group $G$. Then, any $\Alg$-valued $2$-dimensional extended X-HFT $Z: \X\Bord_2 \to \Alg$ whose precomposition $\XB^{\PD} \xrightarrow{\simeq} \X\Bord_2 \xrightarrow{Z} \Alg$ gives a strict symmetric monoidal $2$-functor determines a triple $(A,B,\zeta)$ where $A$ and $B$ are quasi-biangular $G$-algebras and $\zeta$ is a compatible $G$-graded Morita context between $A$ and $B^{\op}$. Conversely, for any such triple $(A,B,\zeta)$ there exists an $\Alg$-valued $2$-dimensional extended X-HFT.
\end{thm:orclass}
Theorem \ref{orclass} generalizes Schommer-Pries' classification of $\Alg$-valued $2$-dimensional extended TFTs (\cite{schommer}) in terms of separable symmetric Frobenius algebras. Theorem \ref{genelsinif} suggests studying the bicategory $\XP(\Alg)$ to understand $\text{SymMon}(\X\Bord_2,\Alg)$, more specifically to answer the question of which triples yield equivalent extended X-HFTs. This study leads us to define a bicategory, $\text{Frob}^G$, which has quasi-biangular $G$-algebras as objects, compatible $G$-graded Morita contexts as $1$-morphisms, and equivalences of such Morita contexts as $2$-morphisms (see Section \ref{alg_or_sinif}). 
\newtheorem*{thm:equivbicats}{Theorem \ref{thm:equivbicats}}
\begin{thm:equivbicats}
Under the assumptions of Theorem \ref{orclass}, there is an equivalence of bicategories 
\begin{align*}
    \text{SymMon}(\X\Bord_2,\Alg)\simeq \text{Frob}^G.
\end{align*}
\end{thm:equivbicats}
On the level of objects this equivalence maps a triple $(A,B,\zeta)$ to $A$. Consequently, Theorem \ref{thm:equivbicats} implies the following corollary.
\newtheorem*{cor:impliedequiv}{Corollary \ref{impliedequiv}}
\begin{cor:impliedequiv}
Under the assumptions of Theorem \ref{orclass}, two triples $(A_1,B_1,\zeta_1)$ and $(A_2,B_2,\zeta_2)$ produce equivalent $2$-dimensional extended X-HFTs if and only if there exists a compatible $G$-graded Morita context between $A_1$ and $A_2$.
\end{cor:impliedequiv} 

A different approach to the classification of $2$-dimensional E-HFTs with $K(G,1)$-targets is given by the $(G \times SO(2))$-structured cobordism hypothesis due to Lurie~\cite{lurie}. This hypothesis states a classification of such E-HFTs in terms of homotopy $(G \times SO(2))$-fixed points (see Section \ref{gso2sch}). Davidovich \cite{davidovich} computed these fixed points in $\Alg$ when $\Bbbk$ is an algebraically closed field of characteristic zero. By comparing Theorem \ref{equivbicats} with Davidovich's results, we verify a special case of the $(G \times SO(2))$-structured cobordism hypothesis as follows.
\newtheorem*{cor:Gcobhyp}{Corollary \ref{Gcobhyp}}
\begin{cor:Gcobhyp}
For any discrete group $G$ and any algebraically closed field $\Bbbk$ of characteristic zero, the $(G \times SO(2))$-structured cobordism hypothesis for $\Alg$-valued oriented E-HFTs with target $X\simeq K(G,1)$ holds true. 
\end{cor:Gcobhyp}
In the definition of $\X\Bord_2$ we use oriented manifolds. By using unoriented manifolds, we define the unoriented $X$-cobordism bicategory $\X\Bord_2^{\un}$ and provide a list of generators and relations. Then, parallel to oriented case, we classify $2$-dimensional extended unoriented HFTs and verify a special case of the $(G \times O(2))$-structured cobordism hypothesis. %\\[.2cm]
\subsection{Related works} Our main reference is Schommer-Pries' thesis \cite{schommer} on the classification of $2$-dimensional extended TFTs. In addition to detailed classification of oriented and unoriented extended TFTs, Schommer-Pries also sketched the classification of $2$-dimensional structured extended TFTs (see Section 3.5 of \cite{schommer}). In this approach the structured cobordism bicategory is defined using topological stacks. In particular, a stack corresponding to principal $G$-bundle and orientation structures provides an alternative formulation for extended $(G \times SO(2))$-structured TFTs, or equivalently $2$-dimensional extended HFTs with $K(G,1)$-targets. 

When defining $2$-dimensional extended TFTs, Schommer-Pries \cite{schommer} defined the cobordism bicategory in all dimensions not only in dimension 2. Schweigert and Woike \cite{woike_schweigert_ehqfts_orbifoldization} extended this cobordism bicategory to the setting of homotopy field theories and defined extended HFTs in all dimensions. Similarly, M\"uller and Szabo \cite{muller_szabo_indexthy} constructed a geometric cobordism bicategory $\text{Cob}^{\mathcal{F}}_{n,n-1,n-2}$ where $\mathcal{F}$ is a general stack which encodes the arbitrary background fields in the corresponding quantum field theory (see also \cite{lukasmuller_thesis}). In their work \cite{muller_szabo_discretegauge} when the background fields $\mathcal{F}$ are chosen for $2$-dimensional Dijkgraaf-Witten theories, namely principal bundles with finite structure group $G$ and orientations, the symmetric monoidal bicategory $\text{Cob}^{\mathcal{F}}_{2,1,0}$ is equivalent to $\X\Bord_2$ described above. Using the $n$-dimensional version $\text{Cob}^{\mathcal{F}}_{n,n-1,n-2}$ of this bicategory, they defined an $n$-dimensional extended homotopy field theory for all $n \geq 2$. Moreover, M\"uller and Woike \cite{muller_woike_higherflatgerbes_ehqfts} constructed an $n$-dimensional extended HFT from a flat $(n-1)$-gerbe on a target space represented by $U(1)$-valued singular cocycle (see also \cite{lukasmuller_thesis} and \cite{gerbes_hqfts}). This construction is generalized to unoriented extended HFTs by Young \cite{young_orientation_twistedHQFTs}. 

A state sum approach to both $2$-dimensional extended TFTs and HFTs was taken by Davidovich \cite{davidovich}. As mentioned above Davidovich \cite{davidovich} also classified $\Alg$-valued $2$-dimensional extended $(G \times SO(2))$-structured TFTs following the Cobordism Hypothesis. \\[.3cm]
\textbf{Conventions.}
Throughout the paper, $G$ is a discrete group with identity element $e$ and the target space is a pointed aspherical CW-complex $(X,x)$ with $\pi_1(X,x)=G$. All manifolds are assumed to be smooth and all algebras are unital. By a closed manifold we mean a compact manifold without boundary. For smooth manifolds $M$ and $N$ the space of smooth maps $C^{\infty}(M,N)$ is provided with the Whitney $C^{\infty}$-topology. For subsets $K \subset M$ and $L \subset N$ the notation $[(M,K),(N,L)]$ stands for the set of relative homotopy classes of maps between pairs. \\[.3cm]    
\textbf{Acknowledgments.} I would like to thank my advisor Vladimir Turaev for introducing this problem to me and his support throughout this project. I would also like to thank Noah Snyder for fruitful and enlightening discussions on extended field theories and the cobordism hypothesis. I am grateful to Patrick Chu for helpful discussions and to Alexis Virelizier for his comments on the earlier version of this paper. I would like to thank the referee for valuable comments and suggestions. This work was supported by NSF grant DMS-1664358.

\section{The $2$-dimensional X-cobordism Bicategory}
In his study of HFTs, Turaev \cite{ilkhqft} introduced notions of $X$-manifold and $X$-cobordism using pointed manifolds where $X$ is a connected CW-complex with a specified point $x\in X$. In this paper, $X$ is always a $K(G,1)$-space. In this case, a \textit{pointed manifold} is a manifold with a basepoint on each connected component. We denote the set of basepoints of a pointed manifold $M$ by $\mathtt{bp}_M$. An \textit{$n$-dimensional $X$-manifold} is a pair $(M,\mathtt{g})$ consisting of a closed pointed $n$-manifold $M$ and a homotopy class $\mathtt{g} \in [(M,\bp_M),(X,x)]$ called the \textit{characteristic map}. An \textit{$X$-cobordism} between $X$-manifolds $(M,\mathtt{g})$ and $(M',\mathtt{g}')$ is a pair $(W,\mathtt{P})$ consisting of a cobordism $W$ between $M$ and $M'$ and a homotopy class $\mathtt{P} \in [(W, \bp_{M} \cup \bp_{M'}),(X,x)]$ restricting to $\mathtt{g}$ and $\mathtt{g}'$ on the corresponding boundary components. 

For an aspherical space X, a $2$-dimensional extended X-HFT is a symmetric monoidal $2$-functor from the $2$-dimensional X-cobordism bicategory $\X \Bord_2$ to another symmetric monoidal bicategory. Therefore, this bicategory plays a key role in the definition of $2$-dimensional extended X-HFTs. 

%\begin{remark}
There are existing definitions of structured or equivariant cobordism bicategories related to $\X\Bord_2$. These include the homotopy bicategory of the $(\infty,2)$-category of cobordisms with $(G \times SO(2))$-structures (see \cite{lurie}, \cite{bordn}), the structured cobordism bicategory $\Bord_2^{\mathcal{F}}$ introduced in \cite{schommer} with a topological topological stack $\mathcal{F}$ corresponding to principal $G$-bundle and orientation structures, the $G$-equivariant cobordism bicategory $G$-$\text{Cob}(2,1,0)$ introduced in \cite{woike_schweigert_ehqfts_orbifoldization}, and the structured cobordism bicategory $\text{Cob}^{\mathcal{F}}_{2,1,0}$ introduced in \cite{muller_szabo_indexthy} with an appropriate choice of a stack $\mathcal{F}$ (see \cite{muller_szabo_discretegauge}, \cite{lukasmuller_thesis}). It can be shown that these symmetric monoidal bicategories are equivalent to $\X \Bord_2$ and hence the corresponding extended HFTs with aspherical targets are equivalent.  
%\end{remark}

We start this section with a descriptive definition of $\X \Bord_2$ to motivate the types of X-manifolds and structures on them, which form the $1$- and $2$-morphisms of this bicategory. Then we provide the complete definition of $\X \Bord_2$. 
\subsection{Surfaces with corners} Roughly, the objects of $\X \Bord_2$ are compact oriented $0$-manifolds, $1$-morphisms are $1$-dimensional X-cobordisms, and $2$-morphisms are X-homeomorphism classes of $2$-dimensional X-cobordisms between those X-cobordisms. This hints that underlying manifold of a $2$-morphism must be a surface with corners. Recall that a surface with corners $M$ is a $2$-dimensional topological manifold whose coordinate charts are of the form $\varphi : U \to \R_{+}^2$ where $U \subset M$ is open and $\R_{+}^2 = [0 , \infty) \times [0 , \infty)$. Compatibility of charts is given by diffeomorphisms; that is, two charts $(U,\varphi)$ and $(U', \varphi')$ with $U \cap U' \neq \emptyset$ are compatible if the composition $\varphi' \circ \varphi^{-1} : \varphi(U \cap U') \to \varphi'(U \cap U')$ is a diffeomorphism. Here when $\varphi(U \cap U') \cap \d \R^2_{+} \neq \emptyset$, the map $\varphi' \circ \varphi^{-1}$ is a diffeomorphism if it is a restriction of a diffeomorphism defined on an open set containing $\varphi(U \cap U')$.

The composition of $2$-morphisms in the X-cobordism bicategory is given by gluing surfaces with corners along their common boundaries. Recall that the first step of gluing construction is to choose collar neighborhoods. Nevertheless, not every surface with corners admits collar neighborhoods. Following \cite{schommer}, we use $\<2\>$-surfaces which are special cases of $\<n\>$-manifolds defined in \cite{laures}. These surfaces admit collar neighborhoods (see \cite{laures}) and they are certain type of surfaces with faces.

A surface with faces $M$ is a surface with corners such that any point $m \in M$ belongs to $\text{index}(m)$ different connected faces. Here, the index of a point $m$ is the number of zeros in $\varphi(m) \in \R^2_{+}$ where $(U,\varphi)$ is a chart with $m \in U$ and a connected face of a surface with corners $M$ is the closure of a component of $\{ m \in M  \ | \ \text{index}(m)=1\}$. A face is a disjoint union of connected faces. 
\begin{definition}
A \textit{$\<2\>$-surface} is a $2$-dimensional compact manifold with faces $S$ equipped with two submanifolds with faces $\d_h S$ and $\d_v S$ called \textit{horizontal} and \textit{vertical faces} respectively such that $\d S= \d_h S \cup \d_v S$ and $\d_h S \cap \d_v S$ is either empty or a face of both. A $\<2\>$-surface $S$ is \textit{pointed} if it is equipped with a finite set $R \subset \d S$ such that $\d_h S \cap \d_v S \subset R$, $\d_v S \cap R = \d_h S \cap \d_v S$, and every connected component of $\d_h S$ contains at least $2$ elements of $R$. 
\end{definition}
%\begin{figure}[ht]
%    \centering
%    \includesvg{2xsurface3withcollars}
%    \caption{Examples of cobordism type $\<2\>$-X-surfaces and their compositions}
%    \label{fig:2xsurface3}
%\end{figure}

\begin{figure}[ht]
    \centering
    \def\svgwidth{\columnwidth}
   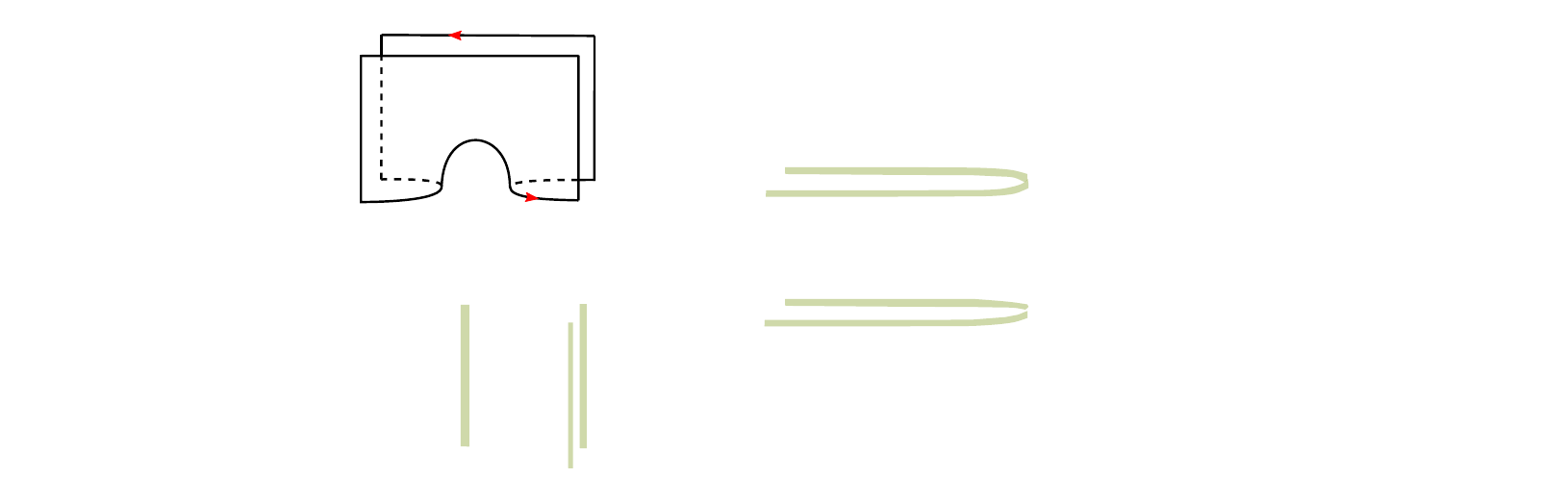
    \caption{Examples of cobordism type $\<2\>$-X-surfaces and their compositions}
   \label{fig:2xsurface3}
\end{figure}
\begin{definition}\label{2Xdefn}
A \textit{$\<2\>$-X-surface} is a triple $(S,R,\mathtt{P})$ where $(S,R)$ is a pointed oriented $\<2\>$-surface and $\mathtt{P} \in [(S,R), (X,x)]$ is a homotopy class. A $\<2\>$-X-surface $(S,R,\mathtt{P})$ is said to be \textit{cobordism type} if $\d_v S$ is diffeomorphic to a product X-manifold with a constant characteristic map i.e. $(\d_v S, \mathtt{P}|_{\d_v S}) \cong (M \times I,\mathtt{P}|_{M \times I})$ where $I=[0,1]$, $(M,\mathtt{P}|_M)$ is a $0$-dimensional X-manifold, and the restriction of $\mathtt{P}|_{M \times I} \in [(M \times I, \d(M \times I)) ,(X,x)]$ to each connected component is the constant homotopy class.
\end{definition}
Figure \ref{fig:2xsurface3} shows an example of a cobordism type $\<2\>$-X-surface $(S,R,\mathtt{P})$ where we encode the data of relative homotopy class $\mathtt{P}$ by arrows and $G$-labels are determined uniquely by $\mathtt{P}$ and arrows. Observe that the horizontal boundary of a $\<2\>$-X-surface $(S',R',\mathtt{P}')$ is not a $1$-dimensional X-cobordism if $R' \neq \d (\d_h S')$. Since we regard $2$-morphisms of $\X\Bord_2$ to be X-cobordisms between $1$-morphisms, this observation implies that $1$-morphisms are more general than $1$-dimensional X-cobordisms in that there are possibly extra points in the interior of the underlying $1$-dimensional compact manifold. 
\begin{definition}
A \textit{$1$-dimensional marked X-manifold} is a triple $(M,T,\mathtt{g})$ where $M$ is an oriented compact $1$-manifold, $T \subseteq M$ is a finite set with $\d M \subset T$ and each connected component of $M$ contains at least 2 elements of $T$, and $\mathtt{g} \in [(M,T),(X,x)]$.
\end{definition}
According to the arguments above the bicategory $\X \Bord_2$ is expected to have compact oriented $0$-manifolds as objects, $1$-dimensional marked X-manifolds as $1$-morphisms, and cobordism type $\<2\>$-X-surfaces as $2$-morphisms. In this case, for a given cobordism type $\<2\>$-X-surface, its source and target $1$-morphisms are certain components of the horizontal boundary. However, the composition of morphisms is a delicate issue, especially the composition of $1$-morphisms. 

When we glue two manifolds along their common boundary, the smooth structure on the resulting topological manifold depends on the choice of collar neighborhoods (see \cite{hcobordism}). Equivalently, different choices of collars give different smooth structures. However, different choices give diffeomorphic smooth manifolds and diffeomorphisms are non-canonical. Therefore, gluing operation on smooth manifolds is not well defined on the nose, but up to a non-canonical diffeomorphism. 

The same results continue to hold for $\<2\>$-X-surfaces. Let $(S,R,\mathtt{P})$ be a $\<2\>$-X-surface and $(N, N \cap R, \mathtt{P}|_N)$ be a face with the inclusion map $\iota  : (N, N \cap R) \hookrightarrow (S,R)$. For a collar neighborhood $U_N \subset S$ of $N$, \textit{collar} is a diffeomorphism $\Psi_N: U_N \to N \times \R_+$ (see Figure \ref{fig:2xsurface3}). The following proposition implies that $2$-morphisms of $\X\Bord_2$ must be (relative) diffeomorphism classes of cobordism type $\<2\>$-X-surfaces in order to have well-defined horizontal and vertical compositions of $2$-morphisms.   
\begin{prop}
Let $(S,R,\mathtt{P})$ and $(S',R',\mathtt{P}')$ be cobordism type $\<2\>$-X-surfaces with faces and $(N,T,\mathtt{g})$ be a $1$-dimensional marked X-manifold together with inclusions $\iota: (N, T,\mathtt{g}) \hookrightarrow (S,R,\mathtt{P})$ and $\iota': (N,T,\mathtt{g}) \hookrightarrow (S',R', \mathtt{P}')$ realizing $(N,T,\mathtt{g})$ as a face of both $\<2\>$-X-surfaces. Then $S \cup_{N} S'$ is a topological manifold with boundary. If in addition we are given collars $\Psi_+ : N \times \R_+ \to S$ and $\Psi_- : N \times \R_+ \to S'$, then there exists a canonical smooth structure on $S \cup_N S'$ which is compatible with the smooth structures on $S$ and $S'$. Moreover, different choices of collars produce non-canonically diffeomorphic cobordism type $\<2\>$-surfaces.  
\end{prop}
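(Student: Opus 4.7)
The plan is to prove the three assertions (topological manifold, canonical smooth structure from a given pair of collars, diffeomorphism of the results for different collar choices) by mimicking the standard gluing argument for smooth manifolds with boundary \`a la Milnor, while keeping careful track of the $\langle 2\rangle$-face structure and the X-data. First I would verify the topological claim pointwise: away from $\iota(N) = \iota'(N)$ a point of $S \cup_N S'$ inherits a chart from $S$ or $S'$; at a point $n$ in the interior of $N$ (interior as a $1$-manifold with boundary $T$), the two local models $N \times \mathbb{R}_+$ on either side paste to $N \times \mathbb{R}$ which is locally $\mathbb{R}^2$, so $n$ becomes an interior point of the pushout; at a point $n \in T \setminus \partial N$ the local models $\partial N \times \mathbb{R}_+ \times \mathbb{R}_+$ paste to $\partial N \times \mathbb{R}_+ \times \mathbb{R}$, locally $\mathbb{R}_+^2$, so $n$ becomes a boundary (non-corner) point. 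Second countability and Hausdorffness are immediate from the pushout. This gives a topological manifold with boundary, whose $\langle 2\rangle$-face decomposition is inherited from the unglued portions of $\partial_h$ and $\partial_v$ of $S$ and $S'$.

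Given the collars $\Psi_+$ and $\Psi_-$, I would manufacture the smooth structure by the bicollar
\[
\Psi : N \times \mathbb{R} \longrightarrow S \cup_N S', \qquad \Psi(n,t) = \begin{cases} \Psi_+(n,t), & t \geq 0, \\ \Psi_-(n,-t), & t \leq 0, \end{cases}
\]
and declaring the atlas on $S \cup_N S'$ to consist of all charts of $S$ supported away from $\iota(N)$, all charts of $S'$ supported away from $\iota'(N)$, and the chart induced by $\Psi$ on a neighborhood of $N$. Smooth compatibility on overlaps follows because each $\Psi_\pm$ is itself a diffeomorphism onto its image, so transitions between a collar chart and an interior chart of $S$ (respectively $S'$) factor through $\Psi_+$ (respectively $\Psi_-$). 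By construction the resulting smooth structure restricts to the given one on each of $S$ and $S'$. The X-structure is then obtained by choosing representatives $P \in \mathtt{P}$ and $P' \in \mathtt{P}'$ that agree strictly on $N$ (possible because they both represent $\mathtt{g}$) and pasting to a continuous map $S \cup_N S' \to X$ sending $R \cup_T R'$ to the basepoint $x$; the resulting relative homotopy class is independent of the chosen representatives.

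For the uniqueness statement, given a second pair $\Psi'_\pm$ of collars I would invoke the uniqueness-of-collars theorem (Milnor, Hirsch): there is an ambient isotopy of $S$, constant outside a neighborhood of $\iota(N)$ and fixing $N$ pointwise, carrying $\Psi_+$ to $\Psi'_+$; likewise for $S'$. These isotopies extend over the pushout to produce a diffeomorphism from $(S \cup_N S', \Psi)$ to $(S \cup_N S', \Psi')$ that is the identity outside a collar neighborhood of $N$; in particular it fixes the outer boundary, the marked set $R \cup_T R'$, and the characteristic map pointwise, so it is an isomorphism of cobordism type $\langle 2\rangle$-X-surfaces. Non-canonicity is built in because the isotopy connecting two collars is only well-defined up to isotopy rel endpoints.

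The main obstacle is bookkeeping the face and corner structure through the gluing near points of $T = N \cap R$, where three strata meet: the glued face $N$, a horizontal face of $S \cup_N S'$, and (possibly) a vertical face. Because $(S,R,\mathtt{P})$ and $(S',R',\mathtt{P}')$ are of cobordism type, the vertical boundary is a product collar $M \times I$ with constant characteristic map, so near each point of $T$ one can upgrade $\Psi_\pm$ to a corner-respecting bicollar whose local model is $\partial N \times \mathbb{R}_+ \times \mathbb{R}$ matching the $\langle 2\rangle$-structure of Laures. Verifying that these corner collars agree with $\Psi_\pm$ on overlaps and that the pasted characteristic map is still constant along the new vertical boundary is where the bulk of the checking lies; once this is done, the preceding three paragraphs go through verbatim and the resulting $\langle 2\rangle$-X-surface is again of cobordism type.
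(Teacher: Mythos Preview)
Your approach is correct and is precisely the standard gluing argument that the paper invokes by citing Proposition~3.1 and Theorem~3.3 of \cite{schommer} rather than writing out; the paper gives no independent proof. One small slip: in your first paragraph the case analysis should read ``at a point $n \in \partial N$'' rather than ``$n \in T \setminus \partial N$'' (and the parenthetical should say ``boundary $\partial N$'', not ``boundary $T$''), since the marked points of $T$ lying in the interior of $N$ are smooth interior points of $S$ and $S'$ and do not see the corner model---it is exactly the points of $\partial N$ that are corners of $S$, $S'$ and unfold to boundary non-corner points after gluing.
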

The proof of this theorem follows from the proofs of Proposition 3.1 and Theorem 3.3 in \cite{schommer}. Note that gluing $\<2\>$-X-surfaces vertically along their common horizontal boundary components does not yield a cobordism type $\<2\>$-X-surface. One needs to choose a diffeomorphism $I \cup I \cong I$ and omit the points on the faces through which $\<2\>$-X-surfaces are glued. Figure \ref{fig:2xsurface3} shows examples of vertical and horizontal compositions of $\<2\>$-X-surfaces denoted by $\circ$ and $\ast$, respectively. 
Following \cite{schommer}, we solve the problem of composition of $1$-morphisms by equipping manifolds with germs of neighborhoods. The notion of a germ of neighborhoods was made precise by Schommer-Pries (see Section 3.2.3 in \cite{schommer}) using halations which are formulated as maps of pro-manifolds.

\subsection{Pro-X-manifolds and X-halations}\label{halation_subsectionu}
Recall that a directed set is a tuple $(D,\leq)$ where $D$ is a nonempty set and $\leq$ is a reflexive and transitive binary relation such that for any $x,y \in D$ there exists $z \in D$ with $x \leq z$ and $y \leq z$. We think of a directed set $(D, \leq)$ as a category $\mathcal{D}$ whose objects are elements of $D$ and morphisms are given by the relation $\leq$. Let $\text{Man}^X$ be the category of smooth X-manifolds and smooth pointed maps commuting with characteristic maps. A \textit{pro-X-manifold} is a pair $(\mathcal{D},A)$ where $\mathcal{D}$ is a directed set and $A: \mathcal{D} \to \text{Man}^X$ is a functor. 

Two directed sets play an important role in describing germs of neighborhoods of X-manifolds: the first one is trivial one $\mathcal{D}_{\bullet}= \{ \bullet \}$ and the second one is associated to an embedding of X-manifolds as follows. Let $(M,\mathtt{g})$ and $(N,\mathtt{h})$ be X-manifolds possibly with boundary or corners, and let $\iota :(M,\mathtt{g}) \hookrightarrow (N,\mathtt{h})$ be an embedding of X-manifolds i.e. $\iota(\bp_M)=\bp_N$ and $\mathtt{g} = \mathtt{h} \circ [\iota]$ as elements of $[(M,\bp_M),(X,x)]$. Then the directed set $\mathcal{D}_N$ is given by codimension zero closed X-submanifolds of $N$ containing $\iota(M)$ and the relation is inclusion. 

For a given X-manifold $(M,\mathtt{g})$ we denote the pro-X-manifolds corresponding to these directed sets with $(M,\mathtt{g})$ and $(\hat{M} \subset N,\hat{\mathtt{g}})$ respectively. There is an obvious inclusion $\imath_M: (M,\mathtt{g}) \hookrightarrow (\hat{M} \subset N,\hat{\mathtt{g}})$ of pro-X-manifolds and an \textit{X-halation} is an inclusion of pro-X-manifolds isomorphic to $\imath_M$. Here morphisms of pro-X-manifolds are the morphisms in $\text{Man}^X$ of the corresponding limits and colimits of the diagrams. More precisely, the set of morphisms between two pro-X-manifolds discussed above is 
\begin{align*}
    \Hom_{\text{pro-}\text{Man}^X}(\mathcal{D}_{\bullet}, \mathcal{D}_N) = \lim_p \underset{q}{\text{colim } }\Hom_{\text{Man}^X}(\mathcal{D}_{\bullet}(p),\mathcal{D}_N(q))
\end{align*}
where the limit and colimit are taken in the category of sets. The codimension of an X-halation is the codimension of the embedding. We denote an X-halation by $(M,\hat{M},\hat{\mathtt{g}})$ and call an X-manifold equipped with an X-halation an X-\textit{haloed manifold}. A map between X-haloed manifolds $(A,\hat{A},\hat{\mathtt{a}})$ and $(B,\hat{B},\hat{\mathtt{b}})$ is a pair of pro-X-manifold morphisms $A \to B$ and $\hat{A} \to \hat{B}$ such that the diagram involving inclusions $A \hookrightarrow \hat{A}$ and $B\hookrightarrow \hat{B}$ commutes. The category of pro-objects in a category $\mathcal{C}$ is generally defined using cofiltered diagrams instead of directed sets. Here we use the results in \cite{schommer} to simplify arguments and refer reader to Sections 3.2.1 and 3.2.2 of \cite{schommer} for a more detailed exposition on halations. 

The solution to the problem of composition of $1$-morphisms is to use compatible X-haloed manifolds. The compatibility is given by choosing an orientation for the normal bundle of the embedding which defines X-halation. Such an X-halation is called \textit{co-oriented}. Now assume that $(M_0,T_0,\mathtt{g}_0)$ and $(M_1,T_1,\mathtt{g}_1)$ are $1$-dimensional marked X-manifolds equipped with co-oriented codimension one X-halations $(\hat{M}_0 \subset N_0,\hat{\mathtt{g}}_0)$ and $(\hat{M}_1 \subset N_1, \hat{\mathtt{g}}_1)$ respectively. Using the tubular neighborhood theorem and the co-orientations of the source and target objects, we write these X-halations as $(M_i \subset M_i \times \R \cup_{\d M_i \times \R \times \{0 \} } \d M_i \times (\R \times \R_{+}))$ for $i=0,1$. Using Lemma 3.25 in \cite{schommer} we refine the index of both pro-X-manifolds to natural numbers $\N$ as
\begin{align*}
    i &\mapsto M_0 \times \{0 \} \cup_{\d_t M_0} \d_t M_0 \times \{0\} \times \bigg( -\dfrac{1}{i},0 \bigg] \cup_{\d_s M_0} \d_s M_0 \times \{0 \} \times \bigg[ 0 , \dfrac{1}{i} \bigg) \\
    i &\mapsto M_1 \times \{0 \} \cup_{\d_s M_1} \d_s M_1 \times \{0\}  \times \bigg[ 0 , \dfrac{1}{i} \bigg) \cup_{\d_t M_1} \d_t M_1 \cup \{0 \} \times \bigg( -\dfrac{1}{i},0 \bigg] \\
    i &\mapsto Y \times \bigg( - \dfrac{1}{i}, \dfrac{1}{i} \bigg)
\end{align*}
where $\d_s$ and $\d_t$ denote the source and target boundary components and $Y \cong \d_t M_0 \cong \d_s M_1$. For each $i \in \N$ the pushout exists and it is the smooth manifold $M_1 \cup_Y M_0$ whose smooth structure is determined by the embedding of $Y \times \bigg( -\dfrac{1}{i}, \dfrac{1}{i} \bigg)$, and by the smooth structures of $M_0$ and $M_1$. The pushout as a co-oriented codimension one X-haloed manifold, i.e. as an $1$-morphism of $\X\Bord_2$, exists by the results in \cite{isaksen} on commutativity of finite colimits and (cofiltered) limits. Here note that the colimit is taken over the pushout diagram $K = \bullet \leftarrow \bullet \rightarrow \bullet$ which is clearly finite (see Section 3.2.3 of \cite{schommer} for details).
\subsection{The X-cobordism bicategory $\X\Bord_2$}
We are now equipped with the necessary information to define the $2$-dimensional X-cobordism bicategory.
\begin{definition}
\textit{The $2$-dimensional X-cobordism bicategory $\X\Bord_2$} has
\begin{enumerate}
    \item triples $\big\{ \big((M,\mathtt{g}), (M, \hat{M}_1,\hat{\mathtt{g}}_1), (M, \hat{M}_2,\hat{\mathtt{g}}_2)\big)\big\}$ as objects where $(M,\mathtt{g})$ is a compact oriented $0$-manifold, $(M,\hat{M}_1,\hat{\mathtt{g}}_1)$ and $(M, \hat{M}_2,\hat{\mathtt{g}}_2)$ are co-oriented codimension one and codimension two X-halations respectively with inclusions $(M,\mathtt{g}) \hookrightarrow (M,\hat{M}_1,\hat{\mathtt{g}}_1) \hookrightarrow (M,\hat{M}_2,\hat{\mathtt{g}}_2)$. For brevity we denote such an object with $(M,\hat{M}_1,\hat{M}_2,\hat{\mathtt{g}}_2)$,
   % \begin{figure}[ht]
%    \centering
 %   \includesvg{halation21}
 %   \caption{Co-oriented X-halations and an X-haloed $1$-cobordism}
 %   \label{fig:haloed1}
%\end{figure}
\begin{figure}[ht]
    \centering
    \def\svgwidth{\columnwidth}
   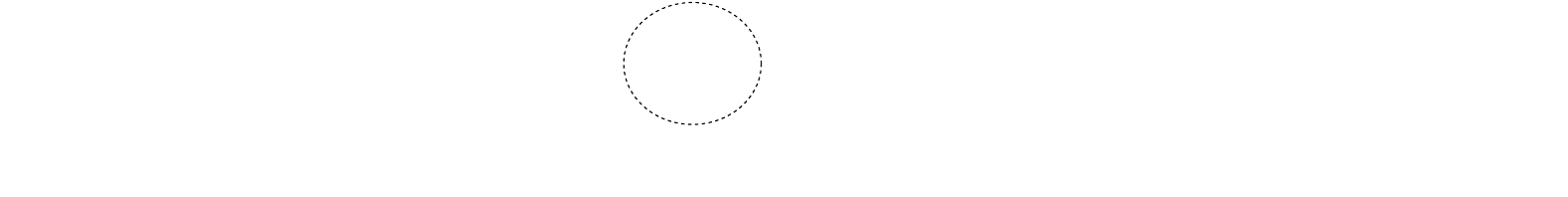
    \caption{Co-oriented X-halations and an X-haloed $1$-cobordism}
    \label{fig:haloed1}
\end{figure}
    \item X-haloed $1$-dimensional X-cobordisms as $1$-morphisms; an X-haloed $1$-dimensional X-cobordism $(A,\hat{A}_0,\hat{A}_1,T,\hat{\mathtt{p}}_1)$ from $(M,\hat{M}_1,\hat{M}_2,\hat{\mathtt{g}}_2)$ to $(N,\hat{N}_1,\hat{N}_2,\hat{\mathtt{h}}_2)$ consists of 
    \begin{itemize}
        \item a $1$-dimensional marked X-manifold $(A,T,\mathtt{p})$, 
        \item a codimension zero X-halation $(A,\hat{A}_0,\hat{\mathtt{p}}_0)$ and a co-oriented codimension one X-halation $(A,\hat{A}_1, \hat{\mathtt{p}}_1)$ with 
        inclusions $(A,\mathtt{p}) \hookrightarrow (A,\hat{A}_0,\hat{\mathtt{p}}_0) \hookrightarrow (A, \hat{A}_1, \hat{\mathtt{p}}_1)$,
        \item a decomposition of the boundary of $(A,T,\mathtt{p})$ as $\d A = \d_{in} A \amalg \d_{out} A$ with isomorphisms of X-halations preserving co-orientations (see Figure \ref{fig:haloed1})
        \begin{align*}
             (M,\hat{M}_1,\hat{M}_2,\hat{\mathtt{g}}_2) &\xrightarrow[\cong]{\mu}  (\d_{in}A ,\hat{A}_0|_{\d_{in}}, \hat{A}_1|_{\d_{in}},\hat{\mathtt{p}}_1)\\
             (N,\hat{N}_1,\hat{N}_2,\hat{\mathtt{h}}_2)  &\xrightarrow[\cong]{\nu}  (\d_{out}A, \hat{A}_0|_{\d_{out}}, \hat{A}_1|_{\d_{out}},\hat{\mathtt{p}}_1).
        \end{align*}
    where $\hat{A}_0|_{\d_{in}}$ is co-oriented by an inward pointing normal vector and $\hat{A}_0|_{\d_{out}}$ is co-oriented by an outward pointing normal vector.
    \end{itemize}
     \item isomorphism classes of X-haloed $2$-dimensional X-cobordisms as $2$-morphisms; an X-haloed $2$-dimensional X-cobordism $(S,\hat{S},R,\hat{\mathtt{F}})$ from $(A,\hat{A}_0,\hat{A}_1,T,\hat{\mathtt{p}}_1)$ to $(B,\hat{B}_0,\hat{B}_1,Q,\hat{\mathtt{q}}_1)$ consists of a cobordism type $\<2\>$-X-surface $(S,R,\mathtt{F})$ together with a codimension zero X-halation $(S,\hat{S},\hat{\mathtt{F}})$, and isomorphisms of X-halations (see Figure \ref{fig:Sdecomp})
   %  \begin{figure}[ht]
%    \centering
%   \includesvg[]{bicategorysaddle2}
 %   \caption{An example of decomposition of a $2$-morphism in $\X\Bord_2$}
 %   \label{fig:Sdecomp}
 %   \end{figure}
    \begin{figure}[ht]
    \centering
    \def\svgwidth{\columnwidth}
   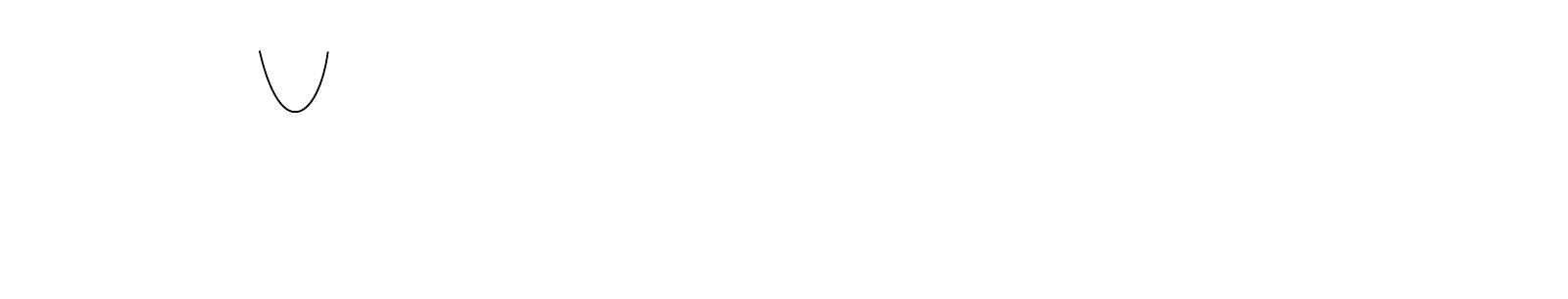
     \caption{An example of decomposition of a $2$-morphism in $\X\Bord_2$}
    \label{fig:Sdecomp}
\end{figure}
\begin{align*}
    (A,\hat{A}_1, \hat{\mathtt{p}}_1) \amalg (B, \hat{B}_1, \hat{\mathtt{q}}_1) \xrightarrow[\cong]{\theta} & (\d_h S , \hat{S}|_{\d_h S},\hat{\mathtt{F}}|_{\d_h S})\\
     (M \times I,\reallywidehat{M \times I^2}, \hat{\mathtt{e}}) \amalg (N \times I , \reallywidehat{N \times  I^2}&, \hat{\mathtt{e}})  \xrightarrow[\cong]{\eta}
     (\d_v S, \hat{S}|_{\d_v S},\hat{\mathtt{F}}|_{\d_v S})
\end{align*}
where $(A,\hat{A}_1,\hat{\mathtt{p}})$ is co-oriented by an inward pointing normal vector and $(B,\hat{B}_1,\hat{\mathtt{q}})$ is co-oriented by an outward pointing normal vector. The X-halations of $M \times I$ and $N \times I$ are induced by their embeddings into $M \times I^2$ and $N \times I^2$ with constant homotopy class $\hat{\mathtt{e}}$. Co-orientations are given by an inward pointing normal vector for $(M \times I , \reallywidehat{M \times I^2}, \hat{\mathtt{e}})$ and an outward pointing normal vector for $(N \times I , \reallywidehat{N \times I^2}, \hat{\mathtt{e}})$. For such an X-haloed $2$-cobordism we have the following notations; $\theta(A)= \d_{h,in}S$, $\theta(B)=\d_{h,out}S$, $\eta(M \times I)= \d_{v,in}S$, and $\eta(N \times I)= \d_{v,out}S$.

Two X-haloed $2$-cobordisms $(S_0,\hat{S}_0,R_0,\hat{\mathtt{F}}_0)$ and $(S_1,\hat{S}_1,R_1,\hat{\mathtt{F}}_1)$ are \textit{isomorphic} if there is an isomorphism of X-halations $\xi : (S_0, \hat{S}_0, \hat{\mathtt{F}}_0) \to (S_1, \hat{S}_1,\hat{\mathtt{F}}_1)$ which restricts isomorphisms  
\begin{align*}
    (\d_{h,in}S_0 ,(\hat{S}_0)|_{\d_{h,in}S_0}, (\hat{\mathtt{F}}_0)|_{\d_{h,in}S_0}) &\to (\d_{h,in}S_1, (\hat{S}_1)|_{\d_{h,in} S_1} ,(\hat{\mathtt{F}}_1)|_{\d_{h,in}S_1}) \\
    (\d_{h,out}S_0 ,(\hat{S}_0)|_{\d_{h,out}S_0}, (\hat{\mathtt{F}}_0)|_{\d_{h,out}S_0}) &\to (\d_{h,out}S_1, (\hat{S}_1)|_{\d_{h,out} S_1}, (\hat{\mathtt{F}}_1)|_{\d_{h,out}S_1} )\\
    (\d_{v,in}S_0 ,(\hat{S}_0)|_{\d_{v,in}S_0}, (\hat{\mathtt{F}}_0)|_{\d_{v,in}S_0}) &\to (\d_{v,in}S_1, (\hat{S}_1)|_{\d_{v,in} S_1} ,(\hat{\mathtt{F}}_1)|_{\d_{v,in}S_1} )\\
    (\d_{v,out}S_0 ,(\hat{S}_0)|_{\d_{v,out}S_0}, (\hat{\mathtt{F}}_0)|_{\d_{v,out}S_0}) &\to (\d_{v,out}S_1, (\hat{S}_1)|_{\d_{v,out} S_1}, (\hat{\mathtt{F}}_1)|_{\d_{v,out}S_1}) 
\end{align*}
such that $\xi|_{\d S_0}$ is identity, $\xi \circ \eta = \eta'$ and $\xi \circ \theta = \theta'$ where $\theta'$ and $\eta'$ are isomorphisms of co-oriented X-halations corresponding to the decomposition $\d S_1 = \d_h S_1 \amalg \d_v S_1$.
\end{enumerate}
\end{definition}

\begin{lemma}
\label{thm:bicat}
The bicategory $\X\Bord_2$ is a symmetric monoidal bicategory under disjoint union.
\end{lemma}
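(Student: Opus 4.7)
The plan is to adapt Schommer-Pries's proof that $\Bord_2$ is a symmetric monoidal bicategory (Theorem 3.12 of \cite{schommer}) to the X-structured setting. The key observation is that the X-structure consists of relative homotopy classes of maps to $X$; these glue coherently whenever their restrictions to the gluing locus agree, and since the underlying bicategorical constructions in $\Bord_2$ are already defined only up to canonical isomorphism, the extra homotopy data introduces no new coherence issues.

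\textbf{Bicategory axioms.} First I would verify that $\X\Bord_2$ is a bicategory. Horizontal composition of $1$-morphisms is constructed in Section \ref{halation_subsectionu} via pushouts of pro-X-manifolds, with the composite characteristic map determined by gluing the given classes along the common object. Identity $1$-morphisms are cylinders $M \times I$ with constant characteristic map and the standard halation $M \times I \subset M \times \R$; identity $2$-morphisms are square cylinders $A \times I$. Vertical composition of $2$-morphisms is gluing of cobordism type $\<2\>$-X-surfaces along common horizontal faces, and horizontal composition is gluing along common vertical faces; in both cases the proposition of Section 2.1 yields canonical smooth structures, and the characteristic maps glue because they agree on the face. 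Associators and unitors are given by the canonical diffeomorphisms inherited from $\Bord_2$, which automatically preserve the homotopy class data. Pentagon, triangle, and the interchange law all reduce to the corresponding facts in $\Bord_2$ after observing that every natural transformation in sight preserves characteristic maps.

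\textbf{Symmetric monoidal structure.} The tensor product is disjoint union of underlying manifolds together with disjoint union of characteristic maps, markings, and halations; the unit is the empty X-manifold $(\emptyset,\emptyset,\emptyset,\emptyset)$. The monoidal associator, unitors, and braiding are the canonical such $1$-isomorphisms in $\Bord_2$, and their X-structured refinement is immediate since disjoint union of maps is strictly coassociative and counital, and the swap map merely exchanges two independent characteristic maps. The coherence $2$-cells (pentagon, triangle, hexagon, syllepsis) are then the same data as in Schommer-Pries's proof, with the characteristic maps carried along formally; all required axioms follow from the non-equivariant case.

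\textbf{Main obstacle.} The subtle point is the compatibility between composition and the X-haloed structure. The halations are given by pro-X-manifolds, so forming pushouts along common faces requires commuting the relevant colimit with the cofiltered limits defining the pro-structures. This is handled by the Isaksen commutation results cited in Section \ref{halation_subsectionu}, together with the fact that the relevant pushout diagram $\bullet \leftarrow \bullet \rightarrow \bullet$ is finite; the inward/outward co-orientation conventions ensure that halations of composable $1$-morphisms and $2$-morphisms assemble into a single halation on the glued object. Once this point is secured, the verifications above reduce to diagrammatic checks that closely parallel those in Sections 3.1 and 3.2 of \cite{schommer}.
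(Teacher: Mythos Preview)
Your approach is correct but differs from the paper's. The paper does not prove this lemma in-text; it defers to the author's thesis \cite{tezim}, where the proof is carried out using Shulman's method \cite{shulman} of constructing symmetric monoidal bicategories from (isofibrant) symmetric monoidal pseudo double categories. In that approach one first organizes X-haloed points, $1$-cobordisms, and $\<2\>$-X-surfaces into a double category, verifies the comparatively short list of double-categorical coherence conditions, and then invokes Shulman's theorem to obtain the symmetric monoidal bicategory automatically.

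By contrast, you carry out a direct axiom-by-axiom verification, lifting the non-equivariant argument from \cite{schommer} and checking that the relative homotopy classes glue along every composition and tensor operation. This is a legitimate route and your identification of the main subtlety (commuting pushouts with the pro-structure via Isaksen's result) is on point. The trade-off is the usual one: Shulman's machinery front-loads the work into a structural lemma and then discharges the many bicategorical coherences (pentagonator, hexagonators, syllepsis, etc.) in one stroke, whereas your approach is more elementary and self-contained but requires tracking each coherence $2$-cell individually. Both arguments ultimately rest on the same point you emphasize, namely that the X-data is a homotopy class that is carried along formally by every diffeomorphism and gluing already present in the unstructured case.
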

We skip the proof which is given in \cite{tezim} using a method developed by Shulman \cite{shulman}. Recall that two major goals of this paper are to define $2$-dimensional extended homotopy field theories and classify them. The following definition addresses to the first one. 
\begin{definition}
Let $\mathcal{C}$ be a symmetric monoidal bicategory. A \textit{$\mathcal{C}$-valued $2$-dimensional extended homotopy field theory with target $X$} is a symmetric monoidal $2$-functor from $\X\Bord_2$ to $\mathcal{C}$.
\end{definition}

\section{The \textit{G}-planar Decompositions}\label{G_planar_section}
\subsection{\textit{G}-linear diagrams}
Linear diagrams, introduced by Schommer-Pries~\cite{schommer}, represent $1$-dimensional compact manifolds equipped with a Morse function to $[0,1]$. Briefly speaking, a linear diagram is a triple formed by the set of critical values of a Morse function on a compact $1$-manifold, an open cover of $[0,1]$, and combinatorial data describing preimages of a Morse function on open sets. By labeling critical values with cup or cap instead of their indices (see Figure \ref{fig:cupcap}) the first ingredient of a linear diagram is defined as follows.
\begin{definition}
A \textit{$1$-dimensional graphic} $\Psi$ (\cite{schommer}) is a finite subset of $(0,1)$ where each point is labeled with either cup or cap.
\end{definition}
%\begin{figure}[ht]
%    \centering
%    \includesvg{1dsing}
%    \caption{Singularities of a Morse function on a $1$-manifold and their images in $\R$}
%    \label{fig:cupcap}
%\end{figure}
\begin{figure}[ht]
    \centering
    \def\svgwidth{\columnwidth}
   %% Creator: Inkscape 1.0 (4035a4fb49, 2020-05-01), www.inkscape.org
%% PDF/EPS/PS + LaTeX output extension by Johan Engelen, 2010
%% Accompanies image file '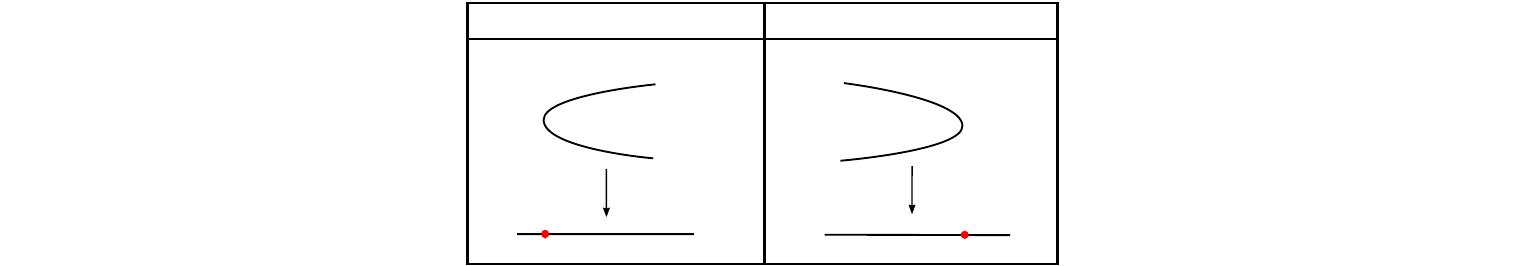' (pdf, eps, ps)
%%
%% To include the image in your LaTeX document, write
%%   \input{<filename>.pdf_tex}
%%  instead of
%%   \includegraphics{<filename>.pdf}
%% To scale the image, write
%%   \def\svgwidth{<desired width>}
%%   \input{<filename>.pdf_tex}
%%  instead of
%%   \includegraphics[width=<desired width>]{<filename>.pdf}
%%
%% Images with a different path to the parent latex file can
%% be accessed with the `import' package (which may need to be
%% installed) using
%%   \usepackage{import}
%% in the preamble, and then including the image with
%%   \import{<path to file>}{<filename>.pdf_tex}
%% Alternatively, one can specify
%%   \graphicspath{{<path to file>/}}
%% 
%% For more information, please see info/svg-inkscape on CTAN:
%%   http://tug.ctan.org/tex-archive/info/svg-inkscape
%%
\begingroup%
  \makeatletter%
  \providecommand\color[2][]{%
    \errmessage{(Inkscape) Color is used for the text in Inkscape, but the package 'color.sty' is not loaded}%
    \renewcommand\color[2][]{}%
  }%
  \providecommand\transparent[1]{%
    \errmessage{(Inkscape) Transparency is used (non-zero) for the text in Inkscape, but the package 'transparent.sty' is not loaded}%
    \renewcommand\transparent[1]{}%
  }%
  \providecommand\rotatebox[2]{#2}%
  \newcommand*\fsize{\dimexpr\f@size pt\relax}%
  \newcommand*\lineheight[1]{\fontsize{\fsize}{#1\fsize}\selectfont}%
  \ifx\svgwidth\undefined%
    \setlength{\unitlength}{440.85766486bp}%
    \ifx\svgscale\undefined%
      \relax%
    \else%
      \setlength{\unitlength}{\unitlength * \real{\svgscale}}%
    \fi%
  \else%
    \setlength{\unitlength}{\svgwidth}%
  \fi%
  \global\let\svgwidth\undefined%
  \global\let\svgscale\undefined%
  \makeatother%
  \begin{picture}(1,0.17336333)%
    \lineheight{1}%
    \setlength\tabcolsep{0pt}%
    \put(0,0){\includegraphics[width=\unitlength,page=1]{1dsing_1.pdf}}%
    \put(0.37808701,0.15523729){\color[rgb]{0,0,0}\makebox(0,0)[lt]{\lineheight{1.25}\smash{\begin{tabular}[t]{l}\scriptsize$\mathsf{cup}$\end{tabular}}}}%
    \put(0.57715512,0.15523731){\color[rgb]{0,0,0}\makebox(0,0)[lt]{\lineheight{1.25}\smash{\begin{tabular}[t]{l}\scriptsize$\mathsf{cap}$\end{tabular}}}}%
    \put(0,0){\includegraphics[width=\unitlength,page=2]{1dsing_1.pdf}}%
  \end{picture}%
\endgroup%

     \caption{Singularities of a Morse function on a $1$-manifold and their images in $\R$}
    \label{fig:cupcap}
\end{figure}
For a given $1$-dimensional graphic $\Psi$, an open cover $\mathcal{U}=\{U_{\alpha}\}_{\alpha \in J}$ of $[0,1]$ having at most double intersections is said to be \textit{$\Psi$-compatible} if each $U_{\alpha}$ contains at most one element from $\Psi$ and double intersections are disjoint from $\Psi$. It is not hard to find such open covers and the second ingredient of a linear diagram is defined as follows. 
\begin{definition}(\cite{schommer})
Let $\Psi$ be a $1$-dimensional graphic. A \textit{chambering set} $\Gamma$ for $\Psi$ is a set of isolated points in $(0,1)$ disjoint from $\mu$. \textit{Chambers} of $\Gamma$ are the connected components of $[0,1]  \backslash (\Gamma \cup \mu )$. A chambering set $\Gamma$ is said to be subordinate to an open cover $\mathcal{U}=\{U_{\alpha}\}_{\alpha \in J}$ of $[0,1]$ if each chamber is a subset of at least one $U_{\alpha}$.
\end{definition}
\begin{example}\label{1dex}
Figure \ref{fig:1dimex} shows an example of a $1$-dimensional oriented compact manifold $M$ equipped with a Morse function $f: (M,\d M) \to ([0,1],\{0,1\})$. Critical values of $f$ defines a $1$-dimensional graphic $\Psi$ (see Figure \ref{fig:cupcap}). An open cover $\mathcal{U} =\{U_i\}_{i=1}^4$ of $[0,1]$ is a $\Psi$-compatible open cover and turquoise points form a chambering set subordinate to $\mathcal{U}$.
\end{example}
%\begin{figure}[h!]
%    \centering
%    \includesvg{pdexample3s}
%    \caption{Induced $1$-dimensional graphic and a chambering set}
%    \label{fig:1dimex}
%\end{figure}
\begin{figure}[h!]
    \centering
    \def\svgwidth{\columnwidth}
    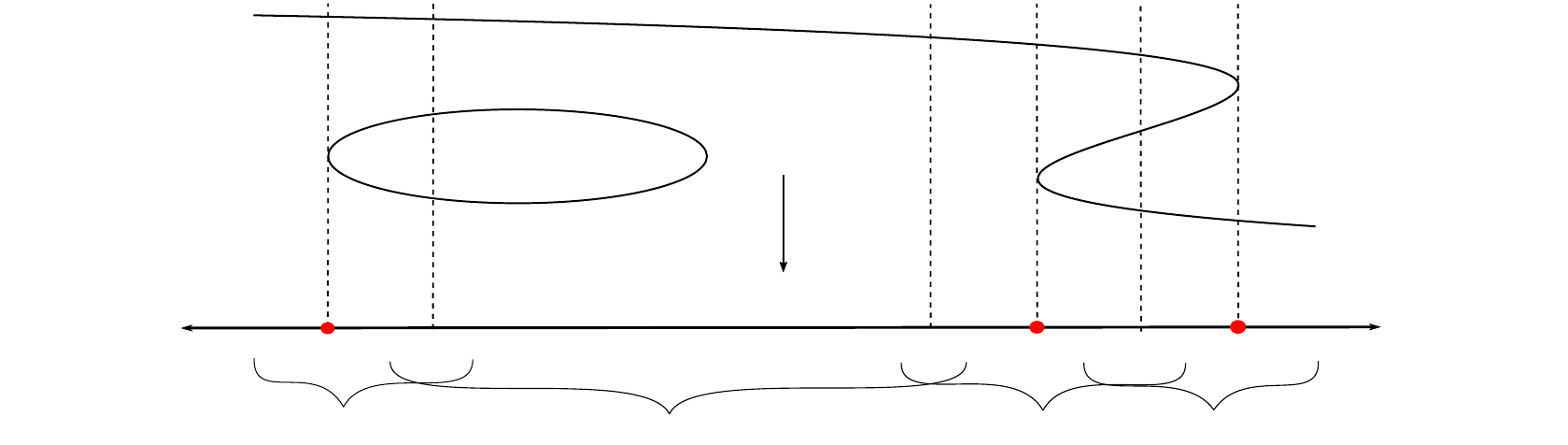
    \caption{Induced $1$-dimensional graphic and a chambering set}
    \label{fig:1dimex}
\end{figure}
For an oriented compact $1$-manifold $M$, we call a Morse function of the form $f :(M,\d M) \to ([0,1],\{ 0,1\})$ whose critical values are distinct and lie in $(0,1)$ a \textit{generic map}. Let $\Psi$ be a $1$-dimensional graphic induced from a pair $(M,f)$ of an oriented compact $1$-manifold equipped with a generic map. Let $\Gamma$ be a chambering set subordinate to a $\Psi$-compatible open cover $\mathcal{U}$. Since $f$ is a Morse function and chambers are disjoint from $\mu$, the preimage $f^{-1}(V)$ of a chamber $V$ consists of disjoint union of arcs (possibly empty) each mapping diffeomorphically onto $V$ under $f$. A \textit{trivialization} of $V$ is an identification of $f^{-1}(V)$ with $\mathbb{N}_{\leq N}\times V$ for some $N \in \N$ where $\mathbb{N}_{\leq N}= \{ a \in \N \ | \ 0 < a \leq N\}$ if $f^{-1}(V)$ is nonempty and identification with empty set otherwise. In this case, each $\{i\} \times V$ is called a \textit{sheet} and each sheet is equipped with an orientation.

Trivializations of two neighboring chambers have the same number of sheets if chambers are separated by a point in $\Gamma$. If a point in $\mu$ separates chambers, then by the Morse lemma the number of sheets differ by two (see Figure \ref{fig:cupcap}). A \textit{sheet data} $\mathcal{S}$ (\cite{schommer}) for a pair $(\Psi,\Gamma)$ consists of a trivialization of each chamber and an injection or a permutation between trivializations of neighboring chambers preserving orientations and describing how sheets are glued.
\begin{definition}(Definition 3.45, \cite{schommer})
A \textit{linear diagram} is a triple $(\Psi, \Gamma, \mathcal{S})$ consisting of a $1$-dimensional graphic $\Psi$, a chambering set $\Gamma$ subordinate to a $\Psi$-compatible open cover $\mathcal{U}= \{U_{\alpha}\}_{\alpha \in J}$ of $[0,1]$, and a sheet data $\mathcal{S}$ associated to the pair $(\Psi,\Gamma)$.
\end{definition}
Any linear diagram yields an oriented compact $1$-manifold and a generic map to $[0,1]$. Our goal is to add extra data of X-manifolds to linear diagrams so that these diagrams produce oriented $1$-dimensional marked X-manifolds. Recall that a $1$-dimensional marked X-manifold is a triple $(M,T,\mathtt{g})$ where $M$ is an oriented compact $1$-manifold, $T \subseteq M$ is a finite set with $\d M \subset T$ and each closed connected component of $M$ contains at least 2 elements of $T$, and $\mathtt{g} \in [(M,T),(X,x)]$. 

We describe the extra data on linear diagrams on an example. Let $(M,T,\mathtt{g})$ be a $1$-dimensional marked X-manifold, shown in Figure \ref{fig:1d_marked_mnfd}, whose underlying manifold $M$ is the $1$-dimensional oriented compact manifold considered in Example \ref{1dex}. We consider the same generic map $f: (M,\d M) \to (I,\d I)$ and chambering set in Example \ref{1dex} giving the linear diagram $(\Psi,\Gamma,\mathcal{S})$. 
%\begin{figure}[ht]
%    \centering
%    \includesvg{pdexample7s}
%    \caption{$1$-dimensional marked X-manifold $(M,T,\mathtt{g})$}
%    \label{fig:1d_marked_mnfd}
%\end{figure}
\begin{figure}[ht]
    \centering
    \def\svgwidth{\columnwidth}
    %% Creator: Inkscape 1.0 (4035a4fb49, 2020-05-01), www.inkscape.org
%% PDF/EPS/PS + LaTeX output extension by Johan Engelen, 2010
%% Accompanies image file '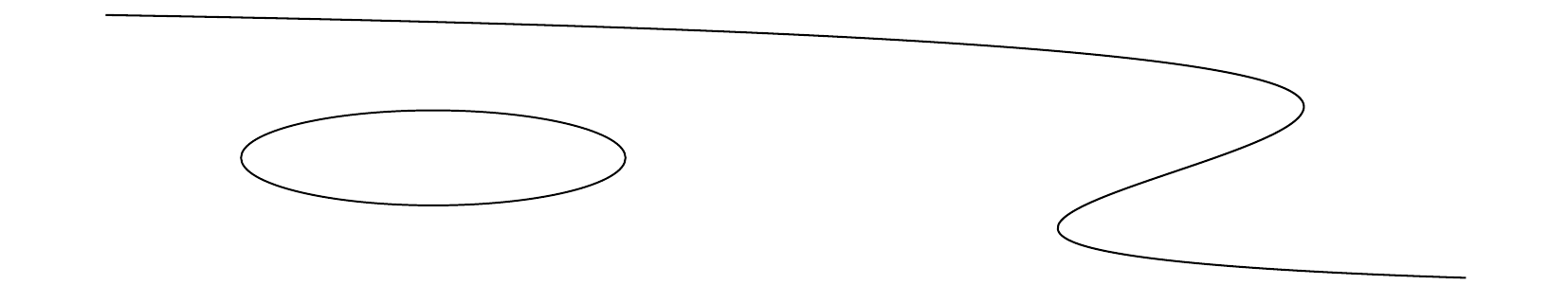' (pdf, eps, ps)
%%
%% To include the image in your LaTeX document, write
%%   \input{<filename>.pdf_tex}
%%  instead of
%%   \includegraphics{<filename>.pdf}
%% To scale the image, write
%%   \def\svgwidth{<desired width>}
%%   \input{<filename>.pdf_tex}
%%  instead of
%%   \includegraphics[width=<desired width>]{<filename>.pdf}
%%
%% Images with a different path to the parent latex file can
%% be accessed with the `import' package (which may need to be
%% installed) using
%%   \usepackage{import}
%% in the preamble, and then including the image with
%%   \import{<path to file>}{<filename>.pdf_tex}
%% Alternatively, one can specify
%%   \graphicspath{{<path to file>/}}
%% 
%% For more information, please see info/svg-inkscape on CTAN:
%%   http://tug.ctan.org/tex-archive/info/svg-inkscape
%%
\begingroup%
  \makeatletter%
  \providecommand\color[2][]{%
    \errmessage{(Inkscape) Color is used for the text in Inkscape, but the package 'color.sty' is not loaded}%
    \renewcommand\color[2][]{}%
  }%
  \providecommand\transparent[1]{%
    \errmessage{(Inkscape) Transparency is used (non-zero) for the text in Inkscape, but the package 'transparent.sty' is not loaded}%
    \renewcommand\transparent[1]{}%
  }%
  \providecommand\rotatebox[2]{#2}%
  \newcommand*\fsize{\dimexpr\f@size pt\relax}%
  \newcommand*\lineheight[1]{\fontsize{\fsize}{#1\fsize}\selectfont}%
  \ifx\svgwidth\undefined%
    \setlength{\unitlength}{476.18504802bp}%
    \ifx\svgscale\undefined%
      \relax%
    \else%
      \setlength{\unitlength}{\unitlength * \real{\svgscale}}%
    \fi%
  \else%
    \setlength{\unitlength}{\svgwidth}%
  \fi%
  \global\let\svgwidth\undefined%
  \global\let\svgscale\undefined%
  \makeatother%
  \begin{picture}(1,0.18199458)%
    \lineheight{1}%
    \setlength\tabcolsep{0pt}%
    \put(0,0){\includegraphics[width=\unitlength,page=1]{pdexample7s_1.pdf}}%
    \put(0.40355507,0.07509603){\color[rgb]{0,0,0}\makebox(0,0)[lt]{\lineheight{1.25}\smash{\begin{tabular}[t]{l}\small$g'$\end{tabular}}}}%
    \put(0.2642576,0.1761845){\color[rgb]{0,0,0}\makebox(0,0)[lt]{\lineheight{1.25}\smash{\begin{tabular}[t]{l}\small$g''$\end{tabular}}}}%
    \put(0,0){\includegraphics[width=\unitlength,page=2]{pdexample7s_1.pdf}}%
    \put(0.83846402,0.1072526){\color[rgb]{0,0,0}\makebox(0,0)[lt]{\lineheight{1.25}\smash{\begin{tabular}[t]{l}\small$g'''$\end{tabular}}}}%
    \put(0,0){\includegraphics[width=\unitlength,page=3]{pdexample7s_1.pdf}}%
    \put(0.23013513,0.03691182){\color[rgb]{0,0,0}\makebox(0,0)[lt]{\lineheight{1.25}\smash{\begin{tabular}[t]{l}\small$e$\end{tabular}}}}%
    \put(0,0){\includegraphics[width=\unitlength,page=4]{pdexample7s_1.pdf}}%
  \end{picture}%
\endgroup%

    \caption{$1$-dimensional marked X-manifold $(M,T,\mathtt{g})$}
    \label{fig:1d_marked_mnfd}
\end{figure}
First we label elements of $\Gamma$. A point in $\Gamma$ is labeled with $\beta^{\sigma}$ where $\sigma \in S_N$ is the permutation coming from the sheet data of this point. We then add the elements of $f(T)$ to $\Gamma$, which means there are possibly new chambers. Each new chamber has the induced trivialization from the larger chamber which splits into two. We do not label these added points. After that we equip the boundary components of every sheet, except the critical points of $f$, with oriented points using the orientation of $M$ (brown points in Figure \ref{fig:modifpdex1}) and label each sheet with a group element using the characteristic map $\mathtt{g}$ as shown in Figure \ref{fig:modifpdex1}.    
%\begin{figure}[ht]
%    \centering
%    \includesvg{pdexample_6s_tez}
%    \caption{Example of $G$-linear diagram without sheet data}
%    \label{fig:modifpdex1}
%\end{figure}
\begin{figure}[ht]
    \centering
    \def\svgwidth{\columnwidth}
    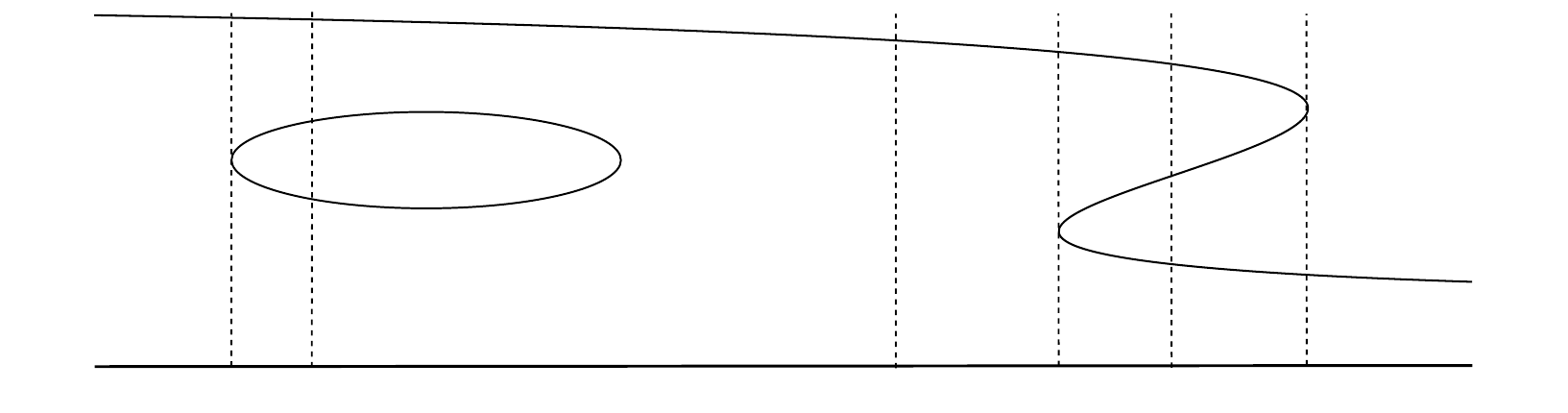
    \caption{Example of $G$-linear diagram without sheet data}
    \label{fig:modifpdex1}
\end{figure}

Next, we add labeled points to $[0,1]$ as follows. If the preimage of a chamber does not have any singularity then the midpoint of that chamber is added. The label of this point is given as follows. We assign $P_{g_1}$ for such a sheet with + boundary points and $g_1$-label, and assign $N_{g_2}$ for a such a sheet with - boundary points and $g_2$-label. Then the label of the added point is given by $A_1 \otimes A_2 \otimes \dots \otimes A_n$ where $A_i$ is the assigned label of the $i$-th sheet according to the trivialization of the chamber for $i=1,\dots,n$. Similarly, we modify labels $\text{cup}$ and $\text{cap}$ according to assignments to sheets which are in the same trivialization with these singularities (see Figure \ref{fig:modifpdex1}). Lastly, on sheet data trivialization of sheets involves labeling each sheet with a group element with an arrow as described above and each (non-labeled) point is lifted to boundary of a sheet. We denote this modified linear diagram using the extra data of the $1$-dimensional marked X-manifold with $(\Psi^G, \Gamma^G, \mathcal{S}^G)$ and call a \textit{$G$-linear diagram}. 

It is clear that any $G$-linear diagram $(\Psi^G, \Gamma^G, \mathcal{S}^G)$ produces a pair $((M',T',\mathtt{g}'),f')$. For any such pair by choosing a compatible chambering set we obtain a new pair. These two pairs are related by the following notion. An \textit{X-homeomorphism} between (marked) X-manifolds is a pointed orientation preserving diffeomorphism commuting with characteristic maps. An X-homeomorphism between such pairs is called \textit{over $[0,1]$} if it commutes with the fixed generic maps.
\begin{prop} \label{lindecomp}
Let $(\Psi^G, \Gamma^G, \mathcal{S}^G)$ be a $G$-linear diagram induced from a pair $((M,T,\mathtt{g}),f)$ of $1$-dimensional oriented marked X-manifold, a generic map $f: (M,\d M) \to ([0,1],\{0,1\})$, and a chambering set $\Gamma$ for a $\Psi$-compatible open cover $\mathcal{U}$ of $[0,1]$. If the pair $((M',T',\mathtt{g}'),f')$ is constructed from $(\Psi^G,\Gamma^G,\mathcal{S}^G)$, then there exists an X-homeomorphism $F:M \to M'$ over $[0,1]$.
\end{prop}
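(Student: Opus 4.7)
The plan is to bootstrap from the non-equivariant version of this result (Proposition 3.46 of \cite{schommer}) by tracking the X-structure separately. First I would forget all of the X-data in $(\Psi^G, \Gamma^G, \mathcal{S}^G)$, that is, drop the $G$-labels on sheets, the oriented boundary markers on each sheet, and the additional points of $\Gamma^G$ coming from $f(T)$. What remains is an ordinary linear diagram $(\Psi, \Gamma, \mathcal{S})$ which recovers the pair $(M, f)$. Applying Schommer-Pries' reconstruction theorem for linear diagrams yields an orientation preserving diffeomorphism $F_0 : M \to M'$ over $[0,1]$.

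Next I would check that $F_0$ matches the marking $T$ with $T'$ and respects sheet orientations. By construction, each interior point of $T$ corresponds to a point added to $\Gamma^G$ coming from $f(T \setminus \partial M)$, while the points $T \cap \partial M = \partial M$ correspond to the endpoints of $M'$. Since $F_0$ is compatible with $f$ and $f'$, it sends $T$ bijectively onto $T'$. The attached oriented boundary points of the sheets record the orientation on each arc of $M \setminus (T \cup f^{-1}(\Psi))$, and because $F_0$ is orientation preserving, these orientations are identified on both sides.

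The final and only genuinely new step is to upgrade $F_0$ to an X-homeomorphism, i.e. to verify that $[\mathtt{g}' \circ F_0] = [\mathtt{g}]$ in $[(M,T),(X,x)]$. Since $X \simeq K(G,1)$ and every connected component of $M$ contains at least two points of $T$, the pointed homotopy class of a map $(M,T) \to (X,x)$ is completely determined by the elements of $G = \pi_1(X,x)$ that it assigns to the arcs of $M \setminus T$, via the obvious injection
\[
[(M,T),(X,x)] \hookrightarrow \prod_{\alpha} G,
\]
where $\alpha$ ranges over the arcs of $M \setminus T$. By construction of the $G$-linear diagram, the label on each sheet arc records exactly the element of $G$ represented by $\mathtt{g}$ on that arc (with the sign dictated by the oriented boundary data), and by construction $\mathtt{g}'$ realizes the same labels on the corresponding arcs of $M'$. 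Thus $\mathtt{g}' \circ F_0$ and $\mathtt{g}$ induce identical elements of $G$ on every arc, so they represent the same homotopy class and $F := F_0$ is the desired X-homeomorphism.

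The main subtlety, and essentially the only place where care is needed beyond the non-equivariant argument, is the consistency of $G$-labels across cup, cap, and permutation singularities of the sheet data: at a cap or a cup two oppositely oriented sheets must carry labels related by $g \mapsto g^{-1}$, and at a chambering point with permutation $\sigma$ the labels must be permuted by $\sigma$. These consistency conditions are automatic when the diagram is extracted from an honest characteristic map $\mathtt{g}$, and they are precisely what is needed for the $G$-labels of $(\Psi^G, \Gamma^G, \mathcal{S}^G)$ to assemble a well-defined homotopy class $\mathtt{g}'$ on $M'$, so no additional work is required.
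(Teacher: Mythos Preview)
Your proposal is correct and follows essentially the same approach as the paper. The paper's proof is a two-sentence sketch (``the diffeomorphism $F$ maps inverse images of chambers to corresponding trivializations; since corresponding connected components have the same $G$-labels and both $f$ and $f'\circ F$ restrict to the same map on $f^{-1}(V)$ for any chamber $V$, $F$ is an X-homeomorphism over $[0,1]$''), and your argument is a careful unpacking of exactly this: the chamber-by-chamber diffeomorphism from the non-equivariant case, followed by the observation that matching $G$-labels on arcs determines the homotopy class because $X\simeq K(G,1)$.
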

\begin{proof}
The diffeomorphism $F$ maps inverse images of chambers to corresponding trivializations. Since corresponding connected components have the same $G$-labels and both $f$ and $f' \circ F$ restrict to the same map on $f^{-1}(V)$ for any chamber $V$, $F$ is an X-homeomorphism over $[0,1]$.   
\end{proof}

\subsection{\textit{G}-planar diagrams}
Planar diagrams, introduced by Schommer-Pries~\cite{schommer}, represent cobordism type $\<2\>$-surfaces equipped with a generic map to $I^2=[0,1]\times[0,1]$. Here generic maps refer to Schommer-Pries stratification of jet spaces described below. Parallel to linear diagrams, a planar diagram consists of a graphic of a generic map, an open cover of $I^2$, and a combinatorial data describing preimages of a generic map on open sets. 

In his classification of $2$-dimensional extended TFTs, Schommer-Pries \cite{schommer} studied maps from cobordism type $\<2\>$-surfaces to $I^2$ and refined the Thom-Boardman stratification of jet spaces. Figure \ref{fig:2dimsing} shows the singularities of Schommer-Pries stratification in normal coordinates and their graphics in $I^2$. Here by a \textit{graphic} we mean the image of a singularity under a generic map. In this context by a generic map we mean a map whose jet sections are transversal to each strata. On Figure \ref{fig:2dimsing} generic maps are projections to the page. The numbers on singularity names indicate the indices of a singularity. By an index of a singularity, we mean a symmetry of either a singularity or its graphic. For example, Fold-1 is obtained from Fold-2 by changing the folding direction. Similarly, Cap, Cup, Saddle-1, and Saddle-2 are different indices of the Morse singularity. Observe that cusp singularity has $4$ indices. 

%\begin{figure}[b]
%    \centering
%     \includesvg{2dsingularities}
%     \caption{Singularities of Schommer-Pries stratification and their graphics in $\R^2$}
%    \label{fig:2dimsing}
%\end{figure}
\begin{figure}[b]
    \centering
    \def\svgwidth{\columnwidth}
    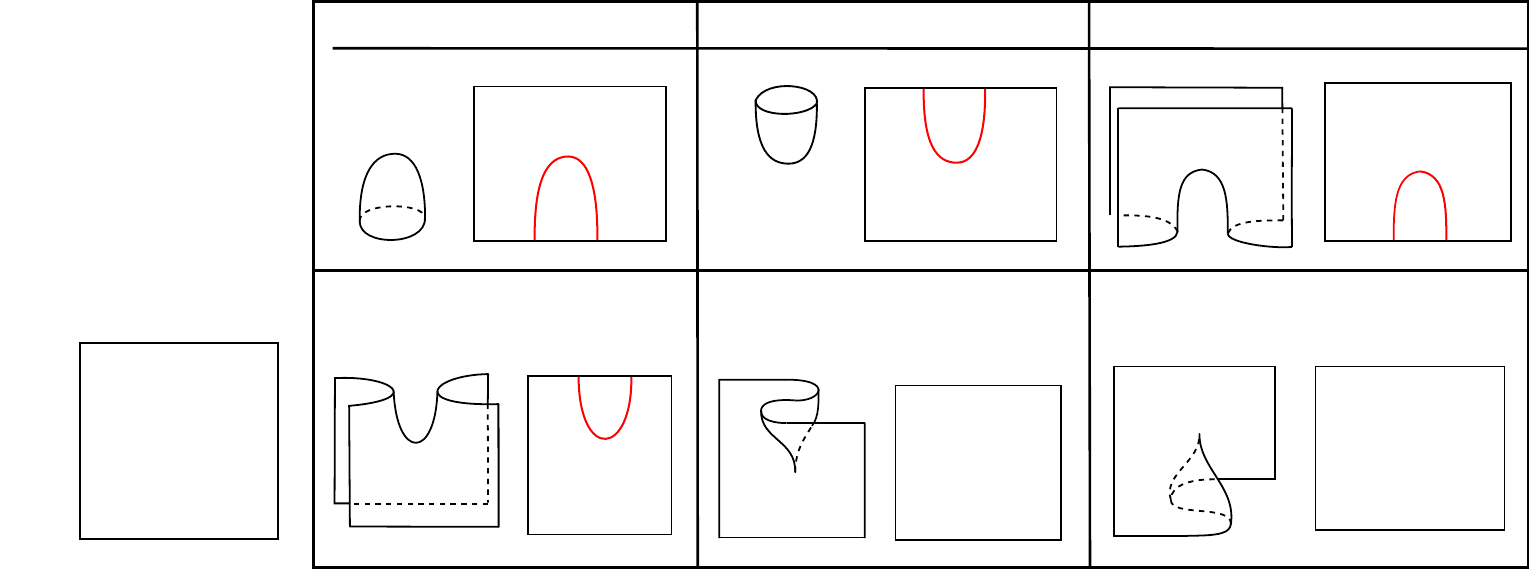
    \caption{Singularities of Schommer-Pries stratification and their graphics in $\R^2$}
    \label{fig:2dimsing}
\end{figure}
For a given cobordism type $\<2\>$-surface $\Sigma$, a generic map for Schommer-Pries stratification has the form $f:(\Sigma, \d_v \Sigma, \d_h \Sigma ) \to (I^2, \d I \times I, I \times \d I)$. Transversality theorems (see \cite{schommer}, \cite{stable-maps}) imply that the set of generic maps is dense in $C^{\infty}((\Sigma, \d_v \Sigma, \d_h \Sigma),(I^2, \d I \times I, I \times \d I))$. The properties of Schommer-Pries stratification are listed in the following definition. In particular, the graphic of a generic map for this stratification is a $2$-dimensional graphic which is defined as follows. 
\begin{definition}[Definition 1.29, \cite{schommer}]\label{2dimgraphic}
A \textit{2-dimensional graphic} $\Phi=(\eta,\mu)$ is a diagram in $I^2$ consisting of a finite number of embedded labeled curves ($\eta$) and a finite number of labeled points ($\mu$) satisfying the following conditions:
\begin{enumerate}[(i)]
    \item Elements of $\eta$ can only have transversal intersections and no three or more elements intersect at a point. Each element of $\eta$ is labeled with either Fold-$1$ or Fold-$2$.
    \item Elements of $\eta$ are disjoint from $\d I \times I$ and intersect transversely with $I \times \d I$. Labeling each of these intersection points on $I \times \d I$ with cup for Fold-$1$ labeled curves and with cap for Fold-$2$ labeled curves produces $1$-dimensional graphics on $I \times \{0\}$ and $I \times \{1\}$. 
    \item Projections of elements of $\eta$ to the last coordinate of $I^2$ are local diffeomorphisms.
    \item Elements of $\mu$ are isolated and disjoint from $\d (I \times I)$. Each element is labeled with one of the Cup, Cap, Saddle-$1$, Saddle-$2$, Cusp-$i$ for $i=1,2,3,4$. 
    \item Each element in $\mu$ has a neighborhood in which two elements of $\eta$ form one of the Cup, Cap, Saddle-$1$, Saddle-$2$, Cusp-$i$ graphic for $i=1,2,3,4$ (see Figure \ref{fig:2dimsing}).
\end{enumerate}
\end{definition}
%\begin{figure}[b]
%    \centering
%    \includesvg{newdiagrams2d}
%    \caption{Graphics of singularities with X-manifold data}
%    \label{fig:newdiagrams}
%\end{figure}
\begin{figure}[b]
    \centering
    \def\svgwidth{\columnwidth}
    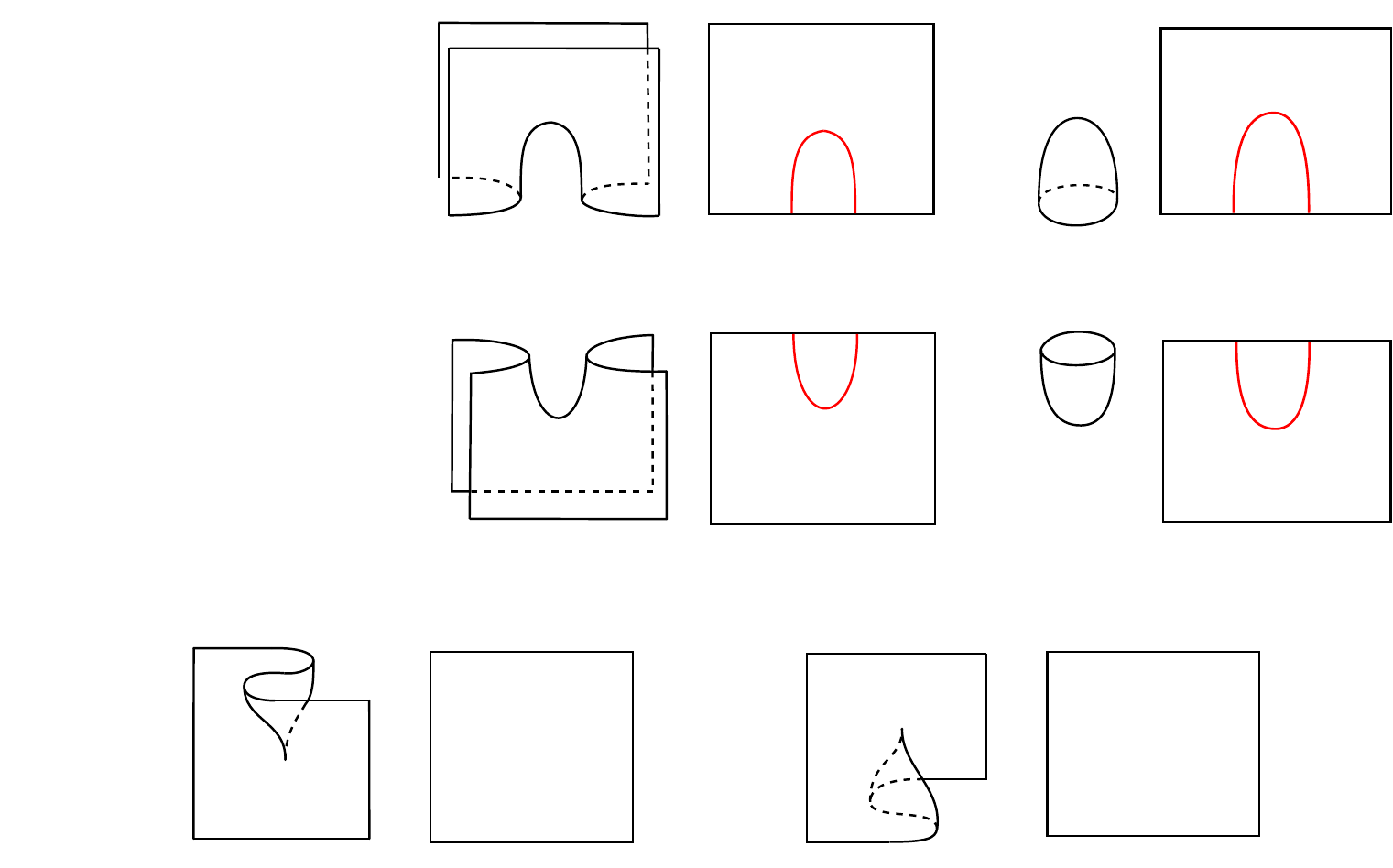
    \caption{Graphics of singularities with X-manifold data}
    \label{fig:newdiagrams}
\end{figure}
We want to extend this definition to cobordism type $\<2\>$-X-surfaces so that a $2$-dimensional graphic additionally contains the X-manifold data. First we consider $\<2\>$-X-surfaces whose underlying manifolds are singularities of Schommer-Pries stratification in normal coordinates (see Figure \ref{fig:2dimsing}). Figure \ref{fig:newdiagrams} shows their graphics with the X-manifold data. Note that we abbreviate Fold-i label to $F_i$, Saddle-i to $S_i$, and Cusp-i to $C_i$ for $i=1,2$. Also observe that Fold-1 and Fold-2 singularities are paths of cup and cap singularities in the previous section. For this reason, henceforth, on any $G$-linear diagram we replace cup and cap labels with $F_1$ and $F_2$ labels respectively. This implies that the restriction of each diagram in Figure \ref{fig:newdiagrams} to $I \times \d I$ yields two partial $G$-linear diagrams. Later we complete them to $G$-linear diagrams by adding chambering sets and sheet data.   

Compared to graphics of singularities in Figure \ref{fig:2dimsing}, there are additional arcs connecting graphics of Morse\footnote{Similar to the graphics of saddles, one can add arcs to the graphics of Cup and Cap and label them with $\emptyset$.} and cusp singularities to the (red) points of the boundary $G$-linear diagram. The reason behind the addition of these arcs is the connection between these diagrams and string diagrams, which is the content of Theorem \ref{computadic}. We call $\<2\>$-X-surfaces in Figure \ref{fig:newdiagrams} \textit{elementary $\<2\>$-X-surfaces} since they are building blocks of cobordism type $\<2\>$-X-surfaces under horizontal and vertical gluing operations. However, this list is not complete. The rest of the elementary $\<2\>$-X-surfaces and their graphics are given in Figure \ref{fig:newdigrams2}. 
%\begin{figure}[h!]
%    \centering
%    \includesvg{newdiagrams2d_2}
%    \caption{Remaining elementary $\<2\>$-X-surfaces and their graphics}
%    \label{fig:newdigrams2}
%\end{figure}
\begin{figure}[h!]
    \centering
    \def\svgwidth{\columnwidth}
    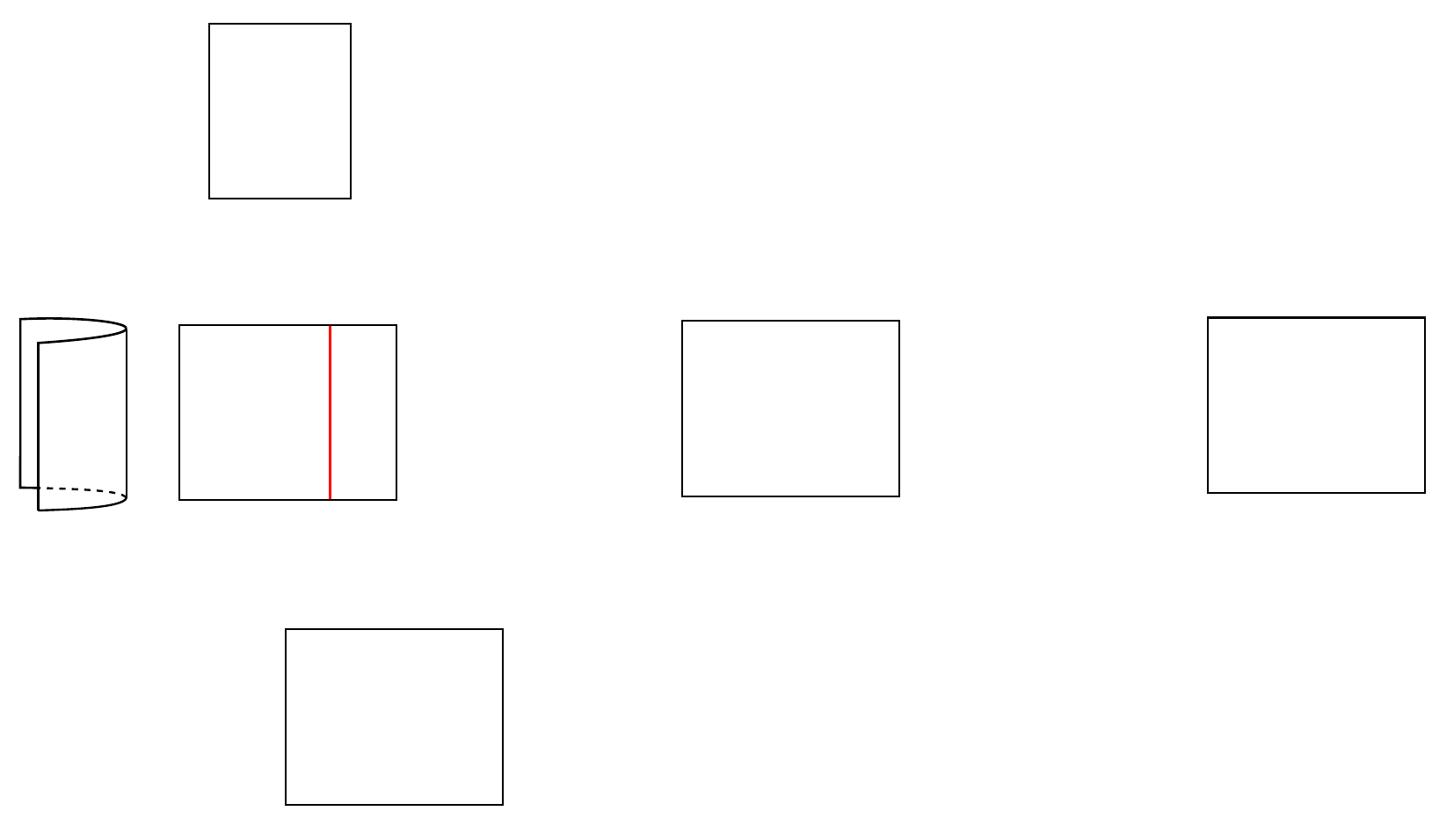
    \caption{Remaining elementary $\<2\>$-X-surfaces and their graphics}
    \label{fig:newdigrams2}
\end{figure}

We now know the extra data on $2$-dimensional graphics of elementary $\<2\>$-X-surfaces. Using these modified diagrams, for any generic map on a cobordism type $\<2\>$-X-surface $(\Sigma,R,\mathtt{P})$ we add the X-manifold data $(R,\mathtt{P})$ to the graphic of the generic map in two steps. First we decompose $(\Sigma,R,\mathtt{P})$ into horizontal and vertical compositions of elementary $\<2\>$-X-surfaces. This is always possible by the nature of Schommer-Pries stratification and above arguments. Using $\mathtt{P}$ we choose $G$-labels on each elementary $\<2\>$-X-surface in the decomposition. We then consider the modified diagrams of these $\<2\>$-X-surfaces in $I^2$ as described above. Figure \ref{fig:newdiagramex} shows an example of this process where the generic map is projection to the page. For a given $2$-dimensional graphic $\Phi=(\eta,\mu)$, we denote the union of $\eta$ and arcs encoding the X-manifold data with $\eta^G$ and similarly $\mu^G$ denotes the union of $\mu$ and additional labeled points. We call such a $2$-dimensional graphic $\Phi$ equipped with X-manifold data a \textit{$2$-dimensional $G$-graphic} and denote it with $\Phi^G=(\eta^G,\mu^G)$. 

Let $\Phi^G=(\eta^G,\mu^G)$ be a $2$-dimensional $G$-graphic, an open cover $\mathcal{U}=\{U_{\alpha}\}_{\alpha \in J}$ of $I^2$ with at most triple intersections is said to be \textit{$\Phi$-compatible} (\cite{schommer}) if each triple intersection is disjoint from $\mu$, each double intersection is disjoint from $\eta \cup \mu$ or contains a single element from $\eta$, and the open covers $\{U_{\alpha} \cap (I \times \{i\})\}_{\alpha \in J}$ of $I \times \{i\}$ for $i=0,1$ are compatible with corresponding $1$-dimensional graphics obtained from $\Phi^G$. Knowing the fact that $\R^2$ has covering dimension two and sets $\eta$ and $\mu$ are finite it is not hard to find $\Phi$-compatible open covers for a given graphic $\Phi$. 
\theoremstyle{definition}
\begin{definition}[Definition 1.42, \cite{schommer}]
Let $\Phi^G=(\eta^G,\mu^G)$ be a $2$-dimensional $G$-graphic. A \textit{chambering graph} $\Gamma$ for $\Phi^G$ is a smoothly embedded graph in $I^2$ satisfying the following conditions. Vertices of $\Gamma$ are disjoint from elements of $\Phi^G$ and have degree either one or three. Edges of $\Gamma$ are disjoint from $\d I \times I$ and transverse to $\Phi^G$ and $I \times \d I$. Furthermore, projection of each edge to the last coordinate is a local diffeomorphism and around each trivalent vertex one of the edges projects to the opposite side of the projection of other two edges with respect to the image of the vertex.
\end{definition}
\begin{definition}
Let $\Gamma$ be a chambering graph for $\Phi^G=(\eta^G,\mu^G)$. \textit{Chambers} of $\Gamma$ are the connected components of $I^2 \backslash(\Gamma \cup \eta \cup \mu)$. A chambering graph $\Gamma$ is said to be subordinate to an open cover $\mathcal{U}=\{U_{\alpha}\}_{\alpha \in J}$ of $I^2$ if each chamber is a subset of at least one $U_{\alpha}$ with $\alpha \in J$ and the chambering sets $\Gamma \cap (I \times \{i\})$ are compatible with the restricted open covers $\{U_{\alpha} \cap (I \times \{i\})\}_{\alpha \in J}$ for $i=0,1$.
\end{definition}
\begin{example}
Figure \ref{fig:newdiagramex} shows an example of a chambering graph $\Gamma$ where each colored region is a chamber. Note that the chambering graph leads to new red points on $I \times \d I$ forming two partial $G$-linear graphs. In this example all new points are labeled with $P_e \otimes N_e$.
\end{example}
%\begin{figure}[h!]
%    \centering
%    \includesvg{Gexample4}
%    \caption{Adding X-manifold data to a graphic and an example of a chambering graph}
%    \label{fig:newdiagramex}
%\end{figure}
\begin{figure}[h!]
    \centering
    \def\svgwidth{\columnwidth}
    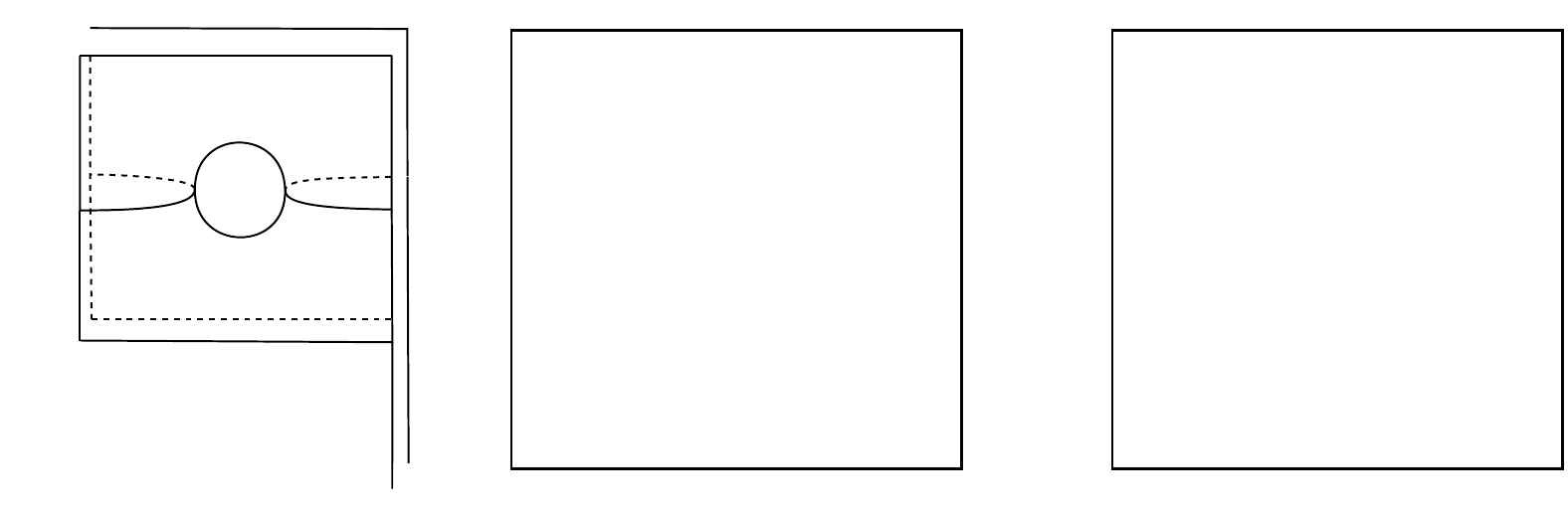
    \caption{Adding X-manifold data to a graphic and an example of a chambering graph}
    \label{fig:newdiagramex}
\end{figure}
\begin{prop}
Let $\Phi^G$ be a $2$-dimensional $G$-graphic in $I^2$ and let $\mathcal{U}= \{U_{\alpha}\}_{\alpha \in J}$ be a $\Phi$-compatible open cover of $I^2$. Then there exists a chambering graph $\Gamma$ for $\Phi^G$ subordinate to $\mathcal{U}$.
\end{prop}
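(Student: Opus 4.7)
The plan is to build $\Gamma$ by first solving the problem on the two horizontal boundary edges and then ``sweeping'' into $I^2$ by arcs that are graphs over the second coordinate, inserting trivalent branchings to refine chambers that are too large. Since $I^2$ is compact, begin by choosing via the Lebesgue number lemma a $\delta>0$ smaller than a Lebesgue number of $\mathcal{U}$, smaller than the minimum distance among distinct points of $\mu^G$, and small enough that every disk of radius $\delta$ centered at a point of $\eta^G$ meets at most one component of $\eta^G$. This controls the local model near the singular strata of $\Phi^G$ and means that any chamber of diameter less than $\delta$ will automatically sit inside some $U_\alpha$.

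Next, restrict $\mathcal{U}$ to $I\times\{0\}$ and $I\times\{1\}$ and apply the $1$-dimensional analogue (which is a direct application of Lebesgue's lemma: repeatedly insert points until every interval is contained in some restricted open set) to produce chambering sets $\Gamma_0,\Gamma_1$ subordinate to these restricted covers and disjoint from the induced $1$-dimensional graphics. These two finite sets will become the degree-one vertices of $\Gamma$ sitting on $I\times\partial I$.

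From each point of $\Gamma_0\cup \Gamma_1$ extend an embedded arc into the interior of $I^2$ that is a graph over the second coordinate (hence automatically a local diffeomorphism onto its image under the last-coordinate projection) and that avoids $\mu^G$. A generic perturbation, using the fact that $\eta$ consists of finitely many embedded curves with only transverse crossings (Definition \ref{2dimgraphic}), makes each such arc transverse to every element of $\eta^G$ and transverse to $I\times\partial I$ at its endpoints; transversality is preserved under $C^\infty$-small perturbations, so we may simultaneously arrange that no two of these arcs meet. After this step we have a disjoint collection of properly embedded arcs in $I^2$, each a graph over the second coordinate, whose complement in $I^2\setminus(\eta\cup\mu)$ is a finite disjoint union of open chambers.

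If all these chambers have diameter less than $\delta$ we are done. Otherwise, for any chamber $C$ whose diameter exceeds $\delta$, pick a height $t\in(0,1)$ in the interior of the range of the second-coordinate projection of $C$ and insert a trivalent vertex: add a short Y-shaped subgraph whose three edges are graphs over the second coordinate and two of which leave on one side while the third leaves on the opposite side (the condition in the definition). Then continue the newly emanating edge as a graph-over-the-second-coordinate arc, transverse to $\Phi^G$ and avoiding $\mu^G$, until it hits another edge of the graph under construction or terminates at a point of $I\times\partial I$ (in which case we also add this endpoint to $\Gamma_0$ or $\Gamma_1$, which remains subordinate to the boundary covers if the added point is chosen close enough to an existing edge termination). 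Iterating this branching refinement finitely many times yields $\Gamma$ with every chamber of diameter $<\delta$, hence subordinate to $\mathcal{U}$.

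The real technical nuisance, rather than any single deep obstacle, is the simultaneous bookkeeping of the four conditions (disjointness from vertices of $\Phi^G$, transversality to edges of $\Phi^G$ and to $I\times\partial I$, the graph-over-the-second-coordinate condition on each edge, and the ``one vs.\ two'' projection condition at trivalent vertices) while also controlling chamber size. All four conditions are \emph{open and dense} under $C^\infty$-perturbation of the arcs, and none obstructs the others since the constraint sets are of positive codimension in an infinite-dimensional function space, so one genericity argument applied to the finitely many arcs used in the construction suffices. The argument is the $X$-equivariant counterpart of the proof of the analogous statement in \cite{schommer}, to which we may appeal for the technical transversality input; the only new feature is that the $G$-labels and the auxiliary arcs entering $\eta^G$ and $\mu^G$ are already finite in number and of the same differential-topological type, so they add no genuinely new difficulty.
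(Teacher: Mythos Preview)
Your final paragraph is precisely the paper's argument: the non-$G$ statement is Proposition 1.46 of \cite{schommer}, and the passage to $\Phi^G$ requires only that the finitely many additional labeled arcs and points in $\eta^G\setminus\eta$ and $\mu^G\setminus\mu$ be made transverse to $\Gamma$, which is a generic condition. The paper says exactly this in two sentences and nothing more.

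The preceding four paragraphs, where you sketch a direct construction via Lebesgue numbers and iterated trivalent branching, are superfluous given your closing remark, and they carry a loose end you should be aware of: you assert that ``iterating this branching refinement finitely many times'' produces chambers of diameter below $\delta$, but you give no reason the process terminates. Each Y-insertion splits one chamber but the new edge you continue may create further chambers whose diameters you do not control; without a monotone quantity (e.g.\ an upper bound on the number of chambers of diameter $\geq\delta$ that strictly decreases) the argument is incomplete. Since you immediately afterward invoke \cite{schommer} for ``the technical transversality input'', the cleanest fix is to drop the direct construction entirely and keep only the last paragraph.
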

The $2$-dimensional graphic version of this proposition was proven in \cite{schommer} (see Proposition 1.46). This version follows from that only using transversality arguments. From now on we assume that all chambering graphs are subordinate to some compatible open cover.  

Next, we recall sheet data associated to a pair $(\Phi,\Gamma)$. We know sheet data on the components of $I \times \d I$ from the previous section. For the other boundary component $\d I \times I$, sheet data is similar and indeed simpler since vertical boundary components are all identical. Therefore, we consider the open subsets of chambers by removing boundary components. That is, for any chamber $U_{\beta}$ which intersects with $\d (I \times I)$ we consider $U_{\beta}'=U_{\beta}- (U_{\beta} \cap \d (I \times I))$. Since $f$ is generic the preimage $f^{-1}(U_{\beta}')$ consists of disjoint union of open sets (possibly empty) each mapping diffeomorphically onto $U_{\beta}'$. A \textit{trivialization} of $U_{\beta}'$ is an identification of $f^{-1}(U_{\beta}')$ with $\mathbb{N}_{\leq N} \times U_{\beta}'$ for some $N \in \N$ if $f^{-1}(U_{\beta}')$ is nonempty and identification with the empty set otherwise. In this case, each $\{i\} \times U_{\beta}'$ is called a \textit{sheet} and each sheet is oriented. By requiring the same trivializations on $U_{\beta}'$ and $\d U_{\beta}$ we extend identifications to $\mathbb{N}_{\leq N} \times U_{\beta}$.

Similar to $1$-dimensional case, trivializations of two neighboring chambers have the same number of sheets if chambers are separated by an edge of $\Gamma$ (see Figure \ref{fig:sheetdata}). If an element in $\eta$ separates chambers then the number of sheets differ by two because it is a fold graphic (see Figure \ref{fig:2dimsing}). A \textit{sheet data} $\mathcal{S}$ (\cite{schommer}) for a pair $(\Phi,\Gamma)$ consists of a trivialization of each chamber and an injection or a permutation between trivializations of neighboring chambers preserving orientations and describing how sheets are glued (see Figure \ref{fig:sheetdata}). Gluing description of sheets requires certain conditions on permutations and injections. For example, if three chambers are separated by edges of a trivalent vertex of $\Gamma$ then the circular composition of permutations must be identity. Also, the permutation corresponding to the edge of a univalent vertex must be the trivial permutation.
\begin{figure}[ht]
\centering
\begin{minipage}[t]{.52 \textwidth}
 \centering
  \def\svgwidth{\columnwidth}
    \scalebox{0.6}{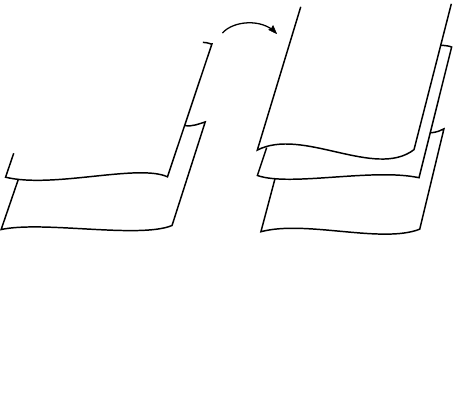}
  % \includesvg[width=.6\linewidth]{sheetdata1234}
  \captionof{figure}{}
  \label{fig:sheetdata}
\end{minipage}%
  \begin{minipage}[t]{.53\textwidth}
  \centering
   \def\svgwidth{\columnwidth}
    \scalebox{0.85}{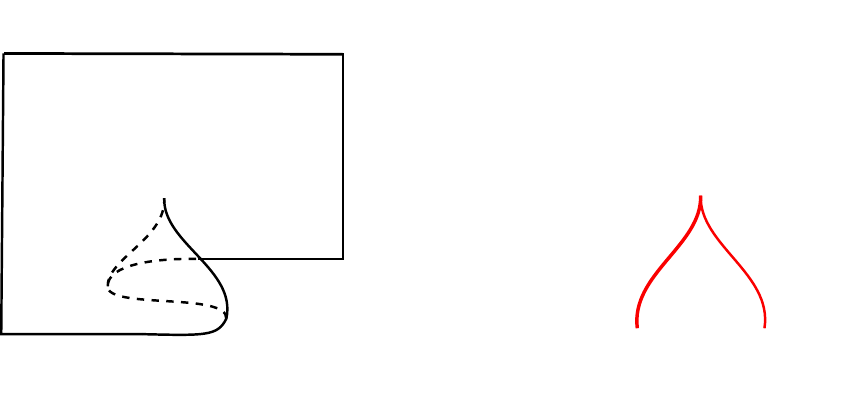}
  %\includesvg[width=.7\linewidth]{cuspsheet_buyuk}
  \captionof{figure}{}
  \label{fig:cuspsheet}
\end{minipage}
\end{figure}
The only nontrivial sheet data is that of a cusp graphic, which we briefly describe. Consider the Cusp-$2$ labeled point in Figure \ref{fig:cuspsheet}. Let $\mathbb{N}_{\leq N+3} \times U_{\beta_1}$ and $\mathbb{N}_{\leq N+1} \times U_{\beta_2}$ be the trivializations such that sheets $\bigcup_{i=N+1}^{N+3} i \times U_{\beta_1}$ and $(N+1) \times U_{\beta_2}$ belong to cusp singularity as shown in Figure \ref{fig:cuspsheet}. In this case, restriction of injections to cusp singularity gives $\sigma_1(N+1)=N+1$ and $\sigma_2(N+1)=N+3$.

\theoremstyle{definition}
\begin{definition}[Definition 1.48, \cite{schommer}]
A \textit{planar diagram} is a triple $(\Phi,\Gamma, \mathcal{S})$ consisting of a $2$-dimensional graphic $\Phi$, a chambering graph $\Gamma$ for $\Phi$ subordinate to a $\Phi$-compatible open cover $\mathcal{U}= \{U_{\alpha}\}_{\alpha \in J}$ of $I^2$, and a sheet data $\mathcal{S}$ associated to the pair $(\Phi,\Gamma)$. 
\end{definition}
Any planar diagram $(\Phi, \Gamma,\mathcal{S})$ produces a cobordism type $\<2\>$-surface $\Sigma$ with a generic map $f :(\Sigma,\d_v \Sigma, \d_h \Sigma) \to (I^2,\d I \times I, I \times \d I)$. In the case of a tuple $(\Phi^G, \Gamma)$, the associated sheet data can be improved to produce a $\<2\>$-X-surface $(\Sigma,R,\mathtt{P})$. We call such sheet data carrying X-manifold data to sheets \textit{$G$-sheet data} and denote it with $\mathcal{S}^G$. Then generalizing a planar diagram, we define a \textit{$G$-planar diagram} as a triple $(\Phi^G,\Gamma, \mathcal{S}^G)$. As an extension of $G$-linear diagrams we label edges of a chambering graph with $\beta^{\sigma}$ where $\sigma$ is the permutation coming from sheet data. If both sheets are trivialized by the empty set, then the separating edge is labeled with $\beta^{\imath}$. We also label vertices of chambering graph as follows. For a fixed trivalent vertex, if two of the edges direct upwards the vertex is labeled with $X^{\sigma,\sigma'}$ and if these two of the edges downward then it is labeled with $(X^{\sigma,\sigma'})^{-1}$. Here $\sigma$ and $\sigma'$ are the permutations corresponding to the sheet data of these edges. A univalent vertex is labeled with $X^e$ if its edge directs upward and it is labeled with $(X^e)^{-1}$ if its edge directs downward. We also label intersection of edges of chambering graph and $2$-dimensional $G$-graphic. For such an intersection, if an edge of chambering graph is labeled with $\beta^{\sigma}$ and an arc of $2$-dimensional $G$-graphic is labeled with $A$ then the intersection is labeled with $\beta^{\sigma}_{A}$.     
%\begin{figure}[ht]
%    \centering
%    \includesvg{unbiasedex5_tez}
%    \caption{Example of a cobordism type $\<2\>$-X-surface and its $G$-planar diagram}
%    \label{fig:stringdiagex}
%\end{figure}
\begin{figure}[ht]
    \centering
    \def\svgwidth{\columnwidth}
    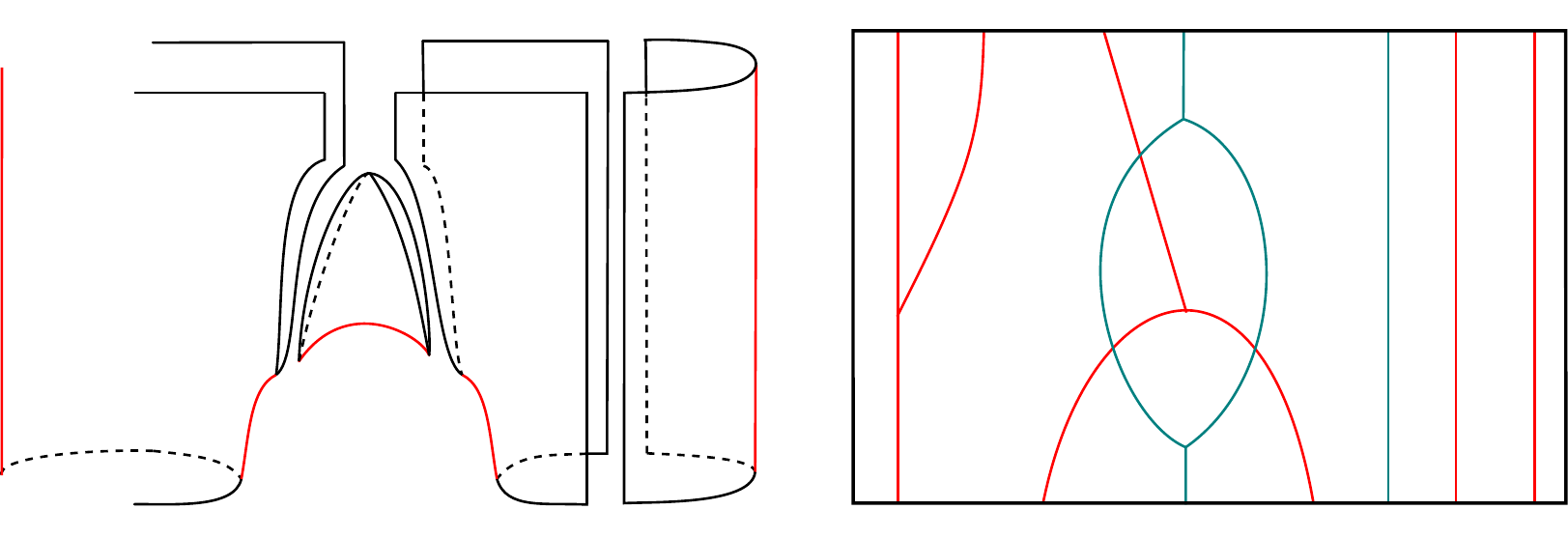
    \caption{Example of a cobordism type $\<2\>$-X-surface and its $G$-planar diagram}
    \label{fig:stringdiagex}
\end{figure}
\begin{example}
Figure \ref{fig:stringdiagex} shows an example of a cobordism type $\<2\>$-X-surface $(\Sigma, R,\mathtt{P})$ and its $G$-planar diagram with respect to the projection map. We denote the trivializations of sheets with numbers on $\Sigma$. Correspondingly, we encode this data to the $G$-planar diagram by labeling chambers with ordered signed points. Here signs come from the sign of the points on the corner of the corresponding sheet. 
\end{example}
Let $(\Sigma,R,\mathtt{P})$ and $(\Sigma',R',\mathtt{P}')$ be $\<2\>$-X-surfaces endowed with generic maps $f$ and $f'$ respectively. An X-homeomorphism $F: (\Sigma,R,\mathtt{P}) \to (\Sigma',R',\mathtt{P}')$ is said to be \textit{over $I^2$} if it commutes with the fixed generic maps, i.e. $f' \circ F =f$.
\begin{prop}\label{lem2}
Let $f:(\Sigma,\d_v \Sigma,\d_h \Sigma) \to (I^2, \d I \times I,I \times \d I)$ be a generic map on a cobordism type $\<2\>$-X-surface $(\Sigma,R,\mathtt{P})$ inducing a $2$-dimensional graphic $\Phi^G$. Let $\Gamma$ be a chambering graph for $\Phi^G$ subordinate to a $\Phi$-compatible open cover giving a $G$-planar diagram $(\Phi^G,\Gamma, \mathcal{S}^G)$. If the pair $((\Sigma',R',\mathtt{P}'),f')$ is constructed from $(\Phi^G,\Gamma, \mathcal{S}^G)$ then there exists an X-homeomorphism $F: \Sigma \to \Sigma'$ over $I^2$.
\end{prop}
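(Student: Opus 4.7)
The plan is to build the desired X-homeomorphism $F\colon\Sigma\to\Sigma'$ chamber by chamber, extend it across the chambering graph $\Gamma$ using the sheet data, then extend it across the graphic $\Phi^G$ using the normal forms of the Schommer-Pries singularities, and finally verify that the resulting map is orientation-preserving, basepoint-preserving, and compatible with the characteristic maps.

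First, for each chamber $V$ of $\Gamma$, the genericity of $f$ and $f'$ implies that both $f|_{f^{-1}(V)}$ and $f'|_{f'^{-1}(V)}$ are trivial covers. The trivializations $f^{-1}(V)\cong \mathbb{N}_{\leq N}\times V$ and $f'^{-1}(V)\cong \mathbb{N}_{\leq N}\times V$ prescribed by $\mathcal{S}^G$ are identical (same number of sheets, same orientations, same $G$-labels on each sheet), so I can define $F$ on $f^{-1}(V)$ to be the composition of these two trivializations. This defines $F$ over the complement of $\Gamma\cup\eta\cup\mu$ in $I^2$, and it commutes with $f$ and $f'$ by construction. Since corresponding sheets carry the same $G$-label, $F$ intertwines the restrictions of $\mathtt{P}$ and $\mathtt{P}'$ on the interiors of chambers.

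Next I extend $F$ across $\Gamma$. Over an edge of $\Gamma$ between two neighboring chambers $V_1,V_2$, the permutation (or injection, in the univalent case) $\sigma$ recorded by $\mathcal{S}^G$ describes how the trivializations on the two sides of the edge are glued in both $\Sigma$ and $\Sigma'$; since the gluing data coincides, the chamber-wise formulas agree on the preimage of the edge and patch to a smooth extension of $F$. At a trivalent vertex of $\Gamma$, the cocycle condition (circular composition of the three permutations is the identity) guarantees consistency of the three patches. Extending across an arc of $\eta^G$ labeled $F_i^g$ uses the Morse lemma normal form for the fold: the sheets merging at the fold are prescribed by $\mathcal{S}^G$ together with the $G$-label $g$, and the local model identifies the two sides intrinsically, so $F$ extends smoothly across $\eta^G$ as a local diffeomorphism. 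At each point of $\mu^G$, the normal form of the corresponding singularity (Cup, Cap, Saddle-$i$, or Cusp-$i$) together with the $G$-labels on the incident sheets and arcs uniquely determines the germ of the surface with its map to $I^2$ up to diffeomorphism over $I^2$, so $F$ extends over a neighborhood of each such point. Finally, the boundary data built into the $G$-planar diagram (the induced $G$-linear diagrams on $I\times\{0\}$ and $I\times\{1\}$ together with the vertical product structure) match the corresponding decompositions of $\d_h\Sigma$ and $\d_v\Sigma$, and Proposition~\ref{lindecomp} supplies compatible X-homeomorphisms on the horizontal boundary which agree with the chamber-wise $F$ on $I\times\d I$.

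The main obstacle is the consistency across the cusp points, where three or more sheets come together and both a fold arc and a chambering edge interact with a single singular point. The local sheet data $\sigma_1(N+1)=N+1$, $\sigma_2(N+1)=N+3$ described after Figure~\ref{fig:cuspsheet}, combined with the $G$-label arcs encoding the characteristic map near the cusp, are exactly what is needed to pin down the germ of the map; once this is checked on each of the four cusp types, the global $F$ is forced. Gluing the chamber-wise definitions along $\Gamma$, $\eta^G$, and $\mu^G$ yields a diffeomorphism $F\colon\Sigma\to\Sigma'$ with $f'\circ F=f$ that carries $R$ to $R'$ and satisfies $\mathtt{P}'\circ[F]=\mathtt{P}$. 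Since $F$ preserves orientations of sheets and basepoints by construction, it is an X-homeomorphism over $I^2$.
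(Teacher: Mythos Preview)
Your proposal is correct and follows essentially the same approach as the paper: define $F$ chamber by chamber via the trivializations in $\mathcal{S}^G$, then check that $F(R)=R'$, $[\mathtt{P}'\circ F]=\mathtt{P}$, and $f'\circ F=f$ on each chamber. The paper's proof is a two-sentence sketch stating exactly this, whereas you have spelled out the extension across $\Gamma$, $\eta^G$, and $\mu^G$ in more detail than the paper does.
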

\begin{proof}
The diffeomorphism $F:\Sigma \to \Sigma'$ maps inverse images of chambers to corresponding trivializations. Since $F(R)=R'$, $[\mathtt{P}' \circ F]= \mathtt{P}$, and both $f$ and $f' \circ F$ restrict to the same map on $f^{-1}(U_{\beta})$ for any chamber $U_{\beta}$, $F$ is an X-homeomorphism over $I^2$.
\end{proof}

\subsection{\textit{G}-spatial diagrams}
Schommer-Pries \cite{schommer} introduced spatial diagrams to identify planar diagrams which produce diffeomorphic cobordism type $\<2\>$-surfaces. We extend them to $G$-spatial diagrams which identify those $G$-planar diagrams producing X-homeomorphic cobordism type $\<2\>$-X-surfaces. Then, using these identifications we define an equivalence relation among $G$-planar diagrams, and prove the $G$-planar decomposition theorem. 

Different generic maps on a fixed cobordism type $\<2\>$-surface yield different graphics just as different Morse functions yield different critical values. In the latter case, Cerf theory relates different sets of critical values in terms of isotopies or birth and death of critical values. Similarly, Schommer-Pries \cite{schommer} related different graphics obtained from different generic maps in terms of isotopies and certain local moves of graphics, called \textit{movie-moves} (see Figure \ref{fig:movie}). These movie-moves are obtained from the singularities of certain stratification of jet spaces $J^r((\Sigma \times I, \d_h \Sigma \times I , \d_v \Sigma \times I), (I^2 \times I, I \times \d I \times I, \d I \times I \times I))$ for a cobordism type $\<2\>$-surface $\Sigma$. Note that in general the map $F$ is not of the form $F(x,t)=(f_t(x),t)$. For our purposes we consider the subspace of $J^r(\Sigma \times I, I^2 \times I)$ consisting of paths of functions. 

\begin{figure}
    \centering
    \includegraphics{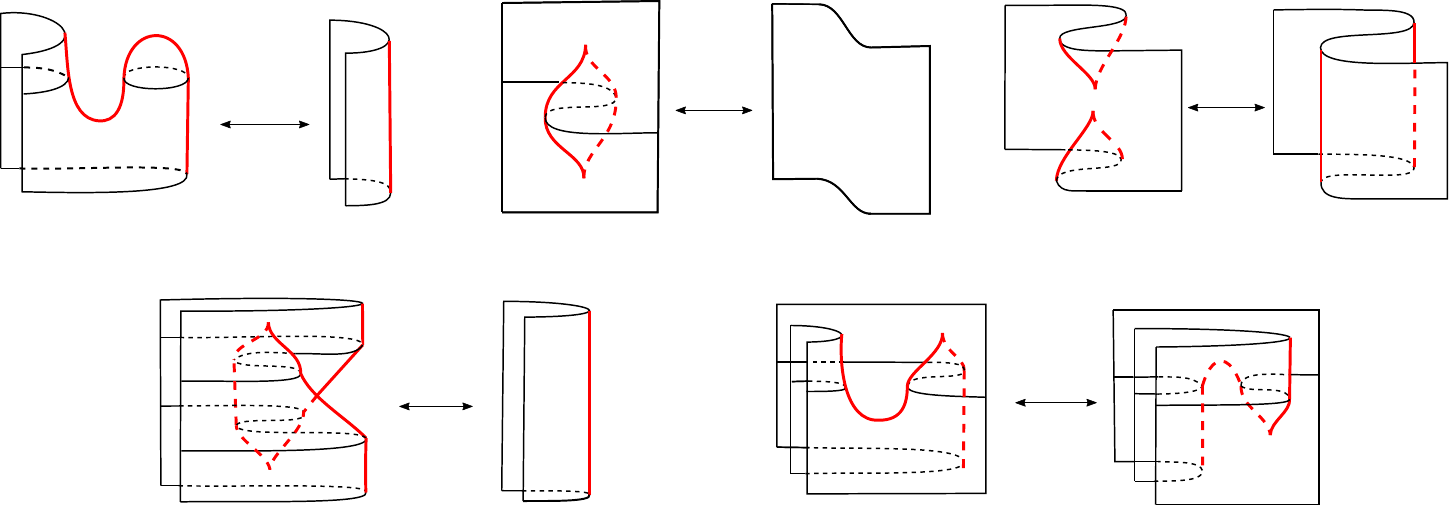}
    \caption{Movie-moves coming from codimension 3 singularities}
    \label{fig:movie}
\end{figure}

Figure \ref{fig:3dimsing} shows the graphics of singularities for the Schommer-Pries stratification in normal coordinates. Observe the relation between movie-moves in Figure \ref{fig:movie} and the horizontal boundary components of the new graphics. The remaining movie-moves coming from this stratification are shown in Figure \ref{fig:movie1}. The properties of this stratification are listed in in the following definition. In particular, the graphic of a generic map for this stratification is a $3$-dimensional graphic which is defined as follows.

%\begin{figure}[ht]
%    \centering
%    \includesvg{3dsing}
%    \caption{Graphics of the singularities in $I^2 \times I$}
%    \label{fig:3dimsing}
%\end{figure}
\begin{figure}[ht]
    \centering
    \def\svgwidth{\columnwidth}
    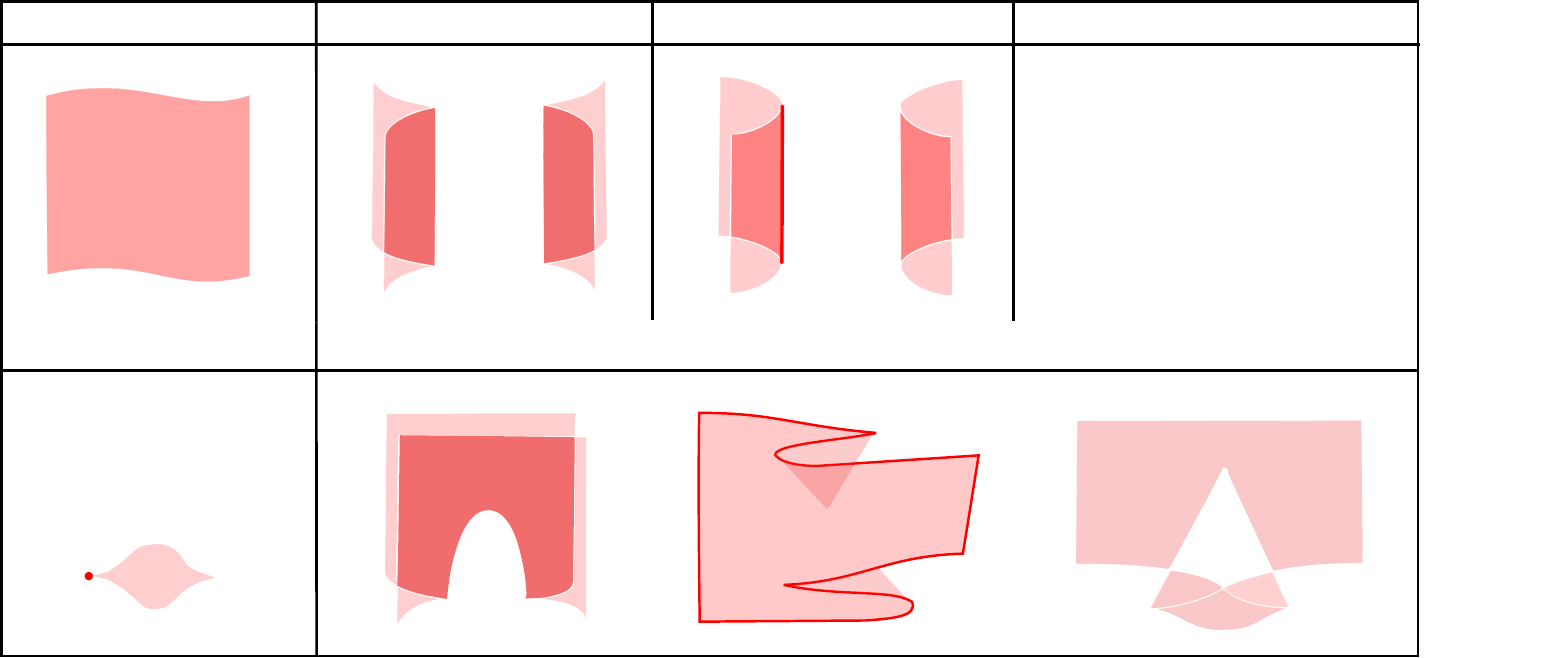
    \caption{Graphics of the singularities in $I^2 \times I$}
    \label{fig:3dimsing}
\end{figure}
\begin{definition}[Definition 1.30, \cite{schommer}]\label{3dimgraph}
A \textit{3-dimensional graphic} $\Delta=(\delta ,\eta,\mu)$ is a diagram in $I^2 \times I$ consisting of a finite number of embedded compact labeled surfaces $(\delta)$, a finite number of embedded labeled curves $(\eta)$, and a finite number of embedded labeled points $(\mu)$ satisfying the following conditions:
\begin{enumerate}[(i)]
    \item Projections of elements of $\delta$ to the last two coordinates are local diffeomorphisms. Elements of $\delta$ intersect with $I \times \{i\} \times I$ along vertical line segments $\{(x,i,s)\}_{s \in [0,1]}$ for some $x \in (0,1)$ and for $i=0,1$. Each element of $\delta$ is labeled with either Fold-$1$ or Fold-$2$. 
    \item Projections of elements of $\eta$ to the last coordinate are local diffeomorphisms.  Every element of $\eta$ has a neighborhood in which two elements of $\delta$ form either Morse or Cusp graphic (see Figure \ref{fig:3dimsing}). Each element of $\eta$ is labeled with either Morse-$i$\footnote{Morse singularities are paths of Cap, Cup, Saddle-$1$, and Saddle-$2$ singularities.} or Cusp-$i$ where $i = 1,2,3,4$ indicates the indices.
    \item Each element of $\mu$ has a neighborhood in which some elements of $\delta$ and $\eta$ form one of the following graphics: Morse relation-$i$, Cusp inversion-$j$, Cusp inversion$'$-$j$, Cusp flip-$j$, and Swallowtail-$i$ where $i=1,2,3,4$ and $j=1,2$ indicate graphics of different indices. 
    \item Elements of $\mu$ are labeled with one of the following singularities: Morse relation-$i$, Cusp inversion-$j$, Cusp inversion$'$-$j$, Cusp flip-$j$, and Swallowtail-$j$ where $1 \leq i \leq 8$ and $j =1,2,3,4$ indicate the indices.
    \item The restriction of the graphic to the components of $I^2 \times \d I$ gives $2$-dimensional graphics.
    \item Elements of $\delta,\eta,$ and $\mu$ are disjoint from $\d I \times I^2$. They are transversal with respect to each other and to $I^2 \times \d I$. Moreover, when two surfaces intersect along an arc there can only be finitely many points on the arc with tangent space lying in $\<\d_x,\d_y\>$ where $(x,y,t)$ is the coordinate for $I^2 \times I$.
\end{enumerate}
\end{definition}
\begin{figure}[h!]
    \centering
    \includegraphics{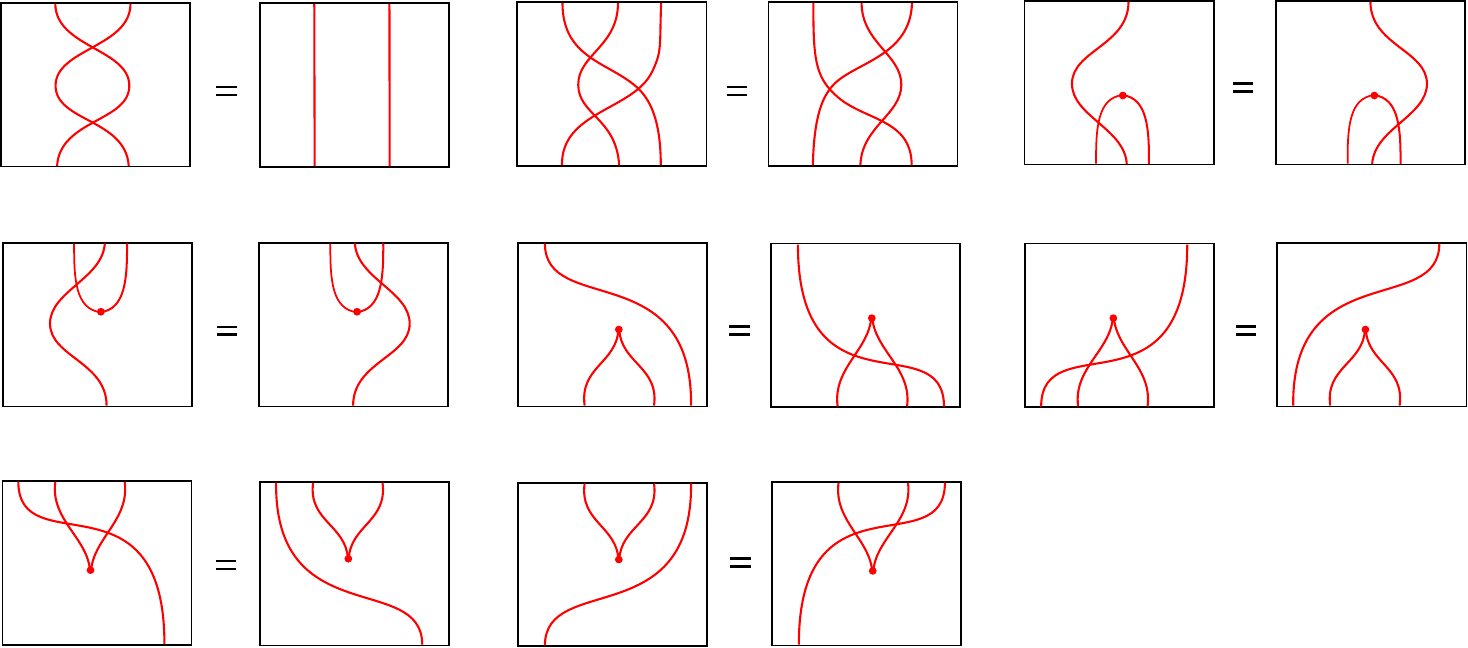}
    \caption{Movie-moves coming from intersection of codimension 1 and 2 singularities}
    \label{fig:movie1}
\end{figure}

%\begin{figure}
%    \centering
%    \includesvg{eskileri_relationlarin_yenilenmesi}
%    \caption{Some of the generalized movie-moves}
%    \label{fig:eskileri_relationlarin_yenilenmesi}
%\end{figure}
\begin{figure}
    \centering
    \def\svgwidth{\columnwidth}
    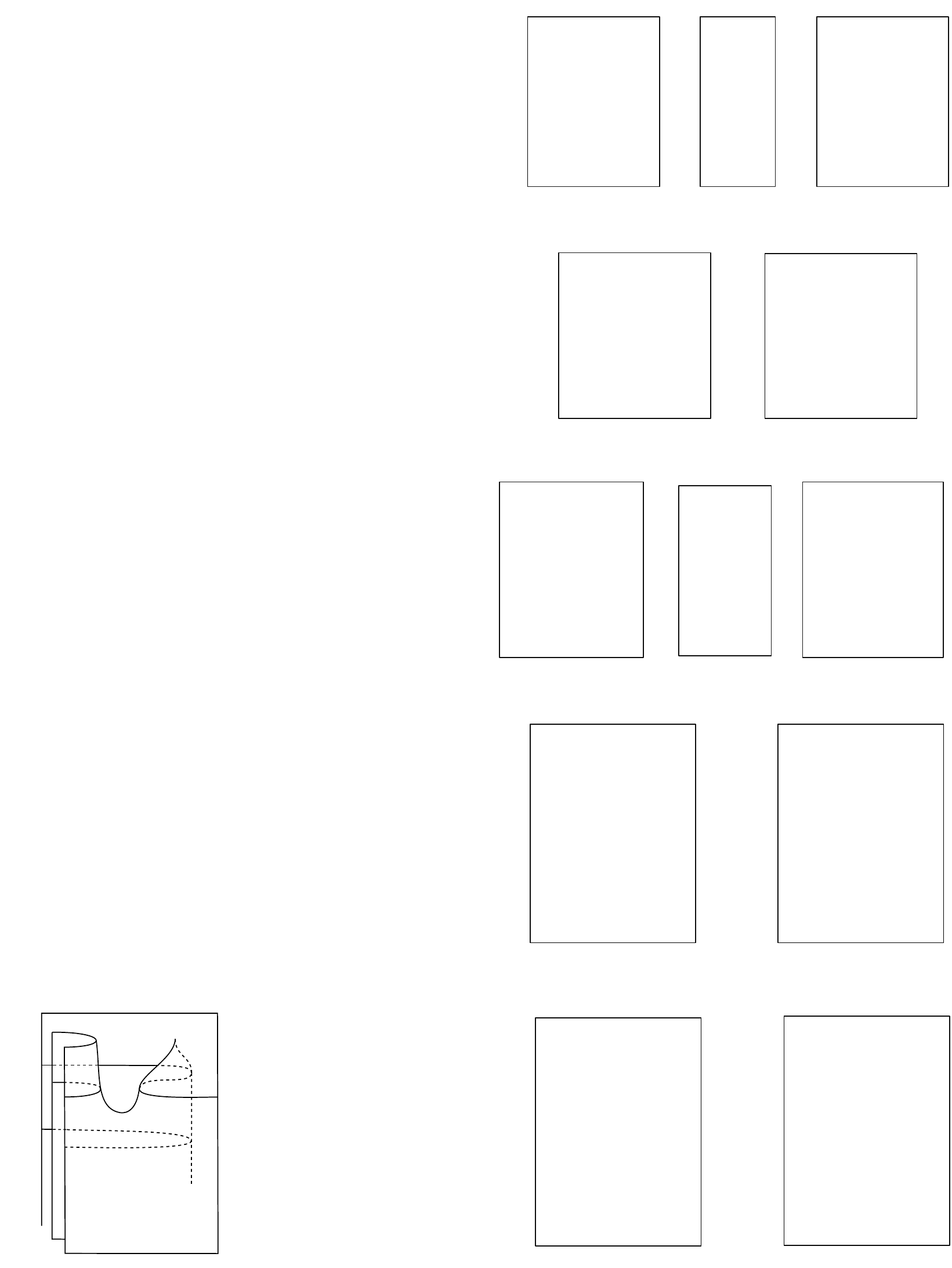
    \caption{Some of the generalized movie-moves}
    \label{fig:eskileri_relationlarin_yenilenmesi}
\end{figure}
%\begin{figure}
%    \centering
%    \includesvg{new_relations}
%    \caption{Some of the generalized movie-moves obtained by gluing elementary $\<2\>$-X-surfaces}
%    \label{fig:new_relations}
%\end{figure}
\begin{figure}
    \centering
    \def\svgwidth{\columnwidth}
    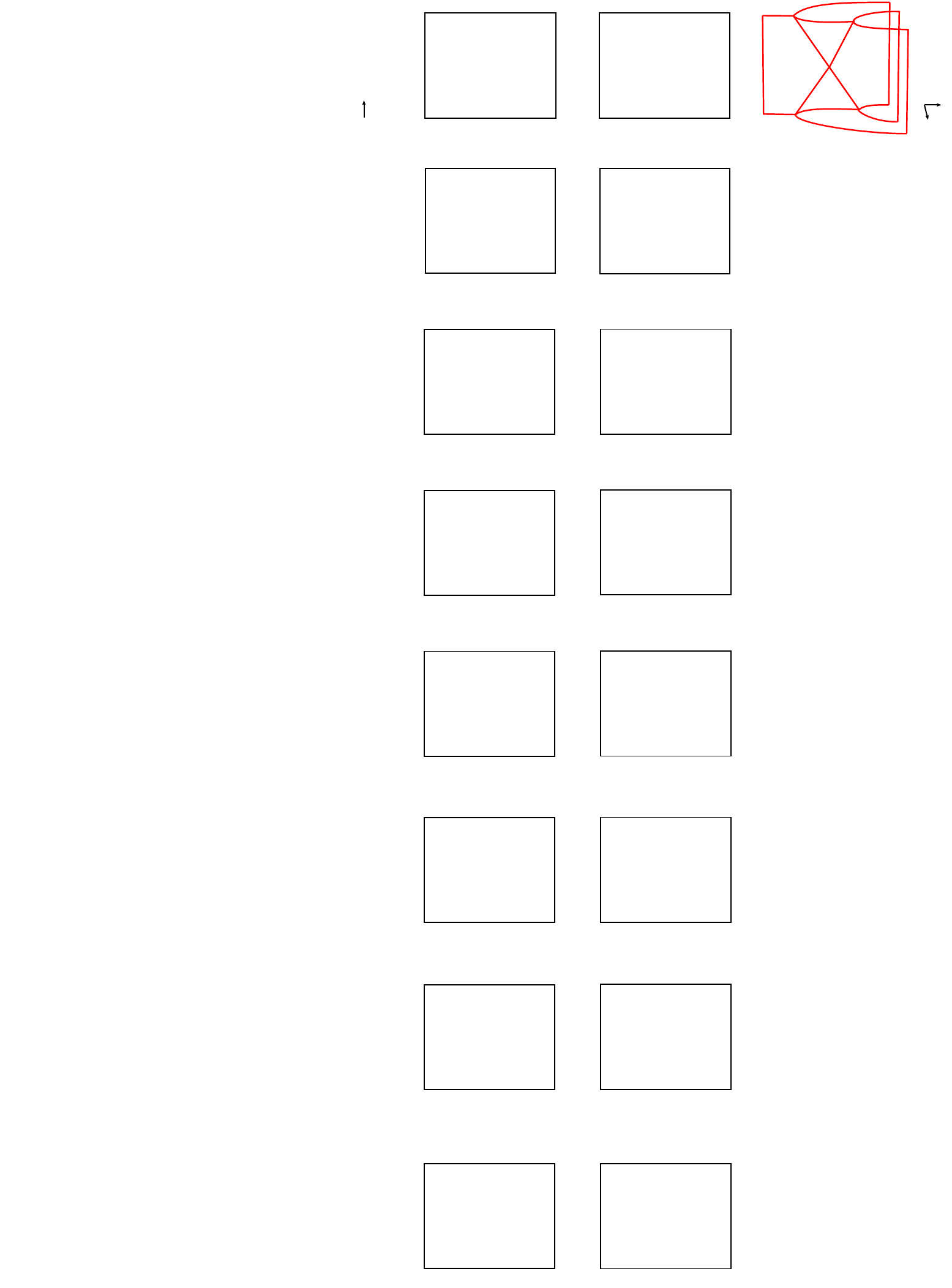
    \caption{Some of the generalized movie-moves obtained by gluing elementary $\<2\>$-X-surfaces}
    \label{fig:new_relations}
\end{figure}
Let $\Sigma$ be a cobordism type $\<2\>$-surface and $F:(\Sigma \times I, \d_h \Sigma \times I ,\d_v \Sigma \times I)\to (I^2 \times I, I \times \d I \times I, \d I \times I \times I)$ be a generic map. We know that the restriction of $F$ to boundary components $\Sigma \times \{0\}$ and $\Sigma \times \{1\}$ gives two generic maps on $\Sigma$. The converse is also true. That is, for any given two generic maps $f_1,f_2 : (\Sigma, \d_v \Sigma, \d_h \Sigma) \to (I^2 , \d I \times I, I \times \d I)$ there exists a generic map $F$ on $\Sigma \times I$ with $F|_{\Sigma \times \{0\}}= f_1$ and $F|_{\Sigma \times \{1\}} =f_2$. Therefore, $3$-dimensional graphics are designed to identify $2$-dimensional graphics obtained from generic maps on $\<2\>$-surfaces which are diffeomorphic relative to boundary. Using $2$-dimensional $G$-graphics, we extend this result to $\<2\>$-X-surfaces which are X-homeomorphic relative to their boundary. This is useful since in the cobordism bicategory $\X\Bord_2$, the $2$-morphisms are X-homeomorphism classes of cobordism type $\<2\>$-X-surfaces relative to their boundary. 

Movie-moves are local relations on $2$-dimensional graphics generating the identification of $2$-dimensional graphics induced from different generic maps. Figure \ref{fig:eskileri_relationlarin_yenilenmesi} and \ref{fig:new_relations} show some of the generalized movie-moves relating $2$-dimensional $G$-graphics. The remaining generalized movie-moves involves singularities with different indices, orientations, and decomposition into elementary $\<2\>$-X-surfaces. Similarly, movie-moves given in Figure \ref{fig:movie1} are generalized and their possible versions (different indices, orientations, decompositions) form generalized movie-moves. From now on, whenever we refer to these figures we mean the complete list of movie-moves. 

Similar to previous two sections, we generalize a $3$-dimensional graphic to a $G$-graphic by adding (labeled) surfaces, arcs, and points. For every new movie-move, the corresponding graphic is shown at the right hand side of Figure \ref{fig:new_relations}. Then, we define \textit{a $3$-dimensional $G$-graphic} $\Delta^G=(\delta^G,\eta^G,\mu^G)$ as a $3$-dimensional graphic $\Delta=(\delta,\eta,\mu)$ along with a $2$-dimensional locally conical stratified space of compact type which is transverse to the graphic and whose local models are given in Figure \ref{fig:new_relations}. The notion of locally conical stratified space is also the main ingredient of Definition \ref{chambering_foam_defn} below. Its definition can be found in \cite{schommer} (Definition 1.41). Reader can think of a locally conical stratified space as a space constructed from given local models just as a manifold is built locally from disks.

Let $\Delta^G=(\delta^G,\eta^G,\mu^G)$ be a $3$-dimensional $G$-graphic. An open cover $\mathcal{U}=\{U_{\alpha}\}_{\alpha \in J}$ of $I^3$ with at most $4$-fold intersections is said to be \textit{$\Delta$-compatible} (\cite{schommer}) if each $4$-fold intersection is disjoint from $\delta \cup \eta \cup \mu$, each $3$-fold intersection is disjoint from $\mu \cup \eta$ and contains at most a single component of surfaces in $\delta$, each double intersection is disjoint from points in $\mu$, and the open covers $\{ U_{\alpha} \cap (I^2 \times \{i\}) \}_{\alpha \in J}$ of $I^2 \times \{i \}$ for $i=0,1$ are compatible with the corresponding $2$-dimensional graphics obtained from $\Delta^G$. Since $I^3$ has covering dimension $3$ and there are only finitely many elements in $\delta$, $\eta$, and $\mu$, $\Delta$-compatible open covers exist. 

%\begin{figure}[h!]
%    \centering
%    \includesvg{kucukchamb}
%    \caption{Local models for a chambering foam}
%    \label{fig:cfoam}
%\end{figure}
\begin{figure}[h!]
    \centering
    \def\svgwidth{\columnwidth}
    %% Creator: Inkscape 1.0 (4035a4fb49, 2020-05-01), www.inkscape.org
%% PDF/EPS/PS + LaTeX output extension by Johan Engelen, 2010
%% Accompanies image file '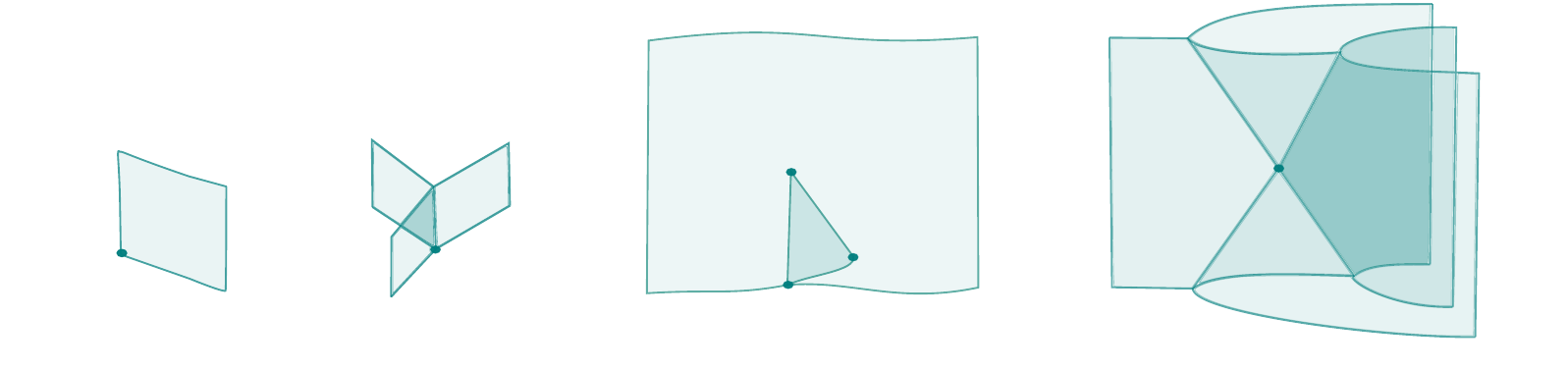' (pdf, eps, ps)
%%
%% To include the image in your LaTeX document, write
%%   \input{<filename>.pdf_tex}
%%  instead of
%%   \includegraphics{<filename>.pdf}
%% To scale the image, write
%%   \def\svgwidth{<desired width>}
%%   \input{<filename>.pdf_tex}
%%  instead of
%%   \includegraphics[width=<desired width>]{<filename>.pdf}
%%
%% Images with a different path to the parent latex file can
%% be accessed with the `import' package (which may need to be
%% installed) using
%%   \usepackage{import}
%% in the preamble, and then including the image with
%%   \import{<path to file>}{<filename>.pdf_tex}
%% Alternatively, one can specify
%%   \graphicspath{{<path to file>/}}
%% 
%% For more information, please see info/svg-inkscape on CTAN:
%%   http://tug.ctan.org/tex-archive/info/svg-inkscape
%%
\begingroup%
  \makeatletter%
  \providecommand\color[2][]{%
    \errmessage{(Inkscape) Color is used for the text in Inkscape, but the package 'color.sty' is not loaded}%
    \renewcommand\color[2][]{}%
  }%
  \providecommand\transparent[1]{%
    \errmessage{(Inkscape) Transparency is used (non-zero) for the text in Inkscape, but the package 'transparent.sty' is not loaded}%
    \renewcommand\transparent[1]{}%
  }%
  \providecommand\rotatebox[2]{#2}%
  \newcommand*\fsize{\dimexpr\f@size pt\relax}%
  \newcommand*\lineheight[1]{\fontsize{\fsize}{#1\fsize}\selectfont}%
  \ifx\svgwidth\undefined%
    \setlength{\unitlength}{459.91005725bp}%
    \ifx\svgscale\undefined%
      \relax%
    \else%
      \setlength{\unitlength}{\unitlength * \real{\svgscale}}%
    \fi%
  \else%
    \setlength{\unitlength}{\svgwidth}%
  \fi%
  \global\let\svgwidth\undefined%
  \global\let\svgscale\undefined%
  \makeatother%
  \begin{picture}(1,0.23597874)%
    \lineheight{1}%
    \setlength\tabcolsep{0pt}%
    \put(0,0){\includegraphics[width=\unitlength,page=1]{kucukchamb_1.pdf}}%
    \put(0.25290315,0.01764399){\color[rgb]{0,0,0}\makebox(0,0)[lt]{\lineheight{1.25}\smash{\begin{tabular}[t]{l}\small $\mathsf{I \times C_3}$\end{tabular}}}}%
    \put(0.50043084,0.0163993){\color[rgb]{0,0,0}\makebox(0,0)[lt]{\lineheight{1.25}\smash{\begin{tabular}[t]{l}\small$\mathsf{CP}$\end{tabular}}}}%
    \put(0.79648277,0.00320976){\color[rgb]{0,0,0}\makebox(0,0)[lt]{\lineheight{1.25}\smash{\begin{tabular}[t]{l}\small$\mathsf{CK}_4$\end{tabular}}}}%
    \put(0.08359014,0.0168705){\color[rgb]{0,0,0}\makebox(0,0)[lt]{\lineheight{1.25}\smash{\begin{tabular}[t]{l}\small $\mathsf{I \times C_1}$\end{tabular}}}}%
  \end{picture}%
\endgroup%

    \caption{Local models for a chambering foam}
    \label{fig:cfoam}
\end{figure}
\theoremstyle{definition}
\begin{definition}\label{chambering_foam_defn} (Definition 1.43, \cite{schommer})
Let $\Delta^G=(\delta^G,\eta^G,\mu^G)$ be a $3$-dimensional $G$-graphic. A \textit{chambering foam} $\Gamma$ for $\Delta^G$ is a smooth embedding of a $2$-dimensional locally conical stratified space $\Gamma$ of compact type into $I \times (0,1) \times I$ with the following properties. The space $\Gamma$ is locally conical with respect to the system of local models $I^2$, $I \times C_1$, $I \times C_3, CP$, and $CK_4$ shown in Figure \ref{fig:cfoam}. Vertices are disjoint from $\Delta^G$ and lie in the interior. Edges can only intersect with a surface from $\delta^G$. Faces can only intersect with surfaces from $\delta^G$ and arcs from $\eta^G$. All intersections are transversal and $\Gamma$ additionally satisfies the following conditions:
    \begin{enumerate}[(I)]
        \item Projection $p : \Gamma \to I \times I$ to the last two coordinates has no singularity and projection of faces to the last coordinate has no singularity.
        \item For every $t \in I$ satisfying three conditions; $(I^2 \times \{t\}) \cap \mu^G = \emptyset$, $t$ is not a critical value of projection $\text{pr} : \Gamma \to I$ to the last coordinate, and $(I^2 \times \{t \}) \cap \Gamma$ does not include a vertex of $\Gamma$, the graph $(I^2 \times \{t\}) \cap \Gamma$ forms a chambering graph for the $2$-dimensional $G$-graphic $\Delta^G \cap (I^2 \times \{t\})$.
        \item Projection of each one of four edges in $CK_4$-model connecting at the cone point to the last coordinate is a local diffeomorphism. Additionally, at least one of them must map to downward of the cone point and at least one of them must map to upward of the cone point.
        \item Projection of the two edges in $CP$-model connecting at the cone point to the last coordinate maps both edges to the same direction with respect to the image of the cone point.
    \end{enumerate}
\end{definition}
\begin{definition}
Let $\Delta^G=(\delta^G,\eta^G,\mu^G)$ be a $3$-dimensional $G$-graphic and let $\Gamma$ be a chambering foam for $\Delta^G$. \textit{Chambers} of $\Gamma$ are the connected components of $I^2 \times I \backslash(\Gamma \cup \delta \cup \eta \cup \mu)$. A chambering foam $\Gamma$ is said to be \textit{subordinate to an open cover} $\mathcal{O}= \{O_{\alpha}\}_{\alpha \in J}$ of $I^2 \times I$ if each chamber is a subset of at least one $O_{\alpha}$ with $\alpha \in J$ and the chambering graphs $\Gamma \cap (I^2 \times \{i\})$ are compatible with the restricted open cover $\{ O_{\alpha}\cap I^2 \times \{i\} \}_{\alpha \in J}$ for $i=0,1$. 
\end{definition}
\begin{lemma} \label{compatible}
Let $\Gamma$ be a chambering foam for a $3$-dimensional $G$-graphic $\Delta^G$ inducing $2$-dimensional $G$-graphics and chambering graphs $(\Phi^G_0, \Gamma_0)$ and $(\Phi^G_1,\Gamma_1)$ on $I^2 \times \{0\}$ and $I^2 \times \{1\}$ respectively. Let $\mathcal{O}= \{O_{\alpha}\}_{\alpha \in J}$ be a $\Delta$-compatible open cover of $I^3$ such that $\Gamma_i$ is subordinate to $\mathcal{O}_i=\mathcal{O}|_{I^2 \times \{i\}}$ for $i=0,1$. Then, there exists a chambering foam $\Gamma'$ for $\Delta^G$ subordinate to $\mathcal{O}$ and whose restriction to $I^2 \times \{0\}$ and $I^2 \times \{1\}$ yields $\Gamma_0$ and $\Gamma_1$ respectively.
\end{lemma}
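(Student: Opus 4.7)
The plan is to adapt Schommer-Pries' $2$-dimensional subordination argument (the analogous Proposition for chambering graphs, used in the existence result cited earlier in the paper) to the $3$-dimensional setting, with the additional constraint that the construction leaves the boundary slices $\Gamma_0$ and $\Gamma_1$ fixed. The essential idea is to refine the given foam $\Gamma$ by inserting additional faces so that every chamber shrinks into some element of $\mathcal{O}$, while preserving conditions (I)--(IV) of Definition \ref{chambering_foam_defn}.

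First, by compactness of $I^3$, extract a Lebesgue number $\lambda > 0$ for the cover $\mathcal{O}$. Next, choose finitely many levels $0 = t_0 < t_1 < \cdots < t_n = 1$ in the $t$-direction with spacing less than $\lambda/2$, chosen generically so that each $I^2 \times \{t_k\}$ is disjoint from $\mu^G$, meets each surface in $\delta^G$ and each arc in $\eta^G$ transversally, and avoids the vertices of $\Gamma$ and the critical values of the projections to $I$ of the faces of $\Gamma$ and of the arcs of $\eta^G$. By the conditions in Definitions \ref{3dimgraph} and \ref{chambering_foam_defn}, such a generic choice is possible and the restriction $\Delta^G \cap (I^2 \times [t_k, t_{k+1}])$ is a product on each slab except possibly for an isolated codimension $\geq 1$ feature away from the level sets $t = t_k$. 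Within each slab $I^2 \times [t_k, t_{k+1}]$, use the $2$-dimensional subordination result applied to the slice $\Delta^G \cap (I^2 \times \{s_k\})$ (for $s_k \in (t_k, t_{k+1})$) together with the restricted cover $\{O_\alpha \cap (I^2 \times \{s_k\})\}$ to produce a fine chambering graph $\Lambda_k$; then extend $\Lambda_k$ as a product $\Lambda_k \times [t_k, t_{k+1}]$ across the slab, perturbing slightly near the isolated features of $\Delta^G$ to preserve transversality. The product structure ensures that projections of faces to the last coordinate are local diffeomorphisms, so condition (I) holds automatically.

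To meet the boundary conditions, retain $\Gamma$ unchanged inside collar neighborhoods $I^2 \times [0, \varepsilon)$ and $I^2 \times (1-\varepsilon, 1]$, which by the existing hypothesis already restrict to $\Gamma_0$ and $\Gamma_1$; in these collars $\Gamma$ is (or may be isotoped through conditions (I)--(IV) to be) a product $\Gamma_i \times [0,\varepsilon)$. Glue the collar pieces to the slab construction along intermediate level sets by using the product structure to match combinatorial types; any mismatch between the boundary chambering graphs $\Gamma_i$ and the internal graphs $\Lambda_k$ can be resolved by introducing further transverse faces whose projections to the $t$-axis are injective, creating the $CP$ and $CK_4$ local models at isolated points. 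Condition (II) at each non-critical level $t$ follows from the $2$-dimensional subordination of $\Gamma_t$; conditions (III) and (IV) are enforced by the genericity of the added faces.

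The principal obstacle is compatibility of condition (I) with fineness: a naive refinement by horizontal slabs $I^2 \times \{t_k\}$ is forbidden since its projection to the $t$-axis is constant, so all new faces must be tilted, yet the tilted faces must still combine with the existing $\Gamma$ to produce only the permitted cone singularities $I \times C_1$, $I \times C_3$, $CP$, and $CK_4$. This is handled by choosing the transverse perturbations in a sufficiently small neighborhood of each level $t_k$ and by coning off the combinatorial discrepancies at isolated interior points, exactly as in the local models of Figure \ref{fig:cfoam}; genericity of the slice levels $t_k$ guarantees that finitely many such cone points suffice and that the resulting $\Gamma'$ is a locally conical stratified space of compact type.
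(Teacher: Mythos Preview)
Your proposal is a reasonable direct construction, but it takes a considerably longer route than the paper. The paper does not prove this lemma from scratch: it simply observes that the corresponding statement for ordinary (non-$G$) $3$-dimensional graphics is already established in \cite{schommer} (Corollary 1.47), and that the only additional content in the $G$-setting is the extra stratified data encoding the X-manifold information. Since a chambering foam interacts with this extra data only through transversality, a small perturbation of the foam produced by Schommer-Pries' result restores transversality with the new strata while preserving compatibility with $\mathcal{O}$ and leaving the boundary slices fixed.

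Your Lebesgue-number/slab approach is in spirit a reconstruction of the Schommer-Pries argument rather than an alternative to it: the step where you ``resolve mismatches by introducing further transverse faces creating the $CP$ and $CK_4$ local models'' is precisely the technical content that Schommer-Pries supplies. Two points in your sketch deserve more care if you pursue this route. First, the product $\Lambda_k \times [t_k,t_{k+1}]$ will generally fail to be transverse to surfaces in $\delta^G$ that vary with $t$; you acknowledge this with a perturbation, but you should check that this perturbation can be chosen small enough to preserve subordination to $\mathcal{O}$. Second, connecting the collar pieces (built from the original $\Gamma$) to the interior slab pieces (built from the $\Lambda_k$) amounts to showing that any two chambering graphs for the same slice are cobordant through a foam with only the allowed local models---which is again exactly the $2$-dimensional relative existence statement you would otherwise be citing. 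So your argument is correct in outline but circular unless you either supply those details or, as the paper does, invoke the existing result.
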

In \cite{schommer} the corresponding statement for a $3$-dimensional graphic was proven (see Corollary 1.47 in \cite{schommer}). In case of nontransversal intersections with new elements encoding X-manifold data, $\Gamma$ can be slightly modified to make all intersections transversal while being compatible with $\mathcal{O}$.

Just as movie-moves in Figures \ref{fig:eskileri_relationlarin_yenilenmesi} and \ref{fig:new_relations} generate relations locally between two $G$-planar diagrams on the boundary of a $G$-spatial diagram, there are movie-moves describing local relations between two chambering graphs on the boundary of a compatible chambering foam. These moves along with corresponding chambering foams are shown in Figure \ref{fig:foam_moves}. Moreover, there are movie-moves coming from an intersection of a $3$-dimensional $G$-graphic with a chambering foam. These local relations are shown in Figure \ref{fig:movies2} in which the labels are omitted. 
%\begin{figure}[h!]
%    \centering
%    \includesvg{movies3_1}
%    \caption{Movie-moves for chambering graphs and the corresponding chambering foams}
%    \label{fig:foam_moves}
%\end{figure}
\begin{figure}[h!]
    \centering
    \def\svgwidth{\columnwidth}
    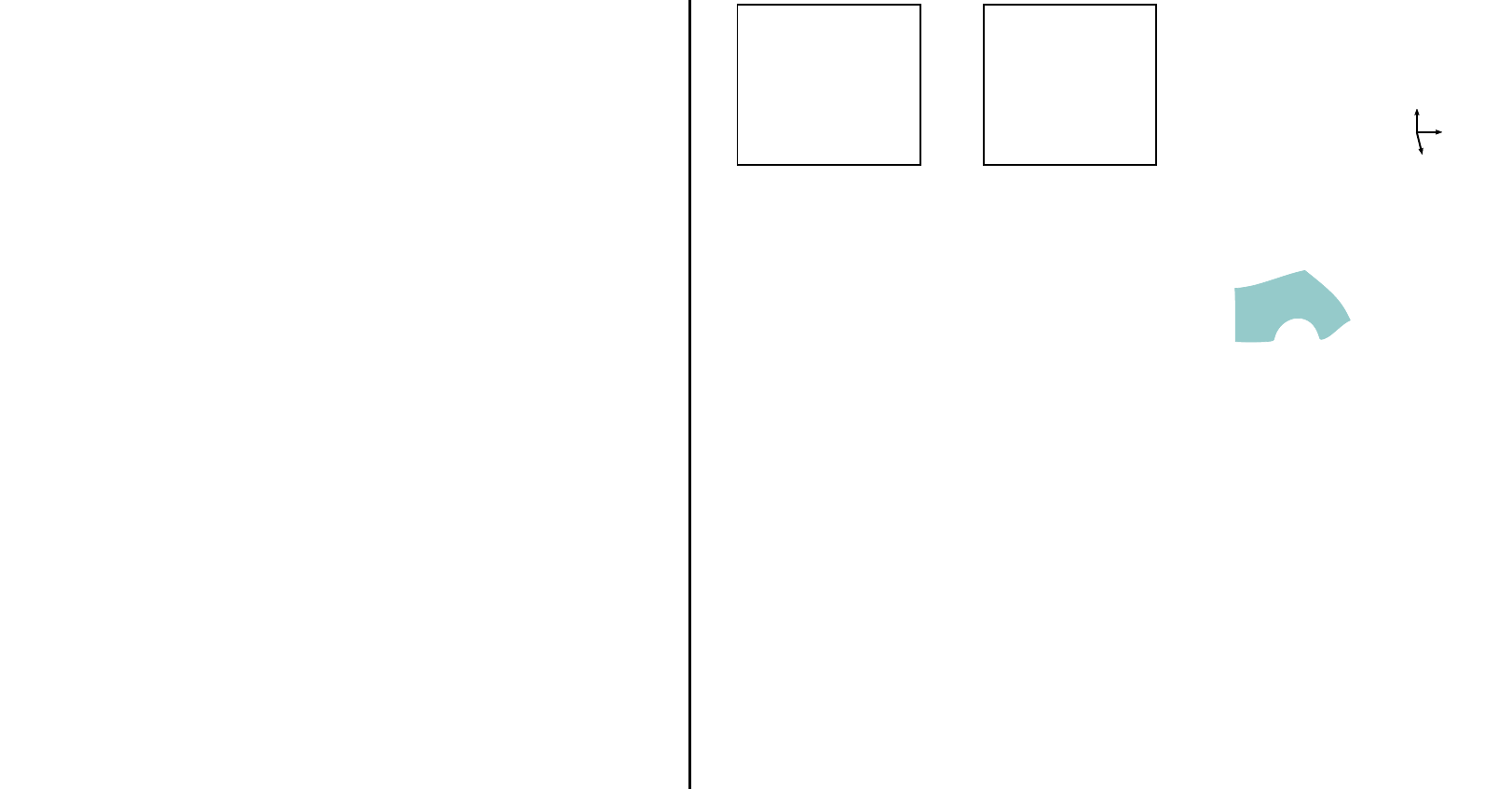
    \caption{Movie-moves for chambering graphs and the corresponding chambering foams}
    \label{fig:foam_moves}
\end{figure}

%\begin{figure}[h]
%    \centering
%    \includesvg{movies2_1}
%    \caption{Generating relations ($\mathcal{X}\mathcal{R}$) from movie-moves of graphics and chambering graphs}
%    \label{fig:movies2}
%\end{figure}
\begin{figure}[h]
    \centering
    \def\svgwidth{\columnwidth}
    %% Creator: Inkscape 1.0 (4035a4fb49, 2020-05-01), www.inkscape.org
%% PDF/EPS/PS + LaTeX output extension by Johan Engelen, 2010
%% Accompanies image file '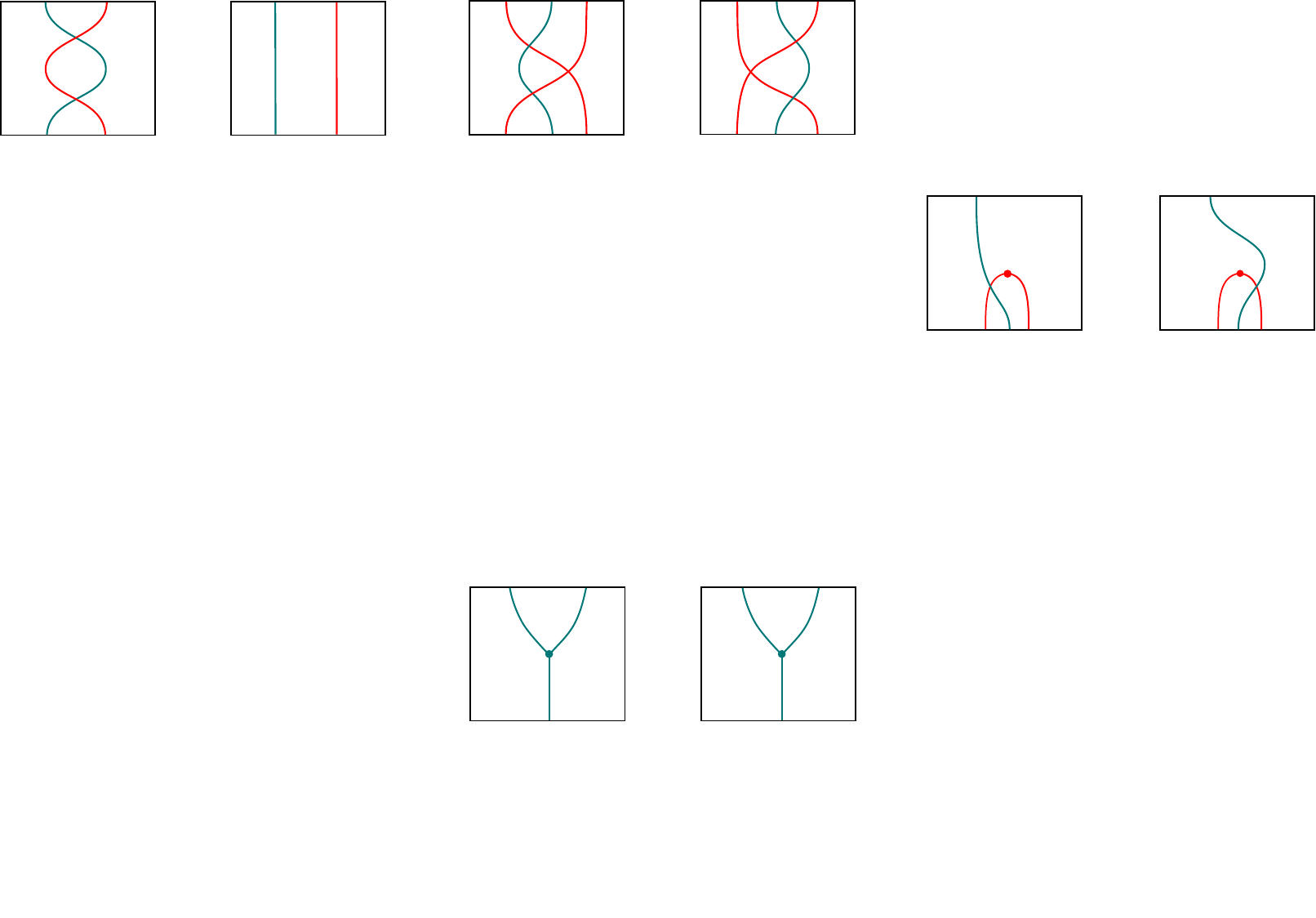' (pdf, eps, ps)
%%
%% To include the image in your LaTeX document, write
%%   \input{<filename>.pdf_tex}
%%  instead of
%%   \includegraphics{<filename>.pdf}
%% To scale the image, write
%%   \def\svgwidth{<desired width>}
%%   \input{<filename>.pdf_tex}
%%  instead of
%%   \includegraphics[width=<desired width>]{<filename>.pdf}
%%
%% Images with a different path to the parent latex file can
%% be accessed with the `import' package (which may need to be
%% installed) using
%%   \usepackage{import}
%% in the preamble, and then including the image with
%%   \import{<path to file>}{<filename>.pdf_tex}
%% Alternatively, one can specify
%%   \graphicspath{{<path to file>/}}
%% 
%% For more information, please see info/svg-inkscape on CTAN:
%%   http://tug.ctan.org/tex-archive/info/svg-inkscape
%%
\begingroup%
  \makeatletter%
  \providecommand\color[2][]{%
    \errmessage{(Inkscape) Color is used for the text in Inkscape, but the package 'color.sty' is not loaded}%
    \renewcommand\color[2][]{}%
  }%
  \providecommand\transparent[1]{%
    \errmessage{(Inkscape) Transparency is used (non-zero) for the text in Inkscape, but the package 'transparent.sty' is not loaded}%
    \renewcommand\transparent[1]{}%
  }%
  \providecommand\rotatebox[2]{#2}%
  \newcommand*\fsize{\dimexpr\f@size pt\relax}%
  \newcommand*\lineheight[1]{\fontsize{\fsize}{#1\fsize}\selectfont}%
  \ifx\svgwidth\undefined%
    \setlength{\unitlength}{463.97529053bp}%
    \ifx\svgscale\undefined%
      \relax%
    \else%
      \setlength{\unitlength}{\unitlength * \real{\svgscale}}%
    \fi%
  \else%
    \setlength{\unitlength}{\svgwidth}%
  \fi%
  \global\let\svgwidth\undefined%
  \global\let\svgscale\undefined%
  \makeatother%
  \begin{picture}(1,0.69727138)%
    \lineheight{1}%
    \setlength\tabcolsep{0pt}%
    \put(0,0){\includegraphics[width=\unitlength,page=1]{movies2_1_1.pdf}}%
    \put(0.35891054,0.04804323){\color[rgb]{0,0,0}\makebox(0,0)[lt]{\lineheight{1.25}\smash{\begin{tabular}[t]{l}\footnotesize+$\mathrm{reflections}$ $\mathrm{with}$ $\mathrm{respect}$ $\mathrm{to}$ $\mathrm{vertical}$ $\mathrm{and}$ $\mathrm{horizontal}$ $\mathrm{axes}$ \\\end{tabular}}}}%
    \put(0,0){\includegraphics[width=\unitlength,page=2]{movies2_1_1.pdf}}%
  \end{picture}%
\endgroup%

    \caption{Generating relations ($\mathcal{X}\mathcal{R}$) from movie-moves of graphics and chambering graphs}
    \label{fig:movies2}
\end{figure}
Next, we describe the sheet data. Let $\Delta=(\delta, \eta,\mu)$ be a $3$-dimensional graphic induced from a generic map $F: (\Sigma \times I, \d_h \Sigma \times I ,\d_v \Sigma \times I)\to (I^2 \times I, I \times \d I \times I, \d I \times I \times I)$ where $\Sigma$ is a cobordism type $\<2\>$-surface. Let $\Gamma$ be a chambering foam subordinate to a $\Delta$-compatible open cover. Similar to previous section a sheet data associated to $(\Delta,\Gamma)$ extends the sheet data of planar diagrams on faces $I^2 \times \{0,1\}$. Since $F$ is generic the preimage $F^{-1}(O_{\beta})$ of an open chamber consists of disjoint union of open sets each mapping diffeomorphically onto $O_{\beta}$. If a chamber $O_{\beta}$ is not open then we consider $O_{\beta}'= O_{\beta} \backslash (O_{\beta} \cap \d (I^2 \times I))$. Then, a \textit{trivialization} of a chamber is the identification of $F^{-1}(O_{\beta})$ with $\mathbb{N}_{\leq N} \times O_{\beta}$ if $F^{-1}(O_{\beta})$ is nonempty and identification with the empty set otherwise. Each $\{i\} \times O_{\beta}$ is called a \textit{sheet} and each sheet is oriented. For every chamber $O_{\beta}$ which is not open, we extend identifications to $\mathbb{N}_{\leq N} \times O_{\beta}$ by requiring the same trivializations on $O_{\beta}'$ and $O_{\beta} \cap \d (I^2 \times I)$ coming from sheet data of planar diagrams.

Trivializations of two neighboring chambers have the same number of sheets if chambers are separated by a $2$-dimensional strata of $\Gamma$. If an element in $\delta$ separates chambers then the number of sheets differ by two. A sheet data $\mathcal{S}$ (\cite{schommer}) for a pair $(\Delta,\Gamma)$ consists of trivialization of each chamber and an injection or a permutation between trivializations of neighboring chambers preserving orientations and describing how sheets are glued.

Gluing description of sheets requires the following conditions on permutations and injections. In the local models $I \times C_3, CP$, and $CK_4$ circular compositions of three or four permutations must be identity. Since fold, cusp, and Morse graphics are paths of the corresponding graphics in the previous section their sheet data do not change. According to properties of multijet stratification transversal double and triple fold intersections are possible. There are four chambers for the double and eight for the triple fold intersection. In both cases different compositions of injections starting from the chamber with the least number of sheets and ending at the chamber with the maximum number of sheets must be the same. The sheet data for intersection of fold and Morse graphics is the same as double fold intersection and the sheet data for intersection of fold and cusp graphics follows from the sheet data of cusp graphic. Sheet data of Morse relation, cusp inversion, cusp inversion$'$, and cusp flip graphics can be interpreted from the corresponding movie moves (see Figure \ref{fig:movie}). For the details of these sheet data see Section 1.5.2 in \cite{schommer}. 
%\begin{figure}[ht]
%    \centering
%    \includesvg{swallowtail}
%    \caption{The Swallowtail-$1$ sheet data}
%    \label{fig:swallow}
%\end{figure}
\begin{figure}[ht]
    \centering
    \def\svgwidth{\columnwidth}
    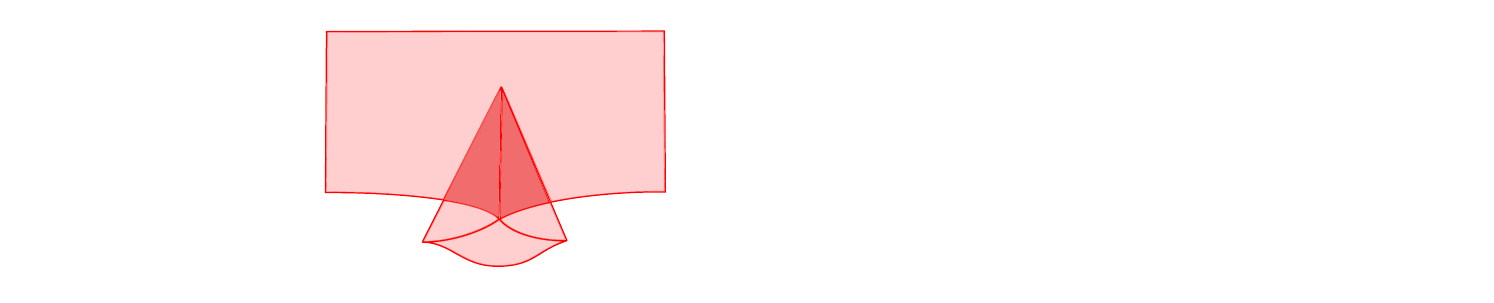
    \caption{The Swallowtail-$1$ sheet data}
    \label{fig:swallow}
\end{figure}

We shortly describe the sheet data of Swallowtail-$1$ graphic shown in Figure \ref{fig:swallow} where two (blue and green) out of three fold singularities form a double fold crossing. Let $\mathbb{N}_{\leq N} \times U_{\beta_1}$, $\mathbb{N}_{\leq N+2}\times U_{\beta_2}$, and $\mathbb{N}_{\leq N+4} \times U_{\beta_3}$ be trivializations of chambers such that sheets $\bigcup_{i=N+1}^{N+2} i  \times U_{\beta_2}$ and $\bigcup_{j=N+1}^{N+4} j \times U_{\beta_3}$ belong to swallowtail singularity as shown in Figure \ref{fig:swallow}. Using the sheet data for cusp singularities, restrictions of injections to these sheets give
\begin{align*}
    \sigma_2(N+1)&= N+3, \hspace{1cm} \sigma_2(N+2)= N+4, \\
    \sigma_3(N+1)&=N+1, \hspace{1cm} \sigma_3(N+2)= N+2, \\
    \sigma_5(N+1)&=N+1, \hspace{1cm} \sigma_5(N+2)= N+4.
\end{align*}
\begin{definition}[Definition 1.49, \cite{schommer}]
A \textit{spatial diagram} is a triple $(\Delta,\Gamma,\mathcal{S})$ consisting of a $3$-dimensional graphic $\Delta$, a chambering foam $\Gamma$ for $\Delta$ subordinate to a $\Delta$-compatible cover $\mathcal{O}= \{ O_{\alpha}\}_{\alpha \in J}$ of $I^3$, and a sheet data $\mathcal{S}$ associated to the pair $(\Delta,\Gamma)$. 
\end{definition}
Any spatial diagram $(\Delta, \Gamma, \mathcal{S})$ produces a compact $3$-dimensional manifold  with corners $M$ with $\d M = \Sigma_1 \sqcup \overline{\Sigma}_2$ where $\Sigma_1$ and $\Sigma_2$ are $\<2\>$-surfaces. Similar to previous two sections, in the case of a tuple $(\Delta^G,\Gamma)$, the associated sheet data can be improved to yield a relative homotopy class from $M$ to $X$ and $\<2\>$-X-surfaces $(\Sigma_1, R_1, \mathtt{P}_1)$ and $(\Sigma_2, R_2, \mathtt{P}_2)$. We call such sheet data carrying X-manifold data to sheets \textit{$G$-sheet data} and denote it with $\mathcal{S}^G$. Then generalizing a spatial diagram, we define a \textit{$G$-spatial diagram} as a triple $(\Delta^G, \Gamma, \mathcal{S}^G)$. 

\begin{prop}\label{independence}
Let $(\Phi_1^G,\Gamma_1,\mathcal{S}^G_1)$ and $(\Phi_2^G, \Gamma_2,\mathcal{S}^G_2)$ be $G$-planar diagrams and let $(\Sigma_1,R_1, \mathtt{P}_1)$ and $(\Sigma_2,R_2, \mathtt{P}_2)$ be the constructed cobordism type $\<2\>$-X-surfaces respectively. Then, $(\Sigma_1,R_1,\mathtt{P}_1)$ is X-homeomorphic to $(\Sigma_2,R_2, \mathtt{P}_2)$ relative to boundary if and only if there exists a $G$-spatial diagram $(\Delta^G,\Gamma,\mathcal{S}^G)$ which restricts to $(\Phi_1^G,\Gamma_1,\mathcal{S}_1^G)$ and $(\Phi_2^G,\Gamma_2,\mathcal{S}_2^G)$ on components of $I^2 \times \d I$. 
\end{prop}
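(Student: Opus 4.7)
The plan is to reduce the statement to the analogous proposition for ordinary spatial diagrams and cobordism type $\<2\>$-surfaces from \cite{schommer}, and then superimpose the $G$-data on top. Throughout, the underlying $3$-dimensional graphic, chambering foam, and sheet data recover the non-equivariant version of Schommer-Pries, while the additional arcs, surfaces, and labels encode the X-manifold structure.

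For the ``if'' direction, suppose a $G$-spatial diagram $(\Delta^G,\Gamma,\mathcal{S}^G)$ is given whose boundary restrictions are the two $G$-planar diagrams. Forgetting the X-data, the underlying spatial diagram $(\Delta,\Gamma,\mathcal{S})$ produces, by Schommer-Pries \cite{schommer}, a compact manifold with corners together with a generic map $F\colon (\Sigma,\d_v\Sigma,\d_h\Sigma)\times I\to (I^2,\d I\times I,I\times\d I)\times I$, and the underlying $\<2\>$-surfaces of $(\Sigma_i,R_i,\mathtt{P}_i)$ are diffeomorphic relative to boundary via a diffeomorphism built from the trivializations over the slices $I^2\times\{i\}$. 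The $G$-sheet data $\mathcal{S}^G$ carries the same diffeomorphism to a map of marked pointed manifolds sending $R_1$ to $R_2$, and the labels assembled from the $2$-dimensional $G$-surfaces $\delta^G$ and arcs $\eta^G$ of $\Delta^G$ together define a relative homotopy class $[\mathtt{P}]\in [(\Sigma\times I,R\times I),(X,x)]$ restricting to $\mathtt{P}_1$ and $\mathtt{P}_2$. Since $X$ is aspherical, this homotopy class is exactly the data needed to upgrade the Schommer-Pries diffeomorphism to an X-homeomorphism relative to boundary.

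For the ``only if'' direction, let $F\colon(\Sigma_1,R_1,\mathtt{P}_1)\to(\Sigma_2,R_2,\mathtt{P}_2)$ be an X-homeomorphism relative to boundary. Pulling back the generic map associated to $(\Phi_2^G,\Gamma_2,\mathcal{S}_2^G)$ along $F$, we may assume without loss of generality that both $G$-planar diagrams arise from generic maps $f_0,f_1\colon (\Sigma_1,\d_v\Sigma_1,\d_h\Sigma_1)\to(I^2,\d I\times I,I\times\d I)$ on a single cobordism type $\<2\>$-X-surface $(\Sigma_1,R_1,\mathtt{P}_1)$. By the transversality theorems used in Schommer-Pries' stratification of the relevant jet space (see Section 1.5 of \cite{schommer}), the set of generic maps $F\colon \Sigma_1\times I\to I^2\times I$ of the form needed in Definition \ref{3dimgraph} with prescribed restrictions $F|_{\Sigma_1\times\{0\}}=f_0$ and $F|_{\Sigma_1\times\{1\}}=f_1$ is non-empty. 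Any such $F$ yields a $3$-dimensional graphic $\Delta=(\delta,\eta,\mu)$ whose restrictions to $I^2\times\{0,1\}$ recover $\Phi_1,\Phi_2$, and the characteristic map $\mathtt{P}_1$ propagated along the product structure adds the X-data to produce a $3$-dimensional $G$-graphic $\Delta^G$.

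Next, one applies Lemma \ref{compatible} to extend the chambering graphs $\Gamma_1,\Gamma_2$ to a chambering foam $\Gamma$ for $\Delta^G$, after first choosing a $\Delta$-compatible open cover $\mathcal{O}$ of $I^3$ whose restrictions to $I^2\times\{0,1\}$ refine the compatible covers used in $(\Phi_i^G,\Gamma_i)$; existence of such $\mathcal{O}$ follows from the covering dimension of $I^3$ and finiteness of $\delta\cup\eta\cup\mu$. The sheet data $\mathcal{S}^G$ is then determined: over each chamber, the preimage $F^{-1}(O_\beta)$ trivializes as $\mathbb{N}_{\leq N}\times O_\beta$ with orientations and $G$-labels induced from $\mathtt{P}_1$, and the prescribed sheet data $\mathcal{S}^G_1,\mathcal{S}^G_2$ on the two boundary components forces the remaining choices. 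The resulting triple $(\Delta^G,\Gamma,\mathcal{S}^G)$ is the required $G$-spatial diagram.

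The one subtle step is ensuring that when pulling back the characteristic map across $F$ to produce the interpolating relative homotopy class on $\Sigma_1\times I$, the class actually restricts to the prescribed $\mathtt{P}_i$ on both ends; this uses the hypothesis that $X\simeq K(G,1)$ so that $[(\Sigma_1\times I,R_1\times I),(X,x)]$ is controlled by $\pi_1(X)=G$ alone, hence the product homotopy class exists and is unique up to the equivalence already built into the $2$-morphism identifications of $\X\Bord_2$. The main obstacle is bookkeeping: verifying that the injections/permutations prescribed on neighborhoods of Cusp, Swallowtail, Morse relation, Cusp inversion, and Cusp flip graphics are consistent with the $G$-labels propagating across folds; this is exactly the content of the generalized movie-moves depicted in Figures \ref{fig:eskileri_relationlarin_yenilenmesi}, \ref{fig:new_relations}, \ref{fig:foam_moves}, and \ref{fig:movies2}, which is verified case by case along the same lines as the sheet-data analysis for the Swallowtail graphic above.
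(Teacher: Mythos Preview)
Your proposal is correct and follows essentially the same approach as the paper's proof: for the forward direction you take a generic map on the mapping cylinder (equivalently, after pulling back via $F$, on $\Sigma_1\times I$) and extend the chambering data via Lemma~\ref{compatible}; for the converse you read off the relative diffeomorphism from the stratification and upgrade it using the compatible $G$-labels. The paper's proof is a two-sentence sketch of exactly these two steps, so your version simply fills in the bookkeeping that the paper leaves implicit.
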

\begin{proof}
$(\Rightarrow)$ For a given such X-homeomorphism $\mathcal{F}$, we obtain a $G$-spatial diagram by first taking a generic map on the mapping cylinder of $\mathcal{F}$ and then choosing a compatible chambering foam. $(\Leftarrow)$ The properties of the stratification imply that the boundary components of the manifold constructed from the $G$-spatial diagram are diffeomorphic relative to boundary and the compatibility of $G$-labels of the arcs on the constructed manifold implies that they are X-homeomorphic.
\end{proof}
We define a relation among $G$-planar diagrams by $(\Phi_1^G, \Gamma_1,\mathcal{S}_1^G)\sim (\Phi_2^G,\Gamma_2,\mathcal{S}_2^G)$ if there exists a $G$-spatial diagram $(\Delta^G,\Gamma,\mathcal{S}^G)$ restricting to the given $G$-planar diagrams on components of $I^2 \times \d I$. It is not hard to see that $\sim$ is an equivalence relation. Since generalized movie-moves provide (nontrivial) local relations on $G$-planar diagrams, the equivalence relation $\sim$ can be described using these moves as follows. 
\begin{prop}
Two $G$-planar diagrams are equivalent if and only if they can be related by a finite sequence of isotopies or movie-moves in Figures\footnote{By referring these figures we mean the list of all generalized movie-moves described in this section.} \ref{fig:movie1}, \ref{fig:eskileri_relationlarin_yenilenmesi}, \ref{fig:new_relations}, \ref{fig:foam_moves}, and \ref{fig:movies2}. 
\end{prop}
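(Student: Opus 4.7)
The plan is to establish both directions by relating the three-dimensional data of a $G$-spatial diagram to a finite sequence of two-dimensional elementary transitions, in close analogy with Cerf theory and with the corresponding (non-equivariant) result for planar diagrams in \cite{schommer}.

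For the easy direction $(\Leftarrow)$, I would verify that each movie-move in Figures \ref{fig:movie1}, \ref{fig:eskileri_relationlarin_yenilenmesi}, \ref{fig:new_relations}, \ref{fig:foam_moves}, and \ref{fig:movies2} arises as the restriction to $I^2 \times \partial I$ of an explicit $G$-spatial diagram: for moves coming from singularities of the $3$-dimensional graphic this is the local model of that singularity in the sense of Definition \ref{3dimgraph}, for moves coming from the chambering foam it is the corresponding local model from Figure \ref{fig:cfoam}, and for interactions between the graphic and the foam it is their transverse intersection. Trivial isotopies correspond to product $G$-spatial diagrams $(\Phi^G, \Gamma, \mathcal{S}^G) \times I$. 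Juxtaposing these local pieces along $I$ via a compatible partition, one $3$-dimensional slab per move, and invoking Proposition \ref{independence}, shows that $G$-planar diagrams related by a finite sequence of moves and isotopies are equivalent.

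For the hard direction $(\Rightarrow)$, suppose $(\Phi_1^G,\Gamma_1,\mathcal{S}_1^G) \sim (\Phi_2^G,\Gamma_2,\mathcal{S}_2^G)$ via some $G$-spatial diagram $(\Delta^G,\Gamma,\mathcal{S}^G)$. The strategy is to slice $I^3$ along the $t$-coordinate and choose a finite partition $0=t_0 < t_1 < \cdots < t_n=1$ such that each slab $I^2 \times [t_i,t_{i+1}]$ contains at most one ``event,'' understood as either (a) a point of $\mu^G$, (b) a critical value of the projection to $t$ of an arc in $\eta^G$, (c) a double or triple point of surfaces in $\delta^G$ with a fiber $I^2 \times \{t\}$, (d) a vertex of the chambering foam $\Gamma$, (e) a critical value of the projection of an edge of $\Gamma$ to $t$, or (f) a transverse intersection of $\Gamma$ with $\delta^G \cup \eta^G \cup \mu^G$. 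Such a partition exists because the set of bad values of $t$ is finite by compactness together with the transversality and local-diffeomorphism conditions of Definitions \ref{3dimgraph} and \ref{chambering_foam_defn}. At a generic time the cross-section is a $G$-planar diagram, and between two consecutive events the diagram changes only by an ambient isotopy, since the projection to $t$ has no singularities on the interiors of the strata. Across each event the local model forces the transition to be exactly one of the listed movie-moves.

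The main obstacle is the exhaustive case analysis: one must check that every local model of every codimension-$3$ singularity of $\Delta^G$ (Morse relations, cusp inversion, cusp inversion$'$, cusp flip, swallowtail), each local model of $\Gamma$ from Figure \ref{fig:cfoam}, and each generic position of the graphic relative to $\Gamma$, together with all possible indices, orientations, and $G$-labels on the added arcs/points and on the sheets, corresponds to a move appearing in the stated figures or one of its symmetric variants. The $G$-equivariance is not an essential new difficulty here: once the underlying non-equivariant move is identified, the $G$-sheet data $\mathcal{S}^G$ restricts coherently to both sides of the slab, so the $G$-labels on sheets and on the added arcs are transported consistently, and the new generalized moves in Figures \ref{fig:eskileri_relationlarin_yenilenmesi} and \ref{fig:new_relations} have been designed precisely to absorb the relations forced by the group labeling. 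Concatenating the moves coming from the successive slabs produces the required finite sequence relating $(\Phi_1^G,\Gamma_1,\mathcal{S}_1^G)$ and $(\Phi_2^G,\Gamma_2,\mathcal{S}_2^G)$.
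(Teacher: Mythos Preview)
The paper does not supply an explicit proof of this proposition; it is stated immediately after the definition of the equivalence relation and the sentence ``Since generalized movie-moves provide (nontrivial) local relations on $G$-planar diagrams, the equivalence relation $\sim$ can be described using these moves as follows,'' and is then used to deduce the $G$-planar decomposition theorem. The implicit justification is that the entire section has been set up so that the local models of a $3$-dimensional $G$-graphic (Definition~\ref{3dimgraph}), of a chambering foam (Definition~\ref{chambering_foam_defn}), and of their transverse interactions are \emph{by construction} the movie-moves listed in the cited figures; the claim is then a direct adaptation of the corresponding unlabeled statement in \cite{schommer}.

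Your proposal supplies exactly the argument the paper leaves implicit, and it is correct. The slicing strategy---choosing a generic partition of the $t$-interval so that each slab contains at most one event of types (a)--(f), using compactness and the transversality/local-diffeomorphism conditions to guarantee finiteness, and identifying each event with a listed move---is precisely the Cerf-theoretic reasoning underlying both this proposition and its non-equivariant precursor. Your observation that the $G$-sheet data transports coherently across each slab, so that the $G$-labels cause no new obstruction beyond enlarging the list of moves to include the generalized ones in Figures~\ref{fig:eskileri_relationlarin_yenilenmesi} and~\ref{fig:new_relations}, matches the paper's tacit viewpoint. In short: you have written out the proof the paper omits, by the same method the paper has in mind.
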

Proposition \ref{independence} implies the following theorem which is the first main step towards the classification of $2$-dimensional extended X-HFTs.
\begin{theorem}[$G$-planar decomposition theorem]
The relative X-homeomorphism classes of cobordism type $\<2\>$-X-surfaces are in bijection with the equivalence classes of $G$-planar diagrams.  
\end{theorem}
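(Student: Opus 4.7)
The plan is to exhibit mutually inverse maps between the two sets and verify they are well-defined on equivalence classes, leveraging Propositions \ref{lindecomp}--\ref{independence} that were established above. The bijection will follow almost formally from Proposition \ref{independence}, so the bulk of the argument is unpacking how to extract a $G$-planar diagram from a cobordism type $\<2\>$-X-surface and checking that different extraction choices land in a single equivalence class.

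First I would construct the map $\Psi$ from equivalence classes of $G$-planar diagrams to relative X-homeomorphism classes of cobordism type $\<2\>$-X-surfaces. A $G$-planar diagram $(\Phi^G,\Gamma,\mathcal{S}^G)$ produces a cobordism type $\<2\>$-X-surface $(\Sigma,R,\mathtt{P})$ together with a generic map to $I^2$ (Proposition \ref{lem2}), so set $\Psi([\Phi^G,\Gamma,\mathcal{S}^G]) = [(\Sigma,R,\mathtt{P})]$. To see this is well-defined, suppose $(\Phi^G_1,\Gamma_1,\mathcal{S}^G_1) \sim (\Phi^G_2,\Gamma_2,\mathcal{S}^G_2)$; by the definition of $\sim$ there is a $G$-spatial diagram $(\Delta^G,\Gamma,\mathcal{S}^G)$ restricting to the given diagrams on $I^2 \times \d I$. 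By the $(\Leftarrow)$-direction of Proposition \ref{independence} the induced cobordism type $\<2\>$-X-surfaces are X-homeomorphic relative to boundary.

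Next I would construct the inverse map $\Phi$. Given a cobordism type $\<2\>$-X-surface $(\Sigma,R,\mathtt{P})$, transversality density in the Whitney $C^{\infty}$-topology yields a generic map $f:(\Sigma,\d_v \Sigma,\d_h \Sigma)\to (I^2,\d I \times I, I \times \d I)$ (this is the standard Schommer-Pries stratification argument used earlier in the section), and $f$ induces a $2$-dimensional $G$-graphic $\Phi^G$. A compatible open cover $\mathcal{U}$ exists since $I^2$ is $2$-dimensional, a subordinate chambering graph $\Gamma$ exists by the earlier existence proposition, and sheet data $\mathcal{S}^G$ is constructed canonically from $f$ on each chamber. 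This yields a $G$-planar diagram $(\Phi^G,\Gamma,\mathcal{S}^G)$; define $\Phi([(\Sigma,R,\mathtt{P})]) = [(\Phi^G,\Gamma,\mathcal{S}^G)]$.

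The key step is to show $\Phi$ is well-defined. Suppose $(\Sigma_1,R_1,\mathtt{P}_1)$ and $(\Sigma_2,R_2,\mathtt{P}_2)$ are X-homeomorphic relative to boundary via some $\xi$, with chosen generic maps $f_1,f_2$ and $G$-planar diagrams $(\Phi_1^G,\Gamma_1,\mathcal{S}_1^G)$, $(\Phi_2^G,\Gamma_2,\mathcal{S}_2^G)$. Using the mapping cylinder of $\xi$ together with $f_1$ and $f_2 \circ \xi$ at the two ends, transversality gives a generic map $F$ on $\Sigma_1 \times I$ restricting to $f_i$ at $\Sigma_1 \times \{i\}$; this produces a $3$-dimensional $G$-graphic $\Delta^G$. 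Lemma \ref{compatible} then supplies a chambering foam $\Gamma$ subordinate to a $\Delta$-compatible cover and restricting to $\Gamma_1,\Gamma_2$ at the two ends, and the sheet data extend accordingly. The resulting $G$-spatial diagram witnesses $(\Phi_1^G,\Gamma_1,\mathcal{S}_1^G) \sim (\Phi_2^G,\Gamma_2,\mathcal{S}_2^G)$, showing $\Phi$ is well-defined.

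Finally, $\Psi \circ \Phi = \mathrm{id}$ and $\Phi \circ \Psi = \mathrm{id}$ are essentially tautological: starting from a $\<2\>$-X-surface, the reconstruction from its induced $G$-planar diagram recovers it up to X-homeomorphism by Proposition \ref{lindecomp} (the $1$-dimensional analogue applied chamber by chamber, together with the gluing description encoded by the sheet data); starting from a $G$-planar diagram, extracting the diagram from the resulting $\<2\>$-X-surface together with the canonical generic map of Proposition \ref{lem2} returns a diagram that differs only by an admissible choice already absorbed into its $\sim$-class. The main obstacle in this plan is the verification at the end of the previous paragraph, namely that the interpolating generic map on $\Sigma_1 \times I$ can be chosen to agree with the prescribed $f_1, f_2 \circ \xi$ on the boundary while remaining generic — this is precisely where one needs a relative version of the transversality argument for the Schommer-Pries stratification (as used in \cite{schommer}), applied to the subspace of $J^r(\Sigma \times I, I^2 \times I)$ of paths of functions, rather than to arbitrary maps.
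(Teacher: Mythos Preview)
Your proposal is correct and follows essentially the same approach as the paper: the paper states that the theorem follows from Proposition~\ref{independence}, and your argument is precisely the unpacking of that implication into the two maps $\Psi$ and $\Phi$ together with the well-definedness checks. One minor point: where you invoke Proposition~\ref{lindecomp} for $\Psi \circ \Phi = \mathrm{id}$, the directly applicable statement is the $2$-dimensional analogue Proposition~\ref{lem2}, which already gives the X-homeomorphism over $I^2$ between the original surface and the one reconstructed from its induced $G$-planar diagram.
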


\section{The Classification of 2-dimensional Extended X-HFTs}
\subsection{New bicategories arising from diagrams}
In this section, we use the $G$-planar decomposition theorem to introduce symmetric monoidal X-cobordism bicategories with diagrams.
\begin{definition}
An object of an X-cobordism bicategory with diagrams $\X\Bord_2^{\pd}$ is a triple  $((M,\hat{M}_1,\hat{M}_2,\hat{\mathtt{g}}_2),\widebar{M},\omega)$ where $(M,\hat{M}_1,\hat{M}_2,\hat{\mathtt{g}}_2)$ is an object of $\X\Bord_2$, $\widebar{M}$ is a finite set of ordered oriented points, and $\omega : M \to \widebar{M}$ is an orientation preserving bijection. 

A $1$-morphism is a triple $((A,\hat{A}_0,\hat{A}_1,T,\hat{\mathtt{p}}_1),L,\nu)$ where $(A,\hat{A}_0,\hat{A}_1,T,\hat{\mathtt{p}}_1)$ is an X-haloed $1$-cobordism, $L=(\Psi^G,\Gamma,\mathcal{S}^G)$ is a $G$-linear diagram, and $\nu: (A,T,p) \to (\widebar{A},\widebar{T},\widebar{\mathtt{p}})$ is an X-homeomorphism over $I$ with $\nu(T)=\widebar{T}$ where $(\widebar{A},\widebar{T},\widebar{\mathtt{p}})$ is the pointed $1$-cobordism constructed from the $G$-linear diagram. The composition of two composable triples is componentwise; the composition of $1$-morphisms in $\X\Bord_2$, the composition of diagrams described below, and the extension of two X-homeomorphisms over $I$, respectively.

A $2$-morphism is a triple $([(S,\hat{S},R,\hat{\mathtt{F}})],P, \kappa)$ where $[(S,\hat{S},R,\hat{\mathtt{F}})]$ is an isomorphism class of an X-haloed $2$-cobordism,  $P=[(\Phi^G,\Gamma,\mathcal{S}^G)]$ is an equivalence class of a $G$-planar diagram, and $\kappa: (S,R,\mathtt{F}) \to (\widebar{S},\widebar{R},\widebar{\mathtt{F}})$ is an X-homeomorphism over $I^2$ where $(\widebar{S},\widebar{R},\widebar{\mathtt{F}})$ is a cobordism type $\<2\>$-X-manifold constructed from a representative $(\Phi^G,\Gamma,\mathcal{S}^G)$. The composition of two composable triples is componentwise, similar to the composition of $1$-morphisms.
\end{definition}
The second bicategory $\XB^{\PD}$ is defined by forgetting X-haloed manifolds and cobordisms in $\X\Bord_2^{\pd}$ and taking isotopy classes of $G$-linear diagrams. In order to define isotopic $G$-linear diagrams, we first need to explain compositions and monoidal products of diagrams.

Horizontal compositions of $G$-linear and $G$-planar diagrams are given by the horizontal concatenation of diagrams where both $G$-sheet data agree and form a new $G$-sheet data. Vertical composition of equivalence classes of $G$-planar diagrams is vertical concatenation of diagrams followed by an isomorphism $I \cup_{\text{pt}}I \cong I$ and forgetting the $G$-linear diagram on the face along which two $G$-planar diagrams are concatenated. Figure \ref{fig:composition} shows an example of horizontal and vertical compositions of $2$-morphisms in $\XB^{\PD}$ whose labels are omitted.

%\begin{figure}[ht]
%    \centering
%    \includesvg{composition2}
%    \caption{Compositions and symmetric monoidal product of $2$-morphisms in $\XB^{\PD}$}
%    \label{fig:composition}
%\end{figure}
\begin{figure}[ht]
    \centering
    \def\svgwidth{\columnwidth}
    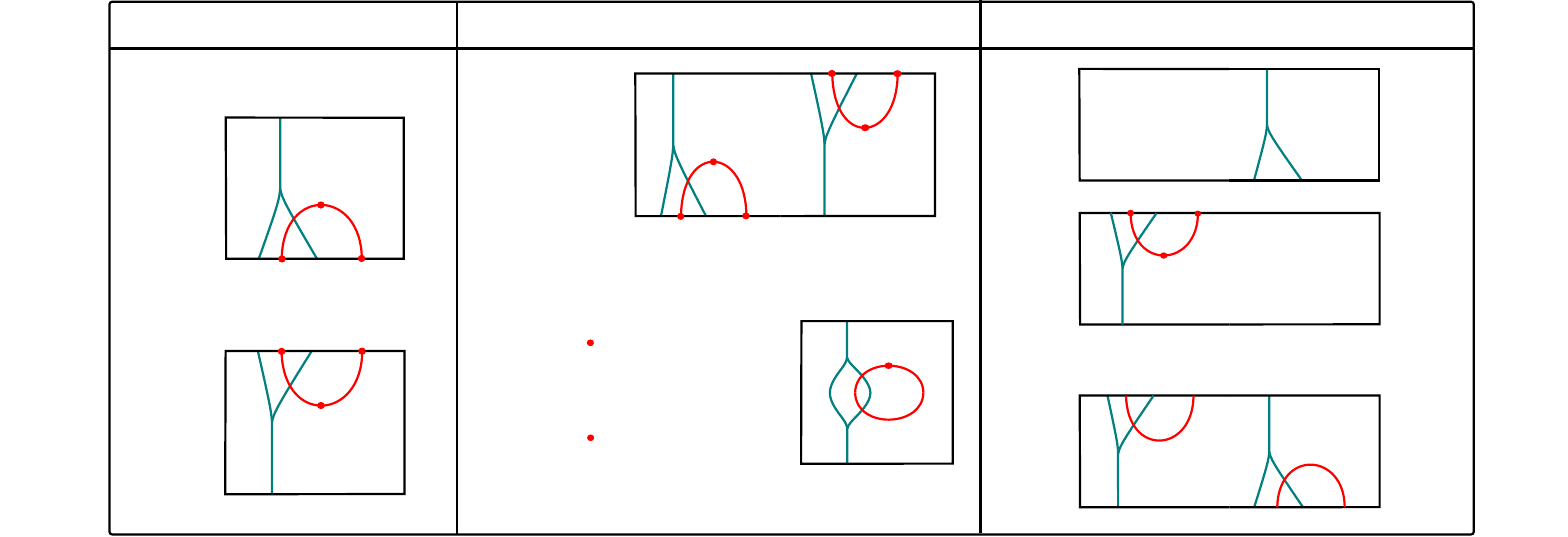
    \caption{Compositions and symmetric monoidal product of $2$-morphisms in $\XB^{\PD}$}
    \label{fig:composition}
\end{figure}
Symmetric monoidal structure on $\XB^{\PD}$ is defined as follows. Let $P_1=(\Phi^G_1,\Gamma_1,\mathcal{S}^G_1)$ and $P_2=(\Phi^G_1,\Gamma_1,\mathcal{S}^G_2)$ be two $G$-planar diagrams on $[m,n] \times I$ and on $[a,b] \times I$ for $m,n,a,b \in \mathbb{Z}$, respectively. Let $V_{\text{left}}$ be the leftmost chamber of $P_1$ and $V_{\text{right}}$ be the rightmost chamber of $P_2$. Then, $P_1 \otimes P_2$ is defined by stretching $V_{\text{left}}$ to the left by $b-a$ units, stretching $V_{\text{right}}$ to the right by $n-m$ units, and joining the stretched diagrams (see Figure \ref{fig:composition}). The $G$-sheet data and the labels of the resulting diagram are modified accordingly. Symmetric monoidal structure on $G$-linear diagrams can be deduced from this description. It is not hard to see that the described symmetric monoidal product of diagrams is compatible with the disjoint union of X-haloed manifolds.

Recall that objects of $\XB^{\PD}$ are finite set of ordered oriented points, $1$-morphisms are isotopy classes of $G$-linear diagrams, and $2$-morphisms are equivalence classes of $G$-planar diagrams. The notion of isotopy between $G$-linear diagrams is generated by the following identifications. Let $L=(\Psi^G,\Gamma,\mathcal{S}^G)$ be any $G$-linear diagram, $\emptyset$ be the empty $G$-linear diagram for the empty $1$-manifold, and $\id_a$ be the identity $G$-linear diagram of the ordered set $a$. Then $L= L \otimes \emptyset= \emptyset \otimes L$ and $L= L \circ \id_{a}=\id_{b} \circ L$ where $L: a \to b$. In this case, it is not hard to see that $\XB^{\PD}$ is a strict $2$-category.  
\begin{lemma}
\label{thm:bicat1}
Both $\X\Bord_2^{\pd}$ and $\XB^{\PD}$ are symmetric monoidal bicategories under disjoint union of X-haloed manifolds and operation $\otimes$ on diagrams.
\end{lemma}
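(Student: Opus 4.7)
The plan is to bootstrap from the fact (the earlier Lemma~\ref{thm:bicat}) that $\X\Bord_2$ is already a symmetric monoidal bicategory. For $\X\Bord_2^{\pd}$ I would observe that there is an evident forgetful strict $2$-functor $U: \X\Bord_2^{\pd} \to \X\Bord_2$ sending a triple to its first coordinate. Given an object, $1$-morphism, or $2$-morphism in $\X\Bord_2$, the fibre of $U$ is a contractible choice of diagram data together with a comparison X-homeomorphism; it is thus harmless to transport the bicategory structure and the symmetric monoidal structure of $\X\Bord_2$ along $U$. Concretely, the associators and unitors are obtained from those of $\X\Bord_2$ by pairing with the canonical rebracketings of the diagrams (which are strict), while the braiding is induced from the one in $\X\Bord_2$ together with the swap $G$-linear and $G$-planar diagrams. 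The coherence axioms then reduce to the corresponding axioms in $\X\Bord_2$ plus the verification that the diagram-level rebracketings and swaps satisfy their own coherence; the latter is immediate because the operation $\otimes$ on diagrams (``stretch and join'') is strictly associative and strictly unital with respect to the empty diagram.

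For $\XB^{\PD}$ the argument simplifies because, by the conventions stated before the lemma ($L\otimes\emptyset=\emptyset\otimes L = L$ and $L\circ \id_a=\id_b\circ L=L$ for $L:a\to b$), the underlying $2$-category is strict. I would first verify that horizontal composition of $G$-linear diagrams and horizontal/vertical composition of $G$-planar diagrams are well-defined operations, which amounts to checking that the concatenation and the identification $I\cup_{\text{pt}} I \cong I$ respect $G$-sheet data and preserve the equivalence relation on $G$-planar diagrams. The latter is exactly where the $G$-planar decomposition theorem enters: any two equivalent $G$-planar diagrams are joined by a $G$-spatial diagram, and horizontal/vertical concatenation extends to $G$-spatial diagrams, so concatenation descends to equivalence classes. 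Strict associativity of all three compositions is then a direct combinatorial check, and the interchange law is witnessed by an explicit $G$-spatial diagram interpolating between the two ways of composing four $G$-planar diagrams arranged in a square (obtained by composing chambering foams in the two orders).

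The symmetric monoidal structure on $\XB^{\PD}$ is defined by the stretch-and-join operation $\otimes$ together with the swap diagrams. Strict associativity and unitality of $\otimes$ on diagrams are immediate from the definition, so the pentagonator and unitors are identities. For the braiding I would use the swap $G$-linear and $G$-planar diagrams appearing among the elementary generators; hexagon and symmetry axioms are verified by exhibiting $G$-spatial diagrams realising each required identity, which are standard Reidemeister-type moves combined with the generalised movie-moves of Figures~\ref{fig:eskileri_relationlarin_yenilenmesi}--\ref{fig:movies2}. Finally, compatibility of $\otimes$ with horizontal and vertical composition (the monoidal structure axioms for a symmetric monoidal bicategory) is checked by the same method: exhibit explicit $G$-spatial diagrams equating the two sides.

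The main obstacle I anticipate is the bookkeeping required to verify that $\otimes$ respects the equivalence relation on $G$-planar diagrams and interacts correctly with both compositions. Specifically, one must show that if $P_1\sim P_1'$ and $P_2\sim P_2'$ via $G$-spatial diagrams $\Delta_1,\Delta_2$, then $P_1\otimes P_2\sim P_1'\otimes P_2'$; this is done by applying the stretch-and-join construction to $\Delta_1$ and $\Delta_2$ levelwise in the time coordinate to obtain a new $G$-spatial diagram, after verifying that stretching is compatible with chambering foams and sheet data. Once these diagram-level coherences are in place, transporting everything to $\X\Bord_2^{\pd}$ via the componentwise composition (and the fact that disjoint union on X-haloed manifolds is already symmetric monoidal) completes the proof for both bicategories.
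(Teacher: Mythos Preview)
Your proposal is sound in outline but takes a genuinely different route from the paper. The paper does not verify coherence axioms directly: for $\XB^{\PD}$ it says the proof is ``very similar to the proof of Lemma~\ref{thm:bicat}'', and that lemma (for $\X\Bord_2$) is in turn deferred to \cite{tezim}, where it is obtained by Shulman's method \cite{shulman} of producing symmetric monoidal bicategories from symmetric monoidal pseudo double categories. This machinery absorbs all the pentagon, hexagon, and syllepsis checks into a single verification that the relevant double category is fibrant and symmetric monoidal. For $\X\Bord_2^{\pd}$ the paper then simply observes that the two symmetric monoidal structures (on X-haloed cobordisms and on diagrams) are compatible, so the componentwise structure on triples is symmetric monoidal.

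Your approach instead bootstraps $\X\Bord_2^{\pd}$ from $\X\Bord_2$ via the forgetful functor and then handles $\XB^{\PD}$ by a direct check of each axiom, exhibiting the required equalities as explicit $G$-spatial diagrams. This is correct in principle and more self-contained, but it is substantially more laborious: the full list of symmetric monoidal bicategory axioms is long, and producing a witnessing $G$-spatial diagram for each one (and checking that $\otimes$ is well-defined on equivalence classes by stretching chambering foams levelwise) is exactly the kind of bookkeeping Shulman's theorem is designed to avoid. One small caution: your claim that ``the pentagonator and unitors are identities'' because stretch-and-join is strictly associative deserves care, since the operation as described depends on specific intervals $[m,n]$ and $[a,b]$; strictness only holds after passing to isotopy and equivalence classes, which you should state explicitly. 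Otherwise the argument goes through.
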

The proof for $\XB^{\PD}$ is very similar to the proof of Lemma \ref{thm:bicat}. The case of $\X\Bord_2^{\pd}$ follows from the compatibility of symmetric monoidal structures. Considering the $G$-planar decomposition theorem, a natural question is whether the symmetric monoidal bicategory $\XB^{\PD}$ defined by using diagrams is symmetric monoidally equivalent to X-cobordism bicategory $\X\Bord_2$. We give a positive answer using the following theorem. 
\begin{theorem}[Whitehead theorem for symmetric monoidal bicategories, Theorem 2.25, \cite{schommer}]\label{whiteadforsym}
Let $\mathcal{B}$ and $\mathcal{C}$ be symmetric monoidal bicategories. A symmetric monoidal $2$-functor $F: \mathcal{B} \to \mathcal{C}$ is a symmetric monoidal equivalence if and only if it is an equivalance of underlying bicategories. That is, $F$ is essentially surjective on objects, essentially full on $1$-morphisms, and fully-faithful on $2$-morphisms.
\end{theorem}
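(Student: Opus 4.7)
The plan is to prove the two implications separately, with the ``only if'' direction being essentially a formality and the ``if'' direction requiring real work in three stages. If $F$ is a symmetric monoidal equivalence, then by definition there is a symmetric monoidal $2$-functor $G : \mathcal{C} \to \mathcal{B}$ and monoidal pseudonatural equivalences $F \circ G \simeq \id_{\mathcal{C}}$ and $G \circ F \simeq \id_{\mathcal{B}}$, and forgetting the monoidal data immediately gives an equivalence of underlying bicategories. So everything comes down to the converse.

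For the ``if'' direction, the first stage is to produce a weak inverse $G : \mathcal{C} \to \mathcal{B}$ as an ordinary pseudofunctor of bicategories, ignoring the monoidal structures. I would use essential surjectivity to choose, for each object $c \in \mathcal{C}$, an object $G(c) \in \mathcal{B}$ together with an adjoint equivalence $\alpha_c : F(G(c)) \xrightarrow{\simeq} c$; the axiom of choice here is harmless. Essential fullness on 1-morphisms combined with fully-faithfulness on 2-morphisms then lets me define $G$ on a $1$-morphism $f : c \to c'$ by choosing a preimage of $\alpha_{c'}^{-1} \circ f \circ \alpha_c$, and on 2-morphisms by the fully-faithful property. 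The pseudofunctor constraint cells $G(g) \circ G(f) \cong G(g \circ f)$ and $\id_{G(c)} \cong G(\id_c)$ are then forced (up to unique $2$-cells) by the faithful-full hypothesis, and their coherence axioms follow by pulling back the coherence axioms of $F$ through the equivalences $\alpha_c$.

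The second stage is to upgrade $G$ to a symmetric monoidal $2$-functor. The monoidal constraints of $F$ supply invertible $2$-cells $\chi_{a,b} : F(a) \otimes F(b) \to F(a \otimes b)$ and $\iota : I_{\mathcal{C}} \to F(I_{\mathcal{B}})$; conjugating by the chosen equivalences $\alpha$ and applying fully-faithfulness on $2$-morphisms produces candidate constraints $\chi^G_{c,c'} : G(c) \otimes G(c') \to G(c \otimes c')$ and $\iota^G : I_{\mathcal{B}} \to G(I_{\mathcal{C}})$. The unit and associativity modifications for $G$, as well as the syllepsis and symmetry modifications, are then defined by the same transport-by-conjugation trick, using that $F$ already satisfies the corresponding coherence axioms. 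Finally, I would promote the equivalences $\alpha_c$ and their inverses for $G \circ F \simeq \id_{\mathcal{B}}$ to monoidal pseudonatural equivalences whose coherence data again comes from $F$'s constraints.

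The main obstacle is the bookkeeping of stage three: the definition of symmetric monoidal bicategory involves a large collection of coherence data (associator, unitors, braiding, syllepsis) and an even larger collection of modifications (pentagon, triangle, hexagons, Breen--Yetter polytopes, syllepsis axioms) that must all commute. Verifying that the transported data on $G$ satisfies these axioms is in principle mechanical, since each axiom for $G$ is obtained from the corresponding axiom for $F$ by conjugating with the adjoint equivalences $\alpha_c$ and invoking fully-faithfulness on $2$-morphisms, but the sheer combinatorial size is where all the difficulty lies. In practice I would organize this by invoking a general coherence principle for symmetric monoidal bicategories, reducing the verification to a finite list of generating axioms, and then check those one at a time by diagram chases in $\mathcal{C}$ translated back through the equivalences.
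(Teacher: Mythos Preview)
The paper does not prove this theorem; it is quoted verbatim from \cite{schommer} (Theorem 2.25) and used as a black box in Proposition~\ref{thm1}, Theorem~\ref{equivbicats}, Theorem~\ref{unorbicatsinif}, and the final lemma of the appendix. There is therefore no proof in the present paper to compare your proposal against.

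That said, your outline is the standard one and matches what Schommer-Pries does in the cited reference: the forward direction is trivial, and for the converse one builds an inverse pseudofunctor from the three local hypotheses, transports the symmetric monoidal constraint data along the chosen adjoint equivalences, and then checks the coherence axioms. Your identification of the third stage as the real work is accurate; in \cite{schommer} this is handled by appealing to the strictification/coherence theory for symmetric monoidal bicategories developed earlier in that paper, which reduces the apparently unbounded diagram-chasing to a finite verification. If you were to write this out in full, that coherence machinery is exactly the ``general coherence principle'' you allude to in your last paragraph, and invoking it is not optional: without it the axiom-by-axiom check is genuinely unmanageable.
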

\begin{prop}\label{thm1}
The forgetful $2$-functors $F$ and $G$ given by forgetting X-haloed cobordisms and diagrams respectively
 \begin{align*}
     \XB^{\PD} \xlongleftarrow[\simeq]{\quad F \quad}& \X\Bord_2^{\pd} \xlongrightarrow[\simeq]{\quad G \quad} \X\Bord_2
\end{align*}
are symmetric monoidal equivalences.
\end{prop}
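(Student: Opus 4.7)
The plan is to apply Theorem \ref{whiteadforsym} (the Whitehead theorem for symmetric monoidal bicategories), which reduces the problem to verifying that each of $F$ and $G$ is essentially surjective on objects, essentially full on $1$-morphisms, and fully faithful on $2$-morphisms. Both functors are manifestly symmetric monoidal since they are defined componentwise by forgetting structure and the compositions and monoidal products in $\XB^{\PD}$, $\XB^{\PD\text{-}\textrm{haloed}}$ (i.e.\ $\X\Bord_2^{\pd}$), and $\X\Bord_2$ were all designed to be compatible. So the entire task is checking the three equivalence conditions for $F$ and $G$.

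For $G: \X\Bord_2^{\pd} \to \X\Bord_2$, essential surjectivity on objects is immediate: any compact oriented $0$-manifold with halations admits an orientation-preserving bijection $\omega: M \to \widebar{M}$ to an abstract ordered set of signed points, and different choices of $\omega$ yield objects that are equivalent via a canonical $1$-morphism. Essential fullness on $1$-morphisms follows by choosing a generic map $f:(A,\d A) \to (I,\d I)$ on any given X-haloed $1$-cobordism, together with a $\Psi$-compatible open cover, a chambering set, and sheet data; this produces a $G$-linear diagram and, by Proposition \ref{lindecomp}, an X-homeomorphism $\nu$ over $I$. Fully faithfulness on $2$-morphisms is straightforward because a $2$-morphism of $\X\Bord_2^{\pd}$ has as its first component exactly an isomorphism class of X-haloed $2$-cobordism, and the additional data of an equivalence class of $G$-planar diagram together with an X-homeomorphism over $I^2$ is unique up to the defining relations whenever it exists, so the forgetful map on $2$-morphism hom-sets is both surjective (by the existence of a compatible $G$-planar diagram, via a generic map on the $\<2\>$-X-surface) and injective.

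For $F: \X\Bord_2^{\pd} \to \XB^{\PD}$, essential surjectivity on objects uses that every finite set of ordered oriented points can be promoted to an X-haloed $0$-manifold by taking product X-halations with constant characteristic maps, and any two such enhancements differ by an equivalence. Essential fullness on $1$-morphisms uses Proposition \ref{lindecomp} to construct a $1$-dimensional marked X-manifold $(\widebar{A},\widebar{T},\widebar{\mathtt{p}})$ from any given $G$-linear diagram, which is then equipped with the trivial X-homeomorphism $\nu = \id$ and any choice of compatible codimension zero and co-oriented codimension one X-halations; the halation choices are unique up to equivalence by the discussion in Section \ref{halation_subsectionu}. The main obstacle, and the heart of the argument, is fully faithfulness of $F$ on $2$-morphisms.

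The hard step is showing that the forgetful map on $2$-morphism hom-sets of $F$ is a bijection. Surjectivity is built into the definition of a $2$-morphism in $\X\Bord_2^{\pd}$: given an equivalence class of $G$-planar diagram $P = [(\Phi^G,\Gamma,\mathcal{S}^G)]$, the $G$-planar decomposition theorem produces a cobordism type $\<2\>$-X-surface $(\widebar{S},\widebar{R},\widebar{\mathtt{F}})$, well defined up to relative X-homeomorphism, and we can lift this to an X-haloed $2$-cobordism together with $\kappa = \id$. For injectivity, suppose two triples $([(S_i,\hat{S}_i,R_i,\hat{\mathtt{F}}_i)], P_i, \kappa_i)$ for $i=0,1$ map to the same $P \in \XB^{\PD}$; then $P_0 \sim P_1$ as $G$-planar diagrams, and the $G$-planar decomposition theorem produces a relative X-homeomorphism between the underlying $\<2\>$-X-surfaces, which combined with $\kappa_0$ and $\kappa_1$ gives an isomorphism of X-haloed $2$-cobordisms (here one uses the results of Section \ref{halation_subsectionu} to lift the surface-level X-homeomorphism to an isomorphism of X-halations, and the uniqueness up to non-canonical isomorphism for collar-induced smooth structures to identify the corresponding isomorphism classes). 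This yields equality in $\X\Bord_2^{\pd}$ and completes the verification of the Whitehead conditions for $F$, hence the proposition.
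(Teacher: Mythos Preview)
Your proof is correct and follows essentially the same approach as the paper: both apply the Whitehead theorem for symmetric monoidal bicategories (Theorem~\ref{whiteadforsym}), verify essential surjectivity on objects directly, invoke Proposition~\ref{lindecomp} (via a choice of Morse function/generic map) for essential fullness on $1$-morphisms, and appeal to the $G$-planar decomposition theorem for full faithfulness on $2$-morphisms. Your write-up is considerably more detailed than the paper's---in particular, you spell out the injectivity direction for $F$ on $2$-morphisms (lifting the relative X-homeomorphism supplied by the $G$-planar decomposition theorem to the level of X-haloed $2$-cobordisms), whereas the paper simply cites the decomposition theorem without elaboration---but the logical skeleton is identical.
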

\begin{proof}
For any given finite set $W$ of ordered oriented points or a compact oriented $0$-manifold with co-oriented codimension two X-halation $(Y,\hat{Y}_0,\hat{Y}_1,\hat{\mathtt{g}})$, there exist objects in $\X\Bord_2^{\pd}$ whose images under $F$ and $G$ are isomorphic to $W$ and $(Y,\hat{Y}_0,\hat{Y}_1,\hat{\mathtt{g}})$ respectively. For any given X-haloed $1$-cobordism there exists a Morse function with distinct critical values leading to a $G$-linear diagram and any $G$-linear diagram produces an X-haloed\footnote{Halation can be encoded into a $G$-sheet data by equipping trivializations of chambers with halations.} $1$-cobordism. Thus, by Proposition \ref{lindecomp} each $2$-functor is (essentially) full on $1$-morphisms. Lastly, by the $G$-planar decomposition theorem $2$-functors $F$ and $G$ are fully-faithful on $2$-morphisms. 
\end{proof}
Proposition \ref{thm1} implies that the X-cobordism bicategory $\X\Bord_2$ is symmetric monoidally equivalent to $\mathbbmss{\XB^{\PD}}$. The advantage of $\mathbbmss{\XB^{\PD}}$ is being a computadic unbiased semistrict symmetric monoidal $2$-category. In Appendix \ref{freely_gen_bicat_section}, we provide the definition of computadic unbiased semistrict symmetric monoidal $2$-category and prove this claim whose precise statement is given below (see Theorem \ref{computadic}). This result is an important step of the classification, which we want to describe here.
%\begin{figure}[t]
%    \centering
%      \includesvg{AllGgenrel6}
%   \caption{Generating objects ($\mathcal{XG}_0$), $1$-morphisms ($\mathcal{XG}_1)$, and $2$-morphisms ($\mathcal{XG}_2$)}
%    \label{fig:generators}
%\end{figure}
\begin{figure}[t]
    \centering
    \def\svgwidth{\columnwidth}
    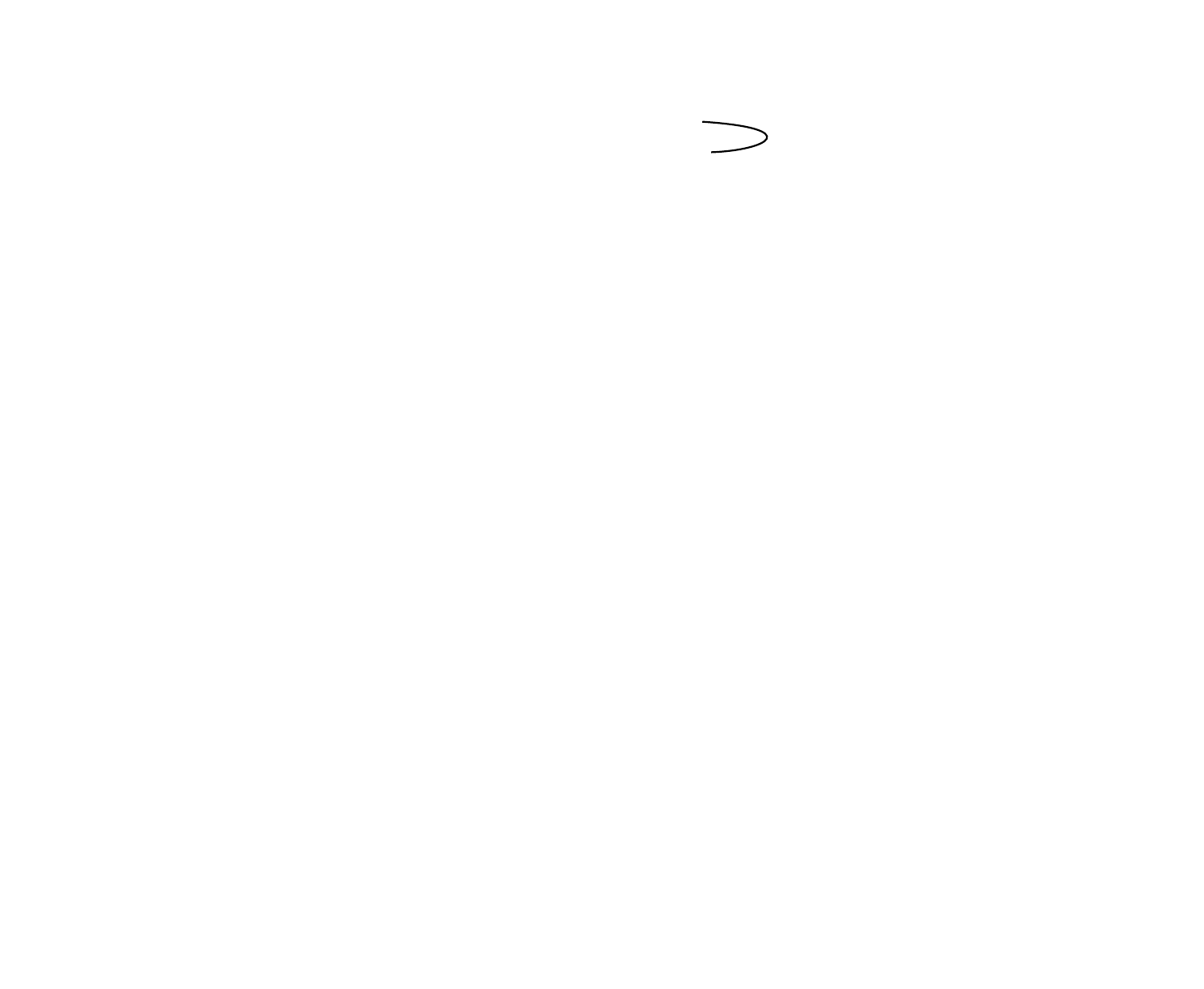
    \caption{Generating objects ($\mathcal{XG}_0$), $1$-morphisms ($\mathcal{XG}_1)$, and $2$-morphisms ($\mathcal{XG}_2$)}
    \label{fig:generators}
\end{figure}    
    
The fact that $\XB^{\PD}$ is a computadic symmetric monoidal bicategory roughly means that there exist four sets namely generating objects $\mathcal{XG}_0$, generating $1$-morphisms $\mathcal{XG}_1$, generating $2$-morphisms $\mathcal{XG}_2$, and generating relations $\mathcal{XR}$ among $2$-morphisms forming the presentation $\XP$ such that there exists an (canonical) isomorphism of symmetric monoidal bicategories $\mathsf{F}_{\uss}(\XP) \to \XB^{\PD}$ where $\mathsf{F}_{\uss}(\XP)$ is constructed from $\XP$. Therefore, we have
\begin{equation}\label{unbiased}
\begin{gathered}
     \xymatrixcolsep{4pc}\xymatrix{ \mathsf{F}_{\text{uss}}(\XP) \ar@{.>}[r]^{\exists }_{\simeq} & \XB^{\PD} & \X\Bord_2^{\pd} \ar[r]^{G}_{\simeq} \ar[l]_{F}^{\simeq} & \X\Bord_2
    }
\end{gathered}
\end{equation}
and the cofibrancy theorem states that symmetric monoidal $2$-functors out of $\mathsf{F}_{\uss}(\XP)$ are determined by the images of generating sets subject to the relations. Thus, the classification of $2$-dimensional extended X-HFTs up to symmetric monoidal equivalence
reduces to understanding images of generators in $\XP$ satisfying relations. The following theorem lists the presentation $\XP=(\mathcal{XG}_0,\mathcal{XG}_1,\mathcal{XG}_2,\mathcal{XR})$ of $\XB^{\PD}$ and its proof is given in Appendix \ref{proof_of_computadic}.
\begin{theorem} \label{computadic}
The symmetric monoidal bicategory $\XB^{\PD}$ is a computadic unbiased semistrict symmetric monoidal $2$-category with the presentation $\XP=(\mathcal{XG}_0,\mathcal{XG}_1,\mathcal{XG}_2,\mathcal{XR})$ given by the diagram versions\footnote{For generating $2$-morphisms, we mean equivalence classes of $G$-planar diagrams. We consider $G$-linear and $G$-planar diagrams whose chambering sets and graphs are trivial corresponding to covers with single elements.} of elements in Figures \ref{fig:generators} and \ref{fig:relations}, and pairs of $G$-planar diagrams corresponding to equalities in Figure \ref{fig:movies2}, where the labels $g_1,g_2,g_3,g_4,g,g',g''$ are indexed over $G$ such that $g_1 g_2 g_3 g_4=e$.
%\begin{figure}[h!]
%    \centering
%    \includesvg{relT4}
%   \caption{Generating relations ($\mathcal{X}\mathcal{R}$) among $2$-morphisms}
%    \label{fig:relations}
%    \vspace{-0.6cm}
%\end{figure}
\begin{figure}[h!]
    \centering
    \def\svgwidth{\columnwidth}
    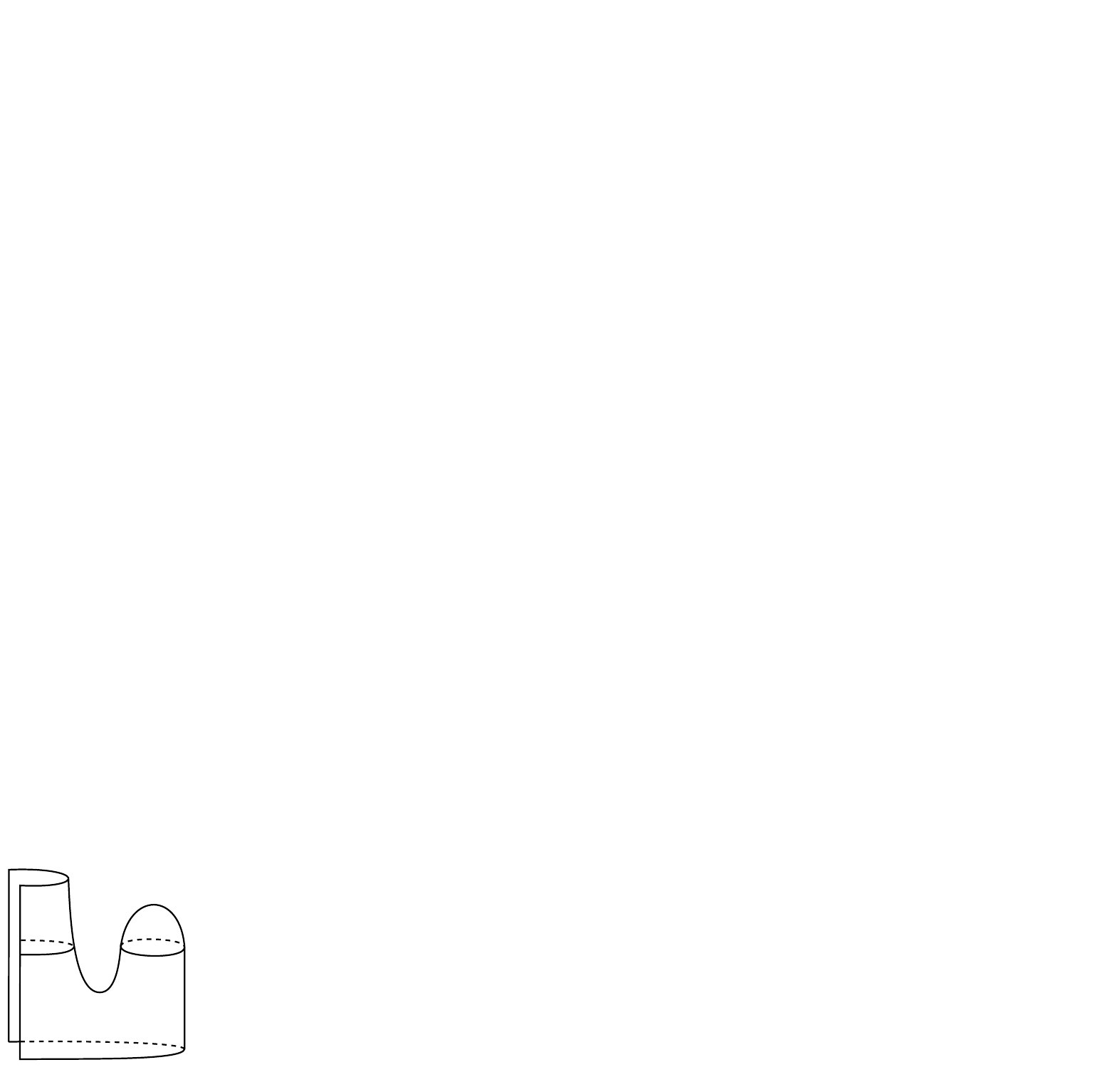
    \caption{Generating relations ($\mathcal{X}\mathcal{R}$) among $2$-morphisms}
    \label{fig:relations}
\end{figure}  
\end{theorem}

\subsection{The Cofibrancy theorem}\label{cofibrancy_section}
Knowing the equivalence $\XB^{\PD}\simeq \X\Bord_2$, the classification of $2$-dimensional extended X-HFTs mainly concerns the understanding of symmetric monoidal $2$-functors defined on $\XB^{\PD}$. The cofibrancy theorem (\cite{schommer}) is a coherence theorem for such $2$-functors. More precisely, this theorem allows replacing (weak) symmetric monoidal $2$-functors defined on computadic symmetric monoidal bicategories with their strict versions naturally. Furthermore, such strict $2$-functors are determined by the images of generating sets of a presentation subject to the relations.      

The cofibrancy theorem holds for any computadic monoidal bicategory (see \cite{schommer}, \cite{piotr}) and specifically for stricter versions of symmetric monoidal bicategories. In this section, we only focus on its version for computadic unbiased semistrict symmetric monoidal $2$-categories, which is the key step of the classification. In the following, we denote the collection of objects of a symmetric monoidal bicategory $\mathcal{C}$ by $\mathcal{C}_0$, $1$-morphisms by $\mathcal{C}_1$, and $2$-morphisms by $\mathcal{C}_2$.
\begin{definition}
Let $(\P=(\G_0,\G_1,\G_2, \mathcal{R}),s,t)$ be an unbiased semistrict symmetric presentation and $\mathcal{C}$ be a symmetric monoidal bicategory. \textit{The bicategory $\P(\mathcal{C})$ of $\P$-data in $\mathcal{C}$} is defined as follows:
\begin{itemize}
    \item The objects of $\P(\mathcal{C})$ are triples $\{(A_0,A_1,A_2) \ | \ A_i: \G_i \to \mathcal{C}_i \text{ for } i=0,1,2 \}$ of assignments satisfying following conditions. These assignments extend canonically to $BW^{\uss}(\G_0)$, $BS^{\uss}(\G_1)/\sim$, and $PG^{\uss}(\G_2)/\sim$ using the monoidal product and braiding of $\mathcal{C}$ as follows. For any $x= a_1\dots a_n \in BW^{\uss}(\G_0)$ we have $A_0(x)= A_0(a_1)\otimes \dots \otimes A_0(a_n)$, $\id_x= \id_{A_0(x)}$ and $\beta^{\sigma}_{a,\sigma(a)}$ for $\sigma \in S_n$ is given by writing $\sigma$ as a product of adjacent transpositions first and then applying the braiding of $\mathcal{C}$ to each of them. Composition and monoidal product of $1$-morphisms follow similarly extending $A_1$ to $BS^{\uss}(\G_1)/\sim$. In the extension to $PG^{\uss}(\G_2)$ interchanger $2$-morphisms $\phi$ are taken as identity and the extension to remaining $2$-morphisms uses the naturality of the braiding of $\mathcal{C}$ and the corresponding modifications. These extensions are required to be globular i.e. $A_0 \circ p = p \circ A_1$ and $A_1 \circ p= p \circ A_2$ 
    where $p$ is used for source $s$ and target $t$ maps. Among all extensions only those with $A_2(x)=A_2(y)$ for all $(x,y) \in \mathcal{R}$ are considered.  
    \item The $1$-morphisms from $A=(A_0,A_1,A_2)$ to $B=(B_0,B_1,B_2)$ are tuples $\{(\alpha_0,\alpha_1) \ | \  \alpha_i: \G_i \to \mathcal{C}_{i+1} \text{ for } i=0,1 \ s(\alpha_0(a))=A_0(a), \ t(\alpha_0(b))=B_0(b) \ \& \ \alpha_1(f):  B_1(f) \circ \alpha_0(a) \xrightarrow{\cong} \alpha_0(b) \circ A_1(f) \ \forall f :a \to b \in \G_1\}$ of assignments. These assignments extend canonically to $BW^{\uss}(\G_0)$ using monoidal product of $\mathcal{C}$ and to $BS^{\uss}(\G_1)/\sim$ using induction on the number of monoidal product and compositions as follows. For $x=a_1\ldots a_n \in BW^{\uss}(\G_0)$, $\alpha_1(\id_x)$ is the composition
    \begin{align*} 
    B_1(\id_x) \circ \alpha_0(x) = \id_{B_0(x)} \circ \alpha_0(x) \xrightarrow{\ell^{\mathcal{C}}} \alpha_0(x) \xrightarrow{(r^{\mathcal{C}})^{-1}} \alpha_0(x) \circ \id_{A_0(x)} = \alpha_0(x) \circ A_1(\id_x)
        \end{align*}
    where $\ell$ and $r$ are the left and right unitors of the underlying bicategory of $\mathcal{C}$. For $x= a_1 \dots a_n \in BW^{\uss}(\G_0)$ and $\sigma \in S_n$, the $2$-morphism $\alpha_1(\beta^{\sigma}_{x,\sigma(x)})$ is given by the components of the braiding (equivalence transformation) of $\mathcal{C}$ on $1$-morphism $\alpha_0(x)$. Next, for elements $f: a \to b$ and $f : a' \to b'$ in $BW^{\uss}(\G_1)$, $\alpha_1( f \otimes f')$ is defined as the composition
    \begin{align*}
    (B_1(f) \otimes B_1(f')) \circ (\alpha_0(a) &\otimes \alpha_0(a')) \to (B_1(f) \circ \alpha_0(a)) \otimes (B_1(f') \circ \alpha_0(a')) \xrightarrow{\alpha_1 (f) \otimes \alpha_1(f')} \\ 
    ( \alpha_0(b) \circ A_1(f)) &\otimes (\alpha_0(b') \circ A_1(f')) \to (\alpha_0(b) \otimes \alpha_0(b')) \circ (A_1(f) \otimes A_1(f')).
    \end{align*}
    For elements $f:a \to b$ and $g : b \to c$ in $BW^{\uss}(\G_1)$, $\alpha_1(g \circ f)$ is defined as the composition
    \begin{align*}
    (B_1(g) \circ  B_1(f)) \circ \alpha_0(a) \xrightarrow{\dot{a}^{\mathcal{C}}} B_1(g) \circ (B_1(f) &\circ \alpha_0(a)) \xrightarrow{\id \ast \alpha_1(f)} B_1(g) \circ (\alpha_0(b) \circ A_1(f)) \xrightarrow{(\dot{a}^{\mathcal{C}})^{-1}}\\
    (B_1(g)  \circ \alpha_0(b)) \circ A_1(f) \xrightarrow{\alpha_1(g) \ast \id}
    (\alpha_0(c) &\circ A_1(g)) \circ A_1(f) \xrightarrow{\dot{a}^{\mathcal{C}}} 
    \alpha_0(c) \circ (A_1(g) \circ A_1(f))   
    \end{align*}
    where $\dot{a}^{\mathcal{C}}$ is the associator of the underlying bicategory of $\mathcal{C}$. These assignments are also required to be natural with respect to equivalence classes of paragraphs. That is, for any $[\beta] \in PG^{\uss}(\G_2)/\sim$ with $\beta: f \to g$, we have $ ( \id_{\alpha_0(b)} \ast A_2(\beta)) \circ \alpha_1(f) = \alpha_1(g) \circ ( B_2(\beta) \ast \id_{\alpha_0(a)})$, equivalently;
    \begin{center}
    \begin{tikzcd}[column sep={4 cm,between origins}, row sep={3 cm,between origins}]
    A_0(a) \arrow[d, "\alpha_0(a)"'] \arrow[r, bend left=20, "A_1(g)"{name=C, above}] \arrow[r, bend right=20, "A_1(f)"{name=D, below}] \arrow[dr, bend left=10, ""{name=E, above}] \arrow[dr, bend right=35, ""{name=F, below}]  & A_0(b) \arrow[d,"\alpha_0(b)"] \\
    B_0(a) \arrow[r, "B_1(f)"']  & B_0(b) \arrow[Rightarrow, from=D, to=C, "A_2(\beta)"'{yshift=0ex}, start anchor={[xshift=-0.5ex, yshift=1.3ex]},end anchor={[xshift=-0.5ex,yshift=-1ex]}] \arrow[Rightarrow, from=F, to=E, "\alpha_1(f)"', start anchor={[xshift=.4ex, yshift=.4ex]},end anchor={[xshift=-0.4ex,yshift=-0.4ex]}]
\end{tikzcd} \hspace{.3cm} $=$ \hspace{.3cm}
        \begin{tikzcd}[column sep={4 cm,between origins}, row sep={3 cm,between origins}]
    A_0(a) \arrow[r, "A_1(g)"] \arrow[d, "\alpha_0(a)"'] \arrow[dr, bend left=35, ""{name=E, above}] \arrow[dr, bend right=10, ""{name=F, below}] \arrow[Rightarrow, from=F, to=E, "\alpha_1(g)"', start anchor={[xshift=.4ex, yshift=.4ex]},end anchor={[xshift=-0.4ex,yshift=-0.4ex]}] 
    & A_0(b) \arrow[d,"\alpha_0(b)"] \\
    B_0(a)  \arrow[r, bend left=20, "B_1(g)"{name=U, above}] \arrow[r, bend right=20,"B_1(f)"{name=V,below}]  & B_0(b). \arrow[Rightarrow, from=V, to=U, "B_2(\beta)"'{yshift=-0.1ex}, start anchor={[xshift=-.7ex, yshift=1.2ex]},end anchor={[xshift=-0.7ex,yshift=-.9ex]}]
\end{tikzcd}
    \end{center}
    \item The $2$-morphisms from $\alpha=(\alpha_0,\alpha_1)$ to $\beta=(\beta_0,\beta_1)$ are assignments $\{ \theta_0: \G_0 \to \mathcal{C}_2 \ | \ \theta_0(a): \alpha_0(a) \to \beta_0(a), \ \forall a \in \G_0\}$ where $\alpha,\beta: A \to B$ for $A=(A_0,A_1,A_2)$ and $B=(B_0,B_1,B_2)$. These assignments extend canonically to $BW^{\uss}(\G_0)$ using the monoidal product of $\mathcal{C}$ and they are required to be natural with respect to $BW^{\uss}(\G_1)$. That is, for any $f: a \to b \in BW^{\uss}(\G_1)$ we have $ \beta_1(f) \circ ( \theta_0(a) \ast \id_{B_1(f)})=  (\id_{A_1(f)} \ast  \theta_0(b) ) \circ \alpha_1(f)$, equivalently;
    \begin{center}
 \begin{tikzcd}[column sep={4 cm,between origins}, row sep={3 cm,between origins}]
 A_0(a) \arrow[r, "A_1(f)"] \arrow[d, bend right=45, "\alpha_0(a)\hspace{.7cm}"{name=A, below}] \arrow[d, bend left=35, "\hspace{.7cm}\beta_0(a)"{name=B, below}] \arrow[Rightarrow, from=A, to=B, "\theta_0(a)"{yshift=0.2ex}, start anchor={[xshift=-3ex, yshift=0ex]},end anchor={[xshift=3ex, yshift=0ex]}] \arrow[dr, bend left=35, ""{name=C, below}] \arrow[dr, bend right=10, ""{name=D, below}]
\arrow[Rightarrow, from=D, to=C, "\beta_1(f)"', start anchor={[xshift=.2ex, yshift=.2ex]},end anchor={[xshift=-0.4ex,yshift=-0.4ex]}] 
 & A_0(b) \arrow[d, "\beta_0(b)"] \\B_0(a) \arrow[r, "B_1(f)"'] & B_0(b)
 \end{tikzcd}\hspace{.3cm}$=$ \hspace{.3cm}
 \begin{tikzcd}[column sep={4 cm,between origins}, row sep={3 cm,between origins}]
 A_0(a) \arrow[r, "A_1(f)"] \arrow[d, "\alpha_0(a)"'] \arrow[dr, bend left=10, ""{name=C,below}] \arrow[dr, bend right=35, ""{name=D,above}] \arrow[Rightarrow, from=D, to=C, "\alpha_1(f)"'] & A_0(b) \arrow[d, bend right=35, "\alpha_0(b)\hspace{.7cm}"{name=A, above}] \arrow[d, bend left=45, "\hspace{0.7cm}\beta_0(b)"{name=B, above}] \arrow[Rightarrow, from=A, to=B, "\theta_0(b)"{yshift=0.2ex}, start anchor={[xshift=-3ex, yshift=0ex]},end anchor={[xshift=3ex, yshift=0ex]}]  \\ B_0(a) \arrow[r, "B_1(f)"'] & B_0(b).
 \end{tikzcd}
 \end{center}
\end{itemize}
\end{definition}
With the trivial coherence data $(\chi,\iota, \mathcal{W}, \mathcal{G}, \mathcal{R},\mathcal{U})$, any object of $\P(\mathcal{C})$ considered with the extensions gives rise to a strict symmetric monoidal $2$-functor $\mathsf{F}_{\uss}(\P) \to \mathcal{C}$ (see \cite{schommer}, \cite{tezim}). Similarly, any $1$-morphism and $2$-morphism of $\P(\mathcal{C})$ considered with their extensions yield a strict\footnote{Those monoidal transformations whose monoidal structure $1$- and $2$-morphisms are identities.} symmetric monoidal transformation and a symmetric monoidal modification, respectively (see Section 2.3 in \cite{schommer}).
\begin{definition}
Let $\mathcal{C}$ be a symmetric monoidal bicategory, $\mathsf{F}_{\uss}(\P)$ be a computadic unbiased semistrict symmetric monoidal $2$-category for a given presentation $\P=(\mathcal{G}_0,\mathcal{G}_1,\mathcal{G}_2,\mathcal{R})$. The bicategory $\text{SymMon}(\mathsf{F}_{\uss}(\P),\mathcal{C})$ has symmetric monoidal $2$-functors as objects, symmetric monoidal transformations as $1$-morphisms, and symmetric monoidal modifications as $2$-morphisms.
\end{definition}
By construction of $\P(\mathcal{C})$ we have a strict inclusion functor $\imath: \P(\mathcal{C})\hookrightarrow \text{SymMon}(\mathsf{F}_{\uss}(\P),\mathcal{C})$ given by the associated $2$-functors, transformations, and modifications. The cofibrancy theorem below states that $\imath$ is an equivalence of bicategories.
\begin{theorem}[Cofibrancy Theorem, Theorem 2.78, \cite{schommer}]\label{cofibrancy}
Let $\mathcal{C}$ be a symmetric monoidal bicategory and let $\mathsf{F}_{\uss}(\P)$ be a computadic unbiased semistrict symmetric monoidal $2$-category constructed from an unbiased semistrict presentation $\P=(\G_0,\G_1,\G_2,\mathcal{R})$. Then, the inclusion $\imath: \P(\mathcal{C})\hookrightarrow \text{SymMon}(\mathsf{F}_{\uss}(\P),\mathcal{C})$  is an equivalence of bicategories.
\end{theorem}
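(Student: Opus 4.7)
My plan is to apply the Whitehead theorem for symmetric monoidal bicategories (Theorem \ref{whiteadforsym}), which reduces the task to verifying three conditions for the inclusion $\imath : \P(\mathcal{C}) \hookrightarrow \text{SymMon}(\mathsf{F}_{\uss}(\P),\mathcal{C})$: essential surjectivity on objects, essential fullness on $1$-morphisms, and fully faithfulness on $2$-morphisms. Throughout, the key technical input is the coherence theorem for symmetric monoidal bicategories (which says that every symmetric monoidal bicategory is equivalent to a semistrict one) together with the freeness of $\mathsf{F}_{\uss}(\P)$ as a computad built from $\P=(\G_0,\G_1,\G_2,\mathcal{R})$.

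For essential surjectivity on objects, I take an arbitrary symmetric monoidal $2$-functor $F: \mathsf{F}_{\uss}(\P) \to \mathcal{C}$ and construct $\P$-data $A=(A_0,A_1,A_2)$ together with a symmetric monoidal equivalence $F \simeq \imath(A)$. The construction is tautological on generators: set $A_i = F|_{\G_i}$ for $i=0,1,2$. The issue is that $F$ carries a priori nontrivial coherence data $(\chi, \iota, \mathcal{W},\mathcal{G},\mathcal{R},\mathcal{U})$, whereas $\imath(A)$ has trivial coherence data. The plan is to strictify $F$: build a symmetric monoidal transformation $\alpha : F \Rightarrow \imath(A)$ whose components on generating objects are identities, and whose structure $2$-cells are the inverses of $F$'s coherence cells. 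The fact that these cells assemble into a genuine SM transformation is an instance of the coherence theorem applied levelwise; compatibility with $\mathcal{R}$ follows from $F$ being a well-defined $2$-functor, since the relations already hold in $\mathsf{F}_{\uss}(\P)$.

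For essential fullness on $1$-morphisms, I would take a symmetric monoidal transformation $\tau : \imath(A) \Rightarrow \imath(B)$ between two strict $2$-functors arising from $\P$-data $A$ and $B$, and construct a $1$-morphism $(\alpha_0,\alpha_1)$ in $\P(\mathcal{C})$ inducing $\tau$ up to an invertible modification. Set $\alpha_0 := \tau|_{\G_0}$ and define $\alpha_1(f)$ for $f \in \G_1$ to be the naturality square of $\tau$ at $f$. The canonical extensions prescribed in the definition of $\P(\mathcal{C})$ (to words, to compositions, to braidings) recover exactly the remaining data of $\tau$, using the axioms of a SM transformation and the semistrict structure on $\mathsf{F}_{\uss}(\P)$. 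Naturality with respect to $\mathcal{R}$ is automatic since $\tau$ is globular. For fully faithfulness on $2$-morphisms, a symmetric monoidal modification $\Theta$ between two such transformations is, by definition of modification, uniquely determined by its components on objects; restricting to $\G_0$ gives an assignment $\theta_0 : \G_0 \to \mathcal{C}_2$, and the compatibility with $1$-morphisms in $BW^{\uss}(\G_1)$ required in the definition of $2$-morphisms of $\P(\mathcal{C})$ is precisely the modification square. This gives a bijection between modifications and $2$-morphism $\P$-data.

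The hard part will be the essential surjectivity step: carefully producing a coherent SM transformation $F \Rightarrow \imath(A)$ from the coherence data of $F$, and checking that all the pentagon/hexagon axioms for SM transformations are satisfied. This is where the unbiased semistrict structure of $\mathsf{F}_{\uss}(\P)$ is essential, because it shrinks the bookkeeping: words in $BW^{\uss}(\G_0)$ have no associators to contend with, paragraphs in $PG^{\uss}(\G_2)$ absorb all interchanger $2$-cells into trivial $\phi$'s, and the only genuinely nontrivial coherence data on $F$ that must be trivialized comes from the unitors, the monoidal compositor $\chi$, and the braiding modifications $\mathcal{R},\mathcal{U}$. The induction is on the length of words and on the depth of paragraph expressions, and the base case reduces to the coherence data living directly on the generators. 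Once this strictification is in place, the other two parts of the Whitehead criterion follow by the elementary arguments sketched above, completing the proof.
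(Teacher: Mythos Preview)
The paper does not prove this theorem; it is quoted from \cite{schommer} (Theorem~2.78) and used as a black box. So there is no ``paper's own proof'' to compare against. Your proposal is a reasonable sketch of how one would prove such a strictification/cofibrancy result, and the overall shape---check essential surjectivity, essential fullness on $1$-morphisms, and fully-faithfulness on $2$-morphisms for $\imath$---is exactly the standard route.

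Two small remarks. First, Theorem~\ref{whiteadforsym} as stated is about symmetric monoidal $2$-functors between symmetric monoidal bicategories, whereas $\P(\mathcal{C})$ and $\text{SymMon}(\mathsf{F}_{\uss}(\P),\mathcal{C})$ are treated here simply as bicategories and $\imath$ is just an inclusion of bicategories; you want the plain bicategorical Whitehead criterion, not the symmetric monoidal one (the three conditions are the same, so this is only a citation mismatch). Second, your essential surjectivity step is where the real content lies: producing, from an arbitrary symmetric monoidal $2$-functor $F$, a strict one $\imath(A)$ and an equivalence $F\simeq \imath(A)$ is exactly the strictification theorem for symmetric monoidal $2$-functors out of a computad. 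Your identification of this as the hard step and your plan to build the equivalence from the coherence data of $F$ by induction on word/paragraph depth is correct in spirit; carrying it out carefully is essentially the proof in \cite{schommer}.
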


The following lemma taken from \cite{schommer} (Lemma 2.15) implies that for any symmetric monoidal bicategory $\mathcal{C}$, the bicategories $\text{SymMon}(\X\Bord_2 ,\mathcal{C})$ and $\text{SymMon}(\XB^{\PD},\mathcal{C})$ of symmetric monoidal $2$-functors, symmetric monoidal transformations, and modifications are equivalent.
\begin{lemma}[\cite{schommer}]\label{whiskering}
Let $\mathcal{M}$ and $\mathcal{M}'$ be symmetric monoidal bicategories, and $H: \mathcal{M} \to \mathcal{M}'$ be a symmetric monoidal $2$-functor, which is an equivalence. Then there is a canonical equivalence $H^{\ast}: \text{SymMon}(M', B) \to \text{SymMon}(M,B)$ of bicategories given by composition on the level of objects, and by symmetric monoidal whiskering on the level of $1$- and $2$-morphisms.
\end{lemma}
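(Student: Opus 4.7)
The plan is to build $H^\ast$ explicitly by (pre)composition and whiskering, and then to verify it is an equivalence of bicategories via the Whitehead theorem (Theorem \ref{whiteadforsym}). On an object $F \in \text{SymMon}(\mathcal{M}',\mathcal{B})$, set $H^\ast(F) := F \circ H$, whose symmetric monoidal structure is obtained by stacking the coherence data (unit, multiplication, braiding modifications) of $H$ with those of $F$. On a symmetric monoidal transformation $\alpha : F \to G$, set $H^\ast(\alpha) := \alpha \ast H$, the whiskering of $\alpha$ by $H$, whose modification data defining its symmetric monoidal structure pulls back from $\alpha$ along $H$. On a symmetric monoidal modification $\Gamma : \alpha \Rrightarrow \beta$, set $H^\ast(\Gamma) := \Gamma \ast H$. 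Compatibility with horizontal and vertical composition, as well as with the monoidal products of the various hom-bicategories, is a bookkeeping exercise in the middle-four interchange, unitors, and associators of $\mathcal{B}$, giving a well-defined $2$-functor.

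To establish that $H^\ast$ is an equivalence, I would pick a (symmetric monoidal) quasi-inverse $K : \mathcal{M}' \to \mathcal{M}$ of $H$ together with symmetric monoidal equivalence transformations $\eta : \id_{\mathcal{M}} \simeq K \circ H$ and $\epsilon : H \circ K \simeq \id_{\mathcal{M}'}$, which exist because a symmetric monoidal $2$-functor that is an equivalence of underlying bicategories admits a symmetric monoidal inverse. The candidate quasi-inverse of $H^\ast$ is then $K^\ast$, and the invertible transformations $\eta, \epsilon$ produce, by whiskering on the $\mathcal{B}$-side of $\text{SymMon}(-,\mathcal{B})$, equivalence transformations $\id \simeq K^\ast H^\ast$ and $H^\ast K^\ast \simeq \id$ in the appropriate mapping bicategories. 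Equivalently, one can verify the three Whitehead conditions directly: essential surjectivity on objects follows by sending $F' : \mathcal{M} \to \mathcal{B}$ to $F' \circ K$, since $(F' \circ K) \circ H \simeq F' \circ (K \circ H) \simeq F'$ via $\eta$; essential fullness on $1$-morphisms and fully-faithfulness on $2$-morphisms reduce to the fact that whiskering by the equivalence $H$ is an equivalence of hom-bicategories of $\mathcal{M} \to \mathcal{B}$.

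The main obstacle is keeping track of symmetric monoidal coherence data throughout the whiskering. One must verify not only that $\alpha \ast H$ is a pseudonatural transformation, but that its monoidal and braiding modifications, assembled from those of $\alpha$ and $H$, still satisfy the defining axioms of a symmetric monoidal transformation; similarly for the modification level, and crucially for the would-be invertible transformations $\eta^\ast$ and $\epsilon^\ast$ witnessing the equivalence, which must themselves be promoted to symmetric monoidal invertible transformations. This verification is routine but notation-heavy; it is the technical core of Lemma 2.15 in \cite{schommer}, and we omit the diagrams and refer the reader there.
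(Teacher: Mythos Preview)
The paper does not give its own proof of this lemma: it is quoted verbatim as Lemma 2.15 of \cite{schommer} and used as a black box. Your sketch is a correct outline of how the argument goes (precomposition on objects, symmetric monoidal whiskering on $1$- and $2$-morphisms, then either building a quasi-inverse from a symmetric monoidal inverse of $H$ or invoking Whitehead), and you already point the reader to \cite{schommer} for the coherence bookkeeping, which is exactly what the paper does.
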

We denote the bicategory $\text{SymMon}(\X\Bord_2,\mathcal{C})$ by $\mathcal{E}$-$\mathcal{HFT}(X,\mathcal{C})$ and state the classification of $2$-dimensional extended HFTs with target $X\simeq K(G,1)$ as follows. 
\begin{theorem}\label{genelsinif}
\label{thm:genelsinif}
Let $\XP$ be the presentation of $\XB^{\PD}$ given in Theorem \ref{computadic}. Then for any symmetric monoidal bicategory $\mathcal{C}$, there is an equivalence of bicategories $ \mathcal{E}\text{-}\mathcal{HFT}(X,\mathcal{C}) \simeq \XP(\mathcal{C})$.
\end{theorem}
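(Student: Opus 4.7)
The plan is to chain together the three main results established earlier in the paper: the equivalence $\X\Bord_2 \simeq \XB^{\PD}$ from Proposition \ref{thm1}, the isomorphism $\mathsf{F}_{\uss}(\XP) \cong \XB^{\PD}$ from Theorem \ref{computadic}, and the Cofibrancy Theorem (Theorem \ref{cofibrancy}). The whiskering lemma (Lemma \ref{whiskering}) is what makes it possible to transport symmetric monoidal $2$-functors along the first two equivalences without losing any information at the level of transformations and modifications.

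More explicitly, I would build the equivalence in three steps. First, apply Lemma \ref{whiskering} to the symmetric monoidal equivalence $G: \X\Bord_2^{\pd} \xrightarrow{\simeq} \X\Bord_2$ of Proposition \ref{thm1} to obtain a canonical equivalence of bicategories
\begin{align*}
G^{\ast}: \text{SymMon}(\X\Bord_2, \mathcal{C}) \xrightarrow{\simeq} \text{SymMon}(\X\Bord_2^{\pd}, \mathcal{C}).
\end{align*}
Second, apply Lemma \ref{whiskering} to the symmetric monoidal equivalence $F: \X\Bord_2^{\pd} \xrightarrow{\simeq} \XB^{\PD}$ of Proposition \ref{thm1} (using a quasi-inverse) to produce an equivalence
\begin{align*}
\text{SymMon}(\X\Bord_2^{\pd}, \mathcal{C}) \simeq \text{SymMon}(\XB^{\PD}, \mathcal{C}).
\end{align*}
Third, use the canonical isomorphism $\mathsf{F}_{\uss}(\XP) \cong \XB^{\PD}$ provided by Theorem \ref{computadic} to identify $\text{SymMon}(\XB^{\PD}, \mathcal{C})$ with $\text{SymMon}(\mathsf{F}_{\uss}(\XP), \mathcal{C})$, and then invoke the Cofibrancy Theorem to obtain the final equivalence $\text{SymMon}(\mathsf{F}_{\uss}(\XP), \mathcal{C}) \simeq \XP(\mathcal{C})$ via the inclusion $\imath$.

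Composing these four equivalences yields the desired equivalence $\mathcal{E}\text{-}\mathcal{HFT}(X, \mathcal{C}) \simeq \XP(\mathcal{C})$. Since all the heavy lifting (verifying the equivalences of underlying bicategories, establishing computadicity of $\XB^{\PD}$, and proving the cofibrancy theorem) has been done elsewhere, the proof itself is essentially one or two lines of composition. The only subtle point that requires a brief remark is that the quasi-inverses used for the whiskerings should be chosen as symmetric monoidal $2$-functors, which is possible by the Whitehead theorem for symmetric monoidal bicategories (Theorem \ref{whiteadforsym}) applied to quasi-inverses of $F$ and $G$.

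The main potential obstacle is mostly bookkeeping: one must confirm that the compositions of the three equivalences remain equivalences of bicategories in the 2-categorical sense (i.e.\ are essentially surjective on objects, essentially full on $1$-morphisms, and fully faithful on $2$-morphisms). This is automatic because equivalences compose, but it is worth noting explicitly since \textit{symmetric monoidal} equivalences are required for Lemma \ref{whiskering} while the Cofibrancy Theorem delivers only a plain equivalence of bicategories; the composite is therefore a plain equivalence of bicategories, which is exactly what the statement of Theorem \ref{genelsinif} claims.
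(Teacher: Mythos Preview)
Your proposal is correct and follows essentially the same route as the paper's own proof: combine the computadicity of $\XB^{\PD}$ (Theorem~\ref{computadic}) with the Cofibrancy Theorem to get $\text{SymMon}(\XB^{\PD},\mathcal{C})\simeq \XP(\mathcal{C})$, and then transport along the symmetric monoidal equivalence $\X\Bord_2 \simeq \XB^{\PD}$ of Proposition~\ref{thm1} using Lemma~\ref{whiskering}. The paper compresses your two whiskering steps through $\X\Bord_2^{\pd}$ into a single sentence, but the content is identical.
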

\begin{proof}
Theorem \ref{computadic} gives a presentation $\XP$ of $\XB^{\PD}$ as a computadic unbiased semistrict symmetric monoidal $2$-category. By the cofibrancy theorem we have $\text{SymMon}(\XB^{\PD},\mathcal{C})\simeq \XP(\mathcal{C})$. Using the symmetric monoidal equivalence between $\XB^{\PD}$ and $\X\Bord_2$ in Proposition $\ref{thm1}$ and Lemma \ref{whiskering}, we obtain the result.
\end{proof}
\subsection{$\Alg$-valued 2-dimensional extended X-HFTs}\label{alg_or_sinif}
Every $2$-dimensional extended X-HFT gives a nonextended one by restricting to oriented X-circles and X-cobordisms between them. A natural question is how the classification of $2$-dimensional extended HFTs is related to Turaev's classification of $2$-dimensional HFTs by crossed Frobenius $G$-algebras (\cite{ilkhqft}). To understand this relation we study extended HFTs taking values in $\Alg$ which has $\Bbbk$-algebras as objects, bimodules as $1$-morphisms, and bimodule maps as $2$-morphisms for a commutative ring $\Bbbk$ with unity. 

The symmetric monoidal structure of $\Alg$ is given by tensoring over $\Bbbk$. We denote $(E,C)$-bimodule $D$ by ${}_{E}D_C$ and omit the symbol $\Bbbk$ when either $C$ or $E$ is $\Bbbk$. We regard ${}_{E}D_C$ as a $1$-morphism from $C$ to $E$ which is in line with the composition in $\X\Bord_2$ (see Figure \ref{fig:Sdecomp}). Composition of $1$-morphisms ${}_{E}D_C$ and ${}_{C}B_A$ is the bimodule ${}_E (D\otimes_C B)_A$. 

Before studying $\Alg$-valued $2$-dimensional extended X-HFTs, we recall necessary algebraic notions and introduce quasi-biangular $G$-algebras. Recall that a $G$-algebra over a commutative ring $\Bbbk$ is an associative $\Bbbk$-algebra $K$ equipped with a decomposition $K=\oplus_{g \in G} K_g$ such that $K_g K_h \subseteq K_{gh}$ for any $g,h \in G$. In this case, $K_e$ is the \textit{principal component} and $K$ is called \textit{strongly graded} if  $K_g K_{h} = K_{gh}$ or equivalently the natural map $K_g \otimes_{K_e} K_h \to K_{gh}$ is an isomorphism for all $g,h \in G$. The opposite $G$-algebra of $K$ is defined as $K^{op}= \oplus_{g \in G}K_{g^{-1}}$ where the order of multiplication is reversed.

\begin{definition} (\cite{tur-hqft})\label{frobG}
Let $K= \oplus_{g \in G}K_g$ be a $G$-algebra over a commutative ring $\Bbbk$. Recall that an \textit{inner product} on $K$ is a symmetric bilinear form $\eta : K \otimes K \to \Bbbk$ satisfying $\eta(ab,c)= \eta(a,bc)$ for any $a,b,c \in K$ such that $\eta|_{K_g \otimes K_h}$ is nondegenerate when $gh=e$ and zero otherwise. A \textit{Frobenius $G$-algebra} is a $G$-algebra $K$ with an inner product $\eta$ and components of $K$ are finitely generated projective $\Bbbk$-modules.
\end{definition}
Let $(K=\oplus_{g \in G}K_g,\eta)$ be a Frobenius $G$-algebra over $\Bbbk$. Each nondegenerate form $\eta|_{K_g \otimes K_{g^{-1}}}$ yields an element $\eta_g^{-}= \sum_{i \in I_g} p_i^g \otimes q_i^{g} \in K_g \otimes K_{g^{-1}}$, called an \textit{inner product element}, where $I_g$ is finite and $\eta_g^-$ is characterized by $a = \sum_{i \in I_g} \eta \big(a,q_i^{g}\big)p_i^g$ for any $a \in K_g$. Since $\eta$ is symmetric we have $\sum_i p_i^{g^{-1}} \otimes q_i^{g^{-1}} = \sum_i q_i^g \otimes p_i^g$ for all $g \in G$.

Recall that an associative $\Bbbk$-algebra $A$ is \textit{separable} if there exists an element $a= \sum_{i=1}^n p_i \otimes q_i \in A \otimes_{\Bbbk} A^{op}$ called \textit{separability idempotent} such that $\sum_{i=1}^n p_i q_i =1$ and $ab=ba$ for all $b \in A$. A separable algebra $A$ is called \textit{strongly separable} if the separability idempotent is symmetric i.e. $a= \sum_{i=1}^n p_i \otimes q_i = \sum_{i=1}^n q_i \otimes p_i$. 
\begin{lemma}\label{label2}
Let $(K = \oplus_{g \in G} K_g, \eta)$ be a Frobenius $G$-algebra with inner product elements $\big\{\eta_g^- = \sum_i p_i^g \otimes q_i^{g}\big\}_{g \in G}$ and $z \in K_e$. Then, for any $g,h \in G$ and $b \in K_{g^{-1}}$ we have 
\begin{equation}\label{eq:1}
    \sum_i p_i^h \otimes zq_i^{h} b = \sum_j bp_j^{gh} \otimes zq_j^{gh}.
\end{equation}
In particular, for any $b \in K$ and $c \in K_{h^{-1}}$ we have $\sum_j p_j^g b zq_j^g c= \sum_k c p_k^{hg}bzq_k^{hg}$.
\end{lemma}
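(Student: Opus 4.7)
The plan is to prove the main identity (\ref{eq:1}) by pairing both sides in the second tensor slot against an arbitrary element of $K_{gh}$ via the nondegenerate form $\eta$, and then to deduce the particular case as an immediate multiplicative corollary. A grading check shows that both sides of (\ref{eq:1}) live in $K_h \otimes K_{h^{-1}g^{-1}}$: on the LHS, $p_i^h \in K_h$ and $zq_i^h b \in K_e K_{h^{-1}} K_{g^{-1}} \subseteq K_{h^{-1}g^{-1}}$, while on the RHS, $bp_j^{gh} \in K_{g^{-1}}K_{gh} = K_h$ and $zq_j^{gh} \in K_e K_{h^{-1}g^{-1}}$. Since each homogeneous component is finitely generated projective and $\eta$ restricts to a nondegenerate pairing $K_{h^{-1}g^{-1}} \otimes K_{gh} \to \Bbbk$, two elements of $K_h \otimes K_{h^{-1}g^{-1}}$ coincide if and only if they agree after applying $\mathrm{id}_{K_h} \otimes \eta(\,\cdot\,,y)$ for every $y \in K_{gh}$.

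For the LHS, this pairing yields $\sum_i p_i^h\,\eta(zq_i^h b, y)$. Two applications of the Frobenius relation $\eta(uv,w) = \eta(u, vw)$ combined with symmetry of $\eta$ collapse $\eta(zq_i^h b, y)$ to $\eta(q_i^h, byz)$. Using symmetry once more and then the dual-basis identity $a = \sum_i \eta(a, q_i^h) p_i^h$ for $a \in K_h$, which applies because $byz \in K_{g^{-1}}K_{gh}K_e \subseteq K_h$, the LHS pairing equals $byz$. The same manipulation reduces the RHS pairing to $\sum_j bp_j^{gh}\,\eta(yz, q_j^{gh}) = b \cdot yz = byz$, where the middle equality uses the dual-basis identity for $K_{gh}$, valid since $yz \in K_{gh}$. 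As both pairings give $byz$ for every $y \in K_{gh}$, identity (\ref{eq:1}) follows.

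For the particular case, apply the $\Bbbk$-linear map $T\colon K \otimes K \to K$, $x \otimes w \mapsto xbw$, to (\ref{eq:1}) after relabeling $(g,h,b) \mapsto (h,g,c)$, so that $c \in K_{h^{-1}}$ plays the role of the homogeneous coefficient required by the main identity and $b \in K$ is inserted via $T$. This converts the tensor-product equality into $\sum_i p_i^g\, b\, z\, q_i^g\, c = \sum_j c\, p_j^{hg}\, b\, z\, q_j^{hg}$, the desired identity. The only technical subtleties are careful tracking of gradings and systematic use of the cyclic form of the Frobenius identity together with symmetry of $\eta$; there is no deeper obstacle.
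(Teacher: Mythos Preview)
Your proof is correct and follows essentially the same approach as the paper: both arguments verify (\ref{eq:1}) by pairing against test elements via the nondegenerate form $\eta$ and reducing each side to $byz$ (or $\eta(x,byz)$) using the Frobenius cyclic identity and the dual-basis characterization of the inner product elements. The only cosmetic difference is that the paper pairs both tensor slots against $x\otimes y \in K_{h^{-1}}\otimes K_{gh}$, whereas you pair only the second slot against $y\in K_{gh}$; your derivation of the ``in particular'' statement via the map $x\otimes w\mapsto xbw$ is also fine.
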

\begin{proof}
Since both sides belong to $K_h \otimes K_{h^{-1}g^{-1}}$ it is enough to check that they give the same functionals on the dual $\Bbbk$-module $K_{h^{-1}} \otimes K_{gh}$. For any $x \in K_{h^{-1}}$ and $y \in K_{gh}$ applying $x \otimes y$ to the left hand side of equation (\ref{eq:1}) and using cyclic symmetry property of $\eta$ we obtain
\begin{align*}
    \sum_i \eta \big( p_i^h ,x \big)\eta \big(zq_i^{h}b,y\big)= \sum_i \eta \big(x,p_i^h \big) \eta \big(q_i^{h},byz \big)= \eta \bigg(x, \sum_i \eta(byz,q_i^{h})p_i^h \bigg)= \eta(x,byz).
\end{align*}
Similarly applying $x \otimes y$ to the right hand side of the equation (\ref{eq:1}) we have
\begin{align*}
    \sum_j \eta \big(bp_j^{gh},x \big) \eta \big(zq_j^{gh},y \big)= \sum_j\eta \big (xb,p_j^{gh} \big)\eta \big(q_j^{gh},yz \big)=\eta\bigg(xb, \sum_j \eta (yz,q_j^{gh})p_j^{gh}\bigg)=\eta(xb,yz).%=\eta(x, by)
\end{align*}
\end{proof}
We generalize biangular $G$-algebras which were introduced by Turaev \cite{ilkhqft} as follows.
\begin{definition}\label{genbiangular}
A strongly graded Frobenius $G$-algebra $(K, \eta)$ is called \textit{quasi-biangular} if there exists a central element $z \in K_e$, i.e. $za=az$ for all $a \in K_e$, such that for the collection of inner product elements $\big\{\sum_i p_i^g \otimes q_i^g\big\}_{g \in G}$ equations $\sum_i p_i^g z q_i^g=1$ hold for all $g \in G$. 
\end{definition}
\begin{remark}
By the Lemma \ref{label2} the principal component of a quasi-biangular $G$-algebra is a separable algebra with separability idempotent $\sum_i p_i^e \otimes z q_i^e$. A biangular $G$-algebra is a quasi-biangular $G$-algebra with $z=1$. Similarly, the principal component of a biangular $G$-algebra is strongly separable. 
\end{remark} 
One way of studying an algebra is to study the category of modules over it. Recall that Morita equivalence of algebras is the equivalence of categories of modules. In the case of a graded algebra one studies the category of graded modules. An equivalence of such categories is called a graded Morita equivalence (see Theorem 3.2 in \cite{gmorita}) which was introduced by Boisen \cite{gmorita} as follows.
\begin{definition} (\cite{gmorita})\label{gradedMorita}
A $G$-\textit{graded Morita equivalence} $\zeta$ between $G$-algebras $K=\oplus_{g \in G}K_g$ and $L= \oplus_{g \in G}L_g$ is a quadruple $({}_{L}U_K, {}_{K}V_L, \tau, \mu)$ where ${}_{L}U_K= \oplus_{g \in G} U_g$ is a graded $(L,K)$-bimodule that is $L_g U_h K_{g'}\subset U_{ghg'}$, ${}_{K}V_L= \oplus_{g \in G} V_g$ is a graded $(K,L)$-bimodule, and $\tau :{}_{K}K_{K} \to {}_{K}V \otimes_L U_K$ and $\mu: {}_{L}U \otimes_K V_L \to {}_{L}L_L$ are graded $(K,K)$ and $(L,L)$ bimodule maps respectively such that the following compositions 
\begin{align*}
    {}_{L}U_K& \xrightarrow{} {}_{L}U \otimes_K K_K \xrightarrow{\text{id} \otimes \tau} {}_{L}U \otimes_K(V \otimes_L U_K) \xrightarrow{} ({}_{L}U \otimes_K V) \otimes_L U_K \xrightarrow{\mu \otimes \text{id}} {}_{L}L \otimes_L U_K \xrightarrow{} {}_{L}U_K \\
    {}_{K}V_L& \xrightarrow{} {}_{K}K \otimes_K V_L \xrightarrow{\tau \otimes \text{id}}({}_{K}V \otimes_L U) \otimes_K V_L \xrightarrow{} {}_{K}V \otimes_L (U \otimes_K V_L) \xrightarrow{\text{id} \otimes \mu} {}_{K}V \otimes_L L_L \xrightarrow{} {}_{K}V_L
\end{align*}
are $id_U$ and $id_V$ respectively. When $\tau$ and $\mu$ are invertible as $G$-graded bimodule maps, $\zeta$ is called a $G$-\textit{graded Morita context}.
\end{definition}
\begin{definition}\label{equivofGmorita}
Let $\zeta=({}_{L}U_K, {}_{K}V_L, \tau, \mu)$ and $\zeta'=({}_{L}U'_K, {}_{K}V'_L, \tau', \mu')$ be two $G$-graded Morita equivalences. An \textit{equivalence} of $G$-graded Morita equivalences $\zeta$ and $\zeta'$ is a $G$-graded bimodule maps $\xi : {}_{L}U_K \to {}_{L}U'_K$ and $\rho: {}_{K}V_L \to {}_{K}V'_L$ such that $\mu= \mu' \circ (\xi \otimes \rho)$ and $\tau'= (\rho \otimes \xi) \circ \tau$.
\end{definition} 
\begin{lemma}(\cite{gradedmorita1})
Assume that $G$-algebras $K=\oplus_{g \in G}K_g$ and $L=\oplus_{g \in G}L_g$ are $G$-graded Morita equivalent. Then, if $K$ is strongly graded then $L$ is also strongly graded.
\end{lemma}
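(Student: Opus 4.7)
The plan is to prove, for every $g \in G$, that $1_L \in L_g L_{g^{-1}}$, equivalently that the multiplication map $L_g \otimes_{L_e} L_{g^{-1}} \to L_e$ is surjective. The key idea is to transport the strong gradedness of $K$ to $L$ through the graded Morita bimodules $U$ and $V$ using Dade's theorem.

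First I would apply Dade's theorem to $U$ and $V$: strong gradedness of $K$ yields isomorphisms $V \cong K \otimes_{K_e} V_e$ of graded left $K$-modules and $U \cong U_e \otimes_{K_e} K$ of graded right $K$-modules. Substituting into $\mu : U \otimes_K V \xrightarrow{\cong} L$ gives, as graded $(L_e, L_e)$-bimodules,
\[
L \;\cong\; U_e \otimes_{K_e} K \otimes_{K_e} V_e,
\]
so that $L_g \cong U_e \otimes_{K_e} K_g \otimes_{K_e} V_e$ for each $g \in G$.

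Next, applying the triangle identity $\mu(u \otimes v)\cdot u' = u \cdot \tau^{-1}(v \otimes u')$ (a formal consequence of Definition~\ref{gradedMorita} when $\mu$ and $\tau$ are invertible), one checks that, under the above identifications, the multiplication $L_g \otimes_{L_e} L_{g^{-1}} \to L_e$ corresponds to the composition of the natural $(K_e, K_e)$-bimodule map $V_e \otimes_{L_e} U_e \to K_e$ induced by $\tau^{-1}$ with the multiplication $K_g \otimes_{K_e} K_{g^{-1}} \to K_e$. The latter is an isomorphism by the strong gradedness of $K$, so surjectivity of $L_g \otimes_{L_e} L_{g^{-1}} \to L_e$ reduces to surjectivity of $V_e \otimes_{L_e} U_e \to K_e$.

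This last surjectivity will be the main obstacle. My approach is to use the Dade descriptions $V_h = K_h V_e$ and $U_{h^{-1}} = U_e K_{h^{-1}}$ to write any $v \otimes u \in V_h \otimes U_{h^{-1}}$ representing a class in $(V \otimes_L U)_e$ in the form $a \cdot (v'_e \otimes u'_e) \cdot b$, with $a \in K_h$, $b \in K_{h^{-1}}$, $v'_e \in V_e$, $u'_e \in U_e$. Applying $\tau^{-1}$, which is a $(K,K)$-bimodule map, shows that the image $I \subseteq K_e$ of $V_e \otimes_{L_e} U_e \to K_e$ satisfies $K_e = \sum_{h \in G} K_h \cdot I \cdot K_{h^{-1}}$; in particular, the graded two-sided ideal of $K$ generated by $I$ equals $K$. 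Combining this with the invertibility of each $K_g$ as a $(K_e, K_e)$-bimodule (a consequence of strong gradedness) and the Morita identity $u = \sum_j \mu(u \otimes v_j)\, u_j$, one deduces that $I = K_e$; this is essentially the content of the graded Morita argument in \cite{gradedmorita1}. Once $I = K_e$ is in hand, the chain of isomorphisms constructed above collapses to $L_g \otimes_{L_e} L_{g^{-1}} \cong L_e$ via multiplication, showing that $L$ is strongly graded.
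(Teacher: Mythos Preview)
The paper does not prove this lemma — it is quoted from \cite{gradedmorita1} — so there is no argument in the paper to compare against. Your first two reductions are sound: Dade's theorem yields $L_g \cong U_e\otimes_{K_e}K_g\otimes_{K_e}V_e$, the mixed associativity $\mu(u\otimes v)\,u'=u\,\tau^{-1}(v\otimes u')$ reduces surjectivity of $L_g L_{g^{-1}}\to L_e$ to surjectivity of $V_e\otimes_{L_e}U_e\to K_e$, and one does obtain $K_e=\sum_h K_h I K_{h^{-1}}$ for the image $I$.

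The genuine gap is the final step, and it cannot be filled from the ingredients you list. The relation $K_e=\sum_h K_h I K_{h^{-1}}$, invertibility of the $K_g$, and the triangle identity (which yields only $U_eI=U_e$) do \emph{not} force $I=K_e$. Concretely, take $G=\Z/2\Z$, $K=M_2(\Bbbk)$ with $K_e$ the diagonal and $K_g$ the anti-diagonal matrices, $L=\Bbbk$ concentrated in degree $e$, and $U,V=\Bbbk^{2}$ the standard row/column Morita bimodules graded by $U_e=\Bbbk(1,0)$, $U_g=\Bbbk(0,1)$ and analogously for $V$. All of Definition~\ref{gradedMorita} is satisfied with $\tau,\mu$ invertible, yet here $I=\Bbbk E_{11}\subsetneq K_e$ and $L_g=0$, so $L$ is not strongly graded. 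Thus the statement actually fails for the notion of graded Morita equivalence in Definition~\ref{gradedMorita}; the result in \cite{gradedmorita1} presumably rests on a stronger hypothesis (for instance that $U_e$ be an $(L_e,K_e)$-progenerator), which is precisely the missing condition $I=K_e$ that your sketch defers back to the reference.
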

Next, we transfer the inner product of one Frobenius $G$-algebra to another using a graded Morita context between them. As the first step we recall the following lemma. 
\begin{lemma}(\cite{schommer}, \cite{homotopy_fixed_pts_on_bicats})
Any Morita context $\zeta=({}_{L}U_K,{}_{K}V_L,\tau,\eta)$ between $\Bbbk$-algebras $K$ and $L$ induces a canonical isomorphism of $\Bbbk$-modules $\zeta_{\ast}: K/[K,K] \to L/[L,L]$.
\end{lemma}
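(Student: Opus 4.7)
The plan is to define the map $\zeta_*$ by ``swapping factors'' and taking the trace, then verify well-definedness, descent to the commutator quotient, and invertibility via the zigzag identities of the Morita context.

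\textbf{Step 1: Define an auxiliary map.} First I would construct a $\Bbbk$-linear map
$$\Phi \colon V \otimes_L U \longrightarrow L/[L,L], \qquad v \otimes u \longmapsto [\mu(u \otimes v)].$$
To check that $\Phi$ is well-defined on the balanced tensor product, one needs that $\Phi(vl \otimes u) = \Phi(v \otimes lu)$ for $l \in L$. On the left, $\mu(u \otimes vl) = \mu(u \otimes v)\cdot l$; on the right, $\mu(lu \otimes v) = l \cdot \mu(u \otimes v)$, and these agree modulo $[L,L]$. This is the key mechanism and I would record it carefully.

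\textbf{Step 2: Define $\zeta_*$ and show descent.} Set $\zeta_*(k) := \Phi(\tau(k))$ for $k \in K$, and check that it factors through $K/[K,K]$. Since $\tau$ is a $(K,K)$-bimodule map, writing $\tau(k') = \sum_i v_i \otimes u_i$ gives $\tau(kk') = \sum_i kv_i \otimes u_i$ and $\tau(k'k) = \sum_i v_i \otimes u_i k$. Then
$$\Phi(\tau(kk')) = \sum_i [\mu(u_i \otimes kv_i)] \quad \text{and} \quad \Phi(\tau(k'k)) = \sum_i [\mu(u_i k \otimes v_i)],$$
and these agree already in $L$ (not merely mod $[L,L]$) because $U \otimes_K V$ is $K$-balanced. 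Thus $\zeta_*([kk'] - [k'k]) = 0$.

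\textbf{Step 3: Construct the inverse.} By the symmetric roles of $(U,\tau)$ and $(V,\mu)$ in Definition \ref{gradedMorita}, repeat Steps 1--2 with the data swapped: define $\Psi \colon U \otimes_K V \to K/[K,K]$ by $u \otimes v \mapsto [\tau^{-1}(v \otimes u)]$ (well-defined by the $K$-balance of $U \otimes_K V$ and the bimodule structure of $\tau^{-1}$), and set $\zeta_*^{-1}([l]) := \Psi(\mu^{-1}(l))$.

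\textbf{Step 4: Verify mutual inverseness.} This is the step I expect to require the most care. Given $k \in K$ with $\tau(k) = \sum_i v_i \otimes u_i$, one has $\zeta_*([k]) = [\sum_i \mu(u_i \otimes v_i)]$. To compute $\zeta_*^{-1}\zeta_*([k])$, one must express $\mu^{-1}(\sum_i \mu(u_i \otimes v_i))$ as an element of $U \otimes_K V$ and apply $\Psi$. The zigzag identities in Definition \ref{gradedMorita} exactly say that the composites
$$U \xrightarrow{\mathrm{id} \otimes \tau} U \otimes_K V \otimes_L U \xrightarrow{\mu \otimes \mathrm{id}} U, \qquad V \xrightarrow{\tau \otimes \mathrm{id}} V \otimes_L U \otimes_K V \xrightarrow{\mathrm{id} \otimes \mu} V$$
are the identity. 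I would use these to rearrange the tensor expressions and reduce $\zeta_*^{-1}\zeta_*([k])$ to $[k]$ modulo $[K,K]$; the commutator quotient absorbs the cyclic reordering, which is precisely why one must pass to $K/[K,K]$ rather than $K$ itself. The same argument with roles exchanged gives $\zeta_* \zeta_*^{-1} = \mathrm{id}$.

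Finally, naturality/canonicity follows from the fact that no choices were made beyond the Morita context data $(\tau, \mu)$ themselves; an equivalence of Morita contexts in the sense of Definition \ref{equivofGmorita} intertwines the resulting maps $\zeta_*$, so the construction is functorial in $\zeta$.
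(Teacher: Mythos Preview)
Your proof is correct and is the standard Hattori--Stallings trace argument. Note, however, that the paper does not actually prove this lemma: it is quoted from \cite{schommer} and \cite{homotopy_fixed_pts_on_bicats}, with the paper only remarking afterward that ``an explicit formula for the isomorphism $\zeta_{\ast}$ in the lemma is provided in \cite{homotopy_fixed_pts_on_bicats}.'' Your construction $k \mapsto \sum_i [\mu(u_i \otimes v_i)]$ where $\tau(k) = \sum_i v_i \otimes u_i$ is exactly that explicit formula, so there is nothing to compare against---you have supplied the proof that the paper outsourced to its references.
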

An explicit formula for the isomorphism $\zeta_{\ast}$ in the lemma is provided in \cite{homotopy_fixed_pts_on_bicats}. The inner product $\eta$ of a Frobenius $G$-algebra $(K,\eta)$ is determined at its principal component by $\eta(a,b\cdot 1)=\eta(ab,1)$.  This allows us to denote $(K,\eta)$ by $(K,\Lambda)$ where $\Lambda: K_e \to \Bbbk$ is a nondegenerate trace. Since $\eta$ is symmetric $\Lambda$ factors through $K_e/[K_e,K_e]$. Lemma \ref{label2} implies that for a symmetric Frobenius algebra $(K_e,\Lambda_e)$, an inner product element $\sum_i p_i^e \otimes q_i^e$ can be considered as the image of $1 \otimes 1$ under a bimodule map $\xi:{}_{K_e^1}(K_e)_{K_e^2} \otimes {}_{K^3_e}(K_e)_{K_e^4} \to {}_{K_e^1}(K_e)_{K_e^4} \otimes {}_{K_e^3}(K_e)_{K_e^2}$ where numbers indicate module actions i.e. $K^i_e= K_e$ for $i=1,2,3,4$. In the case of a quasi-biangular $G$-algebra $(K=\oplus_{g \in G}K_g,\Lambda)$ inner product elements $\big\{\sum_i p_i^g \otimes q_i^g\big\}_{g \in G \backslash \{e\}}$ are the image of $1 \otimes 1$ under the following composition 
\begin{align}\label{eqninnpd}
    {}_{K_e^1}(K_e)_{K_e^2} \otimes {}_{K_e^3}(K_e)_{K_e^4} \to {}_{K^1_e}(K_e)_{K_e^2} \otimes {}_{K_e^3} (K_{g^{-1}} \otimes_{K_e} K_e \otimes_{K_e} K_{g})_{K_e^4} \to 
    \end{align}
    \begin{align*}
    \to {}_{K_e^1}(K_e \otimes_{K_e} K_g)_{K_e^4} \otimes {}_{K_e^3}(K_{g^{-1}}\otimes_{K_e} K_e)_{K_e^2} \to {}_{K_e^1}(K_g)_{K_e^4} \otimes {}_{K_e^3}(K_{g^{-1}})_{K_e^2}
\end{align*}
where the second homomorphism is identity on $K_{g^{-1}}$ and $K_g$, and $\xi$ on $K_e \otimes K_e$. In the following, we consider inner product elements as the images of $1\otimes 1$ under the above bimodule maps.
\begin{definition}\label{compmorita}
Let $(K,\Lambda_K)$ and $(L, \Lambda_L)$ be quasi-biangular $G$-algebras over $\Bbbk$ with collections of inner product elements $\{\eta_g^K\}_{g \in G}$ and $\{\eta_g^L\}_{g \in G}$ respectively. A $G$-graded Morita context $\zeta=({}_{L}U_K, {}_{K}V_L, \tau, \mu)$ between $K$ and $L$ is said to be \textit{compatible} if $\Lambda_L=(\zeta_{\{e\}})_{\ast}\Lambda_K$ and $\eta^L_g=(\zeta_{\{e\}})_{\ast}(\eta^K_g)$ for all $g \in G$ where $(\zeta_{\{e\} })_{\ast}(\eta^K_g)$ consists of inner product elements for $(L,(\zeta_{e})_{\ast}\Lambda_K)$ given by $\xi'(1 \otimes 1)$ under the commutative diagram
\begin{align*}
    \xymatrix{{}_{L_e}(U_e \otimes K_e \otimes V_e)_{L_e} \otimes_{\Bbbk} {}_{L_e} (U_e \otimes K_e \otimes V_e)_{L_e} \ar[r]^{\text{id} \otimes \xi \otimes \text{id}} \ar[d]_{\mu_{\{e\}}} &  {}_{L_e}(U_e \otimes K_e \otimes V_e)_{L_e} \otimes_{\Bbbk} {}_{L_e} (U_e \otimes K_e \otimes V_e)_{L_e} \ar[d]^{\mu_{\{e\}}} \\ {}_{L_e}(L_e)_{L_e} \otimes_{\Bbbk} {}_{L_e}(L_e)_{L_e} \ar[r]_{\xi'} & {}_{L_e}(L_e)_{L_e} \otimes_{\Bbbk} {}_{L_e}(L_e)_{L_e}.
    }
\end{align*}
Remaining inner product elements are obtained from $\xi'$ as described above.
\end{definition} 
\begin{theorem}\label{orclass}
\label{thm:orclass}
Any $\Alg$-valued $2$-dimensional extended X-HFT $Z: \X\Bord_2 \to \Alg$ whose precomposition $\XB^{\PD} \xrightarrow{\simeq} \X\Bord_2 \xrightarrow{Z} \Alg$ gives a strict symmetric monoidal $2$-functor determines a triple $(A,B,\zeta)$ where $A$ and $B$ are quasi-biangular $G$-algebras, and $\zeta$ is a compatible $G$-graded Morita context between $A$ and $B^{\op}$. Conversely, for any such triple $(A,B,\zeta)$ there exists an $\Alg$-valued $2$-dimensional extended X-HFT.
\end{theorem}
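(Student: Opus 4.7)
The plan is to reduce the classification to a computation in $\XP(\Alg)$ via Theorem \ref{genelsinif}, which gives $\mathcal{E}\text{-}\mathcal{HFT}(X,\Alg)\simeq \XP(\Alg)$. Any E-HFT whose precomposition with $\XB^{\PD}\xrightarrow{\simeq}\X\Bord_2$ is strict symmetric monoidal is, under this equivalence, exactly an object of $\XP(\Alg)$, i.e.\ an assignment of the generators in Figure \ref{fig:generators} to objects, $1$-morphisms and $2$-morphisms of $\Alg$ subject to the relations in Figure \ref{fig:relations} (together with those coming from Figure \ref{fig:movies2}). I will interpret these assignments step by step, show the relations translate exactly into the algebraic axioms for a triple $(A,B,\zeta)$ as in the statement, and then invoke the equivalence again to produce the converse.

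\textbf{Step 1: Decoding generators.} Assign to the two generating objects $+$ and $-$ two $\Bbbk$-algebras $A$ and $B$. The $G$-indexed family of oriented elbow $1$-morphisms in $\mathcal{XG}_1$ is sent to a $G$-graded family of bimodules: the right-pointing elbows $\mathrm{cup}_g^{\pm}$ become $(A,B)$-bimodules $\{U_g\}_{g\in G}$, and the left-pointing ones become $(B,A)$-bimodules $\{V_g\}_{g\in G}$, matching the convention of a $G$-graded Morita context between $A$ and $B^{op}$. Horizontal stacking of elbows with $G$-labels produces bimodules over $A\otimes B$; so the generating sticks labeled by $g\in G$ in $\mathcal{XG}_1$ give, after applying Theorem \ref{genelsinif}, graded multiplication and comultiplication bimodule maps on $A=\oplus_g A_g$ and on $B=\oplus_g B_g$. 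In particular the $g$-labeled fold generators $F_i^g$ split $A$ and $B$ as direct sums $A=\oplus_{g\in G}A_g$ and $B=\oplus_{g\in G}B_g$.

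\textbf{Step 2: Decoding $2$-morphism relations.} I go through Figure \ref{fig:relations} in three blocks. The first block (multiplication, unit, associativity, Frobenius relation, counit, the $gg'$-composition triangle) expresses exactly that the decompositions $A=\oplus_g A_g$ and $B=\oplus_g B_g$ are associative unital graded algebras over $\Bbbk$, satisfy $A_g A_{g'}=A_{gg'}$ and $B_g B_{g'}=B_{gg'}$ (strong grading, forced by the invertibility $2$-cells coming from inverse $g^{-1}$ on the back of each elbow, using the relations with $g$ and $g^{-1}$ sticks), and carry nondegenerate bilinear forms $\eta_A,\eta_B$ satisfying $\eta(ab,c)=\eta(a,bc)$. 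This is the content of Definition \ref{frobG}. The quasi-biangular condition of Definition \ref{genbiangular} will be produced by the swallowtail and cusp inversion movie-moves (encoded in Figures \ref{fig:eskileri_relationlarin_yenilenmesi}, \ref{fig:new_relations} and in the chambering relations of Figure \ref{fig:movies2}): in the principal component they give a central $z\in A_e$ (respectively $z'\in B_e$) together with the identities $\sum_i p_i^g z\, q_i^g=1$ for each $g$, following Lemma \ref{label2}; the centrality of $z$ comes from the cyclic rotation relation of the saddle in $\mathcal{XR}$.

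The second block (cup/cap elbows and the two snake identities) produces bimodule maps
\[
\tau:\ {}_A A_A \to {}_A V\otimes_B U_A,\qquad \mu:\ {}_B U\otimes_A V_B \to {}_B B_B,
\]
and the relations of Definition \ref{gradedMorita} are precisely the two snake identities in Figure \ref{fig:relations}. Invertibility of $\tau$ and $\mu$ as $G$-graded bimodule maps is built in because the opposite snake also appears as a generator, making $\zeta=({}_AU_B,{}_BV_A,\tau,\mu)$ a $G$-graded Morita context. The third block of relations (those that mix elbow $2$-cells with fold sticks and cusps, including the movie-moves of Figure \ref{fig:new_relations}) forces the compatibility condition of Definition \ref{compmorita}: the relation equating a Frobenius trace surrounded by a Morita-context loop with the trace of the other algebra is exactly $\Lambda_B=(\zeta_{\{e\}})_\ast\Lambda_A$, while the cusp-fold compatibility combined with the description of inner product elements in~\eqref{eqninnpd} translates into the equality of transported inner product elements $\eta^B_g=(\zeta_{\{e\}})_\ast(\eta^A_g)$ for every $g\in G$.

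\textbf{Step 3: Converse and conclusion.} Conversely, given $(A,B,\zeta)$ as in the statement, I construct an $\XP$-datum by reversing the dictionary of Step 1: objects to $A,B$, the $G$-indexed elbows to the graded summands of $U$ and $V$, sticks to the $G$-graded multiplications/units/forms of $A$ and $B$, and cup/cap $2$-cells to $\tau,\mu$. Steps 1--2 show that every relation in Figure \ref{fig:relations}, together with the chambering/movie-move relations of Figure \ref{fig:movies2}, is a consequence of the axioms of quasi-biangular $G$-algebra and of compatible $G$-graded Morita context. Thus the datum lies in $\XP(\Alg)$, and Theorem \ref{genelsinif} produces the desired extended X-HFT. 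I expect the main obstacle to be the bookkeeping for the compatibility of the cusp- and swallowtail-type relations with the transported inner product elements, since these relations involve interactions of fold sticks with Morita units, and the verification requires repeated use of Lemma \ref{label2} and of the explicit formula \eqref{eqninnpd} to identify both sides as the same element of the corresponding tensor products of graded components.
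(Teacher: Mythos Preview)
Your high-level strategy—reduce to an object of $\XP(\Alg)$ via Theorem \ref{genelsinif} and read off the algebraic data from the generators and relations—is exactly the paper's approach. However, your dictionary between generators and algebraic structures is misassigned in several places, and as written the argument does not go through.

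First, the generating objects do not give the full $G$-algebras $A$ and $B$; they give only $\Bbbk$-algebras $A_e$ and $B_e$, which will turn out to be the \emph{principal components}. The stick $1$-morphisms $\arti$ and $\eksi$ are not maps: as $1$-morphisms in $\Alg$ they are $(A_e,A_e)$-bimodules $A_g$ and $(B_e,B_e)$-bimodules $B_g$. It is the $\tau$-type generating $2$-morphisms (the $gg'$-labeled ones in Figure \ref{fig:generators}) that give bimodule maps $A_{g'}\otimes_{A_e}A_g\to A_{gg'}$, and the associativity/invertibility relations in Figure \ref{fig:relations} then make $A=\oplus_g A_g$ a strongly graded $G$-algebra. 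So the $G$-algebra is \emph{assembled} from the $1$-morphism data, not split off from the object.

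Second, the Morita context does not come from cup/cap elbows and snake identities. The elbow $1$-morphisms give bimodules $M_g$ and $N_g$ over $A_e\otimes B_e$ (equivalently $(A_e,B_e^{\mathrm{op}})$-bimodules after turning actions around); the Morita maps $f_1,f_2$ come from the four \emph{cusp} $2$-morphisms, and it is the \emph{swallowtail} relations that force the two compositions to be identities, yielding $\zeta=(N,M,f_1,f_2)$.

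Third, the Frobenius/quasi-biangular structure is produced by the \emph{Morse} generators, not by your ``first block'' or by swallowtail/cusp inversion. The saddle $2$-morphisms give the bimodule map $\xi$ whose value $\sum_i p_i^e\otimes q_i^e$ on $1\otimes 1$ encodes the inner product elements (the cusp-flip relation identifies the two saddles); the cup gives the trace $\Lambda$ and hence the nondegenerate bilinear form $\eta$; the cap gives $u$ and the central element $z=\sum_j b_ja_j$; and the Morse relations force $\sum_i p_i^g z q_i^g=1$, which is precisely quasi-biangularity. The compatibility of $\zeta$ with the Frobenius data then follows because everything on the $B$-side is obtained by transporting along $\zeta$.

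With this corrected dictionary the argument is essentially the paper's; your Step 3 (converse) is fine once the forward direction is set up correctly.
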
 
\begin{proof}
Let $Z: \X\Bord_2 \to \Alg$ be such a $2$-dimensional extended HFT. The cofibrancy theorem implies that there exists an object $Z'$ in $\XP(\Alg)$ such that $\imath(Z')$ is the composition $\XB^{\PD} \xrightarrow{\simeq} \X\Bord_2 \xrightarrow{Z} \Alg$ where $\imath: \XP(\Alg)\to \text{SymMon}(\XB^{\PD}, \Alg)$ is the equivalence of bicategories.

We have $\Bbbk$-algebras $ Z'(\nokta) =A_e$ and $Z'(\noktab)=B_e$ in $Z'_0(\mathcal{XG}_0)$ corresponding to two generating objects of $\XP$. There are four types of generating $1$-morphisms and each is indexed by the elements of $G$. For every $g \in G$ they give the following bimodules in $Z'_1(\mathcal{XG}_1)$ 
\begin{align*}
    Z'\big(\arti \big) &= A_g \quad (A_e ,A_e)\text{-bimodule}\\
    Z'\big(\eksi \big) &= B_g \quad (B_e,B_e)\text{-bimodule}\\
    Z'\big(\leftelbow \big) &= M_g \quad (B_e \otimes_{\Bbbk} A_e ,\Bbbk)\text{-bimodule} \\
    Z'\big(\rightelbow \big) &= N_g \quad (\Bbbk, A_e \otimes_{\Bbbk} B_e)\text{-bimodule}.
    \end{align*}
The first $2$-morphism in Figure \ref{fig:Gmod} defines a $G$-graded product on the bimodule $\oplus_{g \in G}A_g$. Associativity of this product is the obvious relation in Figure \ref{fig:relations}. Denote the corresponding $G$-algebra by $A= \oplus_{g \in G} A_g$. The first relation in Figure \ref{fig:relations} shows that the bimodule map
\begin{align}\label{eqrank1}
    {}_{A_e}(A_{g'}) \otimes_{A_e} (A_{g})_{A_e} &\xrightarrow{\cong} {}_{A_e}(A_{gg'})_{A_e}
    \end{align}
is invertible for all $g,g' \in G$. Since multiplication of $G$-algebra $A$ is defined using (\ref{eqrank1}) we have $A_g A_{g'} = A_{gg'}$ for all $g,g' \in G$ i.e. $A$ is strongly graded. Similar arguments for $(B_e,B_e)$-bimodules $\{ B_g\}_{g \in G}$ yield another strongly graded $G$-algebra $B= \oplus_{g \in G}B_g$. 

Using the opposite algebra we can turn algebra actions on bimodules around. More precisely, a left $B_e$ action on ${}_{A_e \otimes B_e}M_g$ can be turned into a right $B_e^{op}$ action and the right $B_e$ module action on $(N_g)_{A_e \otimes B_e}$ can be turned into a left $B^{op}_e$ action.
%\begin{figure}
%    \centering
%    \includesvg{Gmod2}
%    \caption{Part of generators and relations giving $G$-algebra and $G$-graded module}
%    \label{fig:Gmod}
%\end{figure}
\begin{figure}
    \centering
    \def\svgwidth{\columnwidth}
    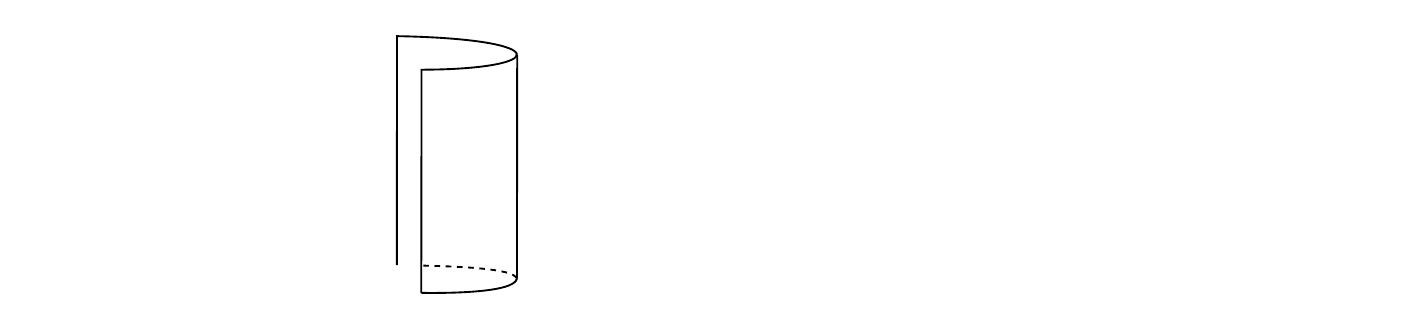
    \caption{Part of generators and relations giving $G$-algebra and $G$-graded module}
    \label{fig:Gmod}
\end{figure} 
The second $2$-morphism in Figure \ref{fig:Gmod} gives  
\begin{align}
      {}_{B_e \otimes A_e}( B_{g^{-1}}\otimes_{\Bbbk}A_{g'}) \otimes_{B_e \otimes A_e}M_h \xrightarrow{\cong} {}_{B_e \otimes A_e}M_{g^{-1}hg'}. 
\end{align}
Turning $B_e$ actions on $B_g$ around gives ${}_{A_e}(M_{ghg'})_{B_e^{op}}$ and the collection of all such bimodule maps turns $\{ {}_{A_e}(M_g)_{B_e^{\op}} \}_{g \in G}$ into a $G$-graded $(A,B^{\op})$-bimodule $M = \oplus_{g \in G} M_g$. Similarly, reflections of this $2$-morphism and corresponding relations with respect to a vertical axis show that $N = \oplus_{g \in G} N_g$ is a $G$-graded $(B^{op},A)$-bimodule. 

There are four types of cusp generators and each is indexed by two elements of $G$. For every $g,g' \in G$ they give the following bimodule maps in $Z'_2(\mathcal{XG}_2)$ 
\begin{align*}
    f_1^{gg'} &: {}_{A_e}(A_{gg'})_{A_e} \to {}_{A_e}M_g \otimes_{B_e^{\text{op}}} (N_{g'})_{A_e} &
    f_2^{gg'} &: {}_{B_e^{\text{op}}}N_{g} \otimes_{A_e} (M_{g'})_{B_e^{\text{op}}} \to {}_{B_e^{\text{op}}}(B^{op}_{gg'})_{B_e^{\text{op}}} \\
    f_3^{gg'} &: {}_{B_e^{\text{op}}}(B^{op}_{gg'})_{B_e^{\text{op}}} \to {}_{B_e^{\text{op}}}N_g \otimes_{A_e} (M_{g'})_{B_e^{\text{op}}} &    f_4^{gg'} &: {}_{A_e}M_g \otimes_{B_e^{\text{op}}}  (N_{g'})_{A_e} \to {}_{A_e}(A_{gg'})_{A_e}
\end{align*}
given in the order of cusp generators in Figure \ref{fig:generators}. These bimodule maps are required to satisfy relations in $\mathcal{XR}$. Relations containing cusp generators indicate that these bimodule maps are both sided inverses i.e. $f_1^{gg'}=\big(f_4^{gg'}\big)^{-1}$ and $f_2^{gg'}=\big(f_3^{gg'}\big)^{-1}$. It is not hard to see that for each $i$ the collection $\big\{f_i^{gg'}\big\}_{g,g' \in G}$ of bimodule maps forms a $G$-graded bimodule map $f_i$. The collection of swallowtail morphisms corresponds to following compositions of graded bimodule maps 
\begin{align*}
    {}_{B^{op}}N_A \xrightarrow {}_{B^{op}}N \otimes_A A_A \xrightarrow{id \otimes f_1} {}_{B^{op}}N \otimes_A M \otimes_{B^{op}} N_A \xrightarrow{f_2 \otimes id} {}_{B^{op}} B^{op} \otimes_{B^{op}} N_A \xrightarrow {}_{B^{op}}N_A \\
    {}_{A}M_{B^{op}} \xrightarrow {}_{A}A \otimes_A M_{B^{op}} \xrightarrow{f_1 \otimes id} {}_{A}M \otimes_{B^{op}} N \otimes_A M_{B^{op}} \xrightarrow{id \otimes f_2} {}_{A}M \otimes_{B^{op}} B^{op}_{B^{op}} \xrightarrow {}_{A}M_{B^{op}}.
\end{align*}
Swallowtail relations imply that both compositions equal to identity bimodule maps of $N$ and $M$ respectively. In other words, $\zeta= ({}_{B^{op}}N_A ,{}_{A}M_{B^{op}}, f_1, f_2)$ is a $G$-graded Morita context. Using $\zeta$ we can replace $B^{\op}$-module actions with $A$-module actions as follows. Right (left) $B^{\op}$-module can be turned into a right (left) $A$-module by tensoring with ${}_{B^{\op}}(N)_A$ (${}_{A}M_{B^{\op}}$) such as tensoring ${}_{A}M_{B^{\op}}$ with ${}_{B^{\op}}(N)_A$ yields ${}_AM \otimes_{B^{\op}} N_A$ which is isomorphic to ${}_{A}A_A$ via $f_4$.

Remaining generators are Morse generators consisting of saddles, cup, and cap $2$-morphisms. The collection of bimodule maps in $Z'_2(\mathcal{XG}_2)$ for the first saddle morphism in Figure \ref{fig:generators} yields a graded bimodule map of the form
\begin{align*}
      {}_{A \otimes B}M \otimes_{\Bbbk} N_{A \otimes B} \to {}_{A \otimes B}(A \otimes_{\Bbbk} B)_{A\otimes B}
\end{align*}
by turning the $B$-module actions around we obtain $ {}_{A \otimes B^{\op}}(M \otimes_{\Bbbk} N)_{A \otimes B^{\op}} \to {}_{A \otimes B^{\op}}( A \otimes_{\Bbbk} B)_{A \otimes B^{\op}}$ where the left $B^{\op}$-action is on $N$ and the right $B^{\op}$-action is on $M$. 
%\begin{figure}[ht]
%    \centering
%    \includesvg[]{innpd_1}
%    \caption{Saddle morphisms and cusp flip relation}
%    \label{fig:innpd}
%\end{figure}
\begin{figure}[ht]
    \centering
    \def\svgwidth{\columnwidth}
    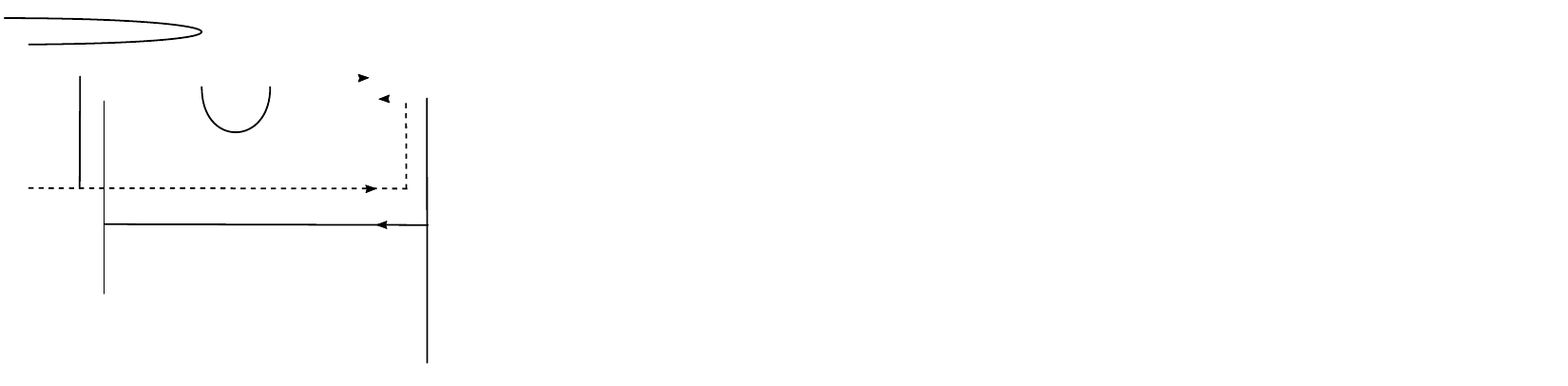
    \caption{Saddle morphisms and cusp flip relation}
    \label{fig:innpd}
\end{figure}
As pointed out above $B^{\op}$-module actions can be replaced by $A$-module actions and we get a graded $(A_1 \otimes A_3,A_2 \otimes A_4)$ bimodule map of the form
\begin{align*}
 \xi:  {}_{A_1} A_{A_2} \otimes_{\Bbbk} {}_{A_3}A_{A_4} &\to {}_{A_1} A_{A_4} \otimes_{\Bbbk} {}_{A_3}A_{A_2}
\end{align*}
where numbers indicate module actions i.e. $A_i=A$ for $i=1,2,3,4$. The first morphism in Figure \ref{fig:innpd} shows that an arbitrary saddle morphism can be obtained from $e$-labeled saddle morphism and other generating $2$-morphisms. This implies that the graded bimodule map $\xi$ is determined at $1 \otimes 1 \in A_e \otimes A_e$ which we denote by a finite sum $\sum_i p_i^e \otimes q_i^{e}$ and it satisfies $\sum_i a p_i^e \otimes q_i^e = \sum_i p_i^e \otimes  q_i^e a$ for all $a \in A$. Similarly, we denote the image of $1 \otimes 1$ under the second $2$-morphism in Figure \ref{fig:innpd} by $\eta_g^A =\sum_i p_i^g \otimes q_i^g$ for all $g \in G$ (compare with equation (\ref{eqninnpd})). 

In the same way, the collection of bimodule maps in $Z'_2(\mathcal{XG}_2)$ for the second saddle morphism gives a graded $(A_1 \otimes A_3, A_2 \otimes A_4)$ bimodule map of the form $\eta : {}_{A_1} A_{A_2} \otimes_{\Bbbk} {}_{A_3} A_{A_4} \to {}_{A_1} A_{A_4} \otimes_{\Bbbk} {}_{A_3}A_{A_2}$. The cusp flip relation shown in Figure \ref{fig:innpd} implies that $\xi = \eta$. 
%\begin{figure}[ht]
%    \centering
%    \includesvg{Gcup2}
%    \caption{Cup and cap morphisms on nonprincipal components}
%    \label{fig:Gcup}
%    \end{figure}
\begin{figure}[ht]
    \centering
    \def\svgwidth{\columnwidth}
    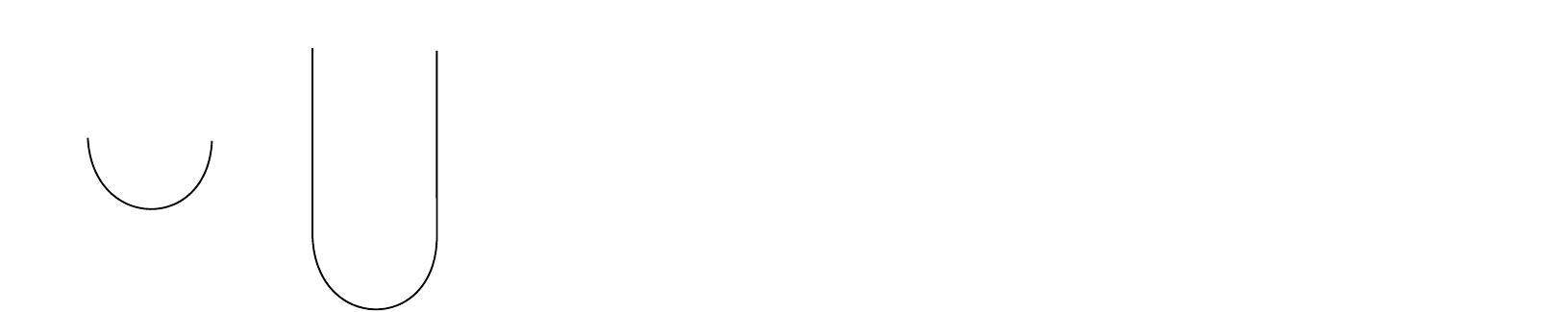
    \caption{Cup and cap morphisms on nonprincipal components}
    \label{fig:Gcup}
\end{figure}
Before considering cup and cap generators note that using $\zeta$ we can assign the collection of all $g,g^{-1}$ labeled circles to $\oplus_{g \in G} A_g \otimes_{ (A_e \otimes A_{e}^{\op})} A_{g^{-1}}$. The collections of $2$-morphisms in Figure \ref{fig:Gcup} give the following bimodule maps 
\begin{align*}
     \Lambda &: \oplus_{g \in G} A_g \otimes_{A_e \otimes A_e^{\op}} A_{g^{-1}} \to \Bbbk\\
     u&: \Bbbk \to  \oplus_{g \in G}A_g \otimes_{A_e \otimes A_e^{\op}} A_{g^{-1}}
\end{align*}
respectively. Figure \ref{fig:Gcup} implies that cup and cap morphisms are determined on the principal component. Since $A_e \otimes_{A_e \otimes A_e^{op}}A_e= A_e/[A_e,A_e]$, cup morphism on the principal component can be considered as a symmetric linear map $\Lambda :A_e \to \Bbbk$. Additionally, Figure \ref{fig:Gcup} shows that on nonprincipal components cup morphism is given by multiplication followed by $\Lambda$ leading to a symmetric $\Bbbk$-bilinear map $\eta_g: A_g \otimes A_{g^{-1}} \to \Bbbk$. Morse relations involving cup morphism indicates the nondegeneracy of $\eta_g$ as follows. Assuming $\sum_j \beta_g^j \otimes 1 \otimes \beta_{g^{-1}}^j$ as the image of $1$ under $A_e \xrightarrow{\sim} A_{g} \otimes_{A_e} A_e \otimes_{A_e} A_{g^{-1}}$ the first (left) $2$-morphism in Figure \ref{fig:nsg} corresponds to following compositions 
\begin{align*}
    a \to 1 \otimes 1 \otimes a \to \sum_j \beta_g^j \otimes \bigg(\sum_i p_i^e \otimes q_i^e \bigg) \otimes \beta_{g^{-1}}^j \otimes a \to \sum_i p_i^g \otimes q_i^{g} \otimes a \to \sum_i p_i^g \eta \big(q_i^{g},a \big)
\end{align*}
and Morse relation implies that it is equivalent to $\text{id}_{A_g}$. Similarly, reflection of this morphism with $g^{-1}$ label gives $b= \sum_i \eta_g(b, p_i^{g})q_i^{g}$ for any $b \in A_{g^{-1}}$, which shows that $\eta_g$ is nondegenerate. Thus, $(A,\eta_A)$ is a Frobenius $G$-algebra where $(\eta_A)|_{A_g \otimes A_{h}}$ is $\eta_g$ when $h=g^{-1}$ and zero otherwise. 

Remaining Morse relations contain cap morphisms which are determined on the principal component. For any $c \in A_e$, assuming $u(1)|_{A_e \otimes A_e}= \sum_j a_j \otimes b_j$ the second $2$-morphism in Figure \ref{fig:nsg} corresponds to the following compositions
\begin{align*}
    c \otimes \sum_j a_j \otimes b_j \to \sum_{i,j} c p_i^e \otimes a_j q_i^e \otimes b_j \to \sum_{i,j} c p_i^e b_j a_j q_i^e \to c \sum_i p_i^e z q_i^e  
\end{align*}
where $z= \sum_j b_j a_j \in A_e$. Morse relation implies that $\sum_i p_i^e z q_i^e=1$ and consequently $\sum_i p_i^e \otimes z q_i^e$ is a separability idempotent of the algebra $A_e$. Thus, $(A_e,\eta_e)$ is a separable symmetric Frobenius algebra as shown in \cite{schommer}. Similarly, we have $\sum_i p_i^g z q_i^g=1$ using the saddle whose image gives $\eta_g^A$. Until now we used $\zeta$ to replace $B^{\op}$ actions by $A$ actions. By changing the roles of $A$ and $B$ we obtain a quasi-biangular $G$-algebra $B$ and $\zeta$ is a compatible graded Morita context between $B$ and $A^{\op}$. 

Thus, any object in $\XP^{\PD}$ determines a triple $(A,B,\zeta)$. Conversely, for any such triple, one constructs an object of $\XP(\Alg)$ by assigning values to generating objects, $1$-morphisms, and $2$-morphisms of $\XP$ satisfying generating relations using the above arguments. Then, by the cofibrancy theorem this object gives a strict symmetric monoidal $2$-functor $\XB^{\PD} \to \Alg$ whose composition with the equivalence $\X\Bord_2 \xrightarrow{\sim} \XB^{\PD}$ produces the desired extended X-HFT.
\end{proof}
%\begin{figure}[t]
%    \centering
%    \includesvg{nsg}
%    \caption{Compositions of generating $2$-morphisms forming Morse relations}
%    \label{fig:nsg}
%\end{figure}
\begin{figure}[t]
    \centering
    \def\svgwidth{\columnwidth}
    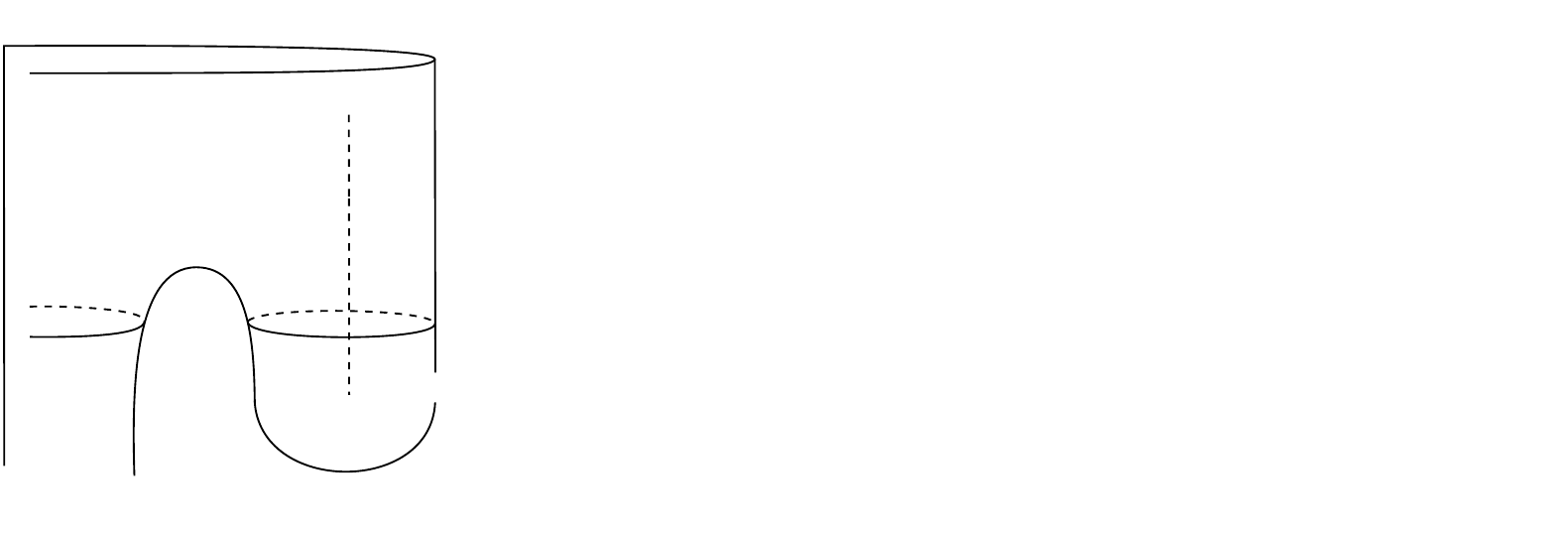
    \caption{Compositions of generating $2$-morphisms forming Morse relations}
    \label{fig:nsg}
\end{figure}
\begin{remark}
The cofibrancy theorem implies that any symmetric monoidal $2$-functor $Z: \XB^{\PD} \to \Alg$ can be strictified. That is, $Z$ is equivalent to a strict symmetric monoidal $2$-functor. From now on, by an $\Alg$-valued extended X-HFT giving triple $(A,B,\zeta)$ we mean the triple coming from the corresponding strict symmetric monoidal $2$-functor.  
\end{remark}
Turaev \cite{tur-hqft} defined the $G$-center of a biangular $G$-algebra. We extend this notion to a $G$-center of a quasi-biangular $G$-algebra $(K,\eta)$ as $Z_G(K)=\oplus_{g \in G} \Psi(K_g)$ where $\Psi(a)=\sum_i p_i^e a q_i^{e}$ for inner product elements $\big\{\sum_i p_i^g \otimes q_i^g\big\}_{g \in G}$. In general, $G$-center is not commutative and it differs from the usual center of the algebra. However, it has a crossed Frobenius $G$-algebra structure which is defined as follows.
\begin{definition}[\cite{tur-hqft}]
A Frobenius $G$-algebra $(L= \oplus_{g \in G}L_g,\eta)$ is \textit{crossed} if $L$ is endowed with a group homomorphism $\varphi:G \to \text{Aut}(L)$ satisfying the following conditions;
\begin{enumerate}[(i)]
    \item $\varphi$ is conjugation type i.e. $\varphi_h(L_g)= L_{hgh^{-1}}$ and $\varphi_h|_{L_h}=\text{id}_{L_g}$ for every $g,h \in G$,
    \item $ba= \varphi_h(a)b$ for any $a \in L$ and $b \in L_h$,
    \item $\text{Tr}(\mu \varphi_h: L_g \to L_g) = \text{Tr}(\varphi_{g^{-1}}\mu_c : L_h \to L_h)$ for all $g,h \in G$ and $c \in L_{ghg^{-1}h^{-1}}$ where $\mu_c : L \to L$ is a left multiplication by $c$ and $\text{Tr}$ is the trace of a map,
    \item $\eta$ is invariant under $\varphi$.
\end{enumerate}
\end{definition}
\begin{lemma}\label{Gcenteri}
Let $(K, \eta)$ be a quasi-biangular $G$-algebra with a central element $z \in K_e$ and a collection of inner product elements $\big\{\sum_i p_i^g \otimes q_i^g\big\}_{g \in G}$. Then, $Z_G(K)$ is a unital $G$-algebra with a multiplication $\sum_i p_i^e a q_i^e \cdot \sum_i p_i^e b q_i^e= \sum_{i,j} p_i^e a q_i^e p_j^e b q_j^e z^{-1}$ for all $a,b \in K$ and the triple $(Z_G(K), \eta|_{Z_G(K)}, \{\varphi_{g}|_{Z_G(K)} \}_{g \in G})$ is a crossed Frobenius $G$-algebra where $\varphi_g(a)=\sum_i p_i^g a zq_i^{g}$ for all $a \in K$ and all $g \in G$. 
\end{lemma}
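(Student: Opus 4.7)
The plan is to prove the lemma in three stages: first, assemble a toolbox of identities relating $\Psi$, $\varphi_g$, and the inner product elements; second, verify the unital graded algebra structure on $Z_G(K)$; third, check the four axioms of a crossed Frobenius $G$-algebra.

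In the first stage I would apply Lemma \ref{label2} (and its special case), centrality of $z$, and the symmetry identity $\sum_i p_i^{g^{-1}}\otimes q_i^{g^{-1}} = \sum_i q_i^g \otimes p_i^g$ to derive: the grading preservation $\Psi(K_g)\subseteq K_g$ and $\varphi_g(K_h)\subseteq K_{ghg^{-1}}$; a near-idempotency of $\Psi$ sufficient to make $Z_G(K) = \oplus_{g\in G}\Psi(K_g)$ a well-defined graded $\Bbbk$-submodule; and the group homomorphism property $\varphi_{gh} = \varphi_g\circ\varphi_h$, obtained by nesting the defining formulas, rewriting the resulting double sum $\sum_{i,j} p_i^g p_j^h a z q_j^h z q_i^g$ via the $(g,h)$-specialization of Lemma \ref{label2}, and collapsing by $\sum_i p_i^g z q_i^g = 1$.

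In the second stage, one checks that the formula $\Psi(a)\cdot\Psi(b) = \sum_{i,j}p_i^e a q_i^e p_j^e b q_j^e z^{-1}$ depends only on the classes $\Psi(a),\Psi(b)$ by a Casimir-style calculation using nondegeneracy of $\eta$; the unit is $\Psi(1) = \sum_i p_i^e q_i^e$, with $\Psi(1)\cdot\Psi(a) = \Psi(a)$ following from $\sum_i p_i^e z q_i^e = 1$; and associativity follows from a parallel double application of Lemma \ref{label2}. Multiplicativity of the grading is immediate because the extra factors all lie in $K_e$. In the third stage, conjugation type and the relation $\varphi_g|_{Z_G(K)_g} = \mathrm{id}$ reduce, for $\Psi(b_0)$ with $b_0\in K_g$, to direct manipulation via Lemma \ref{label2}; twisted commutativity $ba = \varphi_h(a)b$ for $b\in Z_G(K)_h$ follows from the special case $\sum_j p_j^g b z q_j^g c = \sum_k c p_k^{hg} b z q_k^{hg}$; and $\varphi$-invariance of $\eta$ collapses by grading and $\sum_i p_i^g z q_i^g = 1$.

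The hard part will be the trace condition $\mathrm{Tr}(\mu_c\varphi_h : Z_G(K)_g\to Z_G(K)_g) = \mathrm{Tr}(\varphi_{g^{-1}}\mu_c : Z_G(K)_h\to Z_G(K)_h)$ for $c\in Z_G(K)_{ghg^{-1}h^{-1}}$. I would express each trace in Casimir form $\sum_i \eta(T(p_i), q_i)$ for appropriate inner product elements, then rewrite both sides as a common cyclic expression in $c$, $z$, and the paired $p$'s and $q$'s via several nested applications of Lemma \ref{label2}, and finally invoke the symmetry $\sum_i p_i^g\otimes q_i^g = \sum_i q_i^{g^{-1}}\otimes p_i^{g^{-1}}$ to identify them. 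The delicate point is tracking how the $z$-factors propagate through these rewritings so that they exactly cancel the $z^{-1}$ in the product formula; this is where the quasi-biangular setup is strictly more subtle than Turaev's biangular ($z=1$) case.
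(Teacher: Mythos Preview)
Your overall strategy---assembling identities from Lemma~\ref{label2} and then verifying the axioms one by one---is exactly what the paper does, and most of your outline (the homomorphism property of $\varphi$, twisted commutativity, $\varphi$-invariance of $\eta$, and the trace identity via the Casimir formula $\mathrm{Tr}(T)=\sum_i\eta(T(p_i^g),q_i^g)$) matches the paper's computations closely.

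There is, however, a concrete error in your identification of the unit. You claim the unit of $Z_G(K)$ is $\Psi(1)=\sum_i p_i^e q_i^e$, but since $z$ is central, the relation $\sum_i p_i^e z q_i^e=1$ gives $\sum_i p_i^e q_i^e = z^{-1}$. With the twisted multiplication $\Psi(a)\cdot\Psi(b)=\Psi(a)\Psi(b)z^{-1}$, one then gets $\Psi(1)\cdot\Psi(a)=z^{-1}\Psi(a)z^{-1}=z^{-2}\Psi(a)$, which is not $\Psi(a)$ unless $z^2=1$. The correct unit is $z$ itself: one checks $z=\Psi(z^2)$ (so $z\in Z_G(K)$) and $z\cdot\Psi(a)=z\Psi(a)z^{-1}=\Psi(a)$ by centrality. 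This is precisely the distinction between the quasi-biangular and biangular cases that you flag as delicate in your final paragraph, so you should carry it through consistently. The paper also derives and uses the identity $\sum_k p_k^g q_k^g = z^{-1}$ (via the strong grading isomorphism $K_g\otimes_{K_e}K_{g^{-1}}\cong K_e$) when showing $\varphi_g$ is an algebra homomorphism for the twisted product; you will need this as well.
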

\begin{proof}
The unit of $Z_G(K)$ is $\Psi(z^2)=z$. By Lemma \ref{label2} we have the equality
\begin{align} \label{eq4}
    \sum_{i} p_i^h bz' q_i^{h}c = \sum_i c p_i^{gh} b z' q_i^{gh}
\end{align}
for all $c \in K_{g^{-1}}$, $z' \in K_e$, $b \in K$ and $g,h \in G$. Taking $z'=1$ and $g=h=e$ gives
\begin{align}\label{eq5}
    \Psi(a\Psi(b)z^{-1})= \sum_{i,j} p_i^e a p_j^e b  q_j^{e} z^{-1} q_i^{e}= \sum_{i,j} p_i^e a p_j^e b  q_j^{e} q_i^{e} z^{-1}= \sum_{i,j} p_i^e a  q_i^{e} p_j^{e}b  q_j^{e} z^{-1}= \Psi(a)\cdot \Psi(b)
\end{align}
which implies that $Z_G(K)$ is closed under multiplication. Restriction of $\eta$ to $Z_G(K)$ is an inner product and hence $(Z_G(K),\eta|_{Z_G(K)})$ is a Frobenius $G$-algebra. For any $b \in K$ and for all $h \in G$ we have
\begin{align*}
    \Psi(z\varphi_h(b))= \sum_j p_j^e z \bigg(\sum_i p_i^h bz q_i^h\bigg)q_j^e= \sum_{i,j}p_j^ezq_j^e p_i^h bz q_i^h= \sum_i p_i^h bz q_i^h= \varphi_h(b)
\end{align*}
which shows that $\varphi_h(K)\subset Z_G(K)$. Similarly for any $\sum_i p_i^e a q_i^e \in Z_G(K)$ we have
\begin{align*}
    \varphi_e\bigg(\sum_i p_i^e a q_i^e \bigg)= \sum_j p_j^e \bigg(\sum_i p_i^e a q_i^e \bigg) z q_j^e=\sum_{i,j}p_j^ez q_j^e p_i^e a q_i^e = \sum_i p_i^e a q_i^e
\end{align*}
showing $\varphi_e|_{Z_G(K)}= id_{Z_G(K)}$. Note that for any $g \in G$ and $a \in K$ we have 
\begin{align*}
    \varphi_g(\Psi(a))= \sum_j p_j^g \bigg(\sum_i p_i^e a q_i^e \bigg)z q_j^g= \sum_j p_j^g z q_j^g \sum_i p_i^g a q_i^g=\sum_i p_i^g a q_i^g
\end{align*}
and using this we have the following equality for all $\bar{a}=\Psi(a),\bar{b}=\Psi(b) \in Z_G(K)$ and $g \in G$
\begin{align*}
    \varphi_g(\bar{a} \cdot \bar{b})= \sum_k p_k^g \bigg( \sum_{i,j} p_i^e a q_i^e p_j^e b q_j^e z^{-1}\bigg) z q_k^g = \bigg( \sum_i p_i^g a q_i^g \bigg) \bigg( \sum_j p_j^g b q_j^g  \bigg) \bigg( \sum_k p^g_k q_k^g \bigg)= \varphi_g( \bar{a})\cdot \varphi_g(\bar{b})
    \end{align*}
showing $\varphi_g$ is an algebra homomorphism. For the last equality, we have $\sum_k p_k^g q_k^g= z^{-1}$ since $\sum_k p_k^g q_k^g = \sum_{i,j} p_i^e \beta_g^j \beta_{g^{-1}}^j q_i^e= \sum_i p_i^e q_i^e=z^{-1}$ where $\sum_j \beta_g^j \otimes \beta_{g^{-1}}^j \in K_g \otimes_{K_e} K_{g^{-1}}$ is the inverse of $1 \in K_e$ under the product map  $K_h \otimes_{K_e} K_{h^{-1}}\to K_e$ (compare the first equality with the bimodule map corresponding to the second $2$-morphism in Figure \ref{fig:innpd}). We also have
\begin{align*}
    \varphi_g(\varphi_h(\Psi(b)))= \sum_j p_j^g \bigg( \sum_i p_i^h b q_i^h \bigg) z q_j^g = \sum_j p_j^g z q_j^g \sum_i p_i^{gh} b q_i^{gh}= \varphi_{gh}(\Psi(b))
\end{align*}
for all $g,h \in G$ and $b \in K$, which also implies that $\varphi_{g^{-1}}$ is the inverse of $\varphi_g$ for all $g \in G$. For all $\bar{a}=\Psi(a),\bar{b}=\Psi(b) \in Z_G(K)$ and $g \in G$ using the cyclic symmetry of $\eta$ we have
\begin{align*}
\eta(\varphi_g(\bar{a}),\bar{b})= \eta \bigg(\sum_{i} p_i^g \bar{a} z q_i^g, \sum_j p_j^e b q_j^e \bigg)= \eta \bigg( \bar{a}, \sum_{i,j} zq_i^g p_j^e b q_j^e p_i^g  \bigg)= \eta \bigg( \bar{a}, \sum_{i,k} p_k^{g^{-1}} b q_{k}^{g^{-1}}z q_i^g p_i^g  \bigg)= \eta ( \bar{a}, \varphi_{g^{-1}}(\bar{b}))
\end{align*}
showing the inner product $\eta$ is invariant under $\varphi : G \to \text{Aut}(Z_G(K))$.
For any $\bar{c}= \Psi(c) \in Z_G(K)_{h}$ we have
\begin{align*}
\varphi_h(\bar{c})= \sum_{i} p_i^h c q_i^h = \sum_{i,j} p_i^e \beta_h^j c \beta_{h^{-1}}^j q_i^e = \sum_i p_i^e c q_i^e= \bar{c}
\end{align*}
where $\sum_j \beta_h^j \otimes \beta_{h^{-1}}^j \in K_{h} \otimes K_{h^{-1}}$ is the inverse of $1 \in K_e$ under the product bimodule map. This shows that $\varphi_{h}$ acts by identity on $Z_G(K)_h$ for all $h \in G$. Equation (\ref{eq4}) gives $\varphi_g(a)b= b \varphi_{h^{-1}g}(a)$ for $a \in K, b \in K_h$ and $g,h\in G$. In this case by taking $g=h$ we have $\varphi_h(a)b=ba$. Let $\mu_c :K \to K$ be a multiplication by $c \in K$, then for any $g,h \in G$ and $c \in K_{ghg^{-1}h^{-1}}$ we have\footnote{See \cite{tur-hqft} for the first equality.}
\begin{align*}
    \Tr (\mu_c \varphi_{h} : K_g \to K_g) &= \sum_{i} \eta \big(c \varphi_{h} \big(p_i^g \big),q_i^{g}\big)= \sum_{i,j} \eta \big(c p_j^{h} p_i^g z q_j^{h}, q_i^{g} \big) = \sum_{i,j}\eta \big(q_i^{g} c p_j^{h}z p_i^g ,q_j^{h} \big) \\
    &= \sum_j \eta \big(\varphi_{g^{-1}}\big(c p_j^{h}\big), q_j^{h}\big)=\Tr(\varphi_{g^{-1}}\mu_c : K_h \to K_h).
 \end{align*}
\end{proof}
Any $2$-dimensional extended HFT produces a nonextended one by restricting it to a symmetric monoidal full subcategory $\X\mathbbmss{Cob}_2$ of $\X\Bord_2$ defined as follows. The objects of $\X\mathbbmss{Cob}_2$ are $\{ \gcirc \}_{g \in G}$, the empty $1$-morphism in $\X\Bord_2$, and disjoint union of these $1$-morphisms. The morphisms of $\X\mathbbmss{Cob}_2$ are the $2$-morphisms of $\X\Bord_2$ among these $1$-morphisms. We define a symmetric monoidal functor $D : \X\mathbbmss{Cob}_2 \to \X\text{Cob}_2$ by $\gcirc \mapsto \teknokta$ for any $g \in G$. On morphisms $D$ forgets a point on each boundary component and takes the corresponding relative homotopy class. Using definitions it is not hard to see that $D$ is an equivalence of categories. Then, by restriction of $Z : \X\Bord_2 \to \Alg$ to $\X\mathbbmss{Cob}_2$ above we mean precomposing $Z$ with $D^{-1}: \X\text{Cob}_2 \to \X\mathbbmss{Cob}_2$.
\begin{cor}
Let $Z:\X\Bord_2 \to \Alg$ be an extended HFT giving $(A,B,\zeta)$. Then, the nonextended HFT obtained from $Z$ by restricting to $\X\mathbbmss{Cob}_2$ is the nonextended HFT associated to the $G$-center of the quasi-biangular $G$-algebra $(A,\eta_A)$.
\end{cor}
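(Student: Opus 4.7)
The plan is to evaluate the restricted 2-functor $Z|_{\X\mathbbmss{Cob}_2}$ explicitly on the generators of $\X\mathbbmss{Cob}_2$ (the $g$-labeled circles, pairs of pants, disks, and the twist cylinders implementing conjugation by $G$), and to match the resulting crossed Frobenius $G$-algebra with the $G$-center $Z_G(A)$ of Lemma \ref{Gcenteri}. The equivalence $\X\Bord_2 \simeq \XB^{\PD}$ of Proposition \ref{thm1} lets me rewrite each such cobordism as a $G$-planar diagram built from the generators in Figure \ref{fig:generators}, so that its image under $Z$ can be read off directly from the bimodule data assembled in the proof of Theorem \ref{orclass}.

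First, I would identify $Z(\gcirc)$ for each $g \in G$. A $g$-labeled circle decomposes in $\XB^{\PD}$ as the horizontal composition of a $g$-labeled upper arc and a $g^{-1}$-labeled lower arc closed by cusps at the two basepoints (this is the configuration appearing inside the cup morphism of Figure \ref{fig:Gcup}), which produces
\[
Z(\gcirc) \;\cong\; A_g \otimes_{A_e \otimes A_e^{\op}} A_{g^{-1}}.
\]
Using the strong gradation of $A$ (so that $A_g \otimes_{A_e} A_{g^{-1}} \cong A_e$) together with the cyclic identity of Lemma \ref{label2} and the separability of $A_e$, I would then exhibit a canonical $\Bbbk$-linear isomorphism $\Theta_g : Z(\gcirc) \xrightarrow{\sim} \Psi(A_g) = Z_G(A)_g$; in the principal case $g = e$ this is the classical isomorphism $A_e \otimes_{A_e \otimes A_e^{\op}} A_e \cong \Psi(A_e)$ available for any separable Frobenius algebra.

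Second, under $\bigoplus_{g \in G} \Theta_g$ I would check that each elementary 2-morphism of $\X\mathbbmss{Cob}_2$ maps to the corresponding operation on $Z_G(A)$. The pair of pants $\gcirc \sqcup \gcirc' \to \gcirc\gcirc'$ admits a $G$-planar decomposition into two saddles and a fold, and tracking its image through the bimodule maps $f_i$, the inner-product elements $\eta^A_g$, and the multiplication of $A$ identified in Theorem \ref{orclass} yields precisely the product $\Psi(a)\cdot\Psi(b) = \Psi(a\,\Psi(b)\,z^{-1})$ of Lemma \ref{Gcenteri}. The unit disk is sent by the cup generator $u$ to $z = \Psi(z^2)$, the unit of $Z_G(A)$; the counit disk is sent by $\Lambda$ to a trace whose restriction to each $\Psi(A_g) \otimes \Psi(A_{g^{-1}})$ recovers the pairing $\eta_A|_{Z_G(A)}$, which is nondegenerate by the quasi-biangular hypothesis.

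Third, the crossed structure $\varphi \colon G \to \text{Aut}(Z_G(A))$ arises from the $h$-twist cylinder: the cobordism from $\gcirc$ to the $(hgh^{-1})$-labeled circle obtained by dragging a basepoint along a loop representing $h \in G$. Decomposing this cylinder as a $G$-planar diagram (two cusps together with folds implementing conjugation of $G$-labels) and evaluating $Z$ should produce the bimodule map $\Psi(a) \mapsto \sum_i p_i^h a z q_i^h = \varphi_h(\Psi(a))$. The main obstacle is exactly this last step: constructing an explicit $G$-planar decomposition of the twist cylinder and verifying, via iterated application of Lemma \ref{label2} and the generating relations in Figure \ref{fig:relations}, that its image satisfies all the crossed-Frobenius axioms — in particular the graded commutativity $ba = \varphi_h(a) b$ for $b \in A_h$ and the trace identity (iii) in the definition of a crossed Frobenius $G$-algebra. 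Once these checks are in place, Turaev's classification identifies $Z|_{\X\mathbbmss{Cob}_2}$ with the nonextended HFT associated to the crossed Frobenius $G$-algebra $Z_G(A)$.
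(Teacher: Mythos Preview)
Your strategy is correct and broadly aligned with the paper's, but the paper is considerably more economical in two respects.

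First, the paper uses the alternate presentation $Z(\gcirc)=A_e\otimes_{A_e\otimes A_e^{\op}}A_g$, which is literally the $A_e$-commutant $\{b\in A_g\mid ab=ba\text{ for all }a\in A_e\}$; the equality with $\Psi(A_g)$ is then a two-line set-theoretic containment using $\sum_i p_i^e z q_i^e=1$ and Lemma~\ref{label2}, rather than the construction of an isomorphism $\Theta_g$ from $A_g\otimes_{A_e\otimes A_e^{\op}}A_{g^{-1}}$. Second, the paper does not re-verify the pair of pants, unit disk, or Frobenius pairing at all: those identifications are already implicit in the proof of Theorem~\ref{orclass} and the formulas of Lemma~\ref{Gcenteri}, so the only new content in the corollary is the crossed structure.

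For your ``main obstacle'' (the twist cylinder), the paper avoids decomposing it into cusps and folds: it simply writes down the relevant $2$-morphism (the third diagram in Figure~\ref{fig:nsg}) and reads off the composite bimodule map as
\[
1\otimes a \;\mapsto\; 1\otimes \sum_j a_j b_j\otimes a \;\mapsto\; 1\otimes \sum_i p_i^h\otimes zq_i^h\otimes a \;\mapsto\; 1\otimes \sum_i p_i^h a\, z q_i^h,
\]
which is exactly $\varphi_h$. The crossed-Frobenius axioms you worry about (graded commutativity, the trace identity) do not need to be rechecked here: they were already proven for $Z_G(A)$ in Lemma~\ref{Gcenteri}, so matching the underlying spaces and $\varphi$ suffices. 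Your approach would work, but it duplicates effort already invested in Lemma~\ref{Gcenteri} and Theorem~\ref{orclass}.
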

\begin{proof}
Proceeding with the notation used in the proof of Theorem \ref{orclass} the image of a $g$-labeled circle under $Z$ is given by 
\begin{align*}
    A_e \otimes_{A_e \otimes A_e^{\op}} A_g=\{ b \in A_g \ | \ a \cdot b = b \cdot a  \text{ for all } a \in A_e\}.
\end{align*}
The $G$-center of $(A,\eta_A)$ is given by $Z_G(A)= \oplus_{g \in G} \Psi (A_g)$. For any $a  \in A_e \otimes_{A_e \otimes A^{\op}_e} A_g$ we have
\begin{align*}
    a= 1.a = \bigg(\sum_i p_i^e z q_i^e \bigg)a = \sum_i p_i^e  a z q_i^e =\Psi(az) \in \Psi(A_g) 
\end{align*}
and for any $\sum_i p_i^e a  q_i^e \in \Psi(A_g)$ and $b\in A_e$ we have
\begin{align*}
    \bigg(\sum_i p_i^e a  q_i^e \bigg)b = \sum_i p_i^e a q_i^e b = \sum_i b p_i^{e} a q_i^{e} = b \bigg(\sum_i p_i^e a q_i^e \bigg) 
\end{align*}
where the middle equality is the result of Lemma \ref{label2}. Thus, we have $A_e \otimes_{A_e \otimes A^{\op}_e} A_g =  \Psi(A_g)$ for all $g \in G$. The third $2$-morphism in Figure \ref{fig:nsg} gives the crossed structure on the restricted HFT and it corresponds to the following sequence of compositions
\begin{align*}
    1 \otimes a &\to 1 \otimes \sum_j a_jb_j \otimes a \to  \sum_j   1 \otimes \beta_h^j z  \beta_{h^{-1}}^j \otimes a \to \\
    \to 1 \otimes \sum_j \beta_h^j \bigg( \sum_i p_i^e \otimes  & z q_i^e \bigg) \beta_{h^{-1}}^j \otimes a \to 1 \otimes \sum_i p_i^h \otimes z q_i^{h} \otimes a \to 1 \otimes \sum_i p_i^h a z q_i^{h} 
\end{align*}
which coincides with the crossed structure of $Z_G(A)$.
\end{proof}
\begin{example}\label{ornek}
Let $\Bbbk$ be an algebraically closed field. Then separable $\Bbbk$-algebras are the same as semisimple $\Bbbk$-algebras. By Artin-Wedderburn structure theorem any separable algebra is isomorphic to a product of finitely many matrix algebras over $\Bbbk$. Consider the $G$-algebra $A=\oplus_{g \in G}A_g$ whose principal component is a product $A_e = \prod_{i=1}^n M_{k_i}(\Bbbk)$ of $(k_i\times k_i)$-matrix algebras over $\Bbbk$ such that each $k_i$ is invertible in $\Bbbk$ and each component is given by $A_g = \ell_g A_e$ where $\ell_g$ is a basis, i.e. for any $a \in A_g$ there exists $b \in A_e$ such that $a= \ell_g b$. Define an inner product $\eta$ on $A$ as 
\begin{align*}
\eta(a,b)=\begin{cases}
r \text{Tr}(L_{ab}: A_e \to A_e) & \text{ when } ab \in A_e \\
0 & \text{ otherwise}
\end{cases}    
\end{align*}
where $r \in \Bbbk$ is invertible and $\text{Tr}(L_{ab})$ is the trace of left multiplication by $ab$ map. We can express the inner product concretely as $\eta(\ell_g \prod_{i=1}^n A_i,\ell_{g^{-1}} \prod_{i=1}^n B_i)= r \sum_{i=1}^n k_i\text{Tr}(A_iB_i)$ where $\text{Tr}(A_iB_i)$ is the trace of the matrix $A_iB_i$. For each $g \in G$ an inner product element can be chosen as $\eta_g^-= r^{-1}\prod_{i=1}^n k_i^{-1}\sum_{\alpha,\beta=1}^{k_i} \ell_{g} E_{\alpha,\beta} \otimes \ell_{g^{-1}} E_{\beta,\alpha} \in A_g \otimes A_{g^{-1}}$ where $E_{\alpha,\beta}$ is the $(\alpha,\beta)$-elementary matrix. In this case, the central element $z \in A_e$ is given by $(r I_{k_1},\dots,r I_{k_n})$ where $I_{k_i}$ denote $(k_i \times k_i)$ identity matrix. Note that $\prod_{i=1}^n k_i^{-1}\sum_{\alpha,\beta=1}^{k_i}  E_{\alpha,\beta} \otimes  E_{\beta,\alpha}$ is a separability idempotent of $A_e$. Thus, the map $\Psi: A_g \to A_g$ is given by $$\Psi\bigg(\ell_g \prod_{i=1}^n A_i \bigg)= r^{-1}\prod_{i=1}^n k_i^{-1} \sum_{\alpha,\beta=1}^{k_i} E_{\alpha,\beta} (\ell_g A_i) E_{\beta,\alpha}= r^{-1}\prod_{i=1}^n k_i^{-1} \ell_g\text{Tr}(A_i)I_{k_i}$$ which is a projection onto its center $\ell_g \Bbbk^n$.
\end{example}
\subsection{The bicategory of 2-dimensional extended X-HFTs}\label{equivbicat_kanit}
Until now we have studied the objects of $\XP(\Alg)$. Theorem \ref{genelsinif} implies that studying $1$- and $2$-morphisms of $\XP(\Alg)$ leads us to a bicategory equivalent to $\mathcal{E}\text{-}\mathcal{HFT}(X,\Alg)$. Let $Z_0$ and $Z_1$ be extended HFTs with target $X$ giving triples $(A,B,\zeta)$ and $(A',B',\zeta')$ respectively. A $1$-morphism $\alpha : Z_0 \to Z_1$ in $\XP(\Alg)$ gives $1$-morphisms $ \alpha_0(\nokta)=  {}_{A_e'}R_{A_e}$ and $\alpha_0(\noktab) = {}_{B_e'}S_{B_e}$, and $2$-morphisms 
\begin{align*}
    \alpha_1(\arti)&: {}_{A'_e}A_g' \otimes_{A_e'} R_{A_e} \to {}_{A_e'}R \otimes_{A_e} (A_g)_{A_e}\\
   \alpha_1(\eksi)&: {}_{B_e'}B_g' \otimes_{B_e'} S_{B_e} \to {}_{B_e'}S \otimes_{B_e} (B_g)_{B_e}\\
    \alpha_1(\leftelbow)&:  {}_{A_e' \otimes B_e'}(M_g')_{\Bbbk} \to {}_{A_e' \otimes B_e'}(R \otimes S)\otimes_{A_e \otimes B_e} (M_g)_{\Bbbk}\\
    \alpha_1(\rightelbow)&: {}_{\Bbbk}N_g' \otimes_{B_e' \otimes A_e'} (S \otimes R)_{B_e \otimes A_e} \to {}_{\Bbbk}(N_g)_{B_e \otimes A_e} 
\end{align*}
which are isomorphisms for all $g \in G$ and $G$-graded bimodules $M,M',N$, and $N'$ are the components of $\zeta$ and $\zeta'$. These morphisms are natural with respect to generating $2$-morphisms. Naturality with respect to graded multiplication, $\commuc$, leads to the commutativity of the diagram
\begin{align*}
    \xymatrixcolsep{5pc}\xymatrix{ {}_{A_e'}(A_{g'}') \otimes_{A_e'} A_g' \otimes _{A_e'} R_{A_e} \ar[r]^{\alpha_1(\commua)} \ar[d]_{Z_1\big(\commuc \big)} & {}_{A_e'}R\otimes_{A_e}A_{g'} \otimes_{A_e}(A_g)_{A_e} \ar[d]^{Z_0 \big(\commuc \big)} \\ {}_{A_e'}A_{gg'}' \otimes_{A_e'} R_{A_e} \ar[r]_{\alpha_1(\commub)} & {}_{A_e'}R \otimes_{A_e} (A_{gg'})_{A_e} 
    } 
\end{align*}
for all $g,g' \in G$. We denote bimodules ${}_{A_e'}A_g' \otimes_{A_e'} R_{A_e}$ and ${}_{A_e'}R \otimes_{A_e}(A_g)_{A_e}$ by ${}_{A_e'}(R_g')_{A_e}$ and ${}_{A_e'}(R_g'')_{A_e}$ respectively. Commutativity of the above diagram implies that they are naturally isomorphic. Thus, we can use one of them and denote it by $R_g$. Similarly, $S_g$ denotes $(B_e',B_e)$-bimodule. These assignments and naturality with respect to $\big\{ \commuc \big\}_{g,g' \in G}$ turn these bimodules into $G$-graded $(A',A)$ and $(B',B)$-bimodules $R=\oplus_{g \in G} R_g$ and $S=\oplus_{g \in G}S_g$ respectively. Similarly, naturality with respect to $G$-module generators turns collections $\{\alpha_1(\leftelbow)\}_{g \in G}$ and $\{\alpha_1(\rightelbow)\}_{g \in G}$ into $G$-graded $(A' \otimes B',\Bbbk)$ and $(\Bbbk,B \otimes A)$-bimodule maps respectively. 

Using $\alpha_0(\noktab)$ we define a $1$-morphism $\alpha_0'(\nokta) = {}_{A_e}R'_{A_e'}$ as follows 
\begin{align*}
    \alpha_0'(\nokta) &= [Z_1(\rightelbowe) \otimes id_{Z_0(\nokta)}] \circ [\alpha(\noktab) \otimes \sigma_{Z_0(\nokta),Z_1(\nokta)}] \circ [Z_0(\leftelbowe) \otimes id_{Z_1(\nokta)}]\\
    \alpha_0'(\nokta) = [(N_e')_{B_e' \otimes A_e'} &\otimes_{\Bbbk} {}_{A_e}(A_e)_{A_e}] \otimes_{B_e' \otimes A_e' \otimes A_e} [{}_{B_e'}S_{B_e} \otimes_{\Bbbk} \sigma_{A_e,A_e'}] \otimes_{B_e \otimes A_e \otimes A_e'} [ {}_{B_e \otimes A_e}M_e \otimes_{\Bbbk} {}_{A_e'}(A_e')_{A_e'}]
\end{align*}
where $\sigma$ is the symmetric braiding of $\Alg$. Using $\alpha_0'(\nokta)$ we define a $2$-morphism
\begin{align*}
\alpha_1'(\arti) &= Z_0(\arti) \circ \alpha_0'(\nokta) \to \alpha_0'(\nokta) \circ Z_1(\arti)\\
 \alpha_1'(\arti) &= {}_{A_e}A_e \otimes_{A_e} R'_{A_e'} \to {}_{A_e}R' \otimes_{A_e'} (A_g')_{A_e'}.
\end{align*}
Using the naturality $R'$ is turned into a $G$-graded $(A,A')$-bimodule $R'= \oplus_{g \in G}R_g'$. The $1$-morphism $\alpha_0(\noktab)$ can be obtained from $\alpha_0'(\nokta)$ by applying $Z_1\big(\cuspa \big)\circ id_{\alpha_0(\noktab)} \circ Z_0\big(\cuspb \big)$ to the $1$-morphism 
\begin{align*}
    [Z_1(\rightelbowe) \otimes Z_0(\rightelbote) \otimes id_{Z_1(\noktab)}]\circ[\alpha_0(\noktab) \otimes \sigma_{Z_0(\nokta), Z_1(\nokta)}\otimes \sigma_{Z_1(\noktab),Z_0(\noktab)}]\circ[Z_0(\leftelbowe) \otimes Z_1(\leftelbote) \otimes id_{Z_0(\noktab)} ]
\end{align*}
and similarly $\alpha_1(\eksi)$ can be obtained from $\alpha_1'(\arti)$. Likewise, using $\alpha_0'(\nokta)$ in the images of cusps generators under $Z_0$, the $2$-morphisms $\alpha_1'(\leftelbow)$ and $\alpha_1'(\rightelbow)$ are defined and both $\alpha_1(\leftelbow)$ and $\alpha_1(\rightelbow)$ can be obtained from these $2$-morphisms.

As in the proof of Theorem \ref{orclass} using $G$-graded Morita contexts $\zeta$ and $\zeta'$ graded bimodules $M$ and $M'$ can be replaced by ${}_{A \otimes A^{op}}A$ and ${}_{A' \otimes (A')^{op}}A'$. We can also replace the graded bimodule $S$ by $R'$ using $\alpha_0'(\nokta)$. Thus, naturality with respect to $G$-module generators turn the collection $\{\alpha_1'(\leftelbow)\}_{g \in G}$ into a bimodule map ${}_{A'}A'_{A'} \to {}_{A'}R \otimes_{A}R'_{A'}$. Similarly, the collection $\{\alpha_1'(\rightelbow)\}_{g \in G}$ is turned into a bimodule map ${}_{A}R' \otimes_{A'} R_{A} \to {}_{A}A_{A}$. Naturality with respect to cusp generators indicate that compositions 
\begin{align*}
    {}_{A'}R_A \xrightarrow{} {}_{A'}A'_{A'} \otimes {}_{A'}R \xrightarrow{ \alpha_1'(\leftelbor)\otimes \text{id}} {}_{A'}R \otimes_A R' \otimes_{A'} R_A \xrightarrow{\text{id} \otimes \alpha_1'(\rightelbos)} {}_{A'}R \otimes_{A} A_A \xrightarrow{} {}_{A'}R_A \\
    {}_{A}R'_{A'} \xrightarrow{} {}_{A}R'\otimes_{A'}A'_{A'} \xrightarrow{\text{id} \otimes \alpha_1'(\leftelbor)} {}_{A}R' \otimes_{A'} R \otimes_A R'_{A'} \xrightarrow{\alpha_1'(\rightelbos)\otimes \text{id}} {}_{A}A \otimes_{A} {R'}_{A'} \xrightarrow{} {}_{A}R_{A'}'
\end{align*}
are $\text{id}_{R}$ and $\text{id}_{R'}$ respectively. In other words, $\alpha$ gives a $G$-graded Morita context between $A$ and $A'$. Similarly, one can define $\alpha_0'(\eksi)$ and obtain a $G$-graded Morita context between $B$ and $B'$. Naturality with respect to Morse generators indicates that $G$-graded Morita contexts are compatible. Hence, $\alpha$ leads to two compatible $G$-graded Morita contexts. In the theory of bicategories this means that both $\alpha_0(\nokta)$ and $\alpha_0(\noktab)$ are parts of two adjoint equivalences. Since an adjoint equivalence is the same as an equivalence (see Proposition A.27 in \cite{schommer}) $Z_0$ and $Z_1$ are equivalent extended HFTs.

Let $\alpha^1,\alpha^2:Z_0 \to Z_1$ be $1$-morphisms in $\XP(\Alg)$ and $\theta: \alpha^1 \to \alpha^2$ be a $2$-morphism in $\XP(\Alg)$. Assume that $Z_0$ and $Z_1$ give triples $(A,B,\zeta)$ and $(A',B',\zeta')$ as before and $1$-morphisms give $\alpha_0^1(\nokta)= {}_{A_e'}R_{A_e}$ and $\alpha_0^2(\nokta) ={}_{A_e'}P_{A_e}$. Then, $\theta_0(\nokta) = {}_{A_e'}R_{A_e} \to {}_{A_e'}P_{A_e}$ and the naturality of $\theta_0(\nokta)$ with respect to $\arti$ is the commutativity of the following diagram
\begin{align*}
  \xymatrixcolsep{5pc}\xymatrix{ {}_{A_e'}A_g' \otimes_{A_e'} R_{A_e} \ar[r]^{\alpha_1^1 (\arti)} \ar[d]_{\theta_0 (\nokta)} & {}_{A_e'}R \otimes_{A_e} (A_g)_{A_e} \ar[d]^{\theta_0(\nokta)} \\ {}_{A_e'}A_g' \otimes_{A_e'} P_{A_e} \ar[r]_{\alpha_1^2(\arti)} & {}_{A_e'}P \otimes_{A_e} (A_g)_{A_e}
    }
\end{align*}
which shows that $\theta_0(\nokta)$ is a $G$-graded bimodule map. Assuming $(\alpha_0')^1(\nokta)={}_{A_e}R'_{A_e'}$ and $(\alpha_{0}')^2(\nokta)= {}_{A_e'}P'_{A_e}$ we similarly have a graded bimodule map $\theta_0'(\nokta):{}_{A_e}R'_{A_e'}\to {}_{A_e}P'_{A_e'}$ using $\theta_0(\noktab)$ and $(\alpha_0')^i(\nokta)$ for $i=1,2$. Naturality with respect to $\leftelbow$ and $\rightelbow$ corresponds to the commutativity of these bimodule maps with the unit and counit of the adjunctions. In other words, $\theta$ leads to an equivalence of graded Morita contexts. In the same way, using $B$ and $B'$ one gets another equivalence of graded Morita contexts.

Motivated by these observations we define a bicategory $\text{Frob}^G$ and a forgetful $2$-functor $\mathcal{F}': \XP(\Alg) \to \text{Frob}^G$ as follows. The bicategory $\text{Frob}^G$ has quasi-biangular $G$-algebras as objects, compatible $G$-graded Morita contexts as $1$-morphisms, and equivalences of $G$-graded Morita contexts as $2$-morphisms. The forgetting $2$-functor $\mathcal{F}'$ maps an object of $\XP(\Alg)$ giving $(A,B, \zeta)$ to $A$. On $1$-morphisms $\mathcal{F}'$ maps $\alpha: Z_0 \to Z_1$ to a compatible $G$-graded Morita context between quasi-biangular $G$-algebras whose principal components are $Z_0(\nokta)$ and $Z_1(\nokta)$. On $2$-morphisms $\mathcal{F}'$ maps $\theta: \alpha^1 \to \alpha^2$ to an equivalence of the compatible $G$-graded Morita contexts. Composing $\mathcal{F}'$ with the equivalence $\mathcal{E}$-$\mathcal{HFT}(X,\Alg)\simeq \XP(\Alg)$ we define $\mathcal{F}$.
\begin{theorem}\label{equivbicats}
\label{thm:equivbicats}
The $2$-functor $\mathcal{F}$ is an equivalence of bicategories $ \mathcal{E}\text{-}\mathcal{HFT}(X,\Alg) \simeq \text{Frob}^G$.  
\end{theorem}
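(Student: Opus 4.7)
The plan is to apply the Whitehead theorem for bicategories (the non-monoidal analog of Theorem \ref{whiteadforsym}) to the $2$-functor $\mathcal{F}$. Via Theorem \ref{genelsinif}, this reduces to verifying that the forgetful $2$-functor $\mathcal{F}': \XP(\Alg) \to \text{Frob}^G$ sending $(A,B,\zeta) \mapsto A$ is essentially surjective on objects, essentially full on $1$-morphisms, and fully-faithful on $2$-morphisms.

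For essential surjectivity, given any quasi-biangular $G$-algebra $A$, form the triple $(A, A^{\op}, \zeta_{\mathrm{triv}})$ where $\zeta_{\mathrm{triv}}$ is the tautological $G$-graded Morita context between $A$ and $A^{\op\op} = A$ given by the regular bimodules ${}_{A}A_A$ and ${}_{A}A_A$ with unit and counit supplied by the multiplication; compatibility with the Frobenius structure is automatic. By Theorem \ref{orclass}, this triple arises from some object of $\XP(\Alg)$ that maps to $A$ under $\mathcal{F}'$.

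For essential fullness on $1$-morphisms, let $Z_0, Z_1 \in \XP(\Alg)$ realize triples $(A,B,\zeta)$ and $(A',B',\zeta')$, and let $\xi = ({}_{A'}R_A, {}_{A}R'_{A'}, \tau, \mu)$ be a compatible $G$-graded Morita context between $A$ and $A'$. Construct a $1$-morphism $\alpha\colon Z_0 \to Z_1$ of $\XP$-data by declaring $\alpha_0$ on the two generating objects: assign $R_e$ to the first and the bimodule obtained by transporting $R_e$ through the Morita contexts $\zeta$ and $\zeta'$ to the second (mirroring the construction of $\alpha_0'(\nokta)$ in the discussion preceding the theorem, run in reverse). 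Define $\alpha_1$ on the four generating $1$-morphisms using the $G$-graded structure of $R$ and $R'$. The compatibility of $\xi$ with the Frobenius structures, together with the compatibility of $\zeta, \zeta'$, provides exactly the naturality squares required with respect to every generating $2$-morphism. By construction $\mathcal{F}'(\alpha) \cong \xi$ in $\text{Frob}^G$.

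For fully-faithfulness on $2$-morphisms, any $2$-morphism $\theta\colon \alpha^1 \to \alpha^2$ of $\XP$-data is determined by its value on the two generating objects, so the assignment $\theta \mapsto (\theta_0(\nokta), \theta_0(\noktab))$ is injective. The naturality conditions with respect to the four generating $1$-morphisms force these assignments to be $G$-graded bimodule maps; naturality with respect to cup, cap, and cusp generators reproduces precisely the defining equations of an equivalence of compatible Morita contexts as in Definition \ref{equivofGmorita}. Conversely, any such equivalence lifts to a $2$-morphism by reversing this correspondence, so $\mathcal{F}'$ induces a bijection on $2$-morphisms. The main obstacle is the essential fullness step: one must synthesize the full $1$-morphism data $(\alpha_0, \alpha_1)$ from only the single Morita context $\xi$ between the $A$-components and verify, one by one, the naturality squares for each family of generating $2$-morphisms in Figure \ref{fig:generators} (graded multiplication, four module actions, cusps, saddles, cups, and caps). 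Each square invokes a distinct compatibility condition between $\xi$, $\zeta$, and $\zeta'$, and organizing these verifications into a coherent argument — while confirming that one has indeed produced a valid $\XP$-datum in the sense of the cofibrancy theorem — constitutes the bulk of the work.
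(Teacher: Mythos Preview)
Your overall strategy---reduce to $\mathcal{F}'$ via Theorem~\ref{genelsinif} and invoke Whitehead---is exactly what the paper does, and your treatment of essential surjectivity matches. There are two points worth flagging.

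\textbf{Essential fullness.} Your approach handles arbitrary $Z_0,Z_1$ by transporting the Morita context $\xi$ through $\zeta,\zeta'$ to obtain $\alpha_0(\noktab)$. This is correct and is what the Whitehead hypotheses literally demand. The paper instead uses a shortcut: given a compatible Morita context $\alpha$ between $A$ and $A'$, it \emph{chooses} $Z_0=(A,(A')^{\op},\alpha)$ and $Z_1=(A',A^{\op},\alpha)$, so that $\alpha$ itself serves as the third component of both triples and the lifting $\alpha':Z_0\to Z_1$ is essentially tautological---no transport or naturality checking required. This is elegant but, as written, only addresses a specific pair $Z_0,Z_1$; the paper is implicitly relying on the analysis in Section~\ref{equivbicat_kanit} to cover arbitrary pairs. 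Your version is more self-contained but costs you the ``bulk of the work'' you mention.

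\textbf{Fully-faithful on $2$-morphisms.} Your injectivity argument is slightly off. You argue that $\theta\mapsto(\theta_0(\nokta),\theta_0(\noktab))$ is injective, but $\mathcal{F}'(\theta)$ records $(\theta_0(\nokta),\theta_0'(\nokta))$, where $\theta_0'(\nokta)$ is built from $\theta_0(\noktab)$ via $\zeta,\zeta'$. What you need---and what the paper makes explicit---is that $\theta_0(\noktab)$ is \emph{recoverable} from $\theta_0'(\nokta)$ (and hence from $\mathcal{F}'(\theta)$), so that two $2$-morphisms with the same image under $\mathcal{F}'$ must agree on both generating objects. Your surjectivity remark (``reversing this correspondence'') suggests you see this, but the injectivity step as stated does not yet establish that $\mathcal{F}'$ itself is injective on $2$-morphisms.
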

\begin{proof}
It is enough to show that $\mathcal{F}'$ is an equivalence and we use Whitehead theorem (Theorem \ref{whiteadforsym}). For a given quasi-biangular $G$-algebra $A$, the triple $(A,A^{op},\text{id})$ gives an object $Z$ of $\XP(\Alg)$ such that $\mathcal{F}'(Z)=A$. Let $\alpha$ be a compatible $G$-graded Morita context between quasi-biangular $G$-algebras $A$ and $A'$. Then triples $(A,(A')^{op},\alpha)$ and $(A',A^{op},\alpha)$ give objects $Z_0$ and $Z_1$ in $\XP(\Alg)$ such that $\mathcal{F}'(\alpha')=\alpha$ where $\alpha': Z_0 \to Z_1$.

For any two $1$-morphisms $\alpha^1,\alpha^2: Z_0 \to Z_1$, we claim that $$\mathcal{F}'(\alpha^1,\alpha^2): \Hom(\alpha^1,\alpha^2) \to \Hom(\mathcal{F}'(\alpha^1),\mathcal{F}'(\alpha^2))$$ is an injection. Assume that different $2$-morphisms $\theta^1,\theta^2 : \alpha^1 \to \alpha^2$ in $\XP(\Alg)$ give the same equivalence of $G$-graded Morita contexts. This means that pairs $(\theta_0^1(\noktab),(\theta_0')^1(\noktab))$ and $((\theta_0^2(\noktab),(\theta_0')^2(\noktab))$ give different graded bimodule maps while pairs, images of $\theta^1$ and $\theta^2$ under $\mathcal{F}'$,  $((\theta_0^1(\nokta),(\theta_0')^1(\nokta))$ and $((\theta_0^2(\nokta),(\theta_0')^2(\nokta))$ give the same graded bimodules maps. This is a contradiction because each $(\theta_0')^i(\noktab)$ is obtained from $(\theta_0)^i(\nokta)$ and each $(\theta_0')^i(\nokta)$ is obtained from $(\theta_0)^i(\noktab)$ for $i=1,2$. For the surjectivity let $\theta: \mathcal{F}'(\alpha^1)\to \mathcal{F}'(\alpha^2)$ be an equivalence of graded Morita contexts. Then the equivalence of graded Morita contexts $(\theta_0(\noktab),\theta_0'(\noktab))$ can be obtained from $\theta_0(\nokta)$, $\theta_0'(\nokta)$, $(\alpha_0')^1(\noktab)$, and $(\alpha_0')^2(\noktab)$.
\end{proof}
\begin{cor}\label{impliedequiv}
\label{thm:impliedequiv}
Two triples $(A_1,B_1,\zeta_1)$ and $(A_2,B_2,\zeta_2)$ produce equivalent extended X-HFTs if and only if there exists a compatible $G$-graded Morita context between $A_1$ and $A_2$.
\end{cor}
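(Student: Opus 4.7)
The plan is to deduce this corollary directly from Theorem \ref{equivbicats} by unpacking what equivalence of objects means in the two bicategories $\mathcal{E}\text{-}\mathcal{HFT}(X,\Alg)$ and $\text{Frob}^G$. Recall that two objects of a bicategory are equivalent precisely when there exists a $1$-morphism between them that is invertible up to invertible $2$-morphism. Since Theorem \ref{equivbicats} gives an equivalence $\mathcal{F}: \mathcal{E}\text{-}\mathcal{HFT}(X,\Alg) \xrightarrow{\simeq} \text{Frob}^G$ of bicategories, any such equivalence preserves and reflects equivalence of objects. Therefore, the extended X-HFTs $Z_1, Z_2$ associated to triples $(A_1,B_1,\zeta_1)$ and $(A_2,B_2,\zeta_2)$ are equivalent if and only if $\mathcal{F}(Z_1)$ and $\mathcal{F}(Z_2)$ are equivalent in $\text{Frob}^G$. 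By the description of $\mathcal{F}$ on objects given immediately before Theorem \ref{equivbicats}, $\mathcal{F}(Z_i) = A_i$, so the question reduces to showing that $A_1$ and $A_2$ are equivalent in $\text{Frob}^G$ if and only if there is a compatible $G$-graded Morita context between them.

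For the forward direction, any equivalence of objects in $\text{Frob}^G$ is witnessed by a $1$-morphism, and $1$-morphisms in $\text{Frob}^G$ are, by definition of that bicategory, compatible $G$-graded Morita contexts. For the converse, I would note that any $G$-graded Morita context $\zeta$ is by Definition \ref{gradedMorita} already equipped with invertible structural maps $\tau$ and $\mu$, which combined with the identity equivalences of Morita contexts (Definition \ref{equivofGmorita}) furnish an adjoint pair inverse to $\zeta$ in $\text{Frob}^G$. Consequently, every $1$-morphism in $\text{Frob}^G$ is automatically an equivalence, and the existence of a compatible $G$-graded Morita context between $A_1$ and $A_2$ is the same as equivalence of these objects in $\text{Frob}^G$.

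Combining these two observations, the two triples yield equivalent $\Alg$-valued $2$-dimensional extended X-HFTs precisely when there exists a compatible $G$-graded Morita context between $A_1$ and $A_2$. I do not anticipate any genuine obstacle beyond bookkeeping: the only point requiring care is verifying that the $G$-graded bimodule maps $\tau$, $\mu$ together with trivial equivalences $(\id,\id)$ genuinely produce an inverse $1$-morphism with invertible unit and counit $2$-morphisms in $\text{Frob}^G$, so that a compatible Morita context is a genuine equivalence rather than merely a non-invertible $1$-morphism. This is immediate from the definitions of $\tau, \mu$ as isomorphisms, but is the one step that deserves an explicit sentence in the written proof.
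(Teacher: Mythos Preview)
Your proposal is correct and follows essentially the same approach as the paper, which treats the corollary as an immediate consequence of Theorem \ref{equivbicats} without giving an explicit proof. Your added justification that every $1$-morphism in $\text{Frob}^G$ is automatically an equivalence (since a $G$-graded Morita context has invertible $\tau$ and $\mu$) is exactly right; the paper makes this same observation slightly earlier, in the discussion preceding Theorem \ref{equivbicats}, where it notes that the data of a $1$-morphism in $\XP(\Alg)$ yields an adjoint equivalence.
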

Lastly, we comment on the relation between extended HFTs whose targets are related by covering maps. Let $\text{Y}\simeq K(H,1)$ be a pointed CW-complex for a nontrivial subgroup $H \leq G$ and $p : (Y,y) \to (X,x)$ be a covering. Then, any Y-manifold can be turned into an X-manifold by postcomposing a representative of the characteristic map with $p$. This gives a symmetric monoidal $2$-functor $\iota_H : \text{Y}\Bord_2 \to \X\Bord_2$ and precomposing any extended X-HFT with $\iota_H$ yields an extended Y-HFT. Moreover, for any symmetric monoidal bicategory $\mathcal{C}$, precomposition of $\mathcal{C}$-valued extended X-HFT with $\iota$ lifts to a $2$-functor $\text{SymMon}(\X\Bord_2,\mathcal{C}) \to \text{SymMon}(\text{Y}\Bord_2,\mathcal{C})$
by forgetting the naturality of transformations with respect to $G \backslash H$ labeled $1$-morphisms. Correspondingly, there is a $2$-functor $\XP(\mathcal{C}) \to \mathbb{YP}(\mathcal{C})$ where $\XP$ and $\mathbb{YP}$ are the presentations of $\X\Bord_2$ and $\text{Y}\Bord_2$ respectively. When $\mathcal{C}$ is $\text{Alg}_{\Bbbk}^2$, the functor $\text{Frob}^G \to \text{Frob}^H$ is given by forgetting the $G \backslash H$ components of quasi-biangular $G$-algebras, compatible $G$-graded Morita contexts, and equivalences of $G$-graded Morita contexts. In other words, a $G$-graded Morita context can be considered as a collection of Morita contexts indexed by the subgroups of $G$ (see \cite{gmorita}).

\subsection{The $(G \times SO(2))$-structured cobordism hypothesis}\label{gso2sch}
A different approach to categorical classification of (fully-)extended oriented HFTs is given by the structured cobordism hypothesis due to J.~Lurie \cite{lurie}. Cobordism hypothesis (\cite{ayalafrancis}, \cite{lurie}, \cite{baezdolan}) is conjectured by J. Baez and J. Dolan in their seminal paper \cite{baezdolan}. Lurie \cite{lurie} reformulated the cobordism hypothesis using $(\infty,n)$-categories and generalized it to a structured cobordism hypothesis using homotopy fixed points. 

The bordism category involved in the structured cobordism hypothesis consists of manifold with corresponding structures. For a topological group $\Gamma$ and a fixed continuous homomorphism $\chi: \Gamma \to O(n)$, let $\zeta_{\chi}: \R^n \times_{\Gamma} E\Gamma$ denote the associated rank $n$ vector bundle over the classified space $B\Gamma$. Then, a $\Gamma$-structure on a manifold $M$ of dimension $k \leq n$ consists of a continuous map $f : M \to B\Gamma$ and an isomorphism $TM \oplus \underline{\R}^{n-k} \to f^{\ast} \zeta_{\chi}$ of vector bundles where $\underline{\R}^{n-k}$ is a trivial rank $n-k$ vector bundle. In the following theorem the category $\Bord_n^{\Gamma}$ consists of manifolds equipped with $\Gamma$-structure for a fixed homomorphism $\chi$. \\[.2cm]
\textbf{$(\Gamma,\chi)$-Structured Cobordism Hypothesis.} \textit{(Lurie, \cite{lurie})
Let $\mathcal{C}$ be a symmetric monoidal $(\infty,n)$-category (see \cite{bordn}) and $\Bord_n^{\Gamma}$ be the symmetric monoidal $\Gamma$-structured cobordism $(\infty,n)$-category for a group $\Gamma$. Then, there is a canonical equivalence of $(\infty,n)$-categories 
\begin{align*}
    \text{Fun}^{\otimes}(\Bord_n^{\Gamma}, \mathcal{C}) \xrightarrow{\sim} ((\mathcal{C}^{fd})^{\sim})^{h\Gamma}
\end{align*}
where $\text{Fun}^{\otimes}$ is the $(\infty,n)$-category of symmetric monoidal functors between symmetric monoidal $(\infty,n)$-categories, $\mathcal{C}^{fd}$ is the sub-$(\infty,n)$-category of fully dualizable objects with duality data, $(\mathcal{C}^{fd})^{\sim}$ is the underlying $\infty$-groupoid and $((\mathcal{C}^{fd})^{\sim})^{h\Gamma}$ is the $\infty$-groupoid of homotopy $\Gamma$-fixed points given by
\begin{align*}
    ((\mathcal{C}^{fd})^{\sim})^{h\Gamma} = \Hom_{\Gamma}(E\Gamma, (\mathcal{C}^{fd})^{\sim})
\end{align*}
where $E\Gamma$ is a weakly contractible $\infty$-groupoid equipped with a free $\Gamma$-action.} 
\begin{remark}
A $2$-dimensional E-HFT with target $X \simeq K(G,1)$ i.e. a classifying space $BG$, is a $(G \times SO(2))$-structured $2$-dimensional E-TFT where $\chi: G \times SO(2) \to O(2)$ is given by $(g,A) \mapsto A$.
\end{remark}
When $\Bbbk$ is an algebraically closed field of characteristic zero Davidovich \cite{davidovich} showed that for a finite group $G$, homotopy $(G\times SO(2))$-fixed points in $\big(\Alge^{fd}\big)^{\sim}$ are given by $G$-equivariant algebras. A $G$-equivariant algebra is a strongly graded Frobenius $G$-algebra with semisimple principal component. Her methods do not particularly require group $G$ to be finite and can be extended to discrete groups directly. Since the notions of separability and semisimplicity for a $\Bbbk$-algebra are equivalent when $\Bbbk$ is an algebraically closed field of characteristic zero, the objects of $\text{Frob}^G$ and the objects of $\Big(\big(\Alge^{fd}\big)^{\sim}\Big)^{h(G \times SO(2))}$ coincide.

Assume that $\Bbbk$ is an algebraically closed field of characteristic zero. Artin-Wedderburn theorem implies that any separable $\Bbbk$-algebra is isomorphic to a product of matrix algebras over $\Bbbk$. Let $A_e= \text{End}(V_1)\times \text{End}(V_2) \times \dots \times \text{End}(V_n)$ be a such algebra where $V_1,V_2,\dots,V_n$ are finite dimensional $\Bbbk$-vector spaces. Recall that $A=\oplus_{g \in G}A_g$ is strongly graded by the generators leading to bimodule isomorphisms $\{\tau_{g,g'} : A_{g'} \otimes_{A_e}A_g \xrightarrow{\cong} A_{gg'}\}_{g,g\in G}$, that is each $A_g$ is an invertible $(A_e,A_e)$-bimodule. Under the above assumption on $A_e$, these isomorphisms form a function $\tau: G \times G \to (\Bbbk^{\ast})^n$. Moreover, the relations involving these generators give the following commutative diagram for all $g,g',g'' \in G$
\begin{align*}
   \xymatrixcolsep{4pc}  \xymatrix{ (A_{g''} \otimes_{A_e} A_{g'}) \otimes_{A_e} A_{g} \ar[d]_-{\cong} \ar[r]^-{\tau(g',g'')\otimes \text{id}} & A_{g'g''} \otimes_{A_e} A_g \ar[r]^-{\tau(g,g'g'')} & A_{gg'g''} \ar[d]^{\text{id}}  \\
    A_{g''} \otimes_{A_e} (A_{g'} \otimes_{A_e} A_g) \ar[r]_-{\text{id} \otimes \tau(g,g')} & A_{g''} \otimes_{A_e} A_{gg'} \ar[r]_-{\tau(gg',g'')} & A_{gg'g''}
    }
\end{align*}
and isotopy classes of $G$-linear diagrams generate the relations which can be expressed as the following commutative diagram for all $g \in G$ 
\begin{align*}
    \xymatrixcolsep{4pc} \xymatrix{ A_g \otimes_{A_e} A_e \ar[r]^-{\tau(e,g)} \ar[dr]_-{\cong} & A_g \ar[d]_-{\text{id}} & A_e \otimes_{A_e} A_g \ar[l]_-{\tau(g,e)} \ar[dl]^-{\cong} \\ & A_g & 
    }
\end{align*}
which imply that $\tau$ is a normalized $2$-cocycle. Davidovich \cite{davidovich} showed that any invertible $(A_e,A_e)$-bimodule is isomorphic to one of the form $$\Hom_{\Bbbk}(V_{\sigma(1)},V_1)\times \Hom_{\Bbbk}(V_{\sigma(2)},V_2)\times \dots \times \Hom_{\Bbbk}(V_{\sigma(n)},V_n)$$ for some permutation $\sigma \in S_n$ and denote this bimodule by $A^{\sigma}$. Since the direct sum $A= \oplus_{g\in G} A_g$ forms a $G$-algebra permutations indeed form a homomorphism $\sigma :G \to S_n$. 

It is known that all traces on a matrix algebra are given as some (nonzero) constant multiple of the matrix trace. Thus, in the case of $A_e= \text{End}(V_1)\times \text{End}(V_2) \times \dots \times \text{End}(V_n)$ there are constants $r_i \in \Bbbk^{\ast}$ for $i=1,\dots,n$ and the inner product of quasi-biangular $G$-algebra $A= \oplus_{g \in G} A^{\sigma(g)}$ is given by $\eta(f,g)= \text{Tr}(r \circ (g \circ^{\sigma}f))$ for any $f \in A_g$ and $g \in A_{g^{-1}}$ where $r=(r_1 \text{id}_{V_1},\dots,r_n \text{id}_{V_n})$ and $\circ^{\sigma}$ is the composition of morphisms under $\sigma$ such as $f_i \circ g_{\sigma(i)}$ for $f_i \in \Hom_{\Bbbk}(V_{\sigma(i)},V_i), g_{\sigma(i)} \in \Hom_{\Bbbk}(V_{\sigma(\sigma(i))},V_{\sigma(i)})$. Since the inner product is invariant under cyclic order i.e. $\eta(f,g \cdot h)= \eta(h \cdot f,g) =\eta(h, f \cdot g)$, the vector $r \in (\Bbbk^{\ast})^n$ must satisfy $\text{Im}(\sigma)\subseteq \text{Stab}_{S_n}(r)$ where $S_n$ acts on $r$ by permuting the entries. More explicitly, as an example consider the products $(h \circ^{\sigma} g) \circ^{\sigma} f$ and $g \circ^{\sigma} (f \circ^{\sigma}h)$ for $f \in A_g, g \in A_{g'}$, and $h \in A_{(gg')^{-1}}$. Then, the corresponding traces of these morphisms in $A_e =\text{End}(V_1)\times \text{End}(V_2) \times \dots \times \text{End}(V_n)$ are related by the permutation $\sigma(gg') \in S_n$. 

Using the above arguments when $\Bbbk$ is algebraically closed field of characteristic zero we can conclude that up to an isomorphism a quasi-biangular $G$-algebra $(A=\oplus_{g \in G} A_g,\eta)$ is determined by a Morita class of the principal component $(n \geq 1)$, a normalized $2$-cocycle $\tau: G \times G \to (\Bbbk^{\ast})^n$, a homomorphism $\sigma : G \to S_n$, and an element $r \in (\Bbbk^{\ast})^n$ with $\text{Im}(\sigma)\subseteq \text{Stab}_{S_n}(r)$. 

Let $({}_{A_2}M_{A_1}, {}_{A_1}N_{A_2}, \kappa, \mu)$ be a graded Morita context between two quasi-biangular $G$-algebras $(A_1,\eta_1)$ and $(A_2,\eta_2)$ which are determined by the normalized $2$-cocycles $\tau_i$, homomorphisms $\sigma_i : G \to S_n$, and elements $r_i \in (\Bbbk^{\ast})^n$ with $\text{Im}(\sigma_i)\subseteq \text{Stab}_{S_n}(r_i)$ for $i=1,2$. Then $M$ and $N$ are invertible $(A_2,A_1)$ and $(A_1,A_2)$ bimodules respectively, which means there exists $\sigma \in S_n$ such that $M_e$ is isomorphic to 
\begin{align*}
M^{\sigma}= \Hom(V_{\sigma(1)},W_{1})\times \Hom(V_{\sigma(2)},W_{2})\times \dots \times \Hom(V_{\sigma(n)},W_n)    
\end{align*}
where $(A_i)_g= \Hom(V_{\sigma_i^g(1)},V_1)\times \dots \times \Hom(V_{\sigma_i^g(n)},V_n)$ for all $g \in G$ and $\sigma_i^g = \sigma_i(g) \in S_n$ for $i=1,2$. Being a graded $(A_2,A_1)$-bimodule forces $\sigma$ to satisfy $\sigma_2^g = \sigma \sigma_1^g \sigma^{-1}$ for all $g \in G$. In this case, nonprincipal components are given as $M_g= \Hom(V_{\sigma_g'(1)},W_1)\times \dots \times \Hom(V_{\sigma_g'(n)},W_n)$ where $\sigma_g'= \sigma \sigma_1^g= \sigma_2^g \sigma$ for all $g\in G$. Using the similar arguments for the invertible $(A_1,A_2)$-bimodule $N$, we obtain $N_g = \Hom(W_{\sigma_g''(1)},V_1) \times \dots \times \Hom(W_{\sigma_g''(n)},V_n)$ for $\sigma_g'' = \sigma_1^{g^{-1}} \sigma^{-1}= \sigma^{-1}\sigma_2^{g^{-1}}$ for all $g \in G$. 

Transferring the Frobenius form via the graded Morita context amounts to finding a central element in $(A_2)_e$ corresponding to $r_1 \in (\Bbbk^{\ast})^n$. Using the identity component $M_e$ this element is given by $\sigma(r_1)(\text{id}_{W_1},\text{id}_{W_2},\dots,\text{id}_{W_n})$. Thus, we have the equality $\sigma(r_1)=r_2 \in (\Bbbk^{\ast})^n$. The bimodule isomorphisms $\kappa: {}_{A_1}(A_1)_{A_1} \to {}_{A_1}N \otimes_{A_2} M_{A_1}$ and $\mu: {}_{A_2}M \otimes_{A_1} N_{A_2} \to {}_{A_2}(A_2)_{A_2}$ lead to a map $\phi : G \to (\Bbbk^{\ast})^n$ and graded Morita context equations produce a map $\phi : (A_1)_g \to (A_2)_g$ for all $g \in G$ so that the diagram 
\begin{align*}
   \xymatrixcolsep{4pc}  \xymatrix{ (A_1)_g \otimes (A_1)_h \ar[r]^-{\phi_1(g,h)} \ar[d]_{\phi(g)\phi(h)} & \ \ (A_1)_{gh} \ar[d]^{\phi(gh)} \\ (A_2)_g \otimes (A_2)_h \ar[r]_-{\phi_2(g,h)} & \ \ (A_2)_{gh}
    }
\end{align*}
commutes for all $g,h \in G$. This means that normalized $2$-cocycles $\phi_1,\phi_2: G \times G \to (\Bbbk^{\ast})^{n}$ differ by a coboundary $\d \phi$. Thus, we conclude that quasi-biangular $G$-algebras up to compatible $G$-graded Morita contexts are in bijection with $\amalg_{r=1}^{\infty} \amalg_{[r]\in (\Bbbk^{\ast})^n/S_n} H^2(G; (\Bbbk^{\ast})^n) \times \Hom(G, \text{Stab}_{S_n}(r))/\sim$ where the equivalence $\sim$ is given by conjugation. Using Theorem \ref{equivbicats} we derive the following proposition which was previously proven by Davidovich \cite{davidovich}.
\begin{prop}\label{spaceofequivtfts}(\cite{davidovich})
The set of equivalence classes of fully extended oriented $2$-dimensional $G$-equivariant TFTs, i.e. E-HFTs with $K(G,1)$-target, with values in $\Alg$ is in bijection with
\begin{align*}
    \pi_0 \text{Fun}^{\otimes}(\Bord_2^{G\times SO(2)},\Alg) \cong \coprod_{r=1}^{\infty} \coprod_{[r]\in (\Bbbk^{\ast})^n/S_n} H^2(G; (\Bbbk^{\ast})^n) \times \Hom(G, \text{Stab}_{S_n}(r))/\sim
\end{align*}
where the equivalence $\sim$ is given by conjugation.
\end{prop}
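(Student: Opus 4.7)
The plan is to invoke the classification machinery already built in the excerpt and then read off the count of equivalence classes from the explicit parametrization of quasi-biangular $G$-algebras and compatible $G$-graded Morita contexts developed in the paragraphs immediately preceding the statement. By Theorem \ref{equivbicats} we have an equivalence $\mathcal{E}\text{-}\mathcal{HFT}(X,\Alg) \simeq \text{Frob}^G$, and by Corollary \ref{impliedequiv} two $\Alg$-valued extended X-HFTs are equivalent if and only if their underlying quasi-biangular $G$-algebras $A_1,A_2$ admit a compatible $G$-graded Morita context between them. So I would reduce the problem to counting isomorphism classes of objects of $\text{Frob}^G$, i.e.\ quasi-biangular $G$-algebras up to compatible $G$-graded Morita equivalence.

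First I would fix a Morita class of the principal component: when $\Bbbk$ is algebraically closed of characteristic zero, separable equals semisimple, so by Artin--Wedderburn the principal component $A_e$ is Morita equivalent to $\Bbbk^n$ for some $n\geq 1$, and we may replace $A_e$ by $\mathrm{End}(V_1)\times\cdots\times\mathrm{End}(V_n)$. Next I would record the classification of the graded part: the strong grading forces each $A_g$ to be an invertible $(A_e,A_e)$-bimodule, hence of the form $A^{\sigma(g)}$ for some permutation, giving a homomorphism $\sigma:G\to S_n$. The multiplication $A_{g'}\otimes_{A_e} A_g\to A_{gg'}$ together with associativity and unitality (read off from the relevant generators and relations in Figure \ref{fig:relations}) produces a normalized $2$-cocycle $\tau:G\times G\to (\Bbbk^{\ast})^n$. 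Finally the nondegenerate Frobenius form determines a vector $r\in(\Bbbk^{\ast})^n$, and cyclic symmetry of $\eta$ forces $\mathrm{Im}(\sigma)\subseteq\mathrm{Stab}_{S_n}(r)$. All of this is essentially spelled out in the discussion surrounding Artin--Wedderburn in the excerpt.

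Then I would analyze the equivalence relation. Given two quasi-biangular $G$-algebras $(A_1,\eta_1)$ and $(A_2,\eta_2)$ with invariants $(n_i,\sigma_i,\tau_i,r_i)$, a compatible $G$-graded Morita context $({}_{A_2}M_{A_1},{}_{A_1}N_{A_2},\kappa,\mu)$ forces $n_1=n_2=n$, and the identity component $M_e$ picks out a permutation $\sigma\in S_n$ intertwining $\sigma_1$ and $\sigma_2$ via $\sigma_2^g=\sigma\sigma_1^g\sigma^{-1}$; the Frobenius compatibility condition of Definition \ref{compmorita} translates, via the explicit transfer formula for the inner product, into $\sigma(r_1)=r_2$; and the bimodule isomorphisms $\kappa,\mu$ produce a $1$-cochain $\phi:G\to(\Bbbk^{\ast})^n$ making the diagram of the excerpt commute, so that $\tau_2$ differs from $\sigma_{\ast}\tau_1$ by the coboundary $d\phi$. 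Conversely any such $(\sigma,\phi)$ manifestly defines a compatible $G$-graded Morita context, so the equivalence classes are precisely orbits of $(r,[\tau],\sigma)$ under conjugation by $S_n$.

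Collecting this, after fixing $n$ and an $S_n$-orbit $[r]\in(\Bbbk^{\ast})^n/S_n$, the remaining datum is a homomorphism $\sigma:G\to\mathrm{Stab}_{S_n}(r)$ and a class in $H^2(G;(\Bbbk^{\ast})^n)$, both taken up to conjugation by the stabilizer; summing over $n$ and $[r]$ yields the claimed coproduct. I do not anticipate a serious obstacle beyond bookkeeping: the main subtlety is verifying that the Frobenius-compatibility condition of Definition \ref{compmorita} really reduces to $\sigma(r_1)=r_2$ (i.e.\ that the transferred inner product elements match on the nose), but this is a direct computation with the explicit form of $\eta^K_g$ given in display \eqref{eqninnpd} applied to the principal component $\Bbbk^n$. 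After this, the isomorphism in the statement is simply identified on $\pi_0$ using Theorem \ref{equivbicats}.
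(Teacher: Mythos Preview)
Your proposal is correct and follows essentially the same approach as the paper: the paper's argument for this proposition is contained in the paragraphs immediately preceding its statement, which you explicitly track step by step (reduce via Theorem \ref{equivbicats} and Corollary \ref{impliedequiv} to classifying quasi-biangular $G$-algebras up to compatible $G$-graded Morita equivalence, then use Artin--Wedderburn to extract the invariants $(n,\sigma,\tau,r)$ and analyze how a compatible Morita context acts on them). There is no substantive difference in strategy or execution.
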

The fact that $\Alg$-valued fully-extended oriented $2$-dimensional $G$-equivariant TFTs are classified in two different ways, namely using the structured cobordism hypothesis and without using it, is an important step towards the verification of the $(G \times SO(2))$-structured cobordism hypothesis for $\Alg$-valued such TFTs. In this case, the $((G \times SO(2))$-structured cobordism hypothesis gives an equivalence of bigroupoids $\mathcal{E}\text{-}\mathcal{HFT}(X,\Alg) \simeq \Big(\big(\Alge^{fd}\big)^{\sim}\Big)^{h(G \times SO(2))}$. We have $ \mathcal{E}\text{-}\mathcal{HFT}(X,\Alg) \simeq \text{Frob}^G$ by Theorem \ref{equivbicats} and Davidovich \cite{davidovich} showed that the fundamental bigroupoid of $\Big(\big(\Alge^{fd}\big)^{\sim}\Big)^{h(G \times SO(2))}$ is equivalent to the bigroupoid $\text{Grp}_2(BG, \mathcal{G}_{\text{ori}})$ of $2$-functors, transformations, and modifications. Here the bigroupoid $BG$ has one object, $G$ worth of $1$-morphisms, and only identity $2$-morphisms and the bigroupoid $\mathcal{G}_{\text{ori}}$ has semisimple Frobenius algebras as objects, invertible bimodules compatible with Frobenius forms as $1$-morphisms, and invertible bimodule maps as $2$-morphisms (see Proposition 3.3.2 in \cite{davidovich}).  

Davidovich showed that every $2$-functor $F: BG \to \mathcal{G}_{\text{ori}}$ gives rise to a quasi-biangular $G$-algebra, and vice versa (Proposition 3.4.5, \cite{davidovich}). We define a $2$-functor $\mathcal{F} : \text{Frob}^G \to \text{Grp}_2(BG, \mathcal{G}_{\text{ori}})$ as follows. The image of a quasi-biangular $G$-algebra is the corresponding functor described above. Next, let $({}_{A_2}M_{A_1}, {}_{A_1}N_{A_2}, \kappa, \mu)$ be a $G$-graded compatible Morita context between two quasi-biangular $G$-algebras $(A_1,\eta_1)$ and $(A_2,\eta_2)$ which are determined by two triples $(\tau_i, \sigma_i,r_i)$ for $i=1,2$ such that $M_e \cong M^{\sigma}$ as in the proof of Proposition \ref{spaceofequivtfts} above. Then we define $\mathcal{F}(({}_{A_2}M_{A_1}, {}_{A_1}N_{A_2}, \kappa, \mu))$ to be natural transformation between the corresponding $2$-functors producing the bimodule $M^{\sigma}$ (see the proof of Proposition \ref{spaceofequivtfts} in \cite{davidovich}). Lastly, the image of an equivalence of $G$-graded Morita contexts under $\mathcal{F}$ is the modification producing the invertible bimodule map $M^{\sigma} \to M^{\sigma'}$ between the corresponding bimodules described above. 

Proposition \ref{spaceofequivtfts} implies that $\mathcal{F}$ is essentially surjective on objects. The fact that any natural transformation $\eta: F_1 \to F_2$ between functors $F_1, F_2 : BG \to \mathcal{G}_{\text{ori}}$ is isomorphic to a one producing bimodule of the form $M^{\sigma}$ (see proof of Proposition \ref{spaceofequivtfts} in \cite{davidovich}) implies that $\mathcal{F}$ is essentially full on $1$-morphisms. Using the similar arguments used in the proof of Theorem \ref{equivbicats}, one can show that $\mathcal{F}$ is fully-faithfull on $2$-morphisms. Consequently, the $2$-functor $\mathcal{F} :\text{Frob}^G \to \text{Grp}_2(BG, \mathcal{G}_{\text{ori}})$ is an equivalence by the Whitehead theorem for bicategories (Theorem \ref{whiteadforsym}). 
\begin{cor}\label{Gcobhyp}
\label{thm:Gcobhyp}
For any algebraically closed field $\Bbbk$ of characteristic zero, the $(G \times SO(2))$-structured cobordism hypothesis for $\Alg$-valued oriented E-HFTs with target $X \simeq K(G,1)$ holds true. 
\end{cor}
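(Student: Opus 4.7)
The plan is to chain together three equivalences: the classification of $\Alg$-valued extended X-HFTs by $\text{Frob}^G$ (Theorem \ref{equivbicats}), Davidovich's computation of the fundamental bigroupoid of $\bigl(\bigl(\Alge^{fd}\bigr)^{\sim}\bigr)^{h(G\times SO(2))}$ in terms of the bigroupoid $\text{Grp}_2(BG,\mathcal{G}_{\text{ori}})$ of $2$-functors, pseudonatural transformations, and modifications, and a new equivalence $\mathcal{F}:\text{Frob}^G\xrightarrow{\simeq}\text{Grp}_2(BG,\mathcal{G}_{\text{ori}})$. Composing these identifies $\mathcal{E}\text{-}\mathcal{HFT}(X,\Alg)$ with the target of the structured cobordism hypothesis, which is precisely what needs to be verified.

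The first step is to spell out the $2$-functor $\mathcal{F}$. On objects, a quasi-biangular $G$-algebra $(A,\eta)$ is sent to the $2$-functor $BG\to\mathcal{G}_{\text{ori}}$ whose unique object is the separable symmetric Frobenius algebra $(A_e,\eta|_{A_e})$, whose value on $g\in G$ is the invertible $(A_e,A_e)$-bimodule $A_g$, and whose composition data comes from the strongly graded structure $A_g\otimes_{A_e}A_{g'}\xrightarrow{\cong}A_{gg'}$ of $A$; this is the construction Davidovich establishes as a bijection on isomorphism classes. On $1$-morphisms, a compatible $G$-graded Morita context $({}_{A_2}M_{A_1},{}_{A_1}N_{A_2},\kappa,\mu)$ is sent to the pseudonatural transformation between the associated $2$-functors whose component at the unique object is $M_e$ and whose naturality squares are the graded bimodule isomorphisms $M_e\otimes_{A_1}(A_1)_g\cong M_g\cong (A_2)_g\otimes_{A_2}M_e$ built from the grading of $M$. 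On $2$-morphisms, an equivalence of compatible $G$-graded Morita contexts is sent to the invertible bimodule map between the corresponding principal components, which automatically commutes with all of the naturality squares.

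The second step is to apply the Whitehead theorem for bicategories (Theorem \ref{whiteadforsym}) to show $\mathcal{F}$ is a biequivalence. Essential surjectivity on objects follows directly from Proposition \ref{spaceofequivtfts}, which matches $\pi_0\text{Frob}^G$ with $\pi_0\text{Grp}_2(BG,\mathcal{G}_{\text{ori}})$ via the same enumeration by normalized $2$-cocycles, homomorphisms to $S_n$, and Frobenius scaling vectors. Essential fullness on $1$-morphisms reduces, via Artin--Wedderburn and the classification of invertible bimodules between products of matrix algebras, to the observation already sketched in the excerpt: any pseudonatural transformation between two such $2$-functors is isomorphic to one whose principal component is of the form $M^{\sigma}$ for some $\sigma\in S_n$, and such an $M^{\sigma}$ is the identity component of a unique compatible $G$-graded Morita context. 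Fully-faithfulness on $2$-morphisms is the same argument used in the proof of Theorem \ref{equivbicats}: an invertible bimodule map between the identity components of two $G$-graded Morita contexts commuting with all gradings is the same data as an equivalence of the two contexts, because the nonprincipal components are recovered by tensoring with $(A_1)_g$.

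The main obstacle is the verification of essential fullness on $1$-morphisms, where one must check compatibility of the pseudonatural transformation's naturality squares with the Frobenius forms on both sides in order to promote the resulting Morita equivalence to a \emph{compatible} one in the sense of Definition \ref{compmorita}. This amounts to tracking how the transferred inner product elements $(\zeta_{\{e\}})_{\ast}(\eta_g^{A_1})$ behave under the isomorphisms $M_e\otimes_{A_1}(A_1)_g\cong (A_2)_g\otimes_{A_2}M_e$, and it requires an explicit comparison with the constraints $\sigma(r_1)=r_2$ and $\sigma_2^g=\sigma\sigma_1^g\sigma^{-1}$ that appeared in the proof of Proposition \ref{spaceofequivtfts}. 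Once this compatibility is confirmed, chaining the three equivalences yields the claimed equivalence $\mathcal{E}\text{-}\mathcal{HFT}(X,\Alg)\simeq\bigl(\bigl(\Alge^{fd}\bigr)^{\sim}\bigr)^{h(G\times SO(2))}$ and hence the special case of the cobordism hypothesis.
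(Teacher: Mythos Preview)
Your proposal is correct and follows essentially the same approach as the paper: you chain Theorem~\ref{equivbicats} with Davidovich's identification of the homotopy fixed-point bigroupoid with $\text{Grp}_2(BG,\mathcal{G}_{\text{ori}})$, construct the $2$-functor $\mathcal{F}:\text{Frob}^G\to\text{Grp}_2(BG,\mathcal{G}_{\text{ori}})$ exactly as the paper does, and verify it is a biequivalence via Whitehead's theorem using Proposition~\ref{spaceofequivtfts} for essential surjectivity, the $M^{\sigma}$ normal form for essential fullness on $1$-morphisms, and the argument from Theorem~\ref{equivbicats} for full faithfulness on $2$-morphisms. Your discussion of the compatibility obstacle is more explicit than the paper's, which simply defers that check to Davidovich's proof of Proposition~\ref{spaceofequivtfts}.
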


\section{Extended Unoriented X-HFTs and Their Classifications}\label{chapter_unoriented}
In this section by allowing X-manifolds and X-cobordisms to be nonorientable, we define $2$-dimensional extended unoriented X-HFTs and classify them. Both definition and classification of these theories are parallel with the oriented case and we only describe the changes. All of the manifolds and cobordisms in this chapter does not have any orientation data and they are not necessarily orientable.
\subsection{The unoriented X-cobordism bicategory and its presentation} In this section, we define $2$-dimensional extended unoriented HFTs with target $X\simeq K(G,1)$ where every element of $G$ has order two. To avoid repetition, from now on, we assume that $G$ is such a group and X is a pointed $K(G,1)$-space. The restriction to such groups is not essential but for convenience\footnote{Order two elements appear as a result of new generators and we avoid keeping track of which elements are required to have order two and which ones are not. See \cite{unoriented_hqft_kapustin} for the case of nonextended HFT with arbitrary aspherical targets.}. As in the oriented case, the $2$-dimensional extended unoriented X-cobordism bicategory plays the essential role and it is defined using X-halations (see Section \ref{halation_subsectionu}) as follows. 
\begin{definition}
\textit{The $2$-dimensional extended unoriented X-cobordism bicategory $\X\Bord_2^{\un}$} has 
\begin{itemize}
    \item quadruples $\{(M,\hat{M}_1,\hat{M}_2,\hat{\mathtt{g}}_2)\}$ consisting of compact $0$-manifolds equipped with two co-oriented X-halations as objects,
    \item quintuples $\{(A,\hat{A}_0,\hat{A}_1,T,\hat{\mathtt{p}}_1)\}$ consisting of $1$-dimensional marked X-manifolds equipped with two co-oriented X-halations as $1$-morphisms,
    \item isomorphism classes $\{[(S,\hat{S},R,\hat{\mathtt{F}})]\}$ of quadruples consisting of cobordism type $\<2\>$-X-surfaces equipped with a codimension zero X-halation as $2$-morphisms. 
\end{itemize}
\end{definition}
Similar to $\X\Bord_2$, the disjoint union operation is the symmetric monoidal product for $\X\Bord_2^{\un}$. 
\begin{definition}
For a symmetric monoidal bicategory $\mathcal{C}$, a \textit{$\mathcal{C}$-valued $2$-dimensional extended unoriented homotopy field theory with target X} is a symmetric monoidal $2$-functor from $\X\Bord_2^{\un}$ to $\mathcal{C}$.
\end{definition}

It is not hard to modify $G$-linear, $G$-planar, and $G$-spatial diagrams for the unoriented setting. We only need to consider sheets with no additional orientation data. In this case, each diagram produces an unoriented version of the corresponding X-manifold. There are new sheet data for the fold singularity coming from $\<2\>$-X-surface representatives of a M\"obius band. These $\<2\>$-X-surfaces are shown in Figure \ref{fig:unorientedgen}. This extra sheet data produces new relations coming from different possible gluings of these generators with themselves and with the earlier (orientable) generators. Figure \ref{fig: unorientedrel} shows these relations of $2$-morphisms of $\X\Bord_2^{\un}$ instead of the corresponding unoriented $G$-planar diagrams. By generalizing the equivalence relations on the set of unoriented $G$-planar diagrams given by unoriented $G$-spatial diagrams, we obtain the following unoriented $G$-planar decomposition theorem. 
%\begin{figure}[t]
%    \centering
%    \includesvg[]{unorientedgen1}
%    \caption{Additional generating $2$-morphisms of $\mathcal{X}\mathcal{G}_2^{\un}$}
%    \label{fig:unorientedgen}
%\end{figure}
\begin{figure}[t]
    \centering
    \def\svgwidth{\columnwidth}
    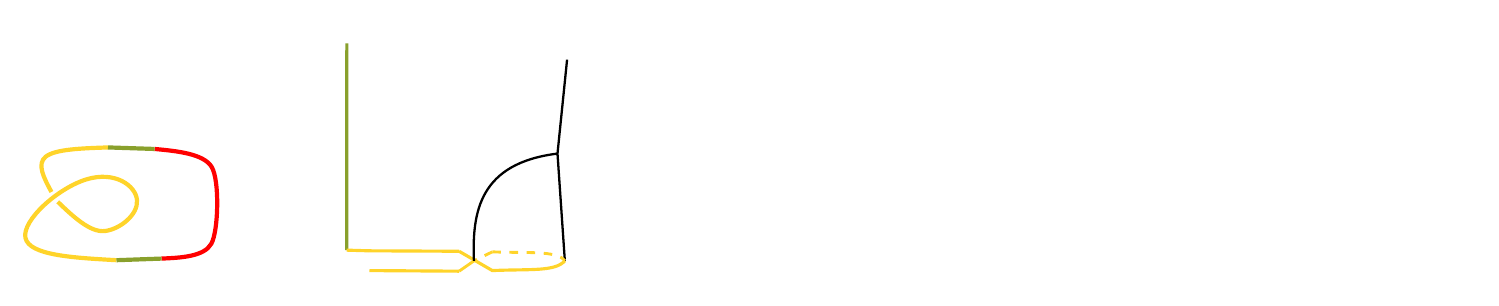
    \caption{Additional generating $2$-morphisms of $\mathcal{X}\mathcal{G}_2^{\un}$}
    \label{fig:unorientedgen}
\end{figure}

\begin{theorem}
The relative X-homeomorphism classes of unoriented cobordisms type $\<2\>$-X-surfaces are in bijection with the equivalence classes of unoriented $G$-planar diagrams.
\end{theorem}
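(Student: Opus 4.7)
The plan is to mimic the oriented $G$-planar decomposition theorem, adapting each step so that the absence of orientations on sheets is absorbed into a richer sheet data for fold singularities. First, I would fix the setup: an unoriented $G$-planar diagram is a triple $(\Phi^G,\Gamma,\mathcal{S}^G)$ in which the trivializations of chambers are ordered (but not oriented) sheets, and the sheet data along a fold arc is allowed to be one of the two new types coming from the additional M\"obius band generators of Figure~\ref{fig:unorientedgen}. Analogously, an unoriented $G$-spatial diagram $(\Delta^G,\Gamma,\mathcal{S}^G)$ has a chambering foam whose local models are exactly those of Definition~\ref{chambering_foam_defn} but whose sheet data no longer records orientations and instead tracks the $\Z/2$-bundle of local sheet-orderings across folds. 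Just as in the oriented case, any such unoriented $G$-planar (resp.~$G$-spatial) diagram produces a cobordism type $\<2\>$-X-surface (resp.~a $3$-manifold with corners interpolating two such surfaces), and conversely a generic map on any unoriented cobordism type $\<2\>$-X-surface gives back such a diagram.

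Next, I would prove the unoriented analog of Proposition~\ref{independence}: two unoriented $G$-planar diagrams $(\Phi^G_i,\Gamma_i,\mathcal{S}^G_i)$ produce relatively X-homeomorphic cobordism type $\<2\>$-X-surfaces $(\Sigma_i,R_i,\mathtt{P}_i)$ if and only if there exists an unoriented $G$-spatial diagram restricting to them on $I^2\times\{0,1\}$. The $(\Leftarrow)$ direction is essentially formal: by Schommer-Pries' stratification, the $3$-manifold built from a $G$-spatial diagram is a relative pseudo-isotopy between its two boundary surfaces, and the $G$-labels on sheets assemble into a homotopy class of a map to $X$ restricting correctly on the boundary; hence the two boundary surfaces are X-homeomorphic relative to boundary. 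The $(\Rightarrow)$ direction mirrors the oriented argument: given an X-homeomorphism $\mathcal{F}\colon\Sigma_1\to\Sigma_2$ relative to boundary, take the mapping cylinder $\Sigma_1\times I\cup_{\mathcal{F}}\Sigma_2$ (really one should take a generic map on $\Sigma_1\times I$ producing a graphic restricting to the two original planar graphics at the boundary, which exists by the usual transversality/relative density argument) and choose a compatible chambering foam via Lemma~\ref{compatible}.

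The equivalence relation $\sim$ on unoriented $G$-planar diagrams generated by the existence of a connecting $G$-spatial diagram is then manifestly reflexive (product diagrams), symmetric (reflecting the $t$-coordinate), and transitive (stacking $G$-spatial diagrams along their common planar face, with chambering foams glued via Lemma~\ref{compatible}); so it is an equivalence relation. Combining with the surjectivity observation that every unoriented cobordism type $\<2\>$-X-surface admits a generic map and hence an unoriented $G$-planar diagram, the proposition above gives the claimed bijection.

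The main obstacle I expect is verifying that the new sheet data accommodates all ways in which non-orientability can interact with the codimension-$2$ and codimension-$3$ singularities of the Schommer-Pries stratification. Concretely, one must check that the local models for folds, cusps, Morse and swallowtail strata admit consistent $\Z/2$-sheet-ordering data (respecting the compositions of sheet permutations at trivalent vertices of the chambering foam), and that every new generalized movie-move in Figure~\ref{fig: unorientedrel} is realized by a local $G$-spatial diagram. This is a finite but extensive case-check: for each singularity with a non-orientable neighborhood (in particular at intersections with the new M\"obius generators), one enumerates the possible sheet-ordering patterns and produces the corresponding local model; once this catalog is established, the rest of the proof proceeds exactly as in the oriented case.
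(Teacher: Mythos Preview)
Your proposal is correct and follows essentially the same approach as the paper: the paper does not give a detailed proof of this theorem but simply states that one modifies the oriented $G$-planar, $G$-spatial diagrams by dropping orientation data on sheets, adding the new M\"obius fold sheet data of Figure~\ref{fig:unorientedgen}, and then repeats the argument of Proposition~\ref{independence} verbatim. Your write-up is in fact more explicit than the paper's, which only says ``by generalizing the equivalence relations on the set of unoriented $G$-planar diagrams given by unoriented $G$-spatial diagrams, we obtain the following unoriented $G$-planar decomposition theorem.''
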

%\begin{figure}[ht]
%    \centering
%    \includesvg{unorientedrel2}
%    \caption{Additional generating relations of $\mathcal{X}\mathcal{R}^{\un}$}
%    \label{fig: unorientedrel}
%\end{figure}
\begin{figure}[ht]
    \centering
    \def\svgwidth{\columnwidth}
    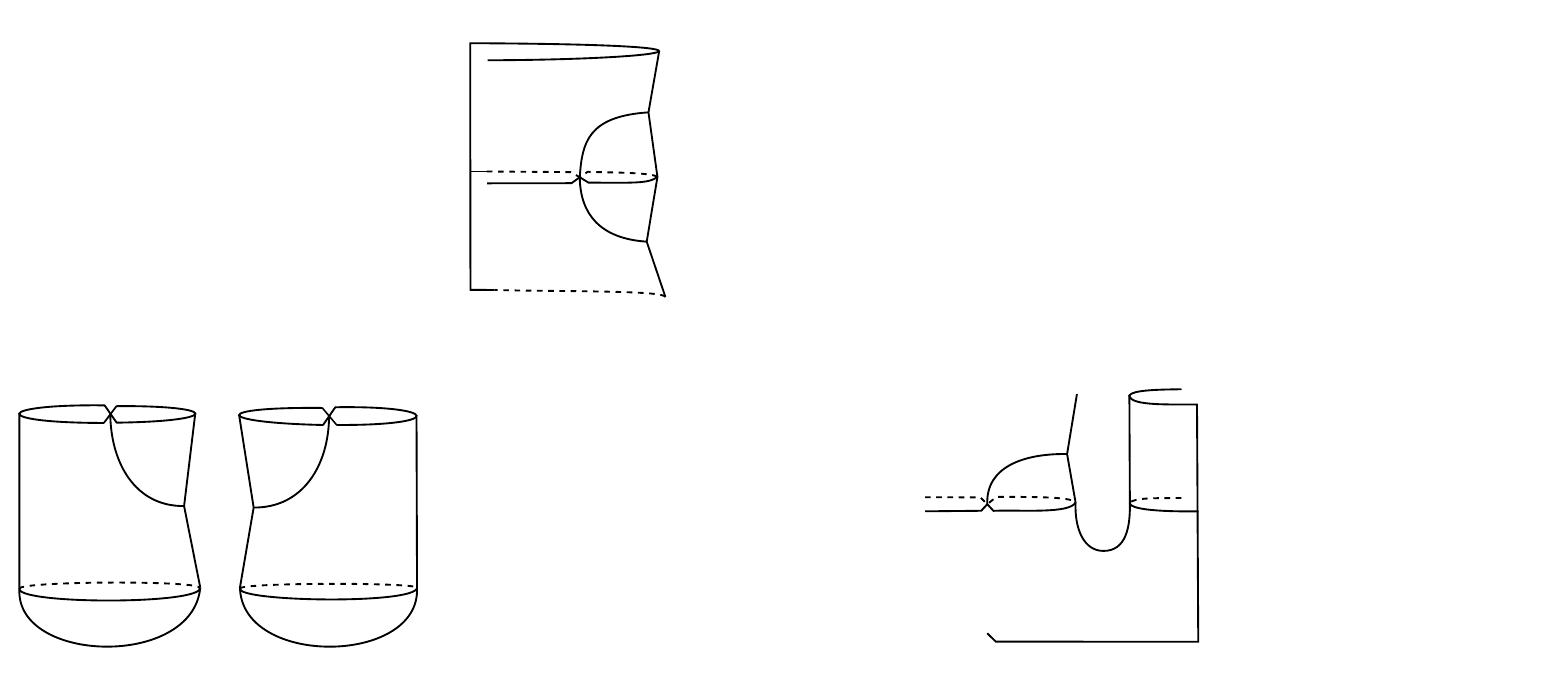
    \caption{Additional generating relations of $\mathcal{X}\mathcal{R}^{\un}$}
    \label{fig: unorientedrel}
\end{figure}

Parallel to oriented case, we define a new symmetric monoidal bicategory $\XB^{\PD,\text{un}}$ which has unoriented points as objects, unoriented $G$-linear diagrams as $1$-morphisms, and equivalence classes of unoriented $G$-planar diagrams as $2$-morphisms. The unoriented $G$-planar decomposition theorem implies that $\XB^{\PD,\un}$ is symmetric monoidally equivalent to $\X\Bord_2^{\un}$ via Whitehead theorem for symmetric monoidal bicategories. The correspondence between G-planar diagrams and the string diagrams for unbiased semistrict symmetric monoidal bicategories gives the following result.
\begin{theorem}
The symmetric monoidal bicategory $\XB^{\PD,\un}$ is computadic unbiased semistrict symmetric monoidal $2$-category with the presentation $\XP^{\un}=(\mathcal{XG}^{\un}_0,\mathcal{XG}^{\un}_1,\mathcal{XG}^{\un}_2,\mathcal{XR}^{\un})$ which has one generating object $\{ \noktac \}$, $G$-linear diagrams of $\big\{ \bos, \leftelbov,\rightelbov\big\}_{g \in G}$ as the generating $1$-morphisms, $G$-planar diagram versions of elements in Figures \ref{fig:generators} and \ref{fig:unorientedgen} as the generating $2$-morphisms, and $G$-planar diagram versions of pairs in Figures \ref{fig:relations} and \ref{fig: unorientedrel} as the generating relations. 
\end{theorem}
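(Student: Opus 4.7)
The plan is to mirror the proof of Theorem \ref{computadic} (the oriented case), replacing oriented generators, relations, and sheet data by their unoriented counterparts, and then appealing to the unoriented $G$-planar decomposition theorem. Concretely, I would first construct the candidate isomorphism $\mathsf{F}_{\uss}(\XP^{\un}) \to \XB^{\PD,\un}$ of symmetric monoidal bicategories by sending the unique generating object $\noktac$ to the one-point unoriented $0$-manifold, each generating $1$-morphism in $\mathcal{XG}_1^{\un}$ to its corresponding unoriented $G$-linear diagram, and each generating $2$-morphism in $\mathcal{XG}_2^{\un}$ to the equivalence class of the corresponding elementary unoriented $G$-planar diagram. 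The main verification that this assignment descends to the quotient by $\mathcal{XR}^{\un}$ is immediate from the construction, since every relation in $\mathcal{XR}^{\un}$ is declared to hold in $\XB^{\PD,\un}$.

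Next, I would show essential surjectivity at each level. On objects this is tautological: every object of $\XB^{\PD,\un}$ is an ordered tuple of unoriented points, hence a monoidal product of copies of $\noktac$. On $1$-morphisms, the crucial fact is that any unoriented $G$-linear diagram admits an isotopy to a horizontal concatenation of the three basic types $\bos_g$, $\leftelbov_g$, $\rightelbov_g$ (this is the unoriented analogue of the Morse-theoretic decomposition underlying Proposition \ref{lindecomp}, with no orientation on the sheets). On $2$-morphisms, the unoriented $G$-planar decomposition theorem ensures that every equivalence class of unoriented $G$-planar diagrams decomposes, via horizontal and vertical gluings, into the elementary diagrams corresponding to the generators in $\mathcal{XG}_2^{\un}$; faithfulness follows because any two such decompositions related by a movie-move are identified by one of the relations in $\mathcal{XR}^{\un}$, which by construction now includes the extra relations from Figure \ref{fig: unorientedrel} arising from the M\"obius-band generators of Figure \ref{fig:unorientedgen}.

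The key new input beyond the oriented argument is the analysis of the fold singularity when the adjacent sheets carry no coherent orientation. Here the additional generators in Figure \ref{fig:unorientedgen} encode the two possible $G$-sheet data at a fold coming from a M\"obius-type local model, and the relations in Figure \ref{fig: unorientedrel} encode how these new generators interact with one another, with the existing saddle and cusp generators, and with the group action (the assumption that every $g \in G$ has order two is exactly what is required to make the labels on opposite sides of a M\"obius core consistent). The technical heart of the proof is therefore the case-by-case check that the codimension-three singularities in the unoriented version of the stratification from Definition \ref{3dimgraph} produce no movie-moves beyond those encoded in $\mathcal{XR}^{\un}$; this parallels the corresponding check in the oriented case and can be organized by the stratum at which the new singularities appear.

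The main obstacle I anticipate is the verification that the list $\mathcal{XR}^{\un}$ is complete, i.e., that no additional generating relations are needed. This requires a careful enumeration of all ways an unoriented fold-type generator can be adjacent to a cusp, a saddle, or another fold, and checking that each resulting movie-move is already a consequence of an element of $\mathcal{XR}^{\un}$ together with relations of $\mathcal{XR}$ pulled back through the forgetful map that remembers only the underlying topology. Once completeness is established, the isomorphism $\mathsf{F}_{\uss}(\XP^{\un}) \xrightarrow{\cong} \XB^{\PD,\un}$ is obtained exactly as in the proof of Theorem \ref{computadic}, which in turn exhibits $\XB^{\PD,\un}$ as a computadic unbiased semistrict symmetric monoidal $2$-category with the stated presentation.
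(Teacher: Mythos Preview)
Your proposal is correct and takes essentially the same approach as the paper. The paper does not give a detailed proof of this theorem; it simply states that ``the correspondence between $G$-planar diagrams and the string diagrams for unbiased semistrict symmetric monoidal bicategories gives the following result,'' which is exactly the strategy you outline---namely, to rerun the proof of Theorem~\ref{computadic} (given in Appendix~\ref{proof_of_computadic}) with unoriented sheet data, the additional M\"obius-type generators of Figure~\ref{fig:unorientedgen}, and the extra relations of Figure~\ref{fig: unorientedrel}, invoking the unoriented $G$-planar decomposition theorem in place of the oriented one.
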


The bicategory $\mathcal{E}$-$\mathcal{HFT}^{\un}(X,\mathcal{C})$ of $2$-dimensional extended unoriented X-HFTs has $\mathcal{C}$-valued unoriented extended X-HFTs as objects, symmetric monoidal transformations as $1$-morphisms, and symmetric monoidal modifications as $2$-morphisms. For any given symmetric monoidal bicategory $\mathcal{C}$, using the cofibrancy theorem, we state the classification of $\mathcal{C}$-valued $2$-dimensional extended unoriented X-HFTs as the equivalence of bicategories $\mathcal{E}\text{-}\mathcal{HFT}^{\un}(\X,\mathcal{C})\simeq \XP^{\un}(\mathcal{C})$.
\begin{remark}
There is a symmetric monoidal $2$-functor $\text{Forget}^{\text{or}}: \X\Bord_2 \to \X\Bord_2^{\text{un}}$ given by forgetting the orientation. In the same way, any oriented or unoriented $2$-dimensional extended TFT leads to an oriented or unoriented extended HFT respectively by forgetting the X-manifold data. The following diagram indicates the universality of unoriented $2$-dimensional extended TFTs in this context
\begin{align*}
 \xymatrixcolsep{5pc}   \xymatrix{ \mathcal{E}\text{-}\mathcal{TFT}^{\un}(\mathcal{C}) \ar[r]^{\text{Forget}^{\ori}} \ar[d]_{\text{Forget}^{X}} & \mathcal{E}\text{-}\mathcal{TFT}(\mathcal{C}) \ar[d]^{\text{Forget}^{X}}\\
    \mathcal{E}\text{-}\mathcal{HFT}^{\un}(X,\mathcal{C}) \ar[r]_{\text{Forget}^{\ori}} & \mathcal{E}\text{-}\mathcal{HFT}(X,\mathcal{C})
    }
\end{align*}
where $\mathcal{E}\text{-}\mathcal{TFT}^{\un}(\mathcal{C})$ and $\mathcal{E}\text{-}\mathcal{TFT}(\mathcal{C})$ are defined similarly using $\Bord_2^{\un}$ and $\Bord_2$ respectively.
\end{remark}

\subsection{$\Alg$-valued 2-dimensional extended unoriented X-HFTs}
Tagami~\cite{unorientedhq} classified $2$-dimensional nonextended unoriented HFTs by extended crossed Frobenius $G$-algebras (see also \cite{unoriented_hqft_kapustin}). Similar to oriented case our goal is to understand the relation between his classification and the restriction of $\Alg$-valued $2$-dimensional extended unoriented HFTs to circles and cobordisms between them. 

Firstly, we introduce necessary algebraic notions. Let $K$ be a $G$-algebra and $V$ be a $(K,K^{op})$ bimodule. \textit{Conjugate} of $V$ is the $(K,K^{op})$ bimodule $\underline{V}$ obtained by turning actions around. Similarly, the \textit{conjugate} of a graded Morita context $\zeta=({}_{K^{op}}U_{K},{}_{K}V_{K^{op}},\tau,\mu)$ is given by $\underline{\zeta}=({}_{K^{op}}\underline{U}_{K},{}_{K}\underline{V}_{K^{op}},\underline{\tau},\underline{\mu})$. We generalize stellar algebras introduced in \cite{schommer} to stellar $G$-algebras as follows.
\begin{definition}
A \textit{stellar $G$-algebra} is a $G$-algebra $K=\oplus_{g \in G}K_g$, equipped with a $G$-graded Morita context $\zeta=({}_{K^{op}}U_{K},{}_{K}V_{K^{op}},\tau,\mu)$ together with an isomorphism of $G$-graded Morita contexts $\sigma : \zeta \cong \underline{\zeta}$ such that $\sigma \circ \underline{\sigma}$ is the identity isomorphism where $\underline{\sigma}$ is the induced isomorphism between $\underline{\underline{\zeta}}$ and $\underline{\zeta}$.
\end{definition}
Stellar structure on a $G$-algebra can be transferred along a graded Morita context as follows. Let $\rho=({}_{K}U'_L,{}_{L}V'_K,\kappa,\nu)$ be a $G$-graded Morita context between $G$-algebras $K$ and $L$ and let $(K,\zeta,\sigma)$ be a stellar structure on $K$ with $\zeta=({}_{K^{op}}U_{K},{}_{K}V_{K^{op}},\tau,\mu)$. Then, $(L,\rho_{\ast}\zeta,\rho_{\ast}\sigma)$ is a stellar algebra where $\rho_{\ast}\zeta= 
\big({}_{L^{op}}\underline{U'} \otimes_{K^{op}} U \otimes_K U'_{L}, {}_{L}V'\otimes_K V \otimes_{K^{op}} \underline{V'}_{L^{op}}, \underline{\kappa}\otimes \tau \otimes \kappa, \underline{\nu} \otimes \mu \otimes \nu \big)$ and $\rho_{\ast}\sigma:\rho_{\ast}\zeta \cong \underline{\rho_{\ast}\zeta}$ is given by $\sigma$.
\begin{definition}
Let $(K,\zeta,\sigma)$ be a stellar $G$-algebra with $\zeta=({}_{K^{op}}U_K,{}_{K}V_{K^{op}},\tau,\mu)$ and let $(K,\eta)$ be a quasi-biangular $G$-algebra. Stellar structure is said to be \textit{compatible} with quasi-biangular $G$-algebra if there exists an element $\sum_j a_j \otimes b_j \in K_e \otimes K_e$ giving the central element $z = \sum_j b_j a_j$ such that the following diagrams\footnote{Tensors in diagrams are taken over $K$,$K^{op}$ or $K\otimes_{\Bbbk}K^{op}$.} commute 
\begin{align*}
    \xymatrixcolsep{4pc}\xymatrix{K \otimes K \ar[r]^-{\tau \otimes \text{id}} \ar[d]_{\text{id} \otimes \tau} & (V \otimes U) \otimes K \ar[r]^-{\sigma \otimes \text{id}} & (\underline{V} \otimes \underline{U}) \otimes K \ar[r]^-{\underline{\tau}^{-1}\otimes \text{id}} &K \otimes K \ar[d]^-{\eta} \\
    K \otimes (V \otimes U) \ar[r]_-{\text{id}\otimes \sigma} & K \otimes (\underline{V} \otimes \underline{U}) \ar[r]_-{\text{id}\otimes \underline{\tau}^{-1}} & K \otimes K \ar[r]_-{\eta} & \Bbbk
    }\\
    \xymatrixcolsep{4pc}\xymatrix{  \Bbbk \ar[r]^-{\iota } \ar[d]_-{ \iota} & K \otimes K \ar[r]^-{\tau \otimes \text{id}} & (V \otimes U) \otimes K \ar[r]^-{\sigma \otimes \text{id}} & (\underline{V} \otimes \underline{U}) \otimes K \ar[d]^-{\underline{\tau}^{-1}\otimes \text{id}}  \\ 
    K \otimes K \ar[r]_-{\text{id}\otimes \tau} & K \otimes (V \otimes U) \ar[r]_-{\text{id} \otimes \sigma}  & K \otimes (\underline{V} \otimes \underline{U}) \ar[r]_-{\text{id}\otimes \underline{\tau}^{-1}} & K \otimes K
    }\\
    \xymatrixcolsep{4pc}\xymatrix{ K \otimes K \ar[r]^-{\tau \otimes \text{id}} \ar[d]_-{\text{id} \otimes \tau} & (V \otimes U) \otimes K \ar[r]^-{\sigma \otimes \text{id}}  & (\underline{V} \otimes \underline{U}) \otimes K \ar[r]^-{\underline{\tau}^{-1} \otimes \text{id}} & K \otimes K \ar[d]^-{\xi}  
  \\ K \otimes (V \otimes U) \ar[r]  \ar[r]_-{\text{id} \otimes \sigma} & K \otimes (\underline{V} \otimes \underline{U}) \ar[r]_-{\text{id} \otimes \underline{\tau}^{-1}} & K \otimes K \ar[r]_-{\xi} & K \otimes K }
\end{align*}
where $\iota(1)|_{K_e \otimes K_e}= \sum_j a_j \otimes b_j$ and $\xi: {}_{K_1}K_{K_2} \otimes {}_{K_3}K_{K_4} \to {}_{K_1}K_{K_4} \otimes {}_{K_3}K_{K_2}$ is a graded bimodule map with $K_i=K$ for $i=1,2,3,4$ and $\xi(1)=\sum_i p_i^e \otimes q_i^e$ is an inner product element of the principal component. We call such a compatible quadruple $(K,\eta,\zeta,\sigma)$ a \textit{quasi-biangular stellar $G$-algebra}.
\end{definition}
\begin{definition} \label{morphGstellar}
A morphism of quasi-biangular stellar $G$-algebras $(K,\eta^k, \zeta^K,\sigma^K)$ and $(L,\eta^L, \zeta^L, \allowbreak \sigma^L)$ is a compatible $G$-graded Morita context $\rho=({}_{K}U_L,{}_{L}V_K,\tau,\mu)$ together with an equivalence of $G$-graded Morita contexts $\phi: \zeta^L \to \rho_{\ast}\zeta^K $ such that $ \rho_{\ast}\sigma^K\circ \phi= \underline{\phi} \circ \sigma^L$ where $\underline{\phi}: \underline{\zeta}^L \to \underline{\rho_{\ast}\zeta}^K$. Two such morphisms $(\rho,\phi)$ and $(\rho',\phi')$ are isomorphic if there exists an equivalence of $G$-graded Morita contexts $\alpha : \rho \to \rho'$ such that $\phi'=\alpha \circ \phi$ and $\underline{\phi'}=\underline{\alpha}\circ \underline{\phi}$ for $\underline{\alpha}: \underline{\rho} \to \underline{\rho'}$.
\end{definition}
\begin{theorem} \label{unorclass}\label{thm:unorclass}
Let $G$ be a group with each nonidentity element having order $2$ and X be a $K(G,1)$-space. Any $\Alg$-valued $2$-dimensional extended unoriented X-HFT $Z: \X\Bord_2^{\un} \to \Alg$ whose precomposition $\XB^{\PD,\un} \xrightarrow{\simeq} \X\Bord_2^{\un} \xrightarrow{Z} \Alg$ gives a strict symmetric monoidal $2$-functor determines a quasi-biangular stellar $G$-algebra $(A,\eta,\zeta,\sigma)$. Conversely, for any quasi-biangular stellar $G$-algebra $(A,\eta, \zeta,\sigma)$ there exists an $\Alg$-valued $2$-dimensional extended unoriented X-HFT.
\end{theorem}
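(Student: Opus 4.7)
The strategy mirrors the proof of Theorem \ref{orclass} closely, using the unoriented presentation $\XP^{\un}$ in place of $\XP$. First, I would apply the cofibrancy theorem to replace $Z$ with an equivalent strict symmetric monoidal $2$-functor, producing an object $Z'$ of $\XP^{\un}(\Alg)$ whose data are assignments of algebras, bimodules, and bimodule maps to the generators satisfying every relation in $\mathcal{XR}^{\un}$. The single generating object $\noktac$ gives a $\Bbbk$-algebra $A_e = Z'(\noktac)$, and the generating $1$-morphisms give, for each $g \in G$: a bimodule $A_g = Z'(\bos)$, together with bimodules $U_g = Z'(\leftelbov)$ and $V_g = Z'(\rightelbov)$ (where the left/right elbow 1-morphisms carry the actions from both sides on one argument, so are naturally $A_e^{\op}$-bimodules).

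Next, the generating $2$-morphisms of Figure \ref{fig:generators} together with the relations of Figure \ref{fig:relations}, which are common to both presentations, reproduce verbatim the calculations from the proof of Theorem \ref{orclass}. Specifically, the multiplication $2$-morphism together with its associativity and inversion relations turn $A = \oplus_{g\in G} A_g$ into a strongly graded $G$-algebra; the cup, cap, and saddle generators together with their Morse and cusp-flip relations equip $A$ with a nondegenerate Frobenius form $\eta$ and a central element $z \in A_e$ satisfying $\sum_i p_i^g z q_i^g = 1$ for all $g$, so $(A,\eta)$ is quasi-biangular; and the cusp generators together with the swallowtail relations yield a $G$-graded Morita context $\zeta = ({}_{A^{\op}}U_A, {}_{A}V_{A^{\op}}, \tau, \mu)$ between $A$ and $A^{\op}$, whose compatibility with $\eta$ (in the sense of Definition \ref{compmorita}) follows as in the oriented case.

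The novelty lies in the additional generators of Figure \ref{fig:unorientedgen} and the additional relations of Figure \ref{fig: unorientedrel}. The Möbius-type $2$-morphisms assemble into bimodule maps $\sigma_U : U_g \to \underline{U}_g$ and $\sigma_V : V_g \to \underline{V}_g$ (equivalently, a morphism $\sigma: \zeta \to \underline{\zeta}$ of $G$-graded Morita contexts). The first family of new relations (those asserting that two consecutive Möbius twists cancel) yields $\sigma \circ \underline{\sigma} = \id$, producing a stellar structure on $(A,\zeta)$. The remaining relations, which describe how the Möbius generators interact with multiplication, cusps, and saddles, translate exactly into the three commutative diagrams appearing in the definition of quasi-biangular stellar $G$-algebra: the first diagram comes from the relation gluing a Möbius band with a cup/cap and inner-product saddle, the second from the relation with the unit, and the third from the relation with the bimodule-swap saddle. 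The conclusion is that $Z'$ determines, and is determined by, a quasi-biangular stellar $G$-algebra $(A,\eta,\zeta,\sigma)$.

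For the converse, given such a quadruple I would define assignments on $\mathcal{XG}^{\un}_0$, $\mathcal{XG}^{\un}_1$, $\mathcal{XG}^{\un}_2$ following the formulas above and verify that every relation in $\mathcal{XR}^{\un}$ holds; the cofibrancy theorem then produces a strict symmetric monoidal $2$-functor $\XB^{\PD,\un} \to \Alg$, which composes with the equivalence $\X\Bord_2^{\un} \simeq \XB^{\PD,\un}$ to yield the desired extended unoriented X-HFT. The main obstacle I expect is the third compatibility diagram, whose derivation requires carefully tracking how the stellar isomorphism $\sigma$ interacts with the saddle $\xi$ encoding the inner-product elements; this is analogous to, but more involved than, the cusp-flip argument $\xi = \eta$ from the proof of Theorem \ref{orclass}, and it is where the assumption that nonidentity elements of $G$ have order two enters, since this ensures that the $G$-labels on the two boundary arcs of a Möbius strip generator are consistent.
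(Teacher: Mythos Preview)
Your proposal is correct and follows essentially the same approach as the paper's proof: both invoke the cofibrancy theorem to reduce to an object of $\XP^{\un}(\Alg)$, import the oriented analysis of Theorem~\ref{orclass} wholesale to obtain the quasi-biangular $G$-algebra $(A,\eta)$ and the $G$-graded Morita context $\zeta$ between $A$ and $A^{\op}$, and then interpret the M\"obius generators of Figure~\ref{fig:unorientedgen} as the bimodule maps constituting $\sigma:\zeta\cong\underline{\zeta}$, with the relations of Figure~\ref{fig: unorientedrel} giving invertibility, the stellar condition $\sigma\circ\underline{\sigma}=\id$, and the compatibility diagrams. One small clarification: in the paper the first block of new relations establishes that the four M\"obius bimodule maps come in mutually inverse pairs (so $\sigma$ is an \emph{isomorphism} of Morita contexts), while it is a separate relation that yields $\sigma\circ\underline{\sigma}=\id$; your phrasing slightly merges these two steps, but the content is the same.
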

\begin{proof}
Let $Z: \X\Bord_2^{\un} \to \Alg$ be such a $2$-dimensional extended unoriented HFT. The cofibrancy theorem implies that there exists an object $Z'$ in $\XP^{\un}(\Alg)$ such that $\imath(Z')$ is the composition $\XB^{\PD,\un} \xrightarrow{\simeq} \X\Bord_2^{\un} \xrightarrow{Z} \Alg$ where $\imath: \XP^{\un}(\Alg)\to \text{SymMon}(\XB^{\PD,\un}, \Alg)$ is the equivalence of bicategories.

Following the proof of Theorem \ref{orclass}, we have a strongly graded $G$-algebra $A= \oplus_{g \in G} A_g$ where $Z'(\noktac)=A_e$. We also have $G$-graded $(A\otimes A,\Bbbk)$ and $(\Bbbk,A \otimes A)$-bimodules $M= \oplus_{g \in G}M_g$, $N=\oplus_{g \in G} N_g$ respectively. By turning actions around we obtain $(A,A^{op})$-bimodule $M$ and $(A^{op},A)$-bimodule $N$. 

Bimodule maps in $Z'_2(\mathcal{X}^{\un}\mathcal{G}_2)$ corresponding to cusp generators (subject to relations) yield a $G$-graded Morita context $\zeta=({}_{A^{op}}N_A,{}_{A}M_{A^{op}},f_1,f_2)$ between $A$ and $A^{op}$ where $f_1: {}_{A}A_A \to {}_{A}M \otimes_{A^{op}} N_{A}$ and $f_2:{}_{A^{op}}N \otimes_{A}M_{A^{op}} \to {}_{A^{op}}A^{op}_{A^{op}}$ are invertible $G$-graded bimodule maps. Bimodule maps in $Z_2'(\mathcal{X}^{\un}\mathcal{G}_2)$ for the Morse generators satisfying relations imply that $(A,\eta)$ is a quasi-biangular $G$-algebra. The generators in Figure \ref{fig:unorientedgen} give the following graded bimodule maps in $Z'_2(\mathcal{X}^{\un}\mathcal{G}_2)$
\begin{align*}
    \sigma_1 &: {}_{A}M_{A^{op}} \to {}_{A}\underline{M}_{A^{op}} \hspace{2cm}\sigma_2: {}_{A^{op}}N_A \to {}_{A^{op}}\underline{N}_A\\
    \sigma_1' &: {}_{A}\underline{M}_{A^{op}} \to {}_{A}M_{A^{op}} \hspace{2cm}\sigma_2': {}_{A^{op}}\underline{N}_A \to {}_{A^{op}}N_A.
\end{align*}
These graded bimodule maps are subject to the relations in Figure \ref{fig: unorientedrel}. Thereby, we have $\sigma_1' \circ \sigma_1= \text{id}_M$, $\sigma_1 \circ \sigma_1'= \text{id}_{\underline{M}}$, $\sigma_2' \circ \sigma_2 =\text{id}_N$, and $\sigma_2 \circ \sigma_2'= \text{id}_{\underline{N}}$. These isomorphisms of bimodules lead to an isomorphism $\sigma: \zeta \cong \underline{\zeta}$. Applying $\sigma$ to $\underline{\underline{\zeta}}$ gives another isomorphism $\underline{\sigma}: \underline{\underline{\zeta}}\to \underline{\zeta}$ whose composition with $\sigma$ gives $\sigma \circ \underline{\sigma}: \underline{\underline{\zeta}}\cong \zeta$. Third relation on the first row of Figure \ref{fig: unorientedrel} and its reflection indicate that compositions of bimodule maps $\underline{\underline{M}} \to \underline{M} \to M$ and $\underline{\underline{N}} \to \underline{N} \to N$ are identity maps. 

Thus, additional generators and relations among them lead to a stellar structure $(\zeta,\sigma)$ on the quasi-biangular $G$-algebra $A$. Remaining relations imply the compatibility giving the quasi-biangular stellar $G$-algebra $(A,\eta, \zeta,\sigma)$. For any quasi-biangular stellar $G$-algebra, one constructs an object of $\XP^{\un}(\Alg)$ by assigning values to generating objects, $1$-morphisms, and $2$-morphisms of $\XP$ satisfying generating relations using the above arguments. Then, this object gives a strict symmetric monoidal $2$-functor $\XB^{\PD,\un} \to \Alg$ whose composition with the equivalence $\X\Bord_2^{\un} \xrightarrow{\sim} \XB^{\PD,\un}$ produces the desired unoriented extended X-HFT.
\end{proof}
Similar to oriented case every $2$-dimensional extended unoriented HFT with target X produces a nonextended one by precomposition $\X\text{Cob}^{\text{un}}_2 \to \X \mathbbmss{Cob}^{\mathbbmss{un}}_2 \to \Alg$ where $\X\text{Cob}_2^{\text{un}}$ and $\X\mathbbmss{Cob}_2^{\mathbbmss{un}}$ are defined just as $\X\text{Cob}_2$ and $\X\mathbbmss{Cob}_2$ using unoriented X-manifolds. In the unoriented case extended crossed Frobenius $G$-algebras plays an important role in the study of $2$-dimensional nonextended unoriented X-HFTs and they are defined as follows.
\begin{definition}\label{extendedalg} (\cite{unorientedhq})
Let $(K,\eta,\varphi)$ be a crossed Frobenius $G$-algebra over $\Bbbk$. An \textit{extended} structure on $K$ consists of a $\Bbbk$-module homomorphism $\Phi: K \to K$ and a family of elements $\{ \theta_g \in K_e\}_{g \in G}$ satisfying the following conditions
\begin{enumerate}
    \item  $\Phi(K_g) \subset K_g$ and $\Phi(\theta_g)=\theta_g$ for all $g\in G$,
    \item $\Phi \circ \varphi_g = \varphi_g \circ \Phi$ for all $g \in G$, 
    \item $\Phi(vw)= \Phi(w)\Phi(v)$ for any $v,w \in K$ and $\Phi(1_K)=1_K$,
    \item $\Phi^2 = id$,
    \item $\eta \circ (\Phi \otimes \Phi)= \eta$,
    \item for any $g,h,l \in G$ and $v \in K_{gh}$, we have
    \begin{align*}
        m \circ( \Phi \circ \varphi_l) \circ \Delta_{g,h}(v)= \varphi_l(\theta_{gl}\theta_l v),\\
        m \circ (\varphi_l \otimes \Phi) \circ \Delta_{g,h}(v) =\varphi_l(\theta_{hl}\theta_l v),
    \end{align*}
where $\Delta_{g,h}: K_{gh} \to K_g \otimes K_h$ is defined by the equation $(id_g \otimes \eta) \circ (\Delta_{g,h} \otimes id_h) =m$. Such a map $\Delta_{g,h}$ is uniquely determined since $\eta$ is nondegenerate and each $K_g$ is finitely generated,
    \item $\Phi(\theta_h v)=\varphi_{hg}(\theta_{hg}v)$ for any $g,h \in G$ and $v \in K_g$,
    \item $\varphi_h(\theta_g)= \theta_g$ for any $g,h \in G$,
    \item for any $g,h,l \in G$, we have $\theta_g \theta_h \theta_l= q(1)\theta_{ghl}$ where $q: \Bbbk \to K_e$ is defined as follows; let $\{a_i \in K_{gh}\}_{i=1}^n$ and $\{b_i \in K_{gh}\}_{i=1}^n$ be families of elements of $K_{gh}$ satisfying the equation $\sum_i \eta(b_i \otimes v) a_i = \varphi_{hl}(v)$ for any $v \in K_{gh}$. As in $(3)$, such $a_i$ and $b_i$ are uniquely determined and $q(1)= \sum_i a_i b_i$.
\end{enumerate}
\end{definition}
\begin{theorem}(Tagami, \cite{unorientedhq})
Let $G$ be a group with each nonidentity element having order $2$. There is a bijection between the isomorphism classes of $2$-dimensional unoriented HFTs with target $X \simeq K(G,1)$ and the isomorphism classes of extended crossed Frobenius $G$-algebras.
\end{theorem}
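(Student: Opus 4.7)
The plan is to follow the same strategy as Turaev's original classification of nonextended $2$-dimensional HFTs by crossed Frobenius $G$-algebras, but enlarging the list of generators and relations of the $1$-category $\X\text{Cob}_2^{\un}$ to account for nonorientability. First, I would give a generators-and-relations presentation of $\X\text{Cob}_2^{\un}$: the objects are disjoint unions of $g$-labeled circles $\{S^1_g\}_{g \in G}$, and the morphisms are generated by the usual oriented pair-of-pants (product), cups, caps, cylinders implementing conjugation by $h \in G$, together with two new unoriented generators---the M\"obius band $\mathbb{M}_g$ viewed as a morphism $S^1_g \to S^1_g$ and the cross-cap (a M\"obius band capped off on one end) $\mathbb{X}_g : \emptyset \to S^1_g$---subject to the relations coming from the classification of compact surfaces (possibly with boundary and with homotopy classes of maps to $X$).

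Given a symmetric monoidal functor $Z : \X\text{Cob}_2^{\un} \to \Vec_{\Bbbk}$, I would set $K_g := Z(S^1_g)$ and equip $K = \oplus_g K_g$ with the crossed Frobenius $G$-algebra structure $(m, \eta, \varphi)$ extracted from the oriented generators exactly as in Turaev's proof. The key new move is to define the extended structure: set $\Phi := Z(\mathbb{M}_g) : K_g \to K_g$ and $\theta_g := Z(\mathbb{X}_g)(1) \in K_e$ (after rerouting through a cap to land in the principal component). Conditions (1)--(9) of Definition \ref{extendedalg} are then checked one by one by exhibiting the corresponding X-homeomorphism of unoriented X-surfaces: for instance, $\Phi^2 = \id$ comes from the fact that $\mathbb{M}_g \circ \mathbb{M}_g$ is X-homeomorphic to a cylinder, $\Phi(vw) = \Phi(w)\Phi(v)$ from the orientation-reversing effect of $\mathbb{M}$ on a pair-of-pants, the compatibility $\eta \circ (\Phi \otimes \Phi) = \eta$ from two M\"obius bands inserted before the cap-pairing, and the identity $\theta_g\theta_h\theta_l = q(1)\theta_{ghl}$ from the classification of the nonorientable genus-$3$ surface with one boundary component.

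Conversely, given an extended crossed Frobenius $G$-algebra $(K, \eta, \varphi, \Phi, \{\theta_g\})$, I would assign $K_g$ to $S^1_g$, send each generator of $\X\text{Cob}_2^{\un}$ to the corresponding structure map (the two new generators going to $\Phi$ and to $u \mapsto u \cdot \theta_g$ composed with the appropriate graded inclusion), and verify by direct computation that every generating relation in the presentation of $\X\text{Cob}_2^{\un}$ is precisely one of the axioms (1)--(9). Invariance under choice of handle decomposition would then give a well-defined HFT. Finally, isomorphisms of HFTs correspond to $\Bbbk$-linear isomorphisms commuting with all structure maps, which is exactly an isomorphism of extended crossed Frobenius $G$-algebras.

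The main obstacle will be step (1): producing a complete, correct generators-and-relations presentation of $\X\text{Cob}_2^{\un}$ in which every relation is manifestly one of the axioms in Definition \ref{extendedalg}. In the oriented case this is essentially the classical Abrams presentation of $\text{Cob}_2$; for the unoriented $X$-labeled version one must keep careful track of how the $\theta_g$-elements interact with conjugation (axiom (7) is the trickiest) and verify the genus-$3$ nonorientable relation (axiom (9)) via Dyck's theorem $\mathbb{RP}^2 \# \mathbb{RP}^2 \# \mathbb{RP}^2 \cong \mathbb{RP}^2 \# T^2$ in its X-manifold form. Everything else should reduce to bookkeeping once this presentation is in hand.
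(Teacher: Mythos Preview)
The paper does not prove this theorem: it is stated as a result of Tagami with a citation to \cite{unorientedhq} and no argument is given. So there is no ``paper's own proof'' to compare against; the theorem is simply quoted as background for the corollary that follows it.

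Your sketch is a reasonable outline of how such a classification is obtained, and is in the spirit of what Tagami does (and what Turaev does in the oriented case). One small mismatch with the paper's conventions: in the corollary immediately after the stated theorem, the paper explains that $\Phi|_{K_g}$ is induced by the \emph{orientation-reversing homeomorphism of the $g$-labeled circle} (realized as a mapping cylinder), while $\theta_g$ is the image of $1$ under the \emph{M\"obius strip} viewed as a cobordism $\emptyset \to S^1_{g^2} = S^1_e$. You have essentially swapped the names, calling the $\Phi$-generator ``the M\"obius band $\mathbb{M}_g$'' and the $\theta$-generator ``the cross-cap $\mathbb{X}_g$''. This is only a labeling issue, but if you carry it through you will find axiom (7), $\Phi(\theta_h v) = \varphi_{hg}(\theta_{hg} v)$, awkward to match to a surface relation; with the paper's (and Tagami's) assignment it corresponds cleanly to sliding a cross-cap past the flip. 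Otherwise your identification of the hard step---a complete presentation of $\X\text{Cob}_2^{\un}$ in which each relation is one of axioms (1)--(9)---is exactly right, and is what the cited paper carries out.
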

\begin{cor}
Assume that $Z: \X\Bord_2^{\un} \to \Alg$ determines a quasi-biangular stellar $G$-algebra $(A,\eta,\zeta,\sigma)$. The stellar structure $(\zeta,\sigma)$ gives an extended structure on the crossed Frobenius $G$-algebra $Z_G(A)$. Moreover, the corresponding $2$-dimensional X-HFT is the unoriented X-HFT obtained by restricting $Z$ to $\X\mathbbmss{Cob}^{\mathbbmss{un}}_2$.
\end{cor}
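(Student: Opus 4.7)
The plan is to proceed in parallel with the oriented corollary and then extract the extra data $(\Phi,\{\theta_g\}_{g\in G})$ from the stellar part of the structure. First I would observe that the restriction of $Z$ along $\X\mathbbmss{Cob}^{\mathbbmss{un}}_2 \hookrightarrow \X\Bord_2^{\un}$ factors through the subbicategory built out of the generators $\gcirc$ and cobordisms between them, and that by the same argument as in the oriented corollary the image of a $g$-labeled circle is $A_e\otimes_{A_e\otimes A_e^{\op}} A_g=\Psi(A_g)$, so the underlying crossed Frobenius $G$-algebra is exactly $Z_G(A)$ (with its multiplication, Frobenius pairing and conjugation $\varphi$ determined by the saddle, cup/cap and $G$-braid generators as in Lemma \ref{Gcenteri}).

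Next I would read off the extra extended structure from the two new generating $2$-morphisms of Figure \ref{fig:unorientedgen}. The ``twist'' generator applied to a $g$-labeled circle is sent by $Z$ to an endomorphism of $\Psi(A_g)$; assembling these over $g\in G$ gives a $\Bbbk$-module map $\Phi: Z_G(A)\to Z_G(A)$, which at the level of $(A,A^{\op})$-bimodules is induced from the stellar isomorphisms $\sigma_1:M\xrightarrow{\cong}\underline{M}$ and $\sigma_2:N\xrightarrow{\cong}\underline{N}$ via the standard identification $\Psi(A_g)=A_e\otimes_{A_e\otimes A_e^{\op}} M_g\otimes_{A^{\op}}N_e$ (using $\zeta$ to eliminate $A^{\op}$). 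The M\"obius-cap generator in Figure \ref{fig:unorientedgen} is sent to a morphism $\Bbbk\to\Psi(A_g)$, whose value at $1$ is the element $\theta_g\in Z_G(A)_e$ (after pushing into the identity component via the same $G$-graded Morita context).

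I would then verify the nine axioms of Definition \ref{extendedalg} by reading the generating relations of $\mathcal{XR}^{\un}$ in Figure \ref{fig: unorientedrel} together with those in Figure \ref{fig:relations}. Axioms (3)--(5) (anti-multiplicativity, involutivity, and invariance of $\eta$ under $\Phi$) follow from the three relations in the top row of Figure \ref{fig: unorientedrel} combined with $\sigma_1'\circ\sigma_1=\mathrm{id}_M$, $\sigma_2'\circ\sigma_2=\mathrm{id}_N$ and the second compatibility diagram in the definition of stellar structure. Axiom (1) comes from fact that $\Phi$ is defined componentwise, axiom (2) from sliding the twist across the $\varphi_g$-twist generator, and axioms (6)--(9) from the relations that compare M\"obius caps glued to saddles, pair-of-pants, and each other (second row of Figure \ref{fig: unorientedrel} and its reflections), together with the third compatibility diagram in the definition of quasi-biangular stellar $G$-algebra which encodes precisely how the $\theta_g$'s interact with the inner product element $\sum_i p_i^e\otimes q_i^e$.

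The final step is to invoke Tagami's theorem to identify the restricted HFT with the unoriented HFT associated to the extended crossed Frobenius $G$-algebra $(Z_G(A),\eta|_{Z_G(A)},\varphi,\Phi,\{\theta_g\})$: once the data and axioms above are established, both functors agree on objects and on the generating morphisms of the unoriented cobordism category (circles, pairs of pants, cups, caps, $\varphi$-twist and M\"obius band), hence on all morphisms by Tagami's generators-and-relations description. The main obstacle I expect is the careful bookkeeping needed in the bimodule-theoretic definition of $\Phi$ and $\theta_g$: one must show that after collapsing $A^{\op}$ via $\zeta$, the stellar isomorphism $\sigma$ descends to a well-defined map on the identity component of $Z_G(A)$ that is independent of the chosen trivialisations, and that the compatibility of $\sigma$ with $\zeta$ (the three commutative diagrams in the definition) is exactly what is needed to upgrade axioms (6) and (9) from bimodule-level statements to the stated elementwise identities.
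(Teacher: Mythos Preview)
Your plan is sound and would work, but it differs from the paper's argument in a key organizational respect. You propose to \emph{construct} $\Phi$ and $\{\theta_g\}$ from the stellar data, then \emph{verify} the nine axioms of Definition~\ref{extendedalg} one by one by matching them against the generating relations in Figure~\ref{fig: unorientedrel}, and only afterwards invoke Tagami's theorem to identify the restricted HFT. The paper instead short-circuits the axiom verification entirely: it first observes that the restriction of $Z$ to $\X\mathbbmss{Cob}^{\mathbbmss{un}}_2$ is automatically a $2$-dimensional unoriented X-HFT, so Tagami's classification guarantees that $Z_G(A)$ carries \emph{some} extended crossed Frobenius structure, and then simply identifies the topologically defined $\Phi$ (orientation-reversing circle diffeomorphism) and $\theta_g$ (image of the M\"obius strip) with the maps induced by $\sigma_1$ and the cap-plus-twist composite respectively. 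No axiom-checking is needed because the axioms hold by Tagami's theorem.

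What each buys: the paper's route is much shorter and avoids the delicate bookkeeping you correctly flag as an obstacle. Your route is more self-contained and would make explicit which generating relation in $\mathcal{XR}^{\un}$ is responsible for which of Tagami's axioms, which has some expository value but is not needed for the statement. One small correction: Figure~\ref{fig:unorientedgen} contains only the twist generators, not a M\"obius cap; $\theta_g$ must be built as a composite (cap followed by twist and relabelling), as the paper spells out.
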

\begin{proof} 
We have a crossed Frobenius $G$-algebra $(Z_G(A), \eta|_{Z_G(A)}, \{\varphi|_{Z_G(A)}\}_{g \in G})$. By Tagami's classification, the $2$-dimensional unoriented HFT given by the restriction of $Z$ to circles and cobordism between them induces an extended structure on $Z_G(A)$. We claim that homomorphism $\Phi$ and elements $\{ \theta_g \in Z_G(A)_e\}_{g \in G}$ come from the stellar structure $(\zeta,\sigma)$ on $A$. 

In \cite{unorientedhq}, for each $g \in G$ the restriction $\Phi|_{Z_G(A)_g} : Z_G(A)_g \to Z_G(A)_g$ is the involution induced by an orientation reversing homeomorphism of a $g$-labeled circle. In the extended case this morphism is given by additional $2$-morphisms (Figure \ref{fig:unorientedgen}). More precisely, $\Phi|_{Z_G(A)_g}: A_e \otimes_{A_e \otimes A_e^{op}} A_g \to A_e \otimes_{A_e \otimes A_e^{op}}A_g$ is defined by $\Phi(a\otimes b)= a \otimes \Phi_g(b)$ where $\Phi_g$ is defined so that the following diagram 
\begin{align*}
    \xymatrix{{}_{A_e}M_g \otimes_{A_e^{op}} (N_e)_{A_e} \ar[r]^-{\cong} \ar[d]_{\sigma_1 \otimes \text{id}} & {}_{A_e}(A_g)_{A_e} \ar[d]^{\Phi_g= Z\big(\syma \big)} \\ {}_{A_e}\underline{M_g} \otimes_{A_e^{op}}(N_e)_{A_e} \ar[r] & {}_{A_e}(A_g)_{A_e}
    }
\end{align*}
commutes. It is not hard to see that $\Phi$ reverses the orientation of the oriented (input) circle. In \cite{unorientedhq}, for every $g \in G$ the element $\theta_g$ is the image of HFT under the M\"obius strip whose boundary is labeled by $g^2=e$ where the M\"obius strip is considered as the cobordism from the empty $1$-manifold to the boundary circle. In the extended case, $\theta_g \in A_e\otimes_{A_e \otimes A_e^{op}}A_e$ is the image of $1 \in \Bbbk$ under the following composition; $\{g,g\}$-labeled cap morphism followed by new generators (see Figure \ref{fig: unorientedrel}) which is composed with module actions turning boundary labels into $\{e,e\}$ (see Figure \ref{fig:Gcup}). 

We see that the involution $\Phi$ and elements $\{\theta_g\}_{g \in G}$ are defined according to their topological description given in \cite{unorientedhq}. Hence, $(Z_G(A), \eta|_{Z_G(A)}, \{\varphi_g|_{Z_G(A)}\}_{g\in G},\Phi, \{\theta_g\}_{g \in G})$ is an extended crossed Frobenius $G$-algebra which by definition corresponds to the restriction of $Z: \X\Bord_2^{\un} \to \text{Alg}^2_{\Bbbk}$ to X-circles and unoriented X-cobordisms between them. 
\end{proof}

\subsection{The bicategory of 2-dimensional extended unoriented X-HFTs}
In order to upgrade Theorem \ref{unorclass} to an equivalence of bicategories, we study morphisms in the bicategory $\XP^{\un}(\Alg)$. Let $\alpha$ be a $1$-morphism from $Z_0$ to $Z_1$ giving quasi-biangular stellar $G$-algebras $(A,\eta,\zeta, \sigma)$ and $(A',\eta',\zeta',\sigma')$ respectively. We know from the oriented case that $\alpha$ gives a compatible $G$-graded Morita context $\xi$ between $G$-algebras $A$ and $A'$. Assuming $\alpha_0(\noktac)= {}_{A_e}R_{A_e'}$ and $\xi=({}_{A}R_{A'}, {}_{A'}R'_{A},\tau,\mu)$ naturality with respect to the first generator in Figure \ref{fig:unorientedgen} is the commutativity of the following diagram
\begin{align*}
   \xymatrixcolsep{5pc}\xymatrix{{}_{A_e'}(M'_g)_{(A_e')^{op}} \ar[d]_{\sigma'=Z_1 \big(\syma \big)} \ar[r]^-{\alpha_1(\leftelbovg)} & {}_{A_e'}R' \otimes_{A_e} M_g \otimes_{A_e^{op}} \underline{R'}_{(A_e')^{op}} \ar[d]^{
   \xi_{\ast}\sigma=Z_0\big(\syma \big)}  \\
    {}_{A_e'}(\underline{M_g'})_{(A_e')^{op}} \ar[r]_-{\alpha_1(\leftelbovt)} & {}_{A_e'}R' \otimes_{A_e} \underline{M_g} \otimes_{A_e^{op}} \underline{R'}_{(A_e')^{op}}
    }
\end{align*}
where $M$ and $M'$ are components of the graded Morita contexts $\zeta$ and $\zeta'$ respectively. There are similar commutative diagrams for the remaining three generators. These diagrams indicate that the $G$-graded Morita context $\xi$ gives an equivalence of $G$-graded Morita contexts $\zeta'$ and $\xi_{\ast}\zeta$ with $\underline{\alpha} \circ \sigma'= \xi_{\ast}\sigma \circ \alpha$. In other words, $\alpha$ leads to a morphism of stellar $G$-algebras (see Definition \ref{morphGstellar}). 

Let $\theta : \alpha^1 \to \alpha^2$ be a $2$-morphism in $\XP(\Alg)$ with $\theta_0(\noktac)= {}_{A_e}R_{A_e'}\to {}_{A_e}P_{A_e'}$. In the oriented case we observed that $\theta$ induces an equivalence of $G$-graded Morita contexts $\xi=({}_{A}R_{A'},{}_{A'}R'_{A},\tau,\mu)$ and $\rho=({}_{A}P_{A'},{}_{A'}P'_{A},\kappa,\nu)$. Naturality of $\theta_0(\noktac)$ with respect to $\leftelbovt$ is the commutativity of the following diagram
\begin{align*}
     \xymatrixcolsep{5pc}\xymatrix{ {}_{A_e'}(\underline{M_g'})_{(A_e')^{op}} \ar[d]_{\text{id}} \ar[r]^-{\alpha^1_1(\leftelbovt)}& {}_{A_e'}R' \otimes_{A_e} \underline{M_g} \otimes_{A_e^{op}} \underline{R'}_{(A_e')^{op}} \ar[d]^{\theta_0(\noktac \ \noktac)} \\ 
    {}_{A_e'}(\underline{M_g'})_{(A_e')^{op}} \ar[r]_-{\alpha_1^2(\leftelbovt)} & {}_{A_e'}P' \otimes_{A_e} \underline{M_g} \otimes_{A_e^{op}} \underline{P'}_{(A_e')^{op}}
    }
\end{align*}
and there is a similar diagram for the naturality with respect to $\rightelbovt$. Naturality for $\{\rightelbov\}_{g \in G}$ and $\{\leftelbov\}_{g \in G}$ gives $\alpha^2 = \theta \circ \alpha^1$ and naturality for $\{\leftelbovt\}_{g \in G}$ and $\{\rightelbovt\}_{g \in G}$ gives $\underline{\alpha_2}= \underline{\theta}\circ \underline{\alpha^1}$. In other words, $\theta$ gives an isomorphism of stellar $G$-algebra morphisms (see Definition \ref{morphGstellar}).

These observations lead us to define a bicategory $\text{Frob}^G_{\ast}$ which has quasi-biangular stellar $G$-algebras as objects, their morphisms as $1$-morphisms, and isomorphisms of quasi-biangular stellar $G$-algebra morphisms as $2$-morphisms. Above arguments imply that there exists a $2$-functor $\mathcal{F}' : \XP^{\un}(\Alg) \to \text{Frob}^G_{\ast}$. Composing $\mathcal{F}'$ with the equivalence $\mathcal{E}\text{-}\mathcal{HFT}^{\un}(X,\Alg) \simeq \XP^{\un}(\Alg)$ we define the $2$-functor $\mathcal{F}$. 
\begin{theorem}\label{unorbicatsinif}
The $2$-functor $\mathcal{F}:\mathcal{E}\text{-}\mathcal{HFT}^{\un}(X,\Alg) \to \text{Frob}^G_{\ast}$ is an equivalence of bicategories.
\end{theorem}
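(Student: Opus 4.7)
The plan is to follow the same blueprint as the proof of Theorem \ref{equivbicats}, namely to factor $\mathcal{F}$ through the equivalence $\mathcal{E}\text{-}\mathcal{HFT}^{\un}(X,\Alg)\simeq\XP^{\un}(\Alg)$ of Section \ref{equivbicat_kanit} (unoriented version) and to show that the induced $2$-functor $\mathcal{F}':\XP^{\un}(\Alg)\to\text{Frob}^G_{\ast}$ is an equivalence via the Whitehead theorem for symmetric monoidal bicategories (Theorem \ref{whiteadforsym}). So it suffices to check that $\mathcal{F}'$ is essentially surjective on objects, essentially full on $1$-morphisms, and fully faithful on $2$-morphisms.

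For essential surjectivity, given a quasi-biangular stellar $G$-algebra $(A,\eta,\zeta,\sigma)$, Theorem \ref{unorclass} already produces an object $Z\in\XP^{\un}(\Alg)$ mapping to it. For essential fullness on $1$-morphisms, I start with a morphism $(\rho,\phi)$ of quasi-biangular stellar $G$-algebras, say $\rho=({}_{A}U_{A'},{}_{A'}V_{A},\tau,\mu)$ with the compatibility $\rho_{\ast}\sigma\circ\phi=\underline{\phi}\circ\sigma'$. Mimicking the oriented construction, define $\alpha:Z_0\to Z_1$ on the generating object by $\alpha_0(\noktac)={}_{A_e'}U_{A_e}$ (and its conjugate $\underline{U}$ plays the role that $S$ played in the oriented case, supplied here by the stellar structure rather than by an independent $B$). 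Use $\tau$ and $\mu$ together with the graded multiplications to define $\alpha_1(\bos)$, use the cusps of $\rho$ to define $\alpha_1(\leftelbov), \alpha_1(\rightelbov)$, and use $\phi$ to define $\alpha_1$ on the four additional unoriented generators of Figure \ref{fig:unorientedgen}. The condition $\rho_{\ast}\sigma\circ\phi=\underline{\phi}\circ\sigma'$ is exactly what is needed for the naturality squares attached to those generators to commute, so $(\rho,\phi)$ is in the image of $\mathcal{F}'$ up to isomorphism.

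For fully faithfulness on $2$-morphisms, I adapt the injectivity/surjectivity argument from Theorem \ref{equivbicats}. Given two $1$-morphisms $\alpha^1,\alpha^2:Z_0\to Z_1$ in $\XP^{\un}(\Alg)$, a $2$-morphism $\theta$ is determined by the single graded bimodule map $\theta_0(\noktac)$, because the unoriented presentation has only one generating object. The image $\mathcal{F}'(\theta)$ records exactly this bimodule map packaged as an equivalence of graded Morita contexts, so $\mathcal{F}'$ is injective on $2$-morphisms. For surjectivity, given an isomorphism $\psi$ of quasi-biangular stellar $G$-algebra morphisms, take $\theta_0(\noktac)$ to be the underlying graded $(A_e',A_e)$-bimodule map of $\psi$; compatibility of $\psi$ with the stellar data $\phi^1,\phi^2$ translates into the naturality of $\theta_0(\noktac)$ with respect to the four unoriented generators, while naturality with respect to the oriented generators follows from $\psi$ being an equivalence of the underlying graded Morita contexts, just as in the oriented case.

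The main obstacle I anticipate is bookkeeping for the conjugate bimodule $\underline{U}$: in the oriented proof there was a genuinely separate bimodule $S$ for the second generating object, and all naturality was split cleanly across the two. Here $\underline{U}$ must be reconstructed from $U$ via $\sigma$ and $\phi$, and one must verify that the unoriented relations in Figure \ref{fig: unorientedrel} (in particular the involution relations $\sigma_1'\circ\sigma_1=\id$, $\sigma_1\circ\sigma_1'=\id$ and the relation mixing the stellar maps with cups, caps, and saddles) are respected by the assignments built from $(\rho,\phi)$ and from $\psi$. This is where the hypotheses $\sigma\circ\underline{\sigma}=\id$ and the compatibility diagrams in the definition of a quasi-biangular stellar $G$-algebra enter essentially, so the verification is a matter of chasing the diagrams in the definition of stellar compatibility against the generating relations, rather than anything genuinely new.
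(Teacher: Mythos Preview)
Your proposal is correct and follows essentially the same approach as the paper: the paper's own proof is the single sentence ``Proof follows from above arguments and Whitehead theorem for bicategories,'' where ``above arguments'' refers to exactly the analysis of $1$- and $2$-morphisms of $\XP^{\un}(\Alg)$ that you have spelled out in more detail. Your identification of the role of the conjugate bimodule $\underline{U}$ (supplied by the stellar structure) in place of the second bimodule $S$ from the oriented case is the one genuinely new bookkeeping point, and you have located it correctly.
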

\begin{proof}
Proof follows from above arguments and Whitehead theorem for bicategories.
\end{proof}

\subsection{The $(G\times O(2))$-structured cobordism hypothesis}
Parallel to oriented case, we want to compare Theorem \ref{unorbicatsinif} with the classification given by the $(G\times O(2))$-structured cobordism hypothesis. To do this, we need to understand homotopy $(G \times O(2))$-fixed points in $(\Alge^{fd})^{\sim}$ which are given by
\begin{align*}
    \Big(\big(\Alge^{\text{fd}}\big)^{\sim}\Big)^{h(G \times O(2))} &=    \text{Map}_G \big(EG,\text{Map}_{O(2)}(EO(2),\mathsf{Alg})\big)
\end{align*}
where $G$ acts on invariant maps trivially and $\mathsf{Alg}$ is the $2$-type corresponding to the $\infty$-groupoid $\big(\Alge^{\text{fd}}\big)^{\sim}$. Recall that unoriented Grassmannian $\text{Gr}(2,\R^{\infty})$ is a model for $BO(2)$ and Stiefel manifold $\text{V}(2,\R^{\infty})$ is for $EO(2)$. The universal principal $O(2)$-bundle $p: \text{V}(2,\R^{\infty})\to \text{Gr}(2,\R^{\infty})$ is given by $p((e_1,e_2))=\<e_1,e_2\>$ i.e. the plane generated by the orthonormal $2$-frame $(e_1,e_2)$. 
\begin{lemma} \label{reflection}
Reflection invariant maps in $\text{Map}(\text{V}(2,\R^{\infty}),\mathsf{Alg})$ determine stellar structures on $\Bbbk$-algebras.
\end{lemma}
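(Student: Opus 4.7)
The plan is to analyze reflection invariance by exploiting the decomposition $O(2)=SO(2)\sqcup r\cdot SO(2)$, where $r$ denotes a chosen reflection, and combining this with Davidovich's treatment of the $SO(2)$-invariant case. First I would observe that $\mathrm{V}(2,\R^{\infty})$ is weakly contractible, so evaluation at the standard frame $(e_1,e_2)$ induces an equivalence $\mathrm{Map}(\mathrm{V}(2,\R^{\infty}),\mathsf{Alg})\simeq \mathsf{Alg}$. Hence an $O(2)$-invariant map amounts to an object $A\in(\Alge^{\mathrm{fd}})^{\sim}$ together with coherent homotopy data realizing the $O(2)$-action at $A$. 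The subgroup $SO(2)$ acts by rotating the frame, and by Davidovich's computation this translates (for separable/semisimple $\Bbbk$-algebras) into the Serre automorphism data that yields a Frobenius inner product, i.e.\ the quasi-biangular structure on the underlying algebra.

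Second, I would describe the effect of the reflection $r$. Geometrically $r$ reverses the orientation of the $2$-frame, and on $\mathsf{Alg}$ this is the canonical involutive $2$-functor $A\mapsto A^{\mathrm{op}}$ together with the involution $U\mapsto\underline{U}$ on invertible bimodules. Consequently, $r$-invariance of the map provides an equivalence $A\simeq A^{\mathrm{op}}$ in $\mathsf{Alg}$, namely an invertible $G$-graded Morita context $\zeta=({}_{A^{\mathrm{op}}}U_{A},{}_{A}V_{A^{\mathrm{op}}},\tau,\mu)$, together with an isomorphism $\sigma:\zeta\xrightarrow{\cong}\underline{\zeta}$ of $G$-graded Morita contexts encoding that the $r$-action is realized as an equivalence in $\mathsf{Alg}$.

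Third, I would use the coherence data coming from the relation $r^{2}=\mathrm{id}$ in $O(2)$. Applying $\sigma$ once more yields the induced isomorphism $\underline{\sigma}:\underline{\underline{\zeta}}\to\underline{\zeta}$, and the involutivity of $r$ forces $\sigma\circ\underline{\sigma}=\mathrm{id}_{\zeta}$ under the canonical identification $\underline{\underline{\zeta}}\cong\zeta$. This is exactly the defining condition for a stellar $G$-algebra structure $(A,\zeta,\sigma)$. Finally, compatibility with the $SO(2)$-data amounts to checking that $\sigma$ intertwines the Serre-automorphism data realizing the Frobenius form $\eta$, the unit $\iota$, and the inner-product bimodule map $\xi$; each of these is a coherence square in the $O(2)$-action, and they translate into the three commutative diagrams in the definition of a quasi-biangular stellar $G$-algebra. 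Conversely, given a stellar structure one recovers the reflection invariance by running these identifications backwards, producing the desired bijective correspondence.

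The main obstacle will be in the third step: one must keep careful track of how the $SO(2)$ coherence data and the $\Z/2$ reflection coherence data interact so that the three compatibility diagrams with $\eta$, $\iota$, and $\xi$ genuinely emerge from the $2$-categorical coherence of an $O(2)=SO(2)\rtimes\Z/2$-action on $A\in\mathsf{Alg}$, rather than from an a posteriori choice. In particular, translating the (higher) homotopies encoding $r\sigma_{\theta}r^{-1}=\sigma_{-\theta}$ in $O(2)$ into commutativity of the $\sigma$-and-$\tau$ diagrams at the level of bimodule maps is the delicate point; once this is handled, the remaining verifications are formal consequences of the graded Morita structure established in the proof of Theorem \ref{unorclass}.
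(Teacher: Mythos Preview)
Your outline captures the correct underlying idea---the reflection action sends $A$ to $A^{\mathrm{op}}$, invariance furnishes an invertible Morita context $\zeta$ between $A$ and $A^{\mathrm{op}}$ together with an isomorphism $\sigma:\zeta\cong\underline{\zeta}$, and the relation $r^{2}=\mathrm{id}$ forces $\sigma\circ\underline{\sigma}=\mathrm{id}$---but you have overshot the scope of the lemma and blurred the geometric content that the paper uses.

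First, on scope: this lemma asserts only that reflection invariance yields a \emph{stellar structure on a $\Bbbk$-algebra}. There is no $G$-grading here and no compatibility with a Frobenius form; those enter only in the subsequent lemma (Lemma~\ref{go2fix}), where one combines reflection invariance with $SO(2)$-invariance and $G$-invariance. Your ``first'' and ``third/finally'' steps---invoking Davidovich for the quasi-biangular structure and then verifying the three compatibility diagrams with $\eta$, $\iota$, $\xi$---belong to that next lemma, not to this one. Likewise your Morita context should not be $G$-graded at this stage.

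Second, on method: the paper's argument is concretely geometric rather than abstract-coherence-theoretic. It does not merely assert that ``$r$-invariance provides an equivalence $A\simeq A^{\mathrm{op}}$''; it \emph{constructs} the bimodules $M,N$ and the maps $\tau,\mu$ explicitly. One takes the nontrivial loop $\gamma\in\pi_{1}(\mathrm{Gr}(2,\R^{\infty}))\cong\Z/2$, lifts it to paths $\tilde{\gamma},\tilde{\gamma}'$ in $\mathrm{V}(2,\R^{\infty})$ starting at $(e_{1},e_{2})$ and at $\omega(e_{1},e_{2})$ respectively, and applies $f$ to obtain $M={}_{A^{\mathrm{op}}}f(\tilde{\gamma})_{A}$ and $N={}_{A}f(\tilde{\gamma}')_{A^{\mathrm{op}}}$. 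The invertible bimodule maps $\tau,\mu$ then come from the contractibility of $\mathrm{V}(2,\R^{\infty})$: the loops $\tilde{\gamma}'\ast\tilde{\gamma}$ and $\tilde{\gamma}\ast\tilde{\gamma}'$ bound disks, and the images of these nullhomotopies under $f$ are precisely $\tau$ and $\mu$. The isomorphism $\sigma:\zeta\cong\underline{\zeta}$ is obtained similarly from the loops $\tilde{\gamma}\ast\omega(\tilde{\gamma})$ and $\tilde{\gamma}'\ast\omega(\tilde{\gamma}')$. Your plan leaves the origin of $\tau,\mu$ implicit in the phrase ``coherent homotopy data,'' which is where the real content lies; the paper's path-and-disk argument is what makes that data concrete and is the intended proof.
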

\begin{proof}
A reflection $\omega$ in $O(2)$ acts on $\mathsf{Alg}$ by sending a $\Bbbk$-algebra $A$ to its opposite algebra $A^{op}$. Let $f$ be a reflection invariant map with $f((e_1,e_2))=A$. Let $\gamma$ be a representative of the nontrivial element of $\pi_1(\text{Gr}(2,\R^{\infty}),\<e_1,e_2\>)\cong \mathbb{Z}/2\Z$. Lift $\gamma$ to $\tilde{\gamma}$ starting at $(e_1,e_2)$ and ending at $\omega((e_1,e_2))$ (see Figure \ref{fig:reflection}). Then, $f(\tilde{\gamma})$ is a $(A^{op},A)$-bimodule $M$ and invariance under $\omega$ means $f(\omega(\tilde{\gamma}))={}_{A^{op}}\underline{M}_{A}=\omega (M)$. Lifting $\gamma$ to $\tilde{\gamma}'$ starting at $\omega((e_1,e_2))$ gives a path ending at $(e_1,e_2)$. Similarly, $f(\widetilde{\gamma}')$ is a $(A,A^{op})$-bimodule $N$ and we have $f(\omega(\widetilde{\gamma}'))={}_{A}\underline{N}_{A^{op}}=\omega(N)$. 
%\begin{figure}[ht]
%    \centering
%    \includesvg{reflection2}
%    \caption{A reflection invariant map}
%    \label{fig:reflection}
%\end{figure}
\begin{figure}[ht]
    \centering
    \def\svgwidth{\columnwidth}
    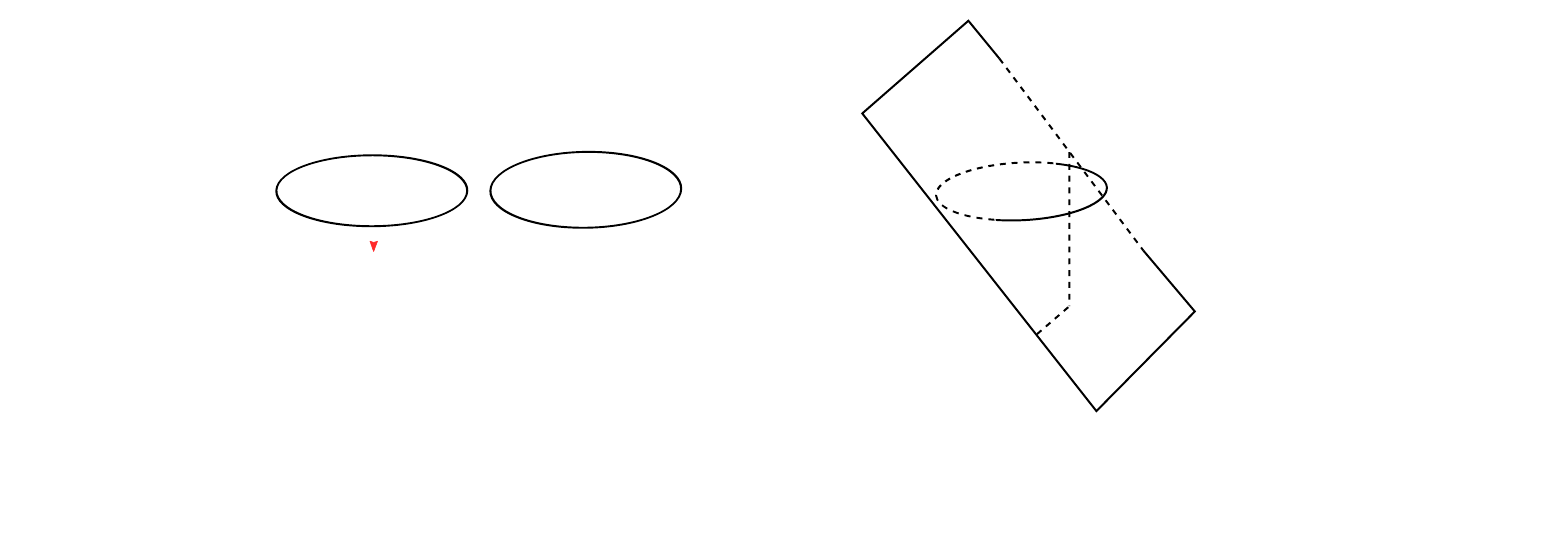
    \caption{A reflection invariant map}
    \label{fig:reflection}
\end{figure}
Loops $\widetilde{\gamma}' \ast \widetilde{\gamma}$ and $\widetilde{\gamma} \ast \widetilde{\gamma}'$ bound disks since $\text{V}(2,\R^{\infty})$ is contractible. In other words, there exist basepoint fixing homotopies which take these loops to constant loops at $(e_1,e_2)$ and $\omega((e_1,e_2))$ respectively. Images of the second homotopy and the time reversed version of the first homotopy under $f$ yield invertible bimodule maps $\mu: {}_{A^{op}}M \otimes_A N_{A^{op}} \to {}_{A^{op}}A^{op}_{A^{op}}$ and $\tau: {}_{A}A_{A} \to {}_{A}N \otimes_{A^{op}}M_A$ respectively. The compositions of homotopies corresponding to the conditions on $\mu$ and $\tau$ to form a Morita context are the constant homotopies of paths $\widetilde{\gamma}$ and $\widetilde{\gamma}'$. Thus, $\zeta= ({}_{A^{op}}M_{A}, {}_{A}N_{A^{op}},\tau,\mu)$ is a Morita context. Similarly, loops $\widetilde{\gamma} \ast \omega(\widetilde{\gamma})$ and $\widetilde{\gamma}' \ast \omega(\widetilde{\gamma}')$ bound, which implies that there is an equivalence of Morita contexts $\sigma: \zeta \cong \underline{\zeta}$. Since the order of reflection is two, we have $\sigma \circ \underline{\sigma}=\text{id}$. Thus, any reflection invariant map leads to stellar algebra structures on algebras. 
\end{proof}
\begin{lemma} \label{go2fix}
For an algebraically closed field $\Bbbk$ of characteristic zero, homotopy $(G \times O(2))$-fixed points of $(\Alge^{fd})^{\sim}$ are quasi-biangular stellar $G$-algebras.
\end{lemma}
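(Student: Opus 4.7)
The plan is to reduce the computation of the $(G\times O(2))$-fixed points to Davidovich's computation of the $(G\times SO(2))$-fixed points combined with the reflection analysis carried out in Lemma \ref{reflection}. Using the decomposition $O(2) \cong SO(2) \rtimes \mathbb{Z}/2$, where $\mathbb{Z}/2$ acts on $SO(2)$ by inversion via a chosen reflection $\omega$, one can rewrite
\[
\text{Map}_{O(2)}(EO(2),\mathsf{Alg}) \simeq \big(\text{Map}_{SO(2)}(EO(2),\mathsf{Alg})\big)^{h\mathbb{Z}/2}.
\]
Since $EO(2)$ is a model for $ESO(2)$, and passing further to $G$-invariants commutes with the $\mathbb{Z}/2$-fixed points (as $G$ and $\mathbb{Z}/2$ act independently on $SO(2)$ and on $\mathsf{Alg}$), the whole fixed-point space decomposes as the $\mathbb{Z}/2$-homotopy fixed points of the $(G\times SO(2))$-fixed points.

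First I would invoke Davidovich's computation, extended from finite to discrete $G$ as already noted in Section \ref{gso2sch}, to identify $\big((\Alge^{fd})^{\sim}\big)^{h(G\times SO(2))}$ with the bigroupoid of quasi-biangular $G$-algebras, compatible $G$-graded Morita contexts, and equivalences of such: this bigroupoid is $\text{Frob}^G$ (in characteristic zero, separability and semisimplicity coincide, so the principal component condition in Davidovich's description matches Definition \ref{genbiangular}). Next, I would analyze the residual $\mathbb{Z}/2$-action induced by the reflection $\omega$ on $\text{Frob}^G$. On objects, $\omega$ sends a quasi-biangular $G$-algebra $(A,\eta)$ to its opposite $(A^{op},\eta^{op})$, and on $1$-morphisms it sends a graded Morita context to its conjugate, exactly as in the proof of Lemma \ref{reflection}. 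A homotopy $\mathbb{Z}/2$-fixed point is therefore a quasi-biangular $G$-algebra $(A,\eta)$ together with a compatible $G$-graded Morita context $\zeta=({}_{A^{op}}N_A,{}_{A}M_{A^{op}},\tau,\mu)$ between $A$ and $A^{op}$ and a $G$-graded isomorphism $\sigma:\zeta\cong\underline{\zeta}$ squaring to the identity (coming from the identity $\omega^2 = \id$ on the level of $2$-cells in $\mathsf{Alg}$).

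The third step is to show that this is precisely a quasi-biangular stellar $G$-algebra in the sense of the definition in Section \ref{chapter_unoriented}. The data $(\zeta,\sigma)$ with $\sigma\circ\underline{\sigma}=\id$ is exactly a stellar $G$-algebra structure by the argument of Lemma \ref{reflection}, transported through the $(G\times SO(2))$-equivalence. The compatibility of $\zeta$ with $\eta$ (the three commutative diagrams involving $\eta$, the unit $\iota(1)=\sum_j a_j\otimes b_j$, and the inner product map $\xi$) must be extracted from the naturality of the $\mathbb{Z}/2$-homotopy fixing datum with respect to the $2$-cells in $\mathsf{Alg}$ witnessing the Frobenius structure. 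Concretely, the reflection symmetry on the Morse $2$-cells (saddles, cup, cap) corresponds to the generalized movie-moves in Figures \ref{fig:eskileri_relationlarin_yenilenmesi} and \ref{fig: unorientedrel}, which translate on the algebraic side precisely into the three compatibility diagrams defining a quasi-biangular stellar $G$-algebra.

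The main obstacle I expect is the bookkeeping for the compatibility between $\sigma$ and the Frobenius form: one must verify that the three compatibility squares hold simultaneously with a single choice of $\iota$ and $\xi$, rather than merely up to some Morita-transport isomorphism. This is essentially a coherence check, analogous to the oriented coherence already built into the cofibrancy theorem, but now performed on the reflection action on the universal example $\mathsf{Alg}$. The conclusion is then packaged into a bijection between $\mathbb{Z}/2$-homotopy fixed points of $\text{Frob}^G$ and the objects of the bicategory appearing in Theorem \ref{unorbicatsinif}, giving the desired identification
\[
\Big(\big(\Alge^{fd}\big)^{\sim}\Big)^{h(G\times O(2))} \;\simeq\; \{\,\text{quasi-biangular stellar } G\text{-algebras}\,\}.
\]
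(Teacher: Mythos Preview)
Your high-level strategy matches the paper's: decompose the $O(2)$-action into the $SO(2)$-part (handled by Davidovich) and the residual reflection (handled by Lemma~\ref{reflection}), then pass to $G$-invariants. Up to the point where you obtain a stellar structure on a quasi-biangular $G$-algebra, your argument and the paper's coincide.

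The genuine gap is in your treatment of the compatibility between the stellar structure and the Frobenius form. You propose to extract the three compatibility diagrams from ``the naturality of the $\mathbb{Z}/2$-homotopy fixing datum with respect to the $2$-cells in $\mathsf{Alg}$ witnessing the Frobenius structure'' and then point to Figures~\ref{fig:eskileri_relationlarin_yenilenmesi} and~\ref{fig: unorientedrel}. But those figures encode relations in the \emph{bordism} bicategory $\X\Bord_2^{\un}$; invoking them here is circular, since the entire purpose of Lemma~\ref{go2fix} is to compute the fixed points independently of the bordism side so as to verify the cobordism hypothesis. You correctly flag this as ``the main obstacle,'' but you do not resolve it.

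The paper's resolution is a short geometric argument you are missing: the Frobenius form on $A\cong\prod_i\text{End}(V_i)$ is encoded by a central element $z\in A$, which is an element of $\pi_2(\text{Map}(BSO(2),\mathsf{Alg}_r),f)\cong(\Bbbk^{\times})^r$. Compatibility with the stellar structure amounts to asking that conjugating the representing $2$-sphere by the loop in $\mathsf{Alg}_r$ determined by the bimodules of $\underline{\zeta}$ returns the same class. Since that loop is contractible, conjugation acts trivially on $\pi_2$, and compatibility follows. This replaces your coherence bookkeeping with a single homotopy-theoretic observation.
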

\begin{proof}
Serre automorphism trivializes homotopy $SO(2)$-action (see \cite{davidovich}), which turns the space of homotopy $(G\times SO(2))$-fixed points into $\text{Map}_G(EG,\text{Map}(\widetilde{\text{Gr}}(2,\R^{\infty}), \mathsf{Alg}))$. Davidovich \cite{davidovich} showed that homotopy $SO(2)$-fixed points are semisimple symmetric Frobenius $\Bbbk$-algebras. Then, understanding homotopy $O(2)$-fixed points means understanding invariance under reflections. Using Lemma \ref{reflection} we conclude that homotopy $O(2)$-fixed points are finite dimensional semisimple symmetric Frobenius $\Bbbk$-algebras with a stellar structure. 

The stellar structure is compatible with the Frobenius form as follows. A Frobenius form on a $\Bbbk$-algebra $A$ is determined by a central element which is the image of $1$ under a bimodule map $z:{}_{A}A_A \to {}_{A}A_A$. Geometrically, $z(1)$ is an element of $\pi_2(\text{Map}(BSO(2),\mathsf{Alg}_r),f)\cong(\Bbbk^{\times})^r$ where the algebra $A \in \mathsf{Alg}_r \subset \mathsf{Alg} = \amalg_{r=1}^{\infty} \mathsf{Alg}_r$ is isomorphic to $\text{End}(V_1)\times \text{End}(V_2) \times \dots \times \text{End}(V_r)$ under Artin-Wedderburn isomorphism for finite dimensional $\Bbbk$-vector spaces $V_1,\dots,V_r$.

Compatibility means that (horizontal) composition of $z$ with $\underline{\zeta}$ yields $z$ again. Geometrically, this corresponds to conjugating the representing sphere based at $f$ with loops in $\mathsf{Alg}_r$ given by bimodules of $\underline{\zeta}$. Since this loop is contractible conjugation does not change $z(1)$ in the second homotopy group. Thus, we have a compatible stellar structure, and following Davidovich's methods \cite{davidovich} we obtain that for a discrete group $G$ homotopy $(G \times O(2))$-fixed points are quasi-biangular stellar $G$-algebras.
\end{proof}
Above lemma is an important step toward the verification of $(G\times O(2))$-structured cobordism hypothesis for $\Alg$-valued $2$-dimensional extended unoriented HFTs with $K(G,1)$-target. This version of the cobordism hypothesis states the equivalence of bigroupoids $\text{Frob}^G_{\ast}$ and $\big( \big( \Alge^{fd}\big)^{\sim}\big)^{h(G \times O(2))}_{\leq 2}$ where $\big( \big( \Alge^{fd}\big)^{\sim}\big)^{h(G \times O(2))}_{\leq 2}$ is the fundamental bigroupoid of $\big( \big( \Alge^{fd}\big)^{\sim}\big)^{h(G \times O(2))}$. Lemma \ref{go2fix} implies that the objects of these bigroupoids coincide. The next step is to give an explicit (algebraic) description of the bigroupoid $\big( \big( \Alge^{fd}\big)^{\sim}\big)^{h(G \times O(2))}_{\leq 2}$ and to write down the $2$-functor $\mathcal{F}: \text{Frob}^G_{\ast} \xrightarrow[]{\simeq} \big( \big( \Alge^{fd}\big)^{\sim}\big)^{h(G \times O(2))}_{\leq 2}$ similar to the oriented case. Here we skip these steps, which may later appear elsewhere.

\appendix
\section{Unbiased Semistrict Symmetric Monoidal 2-categories}\label{freely_gen_bicat_section}
In this section we recall unbiased semistrict monoidal $2$-categories, their computadic versions, and prove Theorem \ref{computadic}. Our main reference is \cite{schommer} (Section 2.10). A similar exposition is given in the Appendix of \cite{piotr}.

\subsection{String diagrams for bicategories}\label{defn_unbiased_section}
The symmetric monoidal bicategory $\XB^{\PD}$ is not a fully weak symmetric monoidal bicategory but a certain stricter version. The strict bicategories we are interested in are unbiased semistrict symmetric monoidal $2$-categories introduced by Schommer-Pries \cite{schommer}. To recall their definition, we first review string diagrams for bicategories. 
%\begin{figure}[h]
%    \centering
%    \includesvg{strdiag_tez}
%    \caption{Pasting diagram with the corresponding string diagram and a string diagram for an unbiased semistrict symmetric monoidal $2$-category}
%    \label{fig:strdiag}
%\end{figure} 
\begin{figure}[h]
    \centering
    \def\svgwidth{\columnwidth}
    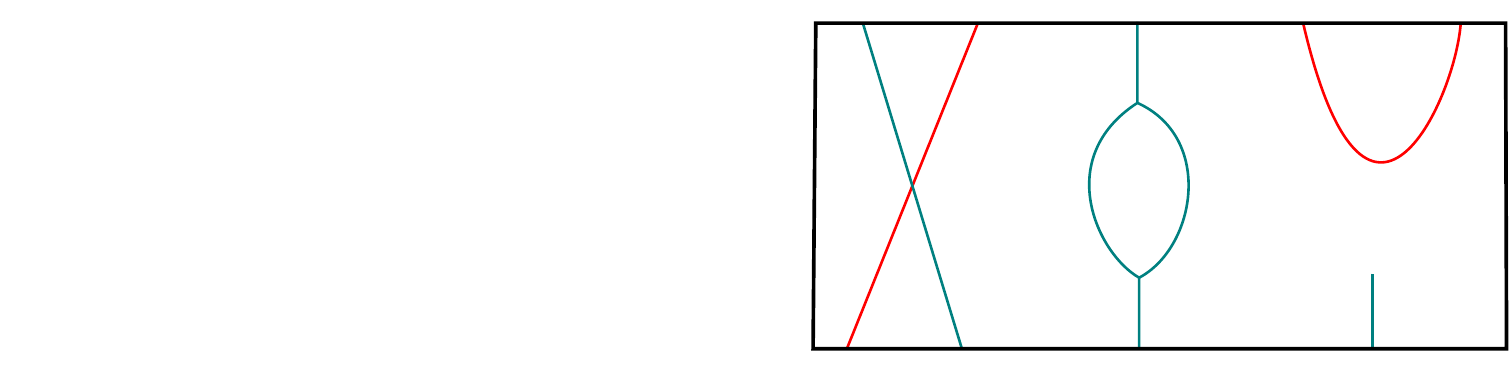
    \caption{Pasting diagram with the corresponding string diagram and a string diagram for an unbiased semistrict symmetric monoidal $2$-category}
    \label{fig:strdiag}
\end{figure}

Alternative to pasting diagrams, string diagrams are tools describing morphisms in a bicategory. Instead of arrows between objects and $1$-morphisms, a string diagram consists of regions, arcs, and vertices. Each region represents an object and each arc represents a $1$-morphism between objects whose corresponding regions share this arc as a common boundary. Each vertex represents a $2$-morphism between $1$-morphisms whose corresponding arcs are connected with each other via this vertex. On the left hand side of Figure \ref{fig:strdiag}, a pasting diagram and the corresponding string diagram is shown. Note that we read string diagrams from right to left and from top to bottom.

Unbiased semistrict symmetric monoidal $2$-categories are strict enough to admit a version of string diagram. An example of such a string diagram is shown in Figure \ref{fig:strdiag} in which regions are labeled with objects $\{\omega_i\}_{i=1}^7$, red arcs are labeled with $1$-morphisms $\{f_j\}_{j=1}^3$, and a red vertex is labeled with a $2$-morphism $\alpha$. However, there are additional strings and vertices of different colors coming from the coherence morphisms of an unbiased semistrict symmetric monoidal $2$-category. 
\begin{definition}[Definition 2.32, \cite{schommer}]\label{defnunbiased}
An \textit{unbiased semistrict symmetric monoidal $2$-category} is a triple $(\mathcal{C}, \beta,X)$ where $\mathcal{C}=(\mathcal{C}, \otimes, \one, \alpha, \lambda, \rho,\mathcal{P},\mathcal{M},\mathcal{L},\mathcal{R})$ is a monoidal bicategory (see Appendix of \cite{tezim}) such that 
\begin{enumerate}[(i)]
    \item the underlying bicategory is a strict $2$-category,
    \item transformations $\alpha,\lambda, \rho$ and modifications $\mathcal{P}, \mathcal{M},\mathcal{L},\mathcal{R}$ are identities,
    \item monoidal product $\otimes= \big(\otimes, \phi^{\otimes}_{(f,f'),(g,g')},\phi^{\otimes}_{(a,a')}\big): \mathcal{C} \times \mathcal{C} \to \mathcal{C}$ is cubical. That is, the interchanger 
    \begin{align*}
        \phi^{\otimes}_{(f,f'),(g,g')}: (f \otimes f') \circ (g \otimes g') \to (f \circ g) \otimes (f' \circ g')
    \end{align*}
    is identity if either $f$ or $g'$ is identity $1$-morphism and $\phi^{\otimes}_{(a,a')}: \id_{a \otimes a'} \to \id_{a} \otimes \id_{a'}$ is identity for all objects $a$ and $a'$.
\end{enumerate}
Secondly, $\beta$ denotes a collection of transformations (turquoise edges and yellow point in Figure \ref{fig:strdiag}) $$\{\beta^{\sigma}: (\mathcal{C}_1 \otimes \mathcal{C}_2 \otimes \dots \otimes \mathcal{C}_n \to \mathcal{C}) \to (\mathcal{C}_{\sigma(1)} \otimes \mathcal{C}_{\sigma(2)} \otimes \dots \otimes \mathcal{C}_{\sigma(n)} \to \mathcal{C}) \}_{\sigma \in S_n, n \geq 0}$$ where $\mathcal{C}_i=\mathcal{C}$ for all $i=1,\dots,n$ and $S_0 := \{ \one \}$ with $\beta^{\one}: (\one \hookrightarrow \mathcal{C}) \to (\one \hookrightarrow \mathcal{C})$ being the identity transformation between inclusion of functors. Lastly, $X$ denotes a collection of invertible modifications (turquoise points in Figure \ref{fig:strdiag})
\begin{align*}
    X^{\sigma,\sigma'} : (\beta^{\sigma}\ast 1) \circ  \beta^{\sigma'} \to \beta^{\sigma \sigma'} \quad \text{ and } \quad 
    X^e : \text{id} \to \beta^{e}
\end{align*}
for every $\sigma,\sigma' \in S_n$ and identity element $e \in S_n$ such that
\begin{enumerate}[(i)]
    \item transformations $\{\beta^{\sigma}\}_{\sigma \in S_n, n \geq 0}$ and modifications $\{ X^{\sigma,\sigma'}, X^e\}_{\sigma,\sigma',e \in S_n, n \geq 0}$ satisfy the following conditions
    \begin{align*}
    \beta^{\text{id} \sqcup \sigma} &= \text{id} \otimes \beta^{\sigma}, &
     \beta^{\sigma \sqcup \text{id}} &= \beta^{\sigma} \otimes \text{id},\\
    X^{(\text{id} \sqcup \sigma),(\text{id} \sqcup \sigma')} &= \text{id} \ast X^{\sigma,\sigma'}, &
    X^{(\sigma \sqcup \text{id}),(\sigma' \sqcup \text{id})} &= X^{\sigma,\sigma'}\ast \text{id}
\end{align*}
and the first three conditions on Figure \ref{fig:axiomsun} for all $\sigma, \sigma',\sigma'',e \in S_n$ and $n> 0$,
    \item for a fixed $n> 0$ and a collection of $n$ natural numbers $\{k_i\}_{i=1}^n$ let $\tilde{\sigma}\in S_{N}$ be given by the operadic product\footnote{By an operadic product we mean the composition $\sigma (\tau_1,\dots,\tau_n)$.} $\sigma \circ (\tau_i)$ where $N = \sum_i k_i$, $\sigma \in S_n$, and $\tau_i \in S_{k_i}$ for all $i=1,2,\dots,n$. Then $2$-morphism $\beta^{\sigma}_{(\beta^{\sqcup \tau_i})}=\beta^{\sqcup \tau_{\sigma(i)}} \circ \beta^{\sigma} \to \beta^{\sigma} \circ \beta^{\sqcup \tau_i}$ satisfies the equality given by the last condition on Figure \ref{fig:axiomsun} for all $n > 0$, $\sigma \in S_n$, and $\tau_i \in S_{k_i}$. In particular, when $\tau_i=e$ for all $i=1,\dots,n$, we have $\beta^{\tilde{\sigma}}=\beta^{\sigma}$, $X^{\tilde{e}}=X^e$, and $X^{\tilde{\sigma},\tilde{\sigma}'}=X^{\sigma,\sigma}$ for all $\sigma,\sigma',e \in S_n$, 
    %\begin{figure}[ht]
    %\centering
    %\includesvg{unnbiasedgen1_tez}
    %\caption{Some of the axioms of an unbiased semistrict symmetric monoidal $2$-category}
    %\label{fig:axiomsun}
%\end{figure}
\begin{figure}[ht]
    \centering
    \def\svgwidth{\columnwidth}
    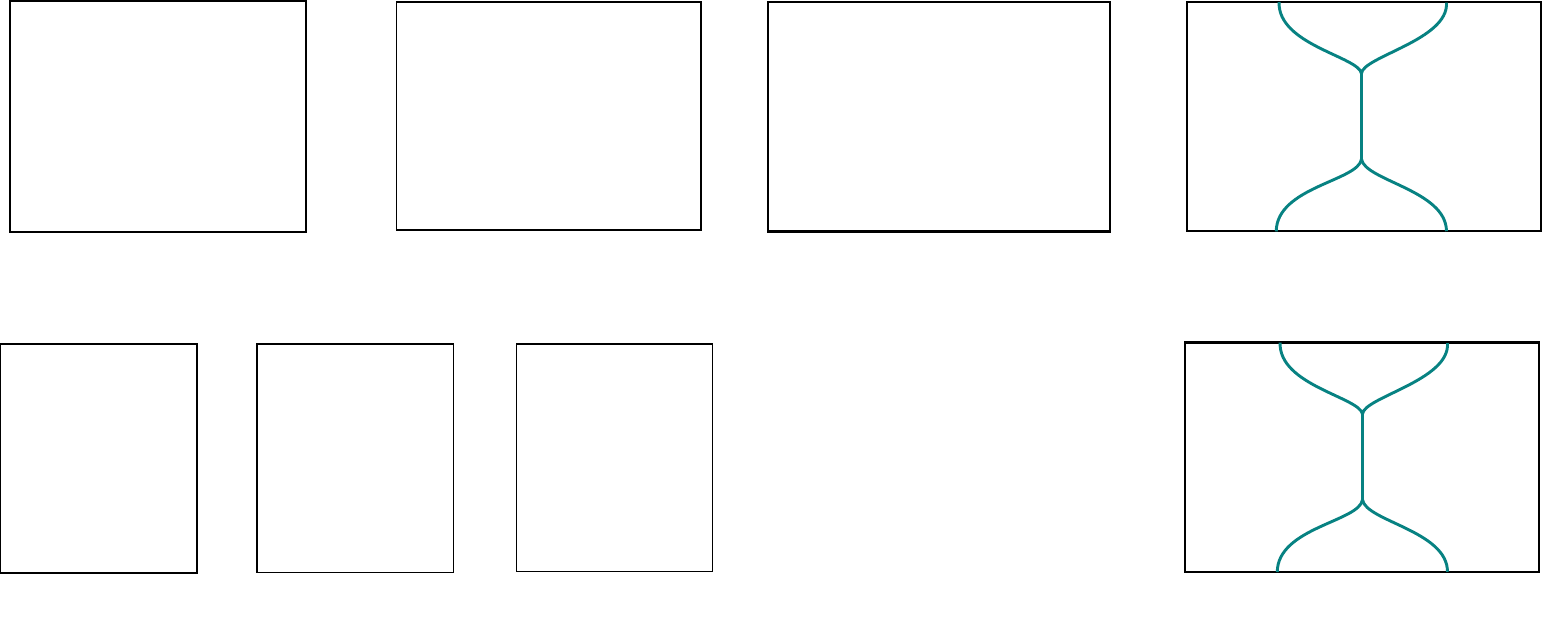
    \caption{Some of the axioms of an unbiased semistrict symmetric monoidal $2$-category}
    \label{fig:axiomsun}
\end{figure}
    \item transformations $\{\beta^{\sigma}\}_{\sigma \in S_n, n \geq 0}$ and modifications $\{ X^{\sigma,\sigma'}, X^e\}_{\sigma,\sigma',e \in S_n, n \geq 0}$ satisfy the conditions given by the reflections of diagrams in Figure \ref{fig:axiomsun} with respect to a horizontal axis.
\end{enumerate}
\end{definition}
In order to prove Theorem \ref{computadic}, we first need to show that the symmetric monoidal bicategory $\XB^{\PD}$ is an unbiased semistrict symmetric monoidal $2$-category. Recall that objects of $\XB^{\PD}$ are finite set of ordered oriented points, $1$-morphisms are isotopy classes of $G$-linear diagrams, and $2$-morphisms are equivalence classes of $G$-planar diagrams. 

\begin{lemma}
Chambering sets, graphs, and foams equip $\XB^{\PD}$ with the structure of an unbiased semistrict symmetric monoidal $2$-category. 
\end{lemma}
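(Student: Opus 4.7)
The plan is to verify each clause of Definition \ref{defnunbiased} directly, using the combinatorial data from chambering sets (dimension $1$), chambering graphs (dimension $2$), and chambering foams (dimension $3$) to realize the required coherence transformations and modifications.

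First I would check that the underlying bicategory of $\XB^{\PD}$ is a strict $2$-category. Composition of $G$-linear diagrams is defined by horizontal concatenation followed by the canonical identification $I \cup_{\mathrm{pt}} I \cong I$ and omission of the gluing point; this operation is strictly associative and strictly unital on isotopy classes, because isotopy of $G$-linear diagrams already absorbs the reparametrization ambiguity introduced by the identification. Likewise, horizontal and vertical composition of equivalence classes of $G$-planar diagrams are strictly associative and unital, and the interchange law holds on the nose since horizontal concatenation and vertical concatenation of disjoint diagrams commute. Next, I would observe that the monoidal product $\otimes$ on diagrams (stretching the outermost chambers and joining) is strictly associative and strictly unital, so the associator $\alpha$ and unitors $\lambda,\rho$ together with the modifications $\mathcal{P},\mathcal{M},\mathcal{L},\mathcal{R}$ can be taken to be identities. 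Cubicality of $\otimes$ amounts to the observation that when either $f$ or $g'$ is an identity $G$-linear diagram, the two composites $(f\otimes f')\circ(g\otimes g')$ and $(f\circ g)\otimes(f'\circ g')$ are equal as $G$-planar diagrams (no reordering of columns is needed), so the interchanger $\phi^{\otimes}$ is the identity $2$-morphism in these cases, and similarly for $\phi^{\otimes}_{(a,a')}$ on identities.

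Next, I would construct the braiding transformations and modifications from the combinatorial data. For every $n\geq 0$ and every $\sigma\in S_n$, the $G$-linear diagram whose chambering set consists of a single point labeled $\beta^{\sigma}$ (with trivial sheet permutation extended by orientation and group labels) defines a $1$-morphism $\beta^{\sigma}:a_1\otimes\cdots\otimes a_n\to a_{\sigma(1)}\otimes\cdots\otimes a_{\sigma(n)}$; naturality of this $1$-morphism in the entries, encoded by the intersections labeled $\beta^{\sigma}_A$ in $G$-planar diagrams, gives $\beta^{\sigma}$ the structure of a pseudonatural transformation. The identities $\beta^{\mathrm{id}\sqcup\sigma}=\mathrm{id}\otimes\beta^{\sigma}$ and $\beta^{\sigma\sqcup\mathrm{id}}=\beta^{\sigma}\otimes\mathrm{id}$ hold by inspection of the diagrams, since a chambering-set point labeled with a block-diagonal permutation is isotopic to a juxtaposition. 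For the modifications, the trivalent and univalent vertices of the chambering graphs, labeled $X^{\sigma,\sigma'}$ (and its inverse) and $X^e$ (and its inverse), provide invertible $2$-morphisms of the required source and target. Compatibility with tensor products $X^{(\mathrm{id}\sqcup\sigma),(\mathrm{id}\sqcup\sigma')}=\mathrm{id}\ast X^{\sigma,\sigma'}$ and its mirror follow again from the definition of vertex labels.

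The heart of the argument is the verification of the axioms in Figure \ref{fig:axiomsun}. Each such axiom is precisely a pair of $G$-planar diagrams related by one of the movie-moves for chambering graphs (Figure \ref{fig:foam_moves}) or by intersections of chambering foams with the $3$-dimensional $G$-graphic (Figure \ref{fig:movies2}); such a pair represents equal $2$-morphisms in $\XB^{\PD}$ because the moves come from codimension-$3$ strata of a chambering foam and hence from a $G$-spatial diagram realizing one diagram as isotopic to the other. Concretely, the pentagon-type axiom $X^{\sigma\sigma',\sigma''}\circ(X^{\sigma,\sigma'}\ast 1)=X^{\sigma,\sigma'\sigma''}\circ(1\ast X^{\sigma',\sigma''})$ is the two sides of a chambering foam built from two $CK_4$ cone points meeting along an interior edge; the two unit axioms involving $X^e$ correspond to the movie-moves that cancel a $CP$ model with a univalent vertex; the operadic compatibility for $\beta^{\sigma}_{(\beta^{\sqcup \tau_i})}$ is exactly the move relating a trivalent vertex absorbing a parallel family of crossings. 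The mirror axioms (reflections across a horizontal axis) are handled symmetrically.

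The main obstacle will be carefully matching every movie-move to one and only one of the defining axioms, and in particular checking the operadic clause for the modifications $\beta^{\sigma}_{(\beta^{\sqcup\tau_i})}$, where one must produce a chambering foam whose restrictions to the top and bottom faces realize the two prescribed composites. Once this dictionary between chambering-foam movie-moves and coherence axioms is in place, each axiom reduces to an instance of the $G$-planar decomposition theorem, and the lemma follows; the detailed verification parallels Section 2.10 of \cite{schommer}, with the only genuinely new input being the compatibility of the $G$-labels on sheets with the permutations at chambering-graph vertices, which is automatic since chambering edges carry trivial $G$-labels and permute sheets while preserving orientation and group decoration.
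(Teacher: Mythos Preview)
Your approach is essentially the same as the paper's: verify the clauses of Definition~\ref{defnunbiased} directly by reading off the required coherence data from chambering sets, graphs, and foams, and then check the axioms by invoking the local models of chambering foams. The paper's proof is considerably terser---it simply points to the models $CP$ and $CK_4$ in Figure~\ref{fig:cfoam} for the first two axioms and to the univalent/trivalent restriction on chambering-graph vertices for the remaining two---but your expanded version covers the same ground.

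One small correction: the pentagon-type axiom is realized by a \emph{single} $CK_4$ cone point, not two. The model $CK_4$ has four edges meeting at the cone point, and under the projection to the last coordinate these four edges become the four trivalent vertices appearing in the axiom (two $X$'s on each side of the equality). Likewise, the unit axioms for $X^e$ come from the single $CP$ model rather than from a cancellation of two separate cone points. This does not affect your overall argument, but you should adjust the geometric attribution accordingly.
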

\begin{proof}
Recall that compositions of morphisms in $\XB^{\PD}$ are given by the concatenation of diagrams. Since $1$-morphisms are isotopy classes of $G$-linear diagrams and $2$-morphisms are equivalence classes of $G$-planar diagrams the underlying bicategory is a strict $2$-category. The symmetric monoidal structure of $\XB^{\PD}$ is cubical by definition. The transformations $\alpha,\lambda,\rho$ and modifications $\mathcal{P},\mathcal{L}$, $\mathcal{M}$ and $\mathcal{R}$ are identity since $2$-morphisms are equivalence classes of $G$-planar diagrams. The local models CP and $\text{CK}_4$ of chambering foam shown in Figure \ref{fig:cfoam} give two conditions on the left hand side of Figure \ref{fig:axiomsun}. For the remaining two conditions recall that a chambering graph can only have univalent and trivalent vertices. 
\end{proof}
Let $(\mathcal{C}, \beta,X)$ be an unbiased semistrict symmetric monoidal $2$-category. The invertibility of modifications $\{X^{\sigma,\sigma'}, X^e\}_{\sigma,\sigma',e \in S_n, n \geq 0}$ and axioms of unbiased semistrict symmetric monoidal $2$-category generate relations between structure morphisms. These relations are given on the left hand side of Figure\footnote{Note that different labelings of string diagrams are possible and each possible labeling is a relation.} \ref{fig:foamrels} in terms of string diagrams. Since chambering foams are responsible for the relations between boundary chambering graphs, on the right hand side of Figure \ref{fig:foamrels} chambering foams corresponding to these relations are shown.  
%\begin{figure}[h!]
%   \centering
%    \includesvg{foamrelns_tez2}
%    \caption{Relations between string diagrams for unbiased semistrict symmetric monoidal $2$-categories and the corresponding spatial foams}
%   \label{fig:foamrels}
%\end{figure}
\begin{figure}[h!]
    \centering
    \def\svgwidth{\columnwidth}
    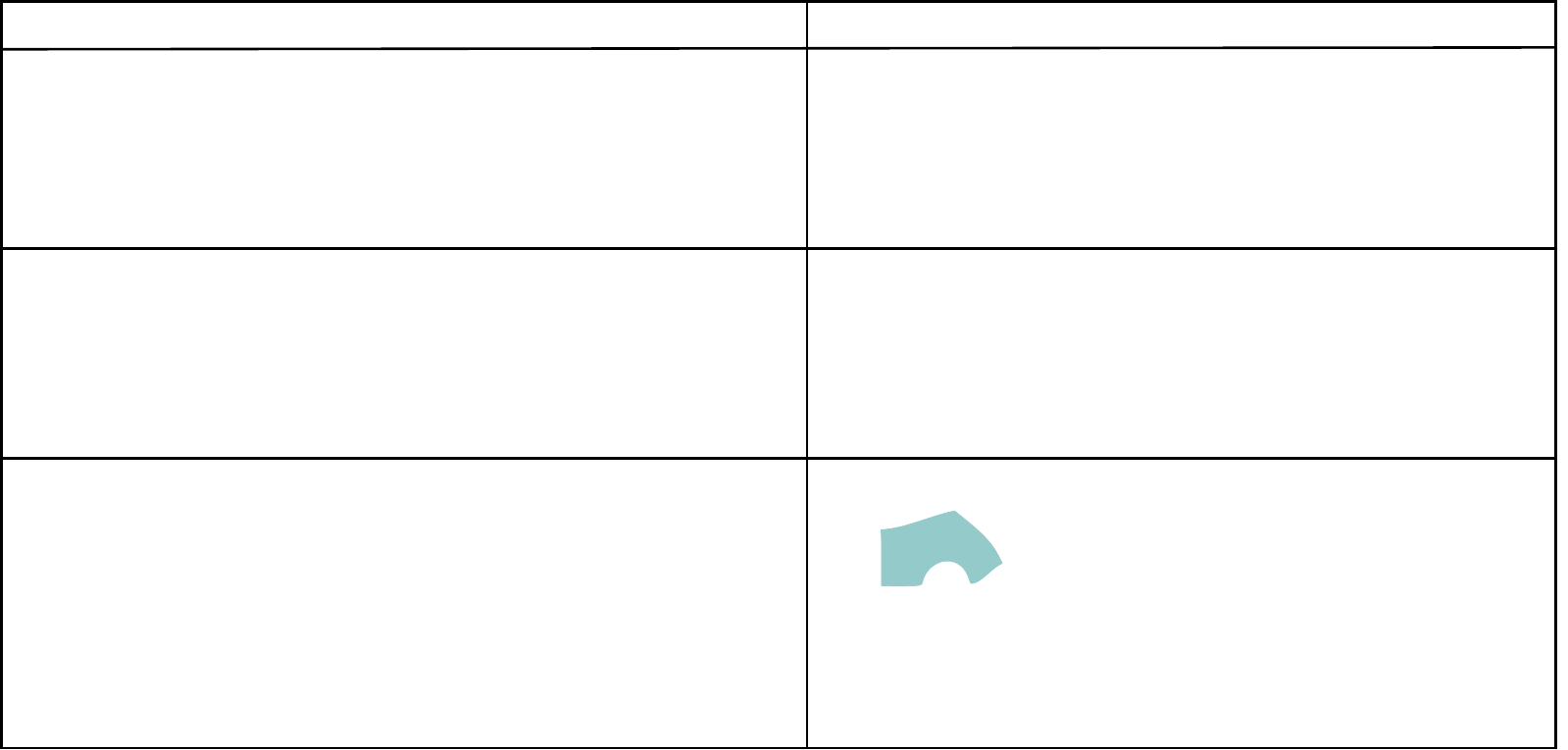
    \caption{Relations between string diagrams for unbiased semistrict symmetric monoidal $2$-categories and the corresponding spatial foams}
   \label{fig:foamrels}
\end{figure}
\subsection{Computadic unbiased semistrict symmetric monoidal $2$-categories}\label{free_gen_G}
To finish the proof of Theorem \ref{computadic}, we need to show that the unbiased semistrict symmetric monoidal $2$-category $\XB^{\PD}$ is computadic. As the next step, we review computadic unbiased semistrict symmetric monoidal $2$-categories and show that $\XB^{\PD}$ is an example. Such a $2$-category is constructed from a certain presentation (unbiased semistrict symmetric monoidal $3$-computad) which we call \textit{unbiased semistrict presentation}. This type of presentation $\P$ consists of four sets $(\G_0,\G_1,\G_2,\mathcal{R})$ together with source and target maps $s,t :\G_1 \to BW^{\uss}(\G_0)$, and $s,t: \G_2 \to BS^{\uss}(\G_1)$, which we describe below. For a given such $\P=(\G_0,\G_1,\G_2,\mathcal{R},s,t)$, the four sets are respectively called \textit{generating objects}, \textit{generating $1$-morphisms}, \textit{generating $2$-morphisms}, and \textit{generating relations among $2$-morphisms}. The following series of definitions start with the ingredients of $\mathsf{F}_{\uss}(\P)$ and continues with definition of each ingredient in the given order.
\begin{definition}
For a given unbiased semistrict presentation $\P=(\G_0,\G_1,\G_2,\mathcal{R},s,t)$, the objects of $\F_{\uss}(\P)$ are \textit{binary words} in $\G_0$, the $1$-morphisms are \textit{binary sentences} in $\G_1$, and the $2$-morphisms are \textit{equivalence classes of paragraphs} in $\G_2$. 
\end{definition}
\begin{definition}\label{binarywords_uss}
Let $\G_0$ be a set. The set $BW^{\uss}(\G_0)$ of \textit{binary words in $\G_0$} contains the symbol $\one$, the elements of $\G_0$, and $\otimes$-products i.e. $a \otimes b \in BW^{\uss}(\G_0)$ for all $a,b \in BW^{uss}(\G_0)$ such that for any $a \in \G_0$ the elements $\one \otimes a$, $a$, and $a \otimes \one$ are identified.
\end{definition}
Since binary words in $\G_0$ form the objects of $\mathsf{F}_{\uss}(\P)$, the set $\G_1$ of generating $1$-morphisms is equipped with source and target maps $s,t: \G_1 \to BW^{\uss}(\G_0)$.    
\begin{definition}
Let $\G_1$ be a set equipped with maps $s,t: \G_1 \to BW^{\uss}(\G_0)$. The set $BW^{\uss}(\G_1)$ of \textit{binary words in $\G_1$} contains elements of $\G_1$, $\text{id}_a$, $\beta_{a,\sigma(a)}^{\sigma}$ for any $a \in BW^{\uss}(\G_0)$ and $\sigma S_n$ where $a$ is a word of length $n$. The extension of the source and target maps to these elements are as follows; $s(\text{id}_a)=a$, $t(\text{id}_a)=a$, $s(\beta_{a,\sigma(a)}^{\sigma})=a$, $t(\beta_{a,\sigma(a)}^{\sigma})=\sigma(a)$.
\end{definition}
\begin{definition} \label{binarysentences_uss}
Let $BW^{\uss}(\G_1)$ be a set of binary words in $\G_1$ with $s,t: BW^{\uss}(\G_1)\to BW^{\uss}(\G_0)$. The set $\underline{BS^{\uss}}(\G_1)$ contains binary words in $\G_1$, compositions $g \circ f$ for any $f,g \in BW^{\uss}(\G_1)$ with $s(g)=t(f)$, and monoidal products $f \otimes g$ for any $f,g\in BW^{\uss}(\G_1)$. Source and target maps extend naturally to $\underline{BS^{\uss}}(\G_1)$ by
\begin{itemize}
    \item $s(g \circ f)=s(f)$ and $t(g \circ f)=t(g)$ for any $g \circ f \in BS^{\uss}(\G_1)$,
    \item $s(f \otimes g)=s(f) \otimes s(g)$ and $t(f \otimes g)=t(f)\otimes t(g)$ for any $f,g \in BW^{\uss}(\G_1)$.
\end{itemize}
The set $BS^{\uss}(\G_1)$ of \textit{binary sentences in $\G_1$} is the quotient $\underline{BS^{\uss}}(\G_1)/\sim$ where $\sim$ is the smallest equivalence relation generated by the identifications; $f \otimes \id_{\one}\sim f, f \sim \id_{\one} \otimes f$, $f \circ \id_a \sim f \sim \id_b \circ f$, and $f \otimes f' \sim (\id_b \otimes f') \circ (f \otimes \id_{a'})$ for any $f,f' \in BS^{\uss}(\G_1)$ with $s(f)=a$, $t(f)= b$, $s(f')= a'$, and $t(f')= b'$ (see Lemma 2.81 in \cite{schommer}).
\end{definition}
Since binary sentences in $\G_1$ form the $1$-morphisms of $\mathsf{F}_{\uss}(\P)$, the set $\G_2$ of generating $2$-morphisms is equipped with source and target maps $s,t: \G_2 \to BS^{\uss}(\G_1)$ satisfying $s\circ s=s \circ t$ and $t \circ s= t \circ t$. Then, an \textit{unbiased semistrict symmetric monoidal $2$-computad} $\P^{\G}$ consists of generating sets $\G_0$, $\G_1$, $\G_2$ together with maps $s,t: \G_1 \to BW^{\uss}(\G_0)$, and $s,t: \G_2 \to BS^{\uss}(\G_1)$ satisfying $s\circ s=s \circ t$ and $t\circ s=t \circ t$.

\begin{definition}
Let $\G_2$ be a set equipped with $s,t:\G_2 \to BS^{\uss}(\G_1)$ satisfying $s\circ s=s \circ t$ and $t \circ s= t \circ t$. The set $BW^{\uss}(\G_2)$ of \textit{binary words in $\G_2$} contains every element of $\G_2$ and the symbols in Table \ref{table:bw1G} for every $f \in BW^{\uss}(\G_1)$ and for every $f_1,f_2,f_3,f_4 \in BS^{\uss}(\G_1)$ with $s(f_1)=t(f_3)$ and $s(f_2)=t(f_4)$, and for all $\sigma, e \in S_n, \bar{\sigma}\in S_m$ for $n,m \geq 0$.  
\begin{table}[ht]
    \centering
    \begin{tabular}{|c||c|c|}
     \multicolumn{1}{c||}{\footnotesize$\mathsf{Symbol}$} & \multicolumn{1}{c}{\footnotesize$\mathsf{Source}$} & \multicolumn{1}{c}{\footnotesize$\mathsf{Target}$}\\
     \hline
     $\text{id}_{f}$  & $f$  & $f$ \\
	 \hline
     $\phi^{\otimes}_{(f_1,f_2),(f_3,f_4)}$ & $(f_1 \otimes f_2) \circ (f_3 \otimes f_4)$ & $(f_1 \circ f_3) \otimes (f_2 \circ f_4)$     \\
     $\phi^{\otimes}_{a,a'}$ & $\id_{a \otimes a'}$  & $\id_{a} \otimes \id_{a'}$   \\
	 ${}_{r}\beta_{f}^{\sigma}$ & $f \circ \beta^{\sigma}$ & $\beta^{\bar{\sigma}} \circ f$ \\
	 ${}_{l}\beta_{f}^{\sigma}$  & $ \beta^{\sigma} \circ f$ & $  f \circ \beta^{\bar{\sigma}}$ \\
	 \hline
	 $X^{\sigma,\sigma'}$ & $(\beta^{\sigma} \ast 1) \circ \beta^{\sigma'}$ & $\beta^{\sigma \sigma'}$ \\
	 $X^e$ & $\text{id}$ & $\beta^e$ \\
	 \hline
    \end{tabular}
    \caption{Binary words in $\G_2$}
    \label{table:bw1G}
    \vspace{-0.2cm}
\end{table}
Moreover, $BW^{\uss}(\G_2)$ contains the inverses of symbols in Table \ref{table:bw1G} except for symbols containing $\beta^{\sigma}$'s. The set of \textit{pre-paragraphs} $\underline{PG^{\uss}}(\G_2)$ is constructed from $BW^{\uss}(\G_2)$ by adding compositions and monoidal products as above. Similar to $BS^{\uss}(\G_1)$ there are certain identifications on $\underline{PG^{\uss}}(\G_2)$ generated by $\phi^{\otimes}_{(\text{id}, f_2),(f_3,f_4)}= \text{id}$, $\phi^{\otimes}_{(f_1,f_2),(f_3,\text{id})}=\text{id}$, and $\phi^{\otimes}_{(a,a')}= \text{id}$ for all $f_1,f_2,f_3,f_4 \in B^{\uss}(\G_1)$ and $a,a' \in BW^{\uss}(\G_0)$. We consider the smallest equivalence relation $\sim$ on $\underline{PG^{\uss}}(\G_2)$ generated by these identifications along with identifications $\{ p \circ p^{-1} = \id_{t(p)}, \ p^{-1}\circ p =\id_{s(p)}\}_{p \in \underline{PG^{\uss}}(\G_2)}$ and those for $\beta$ and $X$ coming from the definition of unbiased semistrict symmetric monoidal $2$-category. The quotient set is denoted by $PG^{\uss}(\G_2)$ and called the set of \textit{paragraphs} in $\G_2$.
\end{definition}
\begin{definition}\label{presentation}
The set $\mathcal{R}$ of generating relations among $2$-morphisms for an unbiased semistrict symmetric monoidal $2$-computad $\P^{\G}=(\G_0,\G_1,\G_2,s,t)$ consists of pairs $(F,G)$ of paragraphs in $\G_2$ in $\F(\P^{\G})$ with $s(F)=s(G)$ and $t(F)=t(G)$. An \textit{unbiased semistrict presentation or unbiased semistrict symmetric monoidal $3$-computad $\P$} consists of an unbiased semistrict symmetric monoidal $2$-computad $\P^{\G}$ and a set $\mathcal{R}$ of generating relations among $2$-morphisms for $\P^{\G}$. 

The $2$-morphisms of the computadic unbiased semistrict symmetric monoidal $2$-category $\F_{\uss}(\P)$ are the $\wr$-equivalence classes of paragraphs in $\G_2$ where $\wr$ is the smallest equivalence relation on $PG^{\uss}(\G_2)$ such that $\wr$ is generated by $\mathcal{R}$ and closed under compositions and monoidal products. An unbiased semistrict symmetric monoidal $2$-category $(\mathcal{C},\beta,X)$ is called \textit{computadic} if there exists a strict symmetric monoidal equivalence $\mathcal{F}: \F_{\uss}(\P) \to \mathcal{C}$ for an unbiased semistrict presentation $\P$.
\end{definition}

In simpler terms, $\mathsf{F}_{\uss}(\P)$ can be described as follows. The objects of $\mathsf{F}_{\uss}(\P)$ are words in $\G_0$. There are two kinds of elementary $1$-morphisms which can be described as
\begin{enumerate}[(i)]
    \item $\beta^{\sigma}_{a,\sigma(a)}: a \to \sigma(a)$ where $\sigma \in S_n$ for $n \geq 0$ and $a$ is a word of length $n$,
    \item $\id_{a} \otimes f \otimes \id_{b}$ where $f \in \G_1$ and $a,b \in BW^{\uss}(\G_0)$,   
\end{enumerate}
so that nonidentity $1$-morphisms of $\mathsf{F}_{\uss}(\P)$ are given by compositions of elementary $1$-morphisms. The $2$-morphisms of $\mathsf{F}_{\uss}(\P)$ are the equivalence classes of string diagrams where two string diagrams are equivalent if they are related by finitely many (local) moves which come from generating relations $\mathcal{R}$, Figures \ref{fig:axiomsun} and \ref{fig:foamrels}, and the naturality of $\beta$ and $X$ with generating morphisms. Compositions of morphisms are given by horizontal and vertical concatenations of string diagrams while (cubical) monoidal product is given by stretching out diagrams from different horizontal directions and merging them (see Figure \ref{fig:composition}). 
\begin{example}\label{XPex}
Consider an unbiased semistrict presentation $\XP=(\mathcal{XG}_0, \mathcal{XG}_1,\mathcal{XG}_2, \allowbreak \mathcal{XR})$ whose generating sets are given as; $\mathcal{XG}_0= \{ \nokta,\noktab \}$, $\mathcal{XG}_1$ consists of $\{ \foldcapg, \foldcupg, \linartg, \lineksg \}$ i.e. $G$-linear diagrams of $\{ \leftelbow, \rightelbow ,\arti, \allowbreak \eksi \}_{g \in G}$ without chambering sets, and $\mathcal{XG}_2$ consists of $G$-planar diagrams without chambering graphs of generating $2$-morphisms in Figure \ref{fig:generators}. The set of relations $\mathcal{XR}$ consists of pairs of $G$-planar diagrams corresponding to pairs of $\<2\>$-X-surfaces in Figure \ref{fig:relations}. 

An object of $\mathsf{F}_{\uss}(\XP)$ is either $\one$ or words in $\nokta$ and $\noktab$. Each $1$-morphism is a composition of the following two types of elementary $1$-morphisms; $\beta^{\sigma}_{a,\sigma(a)}$ where $\sigma \in S_n$ and $a$ is a word of length $n$ and a $G$-linear diagram whose $1$-morphism is labeled with $\id_{a_1} \otimes \dots \otimes f \otimes \dots \otimes \id_{a_n}$ for some $0<k \leq n$ where $a_i$ is either $\nokta$ or $\noktab$, and $f \in \G_1$. Here we use $\id_{\nokta}=\linarte$, $\id_{\noktab}=\linekse$, and $f \otimes f' = (\id_b \otimes f') \circ (f \otimes \id_{a'})$. The $2$-morphisms of $\mathsf{F}_{\uss}(\XP)$ are equivalence classes of paragraphs $PG^{\uss}(\mathcal{XG}_2)$ where the equivalence relation is generated by the set of generating relations $\mathcal{XR}$ (see Figures \ref{fig:movie1}, \ref{fig:eskileri_relationlarin_yenilenmesi}, \ref{fig:new_relations}, and \ref{fig:relations}), and the string diagrams given in Figures \ref{fig:movies2}, \ref{fig:axiomsun}, and \ref{fig:foamrels}. 
\end{example}
\subsection{Proof of Theorem \ref{computadic}}\label{proof_of_computadic}
Note that the string diagram interpretation of elements of $PG^{\uss}(\mathcal{XG}_2)$ coincides with the string diagram interpretation of $2$-morphisms of $\XB^{\PD}$ except for (possible) black points on $G$-linear diagrams (see Figure \ref{fig:stringdiagex}). More precisely, the sets of labels for regions coincide, in both string diagrams there are two types of $1$-morphisms whose sets of labels and possible intersecting patterns coincide, and in both string diagrams there are three types of vertices whose sets of labels coincide for each type of vertex. Lastly, equivalence relations on both string diagrams are generated by the same local moves, equivalently by the same movie-moves. This observation suggests an isomorphism between $\mathsf{F}_{\uss}(\XP)$ and $\XB^{\PD}$ namely a symmetric monoidal equivalance preserving the unbiased semistrict symmetric monoidal structures. The following lemma shows that this is indeed the case and finishes the proof of Theorem \ref{computadic}.
\begin{lemma}
There exists a canonical isomorphism $\Theta: \mathsf{F}_{\uss}(\XP) \to \XB^{\PD}$ of unbiased semistrict symmetric monoidal $2$-categories. 
\end{lemma}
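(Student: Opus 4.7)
The plan is to build $\Theta$ level-by-level, following exactly the dictionary sketched in the paragraph before the statement. On objects, a binary word $a_1 \otimes \dots \otimes a_n$ in $\mathcal{XG}_0 = \{\nokta, \noktab\}$ is sent to the corresponding ordered tuple of oriented points in $\XB^{\PD}$; the identifications $\one \otimes a \sim a \sim a \otimes \one$ from Definition \ref{binarywords_uss} are compatible with the empty-object convention in $\XB^{\PD}$, so this assignment is a bijection. On $1$-morphisms, I will send each elementary generator $\id_{b} \otimes f \otimes \id_{b'}$ (for $f \in \mathcal{XG}_1$) to the corresponding $G$-linear diagram obtained by padding the generating diagram of $f$ with trivial strands labeled by $b$ and $b'$, and send $\beta^{\sigma}_{a,\sigma(a)}$ to the isotopy class of the braiding $G$-linear diagram with chambering set labeled by $\sigma$. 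Horizontal composition in $\mathsf{F}_{\uss}(\XP)$ goes to vertical concatenation of $G$-linear diagrams; the monoidal product $f \otimes g$, being interpreted through cubicality as $(\id \otimes g) \circ (f \otimes \id)$, goes to the horizontal juxtaposition. The identifications in Definition \ref{binarysentences_uss} then match the isotopy relations defining $1$-morphisms of $\XB^{\PD}$, so $\Theta$ descends to a bijection $BS^{\uss}(\mathcal{XG}_1) \to \mathrm{Mor}_1(\XB^{\PD})$.

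For $2$-morphisms I will send each basic element of $BW^{\uss}(\mathcal{XG}_2)$ to its $G$-planar diagram with trivial chambering graph: the generating cusps, saddles, Morse, and multiplication generators in $\mathcal{XG}_2$ go to the corresponding diagrams in Figure \ref{fig:generators}; the cubical interchanger $\phi^{\otimes}$ goes to the identity diagram; the natural transformations ${}_r\beta^{\sigma}_f$ and ${}_l\beta^{\sigma}_f$ become the diagrams of turquoise chambering-graph edges crossing the strand of $f$ (labeled $\beta^{\sigma}_f$ in Section \ref{G_planar_section}); and the coherence modifications $X^{\sigma,\sigma'}$ and $X^e$ map to the trivalent and univalent vertices of the chambering graph. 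Compositions and monoidal products again go to the vertical and horizontal operations on $G$-planar diagrams described in Section 4.1. The key point, spelled out in the paragraph preceding the lemma, is that the two kinds of string diagrams have the same vertices, arcs, and region labels, so this assignment is a bijection on $\underline{PG^{\uss}}(\mathcal{XG}_2)$ followed by passage to the respective quotients.

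The main verification, and the point where I expect to spend most of the work, is that $\Theta$ descends to the $\wr$-quotient defining the $2$-morphisms of $\mathsf{F}_{\uss}(\XP)$ and is still a bijection. I will partition the generators of $\wr$ into four groups: (i) the inverse relations $p \circ p^{-1} = \id$, which correspond to isotopies of $G$-planar diagrams; (ii) the cubicality/interchanger identifications $\phi^{\otimes}_{(\id,f_2),(f_3,f_4)} = \id$ and $\phi^{\otimes}_{a,a'} = \id$, which reduce on $\XB^{\PD}$ to the choice of $I^2 = I \times I$ used when horizontally composing diagrams; (iii) the structural relations of an unbiased semistrict symmetric monoidal $2$-category listed in Figure \ref{fig:axiomsun} together with those in Figure \ref{fig:foamrels}, which under $\Theta$ become precisely the local chambering-foam moves recorded in Figure \ref{fig:foam_moves} and the naturality moves in Figure \ref{fig:movies2}; and (iv) the explicit generating relations $\mathcal{XR}$, which by construction coincide with the $G$-planar-diagram versions of the $\<2\>$-X-surface moves in Figure \ref{fig:relations}. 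Item (iii) is where care is needed: one must check that every movie-move of chambering-graph/foam type in Section \ref{G_planar_section} is a consequence of the coherence axioms of $\beta$ and $X$ together with naturality, and conversely that each such axiom is realized by a chambering foam. Once this dictionary is established, the Whitehead theorem for symmetric monoidal bicategories (Theorem \ref{whiteadforsym}) is not even needed: $\Theta$ is already a strict bijection on objects, $1$-morphisms, and $2$-morphisms, hence an isomorphism, and its construction immediately gives that it commutes with $\otimes$, $\beta$, and $X$, so it is an isomorphism of unbiased semistrict symmetric monoidal $2$-categories.
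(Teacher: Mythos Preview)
Your construction of $\Theta$ and your four-group partition of the relations are essentially the same as the paper's, and that part is fine. The gap is in your claim that $\Theta$ is a \emph{bijection} on $1$-morphisms, which is what lets you discard the Whitehead theorem.

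The problem is the ``extra black points'' on $G$-linear diagrams. Recall from the construction in Section~\ref{G_planar_section} that a $G$-linear diagram carries, in addition to the labeled critical values and the $\beta^{\sigma}$-labeled chambering points, further \emph{unlabeled} marked points coming from $f(T)$ (and the midpoints of singularity-free chambers). The isotopy relation on $1$-morphisms of $\XB^{\PD}$ is generated only by $L \circ \id_a = L$ and $L \otimes \emptyset = L$; it does \emph{not} delete or create these unlabeled points. Hence two $G$-linear diagrams that differ only in the placement or number of such points are genuinely distinct $1$-morphisms of $\XB^{\PD}$. On the other hand, the image of a composite of elementary $1$-morphisms under $\Theta$ produces only one specific pattern of these points, so a general isotopy class of $G$-linear diagrams need not lie in the image. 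The identifications in Definition~\ref{binarysentences_uss} therefore do \emph{not} match the isotopy relation on the nose, contrary to what you assert.

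The paper deals with this by observing that the $\tau_1,\tau_2$-type generating $2$-morphisms in Figure~\ref{fig:newdigrams2} are invertible and add or remove exactly such black points; thus every $G$-linear diagram is \emph{isomorphic} (via an invertible $2$-cell) to one in the image of $\Theta$. This yields essential fullness on $1$-morphisms, not literal surjectivity, and the Whitehead theorem (Theorem~\ref{whiteadforsym}) is then invoked to conclude that $\Theta$ is an equivalence. So your overall strategy is correct, but you should replace the bijection claim on $1$-morphisms by essential fullness and keep the appeal to Whitehead.
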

\begin{proof}
Comparing the descriptions of unbiased semistrict symmetric monoidal $2$-categories $\mathsf{F}_{\uss}(\XP)$ and $\XB^{\PD}$ given above, it is not hard to define the $2$-functor $\Theta$. On the level of objects $\Theta$ maps $\one$ to the empty set and words in set $\{\nokta, \noktab \}$ to the finite ordered oriented points given by the words.   

On $1$-morphisms it is enough to specify the images of elementary $1$-morphisms $\{\beta^{\sigma}, \id_{a_1} \otimes \dots \otimes f \otimes \dots \otimes \id_{a_n}\}$ where $\sigma \in S_n, n \geq 0, f \in \mathcal{XG}_1$, and $a_i \in \{ \nokta, \noktab\}$ for $1 \leq i \leq n$. For $\sigma \in S_n$ and a word $a$, the $1$-morphism $\Theta(\beta^{\sigma}_{a,\sigma(a)})$ is a $G$-linear diagram whose chambering set has only one element labeled by $\beta^{\sigma}$. The latter $1$-morphism is mapped to a $G$-linear diagram described in Example \ref{XPex}. Recall that $1$-morphisms of $\mathsf{F}_{\uss}(\XP)$ are equivalence classes determined by certain identifications and $1$-morphisms of $\XB^{\PD}$ are isotopy classes of $G$-linear diagrams. It is not hard to see that above assignments are well-defined on $1$-morphisms. 

The $2$-functor $\Theta$ maps the equivalence class $[P]$ of a paragraph $P \in PG^{\uss}(\mathcal{XG}_2)/\sim$ to the equivalence class of string diagram corresponding to $P$. This assignment makes sense because as mentioned in Example \ref{XPex} any representative string diagram can be interpreted in both $2$-categories. Since in both $2$-categories $\mathsf{F}_{\uss}(\XP)$ and $\XB^{\PD}$ string diagrams are considered up to the same lists of local moves (movie-moves), this assignment is well-defined. 

We use Whitehead theorem for symmetric monoidal bicategories (Theorem \ref{whiteadforsym}) to show that $\Theta$ is a symmetric monoidal equivalence. It is clear that $\Theta$ is essentially surjective on objects. We claim that $\Theta$ is essentially full on $1$-morphisms. To prove this, it is enough to show that every $G$-linear diagram is isomorphic to a composition of $1$-morphisms $\big\{ \Theta(\beta^{\sigma}_{a,\sigma(a)}),\Theta(\id_{a_1} \otimes \dots \otimes \id_{a_n}),\Theta(\id_{a_1} \otimes \dots \otimes f \otimes \dots \otimes \id_{a_n})\big\}$ for some $n \geq 0, \sigma \in S_n$, and $f \in \mathcal{XG}_1$. For a given $G$-linear diagram it is obvious to see how to write it as a composition of these diagrams except for extra black points. Recall that when we compose $G$-linear diagrams we do not remove black points along which two diagrams are concatenated. However, there are invertible $G$-planar diagrams which remove these points (see Figure \ref{fig:newdigrams2}). Therefore, up to invertible $2$-morphisms, every $G$-linear diagram can be written as compositions of above $1$-morphisms.

Recall that $G$-planar diagrams are formed using generic maps and the X-manifold data of cobordism type $\<2\>$-X-surfaces. Thus, any $G$-planar diagram can be obtained from generating $2$-morphisms in Figure \ref{fig:generators} under horizontal and vertical compositions, and symmetric monoidal product operation. This implies that for any $G$-planar diagram there exists a paragraph such that their equivalence classes are matched by $\Theta$. Consequently, $\theta$ is fully-faithful on $2$-morphisms. 

Hence, Whitehead theorem implies that $\Theta$ is an equivalence. By definition $\Theta$ preserves the unbiased semistrict symmetric monoidal structures. That is, $\XB^{\PD}$ is a computadic unbiased semistrict symmetric monoidal $2$-category. 
\end{proof}

\printbibliography
\Address

\end{document}